\theoremstyle{plain}
\newtheorem{prop}{Proposition}[section]
\newtheorem*{propS}{Proposition}
\newtheorem*{claim}{Claim}
\newtheorem{conj}[prop]{Conjecture}
\newtheorem{coro}[prop]{Corollary}
\newtheorem{lemm}[prop]{Lemma}
\newtheorem{ques}[prop]{Question}
\newtheorem{thrm}[prop]{Theorem}
\theoremstyle{definition}
\newtheorem{defi}[prop]{Definition}
\newtheorem*{nota}{Notation}
\newtheorem*{conv}{Convention}
\newtheorem{exam}[prop]{Example}
\newtheorem{rema}[prop]{Remark}
\newtheorem*{rema*}{Remark}
\newtheorem*{ackn*}{Acknowledgement}
\numberwithin{equation}{section}
\newcommand\0{0}
\newcommand\1{1}
\renewcommand\a{\alpha}
\renewcommand\aa{a}
\renewcommand\AA{A}
\newcommand\aab{\overline{a}}
\newcommand\act{\mathbin{\scriptscriptstyle\bullet}}
\newcommand\Act{\alpha}
\newcommand\Add{\mathbb{A}}
\newcommand\antiexp{\mathrel{{}_{\scriptscriptstyle\LD}\!\!\leftarrow}}
\renewcommand\b{\beta}
\newcommand\bb{b}
\newcommand\BB{B}
\newcommand\bbb{\overline{b}}
\newcommand\BBB{\mathcal{B}^{\scriptscriptstyle+}}
\newcommand\BBBB{\widehat{\mathcal{B}}^{\scriptscriptstyle+}}
\newcommand\BP[1]{B^{\scriptscriptstyle+}_{#1}}
\newcommand\CA{\mathcal{A}^{\scriptscriptstyle+}}
\newcommand\CAct[2]{\mathcal{C}(#1,#2)}
\newcommand\CAm{\mathcal{A}^{\scriptscriptstyle+}_0}
\newcommand\cc{c}
\newcommand\CC{C}
\newcommand\ccb{\overline{c}}
\newcommand\CCb{\overline{C}}
\newcommand\CCbh{\widehat{\overline{C}}}
\newcommand\CCC{\mathcal{C}}
\newcommand\CCCC{\widehat{\mathcal{C}}}
\newcommand\CCh{\widehat{C}}
\newcommand\CLD{\mathcal{LD}^{\scriptscriptstyle+}}
\newcommand\CLDm{\mathcal{LD}^{\scriptscriptstyle+}_0}
\newcommand\CMX{\CAct\MM\XX}
\renewcommand\d{\delta}
\newcommand\D{\Delta}
\newcommand\dd{d}
\newcommand\DD[1]{D_{#1}}
\newcommand\Div{\mathrm{Div}}
\renewcommand\div{\prec}
\newcommand\dive{\preccurlyeq}
\newcommand\ea{\epsilon}
\newcommand\ee{e}
\newcommand\eLD{=_{\scriptscriptstyle\LD}}
\newcommand\eval{\mathrm{eval}}
\renewcommand\exp{\rightarrow_{\scriptscriptstyle\LD}}
\newcommand\ew{\epsilon}
\newcommand\f{\phi}
\newcommand\ff{f}
\newcommand\Fp{F^{\scriptscriptstyle+}}
\newcommand\g{\gamma}
\newcommand\Gar{\mathsf{L\!G}}
\renewcommand\gcd{\mathrm{gcd}}
\let\ge=\geqslant
\renewcommand\gg{g}
\newcommand\head[1]{H(#1)}
\newcommand\Head[2]{H_{#1}(#2)}
\newcommand\hh{h}
\renewcommand\hom{\mathrm{Hom}}
\newcommand\Hom{\mathcal{H}om}
\newcommand\id[1]{1_{{#1}}}
\newcommand\ID{\mathrm{id}}
\newcommand\ie{\emph{i.e.}}
\newcommand\ii{i}
\newcommand\inv{^{-1}}
\newcommand\jj{j}
\newcommand\kk{k}
\newcommand\lcm{\mathrm{lcm}}
\newcommand\LD{\mathrm{LD}}
\let\le=\leqslant
\newcommand\Ldots{...\,}
\renewcommand\lg{\mathrm{lg}}
\newcommand\LG{(\Gar_0)}
\newcommand\LGi{(\Gar_1)}
\newcommand\LGii{(\Gar_2)}
\newcommand\LGiii{(\Gar_3)}
\newcommand\LGloc{(\Gar_3^{\ell oc})}
\newcommand\LGG{(\Gar_0)}
\newcommand\LGGi{(\Gar_1)}
\newcommand\LGGii{(\Gar_2)}
\newcommand\LGGiii{(\Gar_3)}
\newcommand\MA{\mathrm{A}^{\scriptscriptstyle+}}
\newcommand\MAct[2]{[#1,#2]}
\newcommand\MM{M}
\newcommand\MMb{\overline{M}}
\newcommand\MMM{M'}
\newcommand\MLD{\mathrm{LD}^{\scriptscriptstyle+}}
\newcommand\Mon[1]{\langle#1\rangle^{\scriptscriptstyle+}}
\newcommand{\multe}{\mathrel{\widetilde{%
\raisebox{6pt}{\hspace{1pt}}\hspace{-1.5pt}\smash\succcurlyeq}}}
\newcommand\Nat{\mathbb{N}}
\newcommand\nn{n}
\newcommand\nno{{n-1}}
\newcommand\Obj{\mathcal{O}bj}
\newcommand\op{\star}
\newcommand\OP{\mathbin{\hbox to
1.8ex{$\odot\hspace{-1.45ex}\star$}}}
\newcommand\PI{\widehat\pi}
\newcommand\pio{\varpi}
\newcommand\pp{p}
\newcommand\qq{q}
\newcommand\quot{\mathord{/\!\!}}
\newcommand\RH{\mathrm{ht}}
\newcommand\RLD{R_{\LD}}
\newcommand\RR{R}
\newcommand\sh[1]{\mathrm{sh}_{#1}}
\newcommand\sig[1]{\sigma_{\hspace{-0.2ex}#1}^{\null}}
\newcommand\Sig{\Sigma}
\newcommand\Sim{\mathcal{H}om^{sp}}
\newcommand\smallplus{{\scriptscriptstyle+}}
\newcommand\smallR{{\scriptscriptstyle R}}
\newcommand\source[1]{\partial_0#1}
\renewcommand\ss{s}
\renewcommand{\SS}{S}
\newcommand\SSb{\overline{S}}
\newcommand{\SSS}{\mathcal{S}}
\newcommand\sub[2]{#1\hspace{-0.3ex}_{/\hspace{-0.2ex}#2}}
\newcommand\target[1]{\partial_1#1}
\renewcommand\tt{t}
\newcommand\TT{T}
\newcommand\under{\backslash}
\newcommand\uu{u}
\newcommand\var{\mathrm{var}_{\!\smallR}}
\newcommand\vv{v}
\newcommand\ww{w}
\newcommand\xx{x}
\newcommand\XX{X}
\newcommand\xxb{{\overline\xx}}
\newcommand\XXb{\overline{\XX}}
\newcommand\XXX{X'}
\newcommand\yy{y}
\newcommand\zz{z}
\newcommand\ZZZ{\mathbb{Z}}
\begin{document}

\hfill{\tiny 2008-10}

\author{Patrick DEHORNOY}
\address{Laboratoire de Math\'ematiques Nicolas Oresme,
Universit\'e de Caen, 14032 Caen, France}
\email{dehornoy@math.unicaen.fr}
\urladdr{//www.math.unicaen.fr/\!\hbox{$\sim$}dehornoy}

\title{Left-Garside categories, self-distributivity, and braids}

\keywords{Garside category, Garside monoid, self-distributivity, braid, greedy normal form, least common
multiple, LD-expansion}

\subjclass{18B40, 20N02, 20F36}

\begin{abstract}
In connection with the emerging theory of Garside categories, we develop the notions of a left-Garside category
and of a locally left-Garside monoid. In this framework, the connection between the self-distributivity law~LD
and braids amounts to the result that a certain category associated with~LD is a left-Garside category, which
projects onto the standard Garside category of braids. This approach leads to a realistic program for
establishing the Embedding Conjecture of [Dehornoy, Braids and Self-distributivity, Birkha\"user (2000),
Chap.~IX].
\end{abstract}

\maketitle

The notion of a Garside monoid emerged at the end of the 1990's \cite{Dfx, Dgk} as a development of
Garside's theory of braids~\cite{Gar}, and it led to many developments \cite[...]{BesD, BesF, BeC,
BGG1, BGG2, BGG3, CMW, ChM, CrP, FrG, Geb, God, Lee, LeL, McC, Pic, Sib}. More recently, Bessis \cite{Bes},
Digne--Michel \cite{DiM}, and Krammer~\cite{Kra} introduced the notion of a Garside category as a further
extension, and they used it to capture new, nontrivial examples and improve our understanding of their
algebraic structure. The concept of a Garside category is also implicit in~\cite{DeL} and~\cite{GodT}, and
maybe in the many diagrams of~\cite{Dgd}.

Here we shall describe and investigate a new example of (left)-Garside category, namely a certain
category~$\CLD$ associated with the left self-distributivity law
\begin{equation}
\label{E:LD}\tag{LD}
\xx(\yy\zz) = (\xx\yy)(\xx\zz).
\end{equation}
The interest in this law originated in the discovery of several nontrivial structures
that obey it, in particular in set theory \cite{Dem, Lvb} and in low-dimensional topology~\cite{Joy, FeR, Mat}.
A rather extensive theory was developed in the decade 1985-95~\cite{Dgd}. 

Investigating self-distributivity in the light of Garside categories seems to be a good idea. It turns out 
that a large part of the theory developed so far can be summarized into one single statement, namely
\begin{quote}
\emph{The category~$\CLD$ is a left-Garside category,}
\end{quote}
stated as the first part of Theorem~\ref{T:Main}.

The interest of the approach should be at least triple. First, it gives an opportunity to
restate a number of previously unrelated properties  in a new language that is 
natural and should make them more easily understandable---this is probably not useless. In
particular, the connection between self-distributivity and braids is now expressed in the simple statement:
\begin{quote}
\emph{There exists a right-lcm preserving surjective functor of~$\CLD$ to the
Garside category of positive braids,}
\end{quote}
(second part of Theorem~\ref{T:Main}).
This result allows one to recover most of the usual algebraic properties of braids as a direct application of
the properties of~$\CLD$: roughly speaking, Garside's theory of braids is the emerged part if
an iceberg, namely the algebraic theory of self-distributivity.

Second, a direct outcome of the current approach is  a realistic program
for establishing the Embedding Conjecture. The latter is the most puzzling open question involving
free self-distributive systems. Among others, it says that the equivalence class of any bracketed
expression under self-distributivity is a semilattice, \ie, any two expressions admit a least upper bound with
respect to a certain partial ordering. Many equivalent forms of the conjecture are know~\cite[Chapter IX]{Dgd}.
At the moment, no complete proof is known, but we establish the following new result
\begin{quote}
\emph{Unless the left-Garside category~$\CLD$ is not regular, the Embedding Conjecture is true,}
\end{quote}
(Theorem~\ref{T:Embedding}). This result reduces a possible proof of the conjecture to a (long)
sequence of verifications.

Third, the category~$\CLD$ seems to be a seminal example of a left-Garside category, quite
different from all previously known examples of Garside categories. In particular, being strongly asymmetric,
$\CLD$ is not a Garside category. The interest of investigating such objects \emph{per se} is not
obvious, but the existence of a nontrivial example such as~$\CLD$ seems to be a good reason, and a help for
orientating further reaserch.   In particular, our approach emphasizes the role of \emph{locally left-Garside
monoids}\footnote{This is not the notion of a locally Garside monoid in the sense of~\cite{DiM}; we think that
the name ``preGarside'' is more relevant for that notion, which involves no counterpart of any Garside element
or map, but is only the common substratum of all Garside structures.}: this is a monoid~$\MM$ that fails to be
Garside because no global element~$\D$ exists, but nevertheless possesses a family of
elements~$\D_\xx$ that locally play the role of the Garside element and are indexed by a set on which
the monoid~$\MM$ partially acts. Most of the properties of left-Garside monoids extend to locally left-Garside
monoids, in particular the existence of least common multiples and, in good cases, of the greedy normal form.

\begin{ackn*}
Our definition of a left-Garside category is borrowed from~\cite{DiM} (up to a slight change in the formal
setting, see Remark~\ref{R:Equivalence}). Several proofs in Section~\ref{S:Simple} and~\ref{S:Normal} use
arguments that are already present, in one form or another, in~\cite{Adj, Thu, ElM, Eps, Cha, Dgk, GodT} and
now belong to folklore. Most appear in the unpublished paper~\cite{DiM} by  Digne and Michel, and are
implicit in Krammer's paper~\cite{Kra}. Our reasons for including such arguments here is that adapting them
to the current weak context requires some polishing, and that it makes it natural to introduce our two main
new notions, namely locally Garside monoids and regular left-Garside categories.
\end{ackn*}

The paper is organized in two parts. The first one (Sections~\ref{S:Cat} to~\ref{S:Normal}) contains those
general results about left-Garside categories and locally left-Garside monoids that will be needed
in the sequel, in particular the construction and properties of the greedy normal form. The second part
(Sections~\ref{S:LD} to~\ref{S:Intermediate}) deals with the specific case of the category~$\CLD$ and its
connection with braids. Sections~\ref{S:LD} and~\ref{S:CLD} review basic facts about the
self-distributivity law and explain the construction of the category~$\CLD$. Section~\ref{S:Main} is devoted to
proving that $\CLD$ is a left-Garside category and to showing how the results of
Section~\ref{S:Normal} might lead to a proof of the Embedding Conjecture. In Section~\ref{S:Reproving}, we
show how to recover the classical algebraic properties of braids from those of~$\CLD$. Finally, we explain in
Section~\ref{S:Intermediate} some alternative solutions for projecting~$\CLD$ to braids. In an appendix,
we briefly describe what happens when the associativity law replaces the self-distributivity law: here also a
left-Garside category appears, but a trivial one.

We use~$\Nat$ for the set of all positive integers.

\section{Left-Garside categories}
\label{S:Cat}

We define left-Garside categories and describe a uniform way of constructing such categories from
so-called locally left-Garside monoids, which are monoids with a convenient partial action.

\subsection{Left-Garside monoids}
\label{S:Monoids}

Let us start from the now classical notion of a Garside monoid. Essentially, a Garside monoid is a monoid in
which divisibility has good properties, and, in addition, there exists a distinguished element~$\D$
whose divisors encode the whole structure. Slightly different versions have been considered~\cite{Dfx, Dgk,
Dig}, the one stated below now being the most frequently used. In this paper, we are interested 
in one-sided versions involving left-divisibility only, hence we shall first introduce the notion of a
\emph{left-Garside monoid}.

Throughout the paper, if $\aa, \bb$ are elements of a monoid---or, from Section~\ref{S:GarCat}, morphisms of
a category---we say that $\aa$ \emph{left-divides}~$\bb$, denoted $\aa \dive \bb$, if there exists~$\cc$ 
satisfying $\aa\cc = \bb$. The set of all left-divisors of~$\aa$ is denoted by~$\Div(\aa)$. If $\aa\cc = \bb$
holds with $\cc \not= 1$, we say that $\aa$ is a
\emph{proper} left-divisor of~$\bb$,  denoted $\aa \div \bb$. 

We shall always consider monoids~$\MM$ where $1$ is the only invertible element, which will imply that
$\dive$ is a partial ordering. If two elements~$\aa, \bb$ of~$\MM$ admit a greatest lower bound~$\cc$ with
respect to~$\dive$, the latter is called a \emph{greatest common left-divisor}, or \emph{left-gcd}, of~$\aa$
and~$\bb$, denoted $\cc = \gcd(\aa,\bb)$. Similarly, a $\dive$-least upper bound~$\dd$ is called a
\emph{least common right-multiple}, or \emph{right-lcm},  of~$\aa$ and~$\bb$, denoted $\dd =
\lcm(\aa,\bb)$. We say that~$\MM$ \emph{admits local right-lcm's} if any two elements of~$\MM$ that admit
a common right-multiple admit a right-lcm. 

Finally, if $\MM$ is a monoid and $\SS, \SS'$ are subsets of~$\MM$, we say that $\SS$
\emph{left-generates}~$\SS'$ if every nontrivial element of~$\SS'$ admits at least one nontrivial left-divisor
belonging to~$\SS$.

\begin{defi}
\label{D:Monoid}
We say that a monoid~$\MM$ is \emph{left-preGarside} if 

\smallskip

$\LG$ for each~$\aa$ in~$\MM$, every $\div$-increasing sequence in~$\Div(\aa)$ is finite,

$\LGi$ $\MM$ is left-cancellative,

$\LGii$ $\MM$ admits local right-lcm's.

\smallskip

\noindent An element~$\D$ of~$\MM$ is called a \emph{left-Garside element} if

\smallskip

$\LGiii$ {$\Div(\D)$ left-generates~$\MM$, and $\aa \dive \D$ implies $\D \dive \aa\D$.}

\smallskip

\noindent We say that $\MM$ is \emph{left-Garside} if it is left-preGarside and possesses at
least one left-Garside element.
\end{defi}

Using ``generates'' instead of ``left-generates'' in~$\LGiii$ would make no difference, by the following trivial
remark---but the assumption~$\LG$ is crucial, of course.

\begin{lemm}
\label{L:LeftGenerate}
Assume that $\MM$ is a monoid satisfying~$\LG$. Then every subset~$\SS$ left-generating~$\MM$
generates~$\MM$.
\end{lemm}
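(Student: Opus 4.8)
The plan is to show that every element of~$\MM$ lies in the submonoid generated by~$\SS$ by repeatedly peeling nontrivial left-divisors from~$\SS$ off the left, using~$\LG$ to guarantee that the peeling process terminates. Since the empty product equals~$1$, the element~$1$ is covered automatically, so only nontrivial elements require attention; fix $\bb \not= 1$ in~$\MM$.

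First I would build, recursively, a (possibly terminating) sequence of factorizations. Set $\bb_0 = \bb$. As long as the current remainder~$\bb_\kk$ is nontrivial, the hypothesis that $\SS$ left-generates~$\MM$ applied to~$\bb_\kk$ provides a nontrivial element $\ss_{\kk+1} \in \SS$ with $\ss_{\kk+1} \dive \bb_\kk$; choosing one witness gives a factorization $\bb_\kk = \ss_{\kk+1}\,\bb_{\kk+1}$ for some~$\bb_{\kk+1}$ in~$\MM$. Observe that no cancellativity is needed at this point: I only require such a factorization to exist, not to be unique. Writing $\pp_\kk = \ss_1 \cdots \ss_\kk$ for the accumulated left-factor (with $\pp_0 = 1$), an immediate induction yields $\bb = \pp_\kk\,\bb_\kk$, so each~$\pp_\kk$ belongs to~$\Div(\bb)$.

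Next I would note that the~$\pp_\kk$ form a $\div$-increasing chain. Indeed $\pp_{\kk+1} = \pp_\kk\,\ss_{\kk+1}$ with $\ss_{\kk+1} \not= 1$, so by the very definition of proper left-divisibility we have $\pp_\kk \div \pp_{\kk+1}$, again without any appeal to cancellativity. Thus $\pp_0 \div \pp_1 \div \pp_2 \div \cdots$ is a $\div$-increasing sequence lying inside~$\Div(\bb)$. This is exactly where~$\LG$, applied with $\aa = \bb$, comes in: such a sequence must be finite. Hence the recursion cannot continue indefinitely, which forces some remainder~$\bb_\kk$ to equal~$1$; at that stage $\bb = \pp_\kk = \ss_1 \cdots \ss_\kk$ is displayed as a product of elements of~$\SS$, proving that $\SS$ generates~$\MM$.

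I expect no genuine obstacle here: this is a routine well-foundedness induction. The only points deserving a moment's care are recognizing that properness of each step is free from $\ss_{\kk+1} \not= 1$ (so that the chain in~$\Div(\bb)$ is authentically $\div$-increasing and $\LG$ truly applies), and that the whole argument rests on~$\LG$ alone, invoking neither left-cancellativity~$\LGi$ nor local right-lcm's~$\LGii$---in accordance with the remark that $\LG$ is the crucial hypothesis.
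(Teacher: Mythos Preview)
Your argument is correct and is essentially identical to the paper's own proof: you peel off nontrivial left-divisors from~$\SS$, observe that the partial products form a $\div$-increasing chain inside~$\Div(\bb)$, and invoke~$\LG$ to force termination. The only difference is cosmetic---you are more explicit about why cancellativity is not needed and why $\LG$ alone suffices, points the paper leaves implicit.
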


\begin{proof}
Let $\aa$ be a nontrivial element of~$\MM$. By definition there exist~$\aa_1\not=1$ in~$\SS$
and~$\aa'$ satisfying $\aa = \aa_1 \aa'$. If $\aa'$ is trivial, we are done. Otherwise, there
exist~$\aa_2\not=1$ in~$\SS$ and~$\aa''$ satisfying $\aa' = \aa_2 \aa''$, and so on. The sequence $1$,
$\aa_1$, $\aa_1\aa_2$, ... is $\div$-increasing and it lies in~$\Div(\aa)$, hence it must be finite, yielding
$\aa = \aa_1 ... \aa_\dd$ with $\aa_1, ..., \aa_\dd$ in~$\SS$. 
\end{proof}

Right-divisibility is defined symmetrically: $\aa$ \emph{right-divides}~$\bb$ if $\bb = \cc\aa$ holds for
some~$\cc$. Then the notion of a right-(pre)Garside monoid is defined by replacing
left-divisibility by right-divisibility and left-product by right-product in Definition~\ref{D:Monoid}.

\begin{defi}
\label{D:GarsideM}
We say that a monoid $\MM$ is \emph{Garside with Garside element~$\D$} if $\MM$ is both left-Garside
with left-Garside element~$\D$ and right-Garside with right-Garside element~$\D$.
\end{defi}

The equivalence of the above definition with that of~\cite{Dig} is easily checked. 
The seminal example of a Garside monoid is the braid monoid~$\BP\nn$ equipped with
Garside's fundamental braid~$\D_\nn$, see for instance~\cite{Gar, Eps}. Other classical examples are
free abelian monoids and, more generally, all spherical Artin--Tits monoids~\cite{BrS}, as
well as the so-called dual Artin--Tits monoids~\cite{BKL, Bes}. Every Garside monoid embeds in a group of
fractions, which is then called a \emph{Garside group}.

Let us mention that, if a monoid~$\MM$ is left-Garside, then mild
conditions imply that it is Garside: essentially, it is sufficient that $\MM$ is right-cancellative and that the left-
and right-divisors of the left-Garside element~$\D$ coincide~\cite{Dgk}.

\subsection{Left-Garside categories}
\label{S:GarCat}

Recently, it appeared that a number of results involving Garside monoids still make sense in a
wider context where categories replace monoids~\cite{Bes, DiM, Kra}. A category is similar to a monoid,
but the product of two elements is defined only
when the target of the first is the source of the second. In the case of Garside monoids, the main benefit of
considering categories is that it allows for relaxing the existence of the global Garside element~$\D$ into a
weaker, local version depending on the objects of the category, namely a map from the objects to the
morphisms.

We refer to~\cite{Mac} for some basic vocabulary about categories---we use very little of it here.

\begin{conv}
Throughout the paper, composition of morphisms is denoted by a multiplication on the right: $\ff\gg$ means
``$\ff$ then $\gg$''. If $\ff$ is a morphism, the source of~$\ff$ is denoted~$\source\ff$, and its
target is denoted~$\target\ff$. In all examples, we shall make the source and
target explicit: morphisms are triples $(\xx, \ff,
\yy)$ satisfying
$$\source{(\xx, \ff, \yy)} = \xx, \qquad \target{(\xx, \ff, \yy)} = \yy.$$
A morphism~$\ff$ is said to be nontrivial if $\ff \not=\id{\source\ff}$ holds.
\end{conv}

We extend to categories the terminology of divisibility. So, we say that a morphism~$\ff$
is a \emph{left-divisor} of a morphism~$\gg$, denoted~$\ff \dive \gg$, if there
exists~$\hh$ satisfying $\ff \hh = \gg$. If, in addition, $\hh$ can be assumed to be
nontrivial, we say that $\ff \div \gg$ holds. Note that $\ff \dive \gg$ implies $\source\ff =
\source\gg$. We denote by~$\Div(\ff)$ the collection of all left-divisors of~$\ff$. 

The following definition is equivalent to Definition~2.10 of~\cite{DiM} by F.\,Digne and J.\,Michel---see
Remark~\ref{R:Equivalence} below.

\begin{defi}
\label{D:GarCat}
We say that a category~$\CCC$ is \emph{left-preGarside} if 

\smallskip

$\LGG$ for each~$\ff$ in~$\Hom(\CCC)$, every $\div$-ascending sequence in~$\Div(\ff)$ is
finite,

$\LGGi$ $\Hom(\CCC)$ admits left-cancellation,

$\LGGii$ $\Hom(\CCC)$ admits local right-lcm's.

\smallskip

\noindent A map $\D: \Obj(\CCC) \to \Hom(\CCC)$ is called a
\emph{left-Garside map} if, for each object~$\xx$, we have $\source{\D(\xx)} = \xx$ and

\smallskip

$\LGiii$ $\D(\xx)$ left-generates~$\hom(\xx, -)$, and $\ff \dive \D(\xx)$ implies $\D(\xx) \dive
\ff \D(\target{\ff})$.

\smallskip

\noindent We say that $\CCC$ is \emph{left-Garside} if it is left-preGarside and
possesses at least one left-Garside map.
\end{defi}

\begin{exam}
\label{X:Monoid}
Assume that $\MM$ is a left-Garside monoid with left-Garside element~$\D$. One trivially obtains a
left-Garside category $\CCC(\MM)$ by putting
$$\Obj(\CCC(\MM)) = \{1\}, \quad
\Hom(\CCC(\MM)) = \{1\} \times \MM \times \{1\}, \quad
\D(1) = \D.$$
Another left-Garside category~$\CCCC_\MM$ can be attached with~$\MM$, namely
taking 
$$\Obj(\CCCC(\MM)) = \MM, \quad
\Hom(\CCCC(\MM)) = \{(\aa, \bb, \cc) \mid \aa\bb = \cc\,\},
\quad
\D(\aa) = \D.$$
It is natural to call $\CCCC(\MM)$ the \emph{Cayley category} of~$\MM$ since its graph is the Cayley graph
of~$\MM$ (defined provided $\MM$ is right-cancellative). 
\end{exam}

The notion of a right-Garside category can be defined symmetrically, exchanging left and right everywhere
and exchanging the roles of source and target. In particular, the map~$\D$ and Axiom~$\LGGiii$
is to be replaced by a map~$\nabla$ satisfying $\target{\nabla(\xx)} = \xx$ and, using $\bb \multe \aa$ for
``$\aa$ right-divides~$\bb$'',

\smallskip

$\LGiii$ $\nabla(\yy)$ right-generates~$\hom(-, \yy)$, and $\nabla(\yy) \multe \ff$ implies
$\nabla(\source\yy) \ff \multe \nabla(\yy)$.

\smallskip

\noindent Then comes the natural two-sided version of a Garside category~\cite{Bes, DiM}.

\begin{defi}
\label{D:Garside}
We say that a category~$\CCC$ is \emph{Garside with Garside map $\D$} if  $\CCC$ is left-Garside with
left-Garside map~$\D$ and right-Garside  with right-Garside map~$\nabla$ satisfying  $\D(\xx) =
\nabla(\target{(\D(\xx)})$ and $\nabla(\yy) = \D(\source{(\nabla(\yy)})$ for all objects~$\xx, \yy$.
\end{defi}

It is easily seen that, if $\MM$ is a Garside monoid, then the categories~$\CCC(\MM)$ and~$\CCCC(\MM)$ of
Example~\eqref{X:Monoid} are Garside categories. Insisting that the maps~$\D$ and~$\nabla$ involved in the
left- and right-Garside structures are connected as in Definition~\ref{D:Garside}
is crucial: see Appendix for a trivial example where the connection fails.

\subsection{Locally left-Garside monoids}
\label{S:Action}

We now describe a general method for constructing a left-Garside category starting from a
monoid equipped with a partial action on a set. The trivial examples of Example~\ref{X:Monoid} enter this family, and so do
the two categories~$\CLD$ and~$\BBB$ investigated in the second part of this paper. 

\begin{defi}
\label{D:Action}
Assume that $\MM$ is a monoid. We say that $\Act: \MM \times
\XX \to \XX$ is a \emph{partial (right) action} of~$\MM$ on~$\XX$ if, writing $\xx \act
\aa$ for $\Act(\aa)(\xx)$, 

$(i)$ $\xx \act 1 = \xx$ holds for each~$\xx$ in~$\XX$,

$(ii)$ $(\xx \act \aa) \act \bb = \xx \act \aa\bb$ holds for all~$\xx, \aa, \bb$, this meaning that either both
terms are defined and they are equal, or neither is defined,

$(iii)$ for each finite subset~$\SS$ in~$\MM$, there exists $\xx$ in~$\XX$ such that $\xx \act \aa$
is defined for each~$\aa$ in~$\SS$.

\noindent In the above context, for each~$\xx$ in~$\XX$, we put
\begin{equation}
\label{E:Mx}
\MM_\xx = \{\ \aa\in\MM \mid \mbox{ $\xx\act\aa$ is defined }\}.
\end{equation}
\end{defi}

Then Conditions $(i)$, $(ii)$, $(iii)$ of Definition~\ref{D:Action} imply
$$1 \in \MM_\xx, \quad \aa\bb\in\MM_\xx
\Leftrightarrow (\aa \in \MM_\xx \ \&\  \bb \in \MM_{\xx\act\aa}), \quad
\forall\mbox{\,finite\,}\SS \ \exists\xx(\SS
\subseteq\MM_\xx).$$

A monoid action in the standard sense, \ie, an everywhere defined action, is a partial action. For a more typical
case, consider the $\nn$-strand Artin braid monoid~$\BP\nn$. We recall that
$\BP\nn$ is defined  for $\nn \le \infty$ by the monoid presentation
\begin{equation}
\label{E:Braid}
\BP\nn = \bigg\langle \sig1,\Ldots, \sig{n-1} \ \bigg\vert\ 
\begin{matrix}
\sig\ii \sig j = \sig j \sig\ii 
&\text{for} &\vert \ii-\jj \vert\ge 2\\
\sig\ii \sig j \sig\ii = \sig j \sig\ii \sig j 
&\text{for} &\vert \ii-\jj \vert = 1
\end{matrix}
\ \bigg\rangle^{\!\!\smallplus}.
\end{equation}
Then we obtain a partial action of~$\BP\infty$ on~$\Nat$ by putting
\begin{equation}
\label{E:BraidAction}
\nn \act \aa = 
\begin{cases}
\ \ \nn
&\mbox{if $\aa$ belongs to~$\BP\nn$,}\\
\mbox{\ \ undefined}
&\mbox{otherwise.}
\end{cases}
\end{equation}

A natural category can then be associated with every partial action of a monoid.

\begin{defi}
\label{D:ActionCat}
For $\Act$ a partial action of a monoid~$\MM$ on a set~$\XX$, the
category \emph{associated with~$\Act$}, denoted~$\CCC(\Act)$, or $\CAct\MM\XX$ if the action is
clear, is defined by
$$\Obj(\CMX) = \XX, \quad
\Hom(\CMX) = \{ (\xx, \aa, \xx\act\aa) \mid \xx \in\XX, \aa\in \MM\}.$$
\end{defi}

\begin{exam}
\label{X:Braid}
We shall denote by~$\BBB$ the category associated with the action~\eqref{E:BraidAction} of~$\BP\infty$
on~$\Nat$, \ie, we put
$$\Obj(\BBB) = \Nat, \quad
\Hom(\BBB) = \{ (\nn, \aa, \nn) \mid \nn \in\BP\nn\}.$$
Define $\D : \Obj(\BBB) \to \Hom(\BBB)$ by $\D(\nn) = (\nn, \D_\nn, \nn)$.  Then the well known
fact that $\BP\nn$ is a Garside monoid for each~$\nn$~\cite{Gar, KaT} easily implies that $\BBB$ is a
Garside category (as will be formally proved in Proposition~\ref{P:Criterion} below). 
\end{exam}

The example of~$\BBB$ shows the benefit of going from a monoid to a category. The monoid~$\BP\infty$
is not a (left)-Garside monoid, because it is of infinite type and there cannot exist a global Garside
element~$\D$. However, the partial action of~\eqref{E:BraidAction} enable us to restrict to
subsets~$\BP\nn$ (submonoids in the current case) for which Garside elements exist: with the notation
of~\eqref{E:Mx}, $\BP\nn$ is~$(\BP\infty)_\nn$. Thus the categorical context allows to capture the fact that
$\BP\infty$ is, in some sense, \emph{locally} Garside. It is easy to formalize these ideas in a general setting.

\begin{defi}
\label{D:LocGarside}
Let $\MM$ be a monoid with a partial action~$\Act$ on a set~$\XX$. A sequence
$(\D_\xx)_{\xx\in\XX}$ of elements of~$\MM$ is called a \emph{left-Garside sequence}
for~$\Act$ if,  for each~$\xx$ in~$\XX$, the element $\xx\act\D_\xx$ is defined and

\smallskip

$\LGloc$ {$\Div(\D_\xx)$ left-generates $\MM_\xx$ and $\aa\dive\D_\xx $ implies $\D_\xx
\dive\aa\,\D_{\xx\act\aa}$.}

\smallskip

\noindent The monoid~$\MM$ is said to be \emph{locally left-Garside with respect to~$\Act$} if it is
left-preGarside and there is at least one left-Garside sequence for~$\Act$.
\end{defi}

A typical example of a locally left-Garside monoid is $\BP\infty$ with its
action~\eqref{E:BraidAction} on~$\Nat$. Indeed, the sequence $(\D_\nn)_{\nn \in \Nat}$ is clearly a
left-Garside sequence for~\eqref{E:BraidAction}.

The nexy result should appear quite natural.

\begin{prop}
\label{P:Criterion}
Assume that $\MM$ is a locally left-Garside monoid with left-Gars\-ide sequence $(\D_\xx)_{\xx
\in \XX}$. Then $\CMX$ is a left-Garside category with left-Garside map $\D$ defined by
$\D(\xx) = (\xx, \D_\xx, \xx\act\D_{\xx})$.
\end{prop}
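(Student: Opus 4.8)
The plan is to verify, one axiom at a time, that $\CMX$ satisfies the four conditions of Definition~\ref{D:GarCat}, transferring each property from the monoid~$\MM$ to the category via the dictionary implicit in Definition~\ref{D:ActionCat}. The core observation is that composition in $\CMX$ mirrors multiplication in $\MM$: by condition~$(ii)$ of Definition~\ref{D:Action}, two morphisms $(\xx,\aa,\xx\act\aa)$ and $(\xx\act\aa,\bb,(\xx\act\aa)\act\bb)$ are composable exactly when $\aa\bb\in\MM_\xx$, and their composite is $(\xx,\aa\bb,\xx\act\aa\bb)$. Consequently, for a fixed morphism $\ff=(\xx,\aa,\xx\act\aa)$, the left-divisors of~$\ff$ in the category are in bijection with the left-divisors of~$\aa$ in~$\MM$ that lie in~$\MM_\xx$; moreover $\bb\dive\aa$ in~$\MM$ automatically forces $\bb\in\MM_\xx$ here, since $\xx\act\aa$ being defined makes $\xx\act\bb$ defined by~$(ii)$. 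This bijection is an isomorphism of the $\div$-ordered sets $\Div(\ff)$ and $\Div(\aa)$, so I would package it as a short preliminary lemma and then read off the axioms.

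First I would dispatch $\LGG$ and $\LGGi$. For $\LGG$, any $\div$-ascending sequence in $\Div(\ff)$ transports to a $\div$-ascending sequence in $\Div(\aa)\subseteq\MM$, which is finite by $\LG$ for~$\MM$; finiteness is preserved by the bijection. For $\LGGi$, suppose $\ff\gg=\ff\gg'$ in $\CMX$; comparing the middle coordinates gives $\aa\bb=\aa\bb'$ in~$\MM$, so left-cancellativity $\LGi$ of~$\MM$ yields $\bb=\bb'$, and since composability fixes the source object of $\gg,\gg'$, we conclude $\gg=\gg'$. These two steps are essentially bookkeeping.

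Next comes $\LGGii$, local right-lcm's, which requires slightly more care because the relevant $\lcm$ must be computed \emph{inside}~$\MM_\xx$ rather than all of~$\MM$. Given two morphisms $(\xx,\aa,-)$ and $(\xx,\bb,-)$ with a common right-multiple in the category, their middle coordinates $\aa,\bb$ admit a common right-multiple in~$\MM$ lying in~$\MM_\xx$; by $\LGii$ for~$\MM$ they possess a right-lcm~$\dd$ in~$\MM$. The point to check is that $\dd\in\MM_\xx$ and that $\dd$ remains the right-lcm when the ambient set is restricted to $\MM_\xx$. The first follows because $\dd$ left-divides any common multiple, and $\MM_\xx$ is closed under left-divisors (again by $(ii)$); the second follows because $\dive$ on $\MM_\xx$ is the restriction of $\dive$ on~$\MM$. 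Then $(\xx,\dd,\xx\act\dd)$ is the right-lcm in $\CMX$.

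Finally I would verify that $\D$ is a left-Garside map via $\LGiii$, which is where the \emph{local} hypothesis $\LGloc$ does its work and which I expect to be the main obstacle—not because it is deep, but because it is the step where the indexing of the Garside elements by objects must be handled correctly. The source condition $\source{\D(\xx)}=\xx$ holds by construction, and $\xx\act\D_\xx$ is defined by the hypothesis of Definition~\ref{D:LocGarside}. For the generation clause, $\Div(\D(\xx))$ left-generates $\hom(\xx,-)$ precisely because $\Div(\D_\xx)$ left-generates $\MM_\xx$ by $\LGloc$, using the bijection above together with the fact that every morphism out of~$\xx$ has middle coordinate in~$\MM_\xx$. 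The delicate part is the stability clause: $\ff\dive\D(\xx)$ must imply $\D(\xx)\dive\ff\,\D(\target\ff)$. Writing $\ff=(\xx,\aa,\xx\act\aa)$ with $\aa\dive\D_\xx$, the hypothesis $\LGloc$ gives $\D_\xx\dive\aa\,\D_{\xx\act\aa}$ in~$\MM$; I must then translate this into a category-level left-divisibility, checking that $\target\ff=\xx\act\aa$ so that $\D(\target\ff)=(\xx\act\aa,\D_{\xx\act\aa},-)$, that the composite $\ff\,\D(\target\ff)$ has middle coordinate $\aa\,\D_{\xx\act\aa}$, and that the witnessing factor from $\D_\xx\dive\aa\,\D_{\xx\act\aa}$ lies in the appropriate $\MM_{\xx\act\D_\xx}$ so that it defines a genuine morphism with the right source object. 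All of these are forced by $(ii)$ and the closure of the $\MM_\xx$ under left-divisors, but this is the one place where a careless identification of source/target objects would break the argument, so I would write it out explicitly.
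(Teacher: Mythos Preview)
Your proposal is correct and follows essentially the same route as the paper's proof: transfer each axiom $\LGG$--$\LGGiii$ from~$\MM$ to~$\CMX$ via the correspondence between $\Div(\ff)$ and $\Div(\aa)$, using closure of~$\MM_\xx$ under left-divisors for the lcm and stability clauses. You are in fact slightly more explicit than the paper about checking that the witnessing factor in the stability clause lands in~$\MM_{\xx\act\D_\xx}$, but this is a cosmetic difference, not a methodological one.
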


\begin{proof}
By definition, $(\xx, \aa, \yy) \dive (\xx', \aa', \yy')$ in~$\CMX$ implies $\xx' =
\xx$ and $\aa \dive \aa'$ in~$\MM$. So the hypothesis that $\MM$ satisfies~$\LG$
implies that $\CMX$ does.

Next, $(\xx, \aa, \yy)(\yy, \bb, \zz) = (\xx, \aa, \yy)(\yy, \bb', \zz')$ implies $\aa\bb = \aa\bb'$
in~$\MM$, hence $\bb = \bb'$ by~$\LGi$, and, therefore, $\CMX$ satisfies~$\LGGi$.

Assume $(\xx, \aa, \yy)(\yy, \bb', \xx') = (\xx, \bb, \zz)(\zz, \aa', \xx')$ in~$\Hom(\CMX)$.
Then $\aa\bb' = \bb\aa'$ holds in~$\MM$. By~$\LGii$, $\aa$ and $\bb$ admit a
right-lcm~$\cc$, and we have $\aa \dive \cc$, $\bb \dive \cc$, and $\cc \dive \aa\bb'$.
By hypothesis, $\xx \act \aa\bb'$ is defined, hence so is $\xx \act \cc$, and it is obvious
to check that $(\xx, \cc, \xx\act\cc)$ is a right-lcm of~$(\xx, \aa, \yy)$
and~$(\xx,\bb,\zz)$ in~$\Hom(\CMX)$. Hence $\CMX$ satisfies~$\LGGii$. 

Assume that $(\xx, \aa, \yy)$ is a nontrivial morphism of~$\Hom(\CMX)$. This means that $\aa$ is
nontrivial, so, by~$\LGloc$, some left-divisor~$\aa'$ of~$\D_\xx$ is a left-divisor of~$\aa$. Then
$(\xx, \aa', \xx\act\aa') \dive \D(\xx)$ holds, and $\D(\xx)$ left-generates~$\hom(\xx, -)$.

Finally, assume $(\xx, \aa, \yy) \dive \D(\xx)$ in~$\Hom(\CMX)$. This implies $\aa \dive \D_\xx$
in~$\MM$.  Then $\LGloc$ in~$\MM$ implies $\D_\xx \dive \aa \D_\yy$. By hypothesis, $\yy \act
\D_\yy$ is defined, and we have
$ (\xx, \aa, \yy) \D(\yy)  = (\xx, \aa\D_\yy, \xx \act \aa\D_\yy)$, of which $(\xx, \D_\xx, \xx\act\D_\xx)$
is a left-divisor in~$\Hom(\CMX)$. So $\LGiii$ is satisfied in~$\CMX$.
\end{proof}

It is not hard to see that, conversely, if $\MM$ is a left-preGarside monoid, then $\CMX$ being a
left-Garside category implies that $\MM$ is a locally left-Garside monoid. We shall not use the result here.

If $\MM$ has a total action on~$\XX$, \ie, if $\xx\act\aa$ is defined for all~$\xx$ and~$\aa$, the
sets~$\MM_\xx$ coincide with~$\MM$, and Condition~$\LGloc$ reduces to~$\LGiii$. In this
case, each element~$\D_\xx$ is a left-Garside element in~$\MM$, and $\MM$ is a left-Garside monoid. A
similar result holds for each set~$\MM_\xx$ that turns out to be a submonoid (if any).

\begin{prop}
\label{P:Submonoid}
Assume that $\MM$ is a locally left-Garside monoid with left-Gars\-ide sequence $(\D_\xx)_{\xx\in\XX}$, and
$\xx$ is such that $\MM_\xx$ is closed under product and $\D_\yy  = \D_\xx$ holds for each~$\yy$
in~$\MM_\xx$. Then $\MM_\xx$ is a left-Garside submonoid of~$\MM$.
\end{prop}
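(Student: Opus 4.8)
The plan is to exhibit $\D_\xx$ as a left-Garside element of~$\MM_\xx$ and to check that $\MM_\xx$, with the structure inherited from~$\MM$, satisfies Definition~\ref{D:Monoid}. First I would note that $\MM_\xx$ is genuinely a submonoid: it contains~$1$ and is closed under product (the latter by hypothesis), and, by the equivalence $\aa\bb\in\MM_\xx\Leftrightarrow(\aa\in\MM_\xx\ \&\ \bb\in\MM_{\xx\act\aa})$ recorded after Definition~\ref{D:Action}, it is automatically \emph{closed under left-division}. In particular $\D_\xx\in\MM_\xx$, since $\xx\act\D_\xx$ is defined, and hence $\Div(\D_\xx)\subseteq\MM_\xx$.

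The heart of the argument is the claim that $\MM_\xx$ is also \emph{closed under right-division}, \ie\ $\cc\aa\in\MM_\xx$ and $\cc\in\MM_\xx$ imply $\aa\in\MM_\xx$. I would prove, by induction on the maximal length of a $\div$-chain below~$\aa$ (finite by~$\LG$), that for every $\cc\in\MM_\xx$ with $\cc\aa\in\MM_\xx$ one has $\aa\in\MM_\xx$. If $\aa=1$ there is nothing to prove. Otherwise, from $\cc\aa\in\MM_\xx$ and $\cc\in\MM_\xx$ we get $\aa\in\MM_{\xx\act\cc}$, so condition~$\LGloc$ read at the point~$\xx\act\cc$ supplies a nontrivial $\aa_1\dive\aa$ with $\aa_1\in\Div(\D_{\xx\act\cc})$; here the hypothesis $\D_{\xx\act\cc}=\D_\xx$ (the standing assumption $\D_\yy=\D_\xx$, read at $\yy=\xx\act\cc$) is used to conclude $\aa_1\in\Div(\D_\xx)\subseteq\MM_\xx$. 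Writing $\aa=\aa_1\aa'$, the element $\cc\aa_1$ lies in~$\MM_\xx$ by closure under product, $\aa'$ has strictly smaller height, and $(\cc\aa_1)\aa'=\cc\aa\in\MM_\xx$, so the induction hypothesis applied to $\cc\aa_1$ and~$\aa'$ gives $\aa'\in\MM_\xx$, whence $\aa=\aa_1\aa'\in\MM_\xx$. A direct consequence is that the relations $\dive$ and~$\div$ on~$\MM_\xx$ coincide with their restrictions from~$\MM$.

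With these two closure properties the remaining verifications are routine. Axiom~$\LG$ holds because any $\div$-increasing sequence in~$\MM_\xx$ is one in~$\MM$, and $\LGi$ is inherited from left-cancellativity of~$\MM$. For~$\LGii$, two elements $\aa,\bb\in\MM_\xx$ admitting a common right-multiple in~$\MM_\xx$ admit a right-lcm~$\cc$ in~$\MM$; then $\cc\in\MM_\xx$ by closure under left-division, and for any common multiple $\dd=\cc\ee\in\MM_\xx$ closure under right-division gives $\ee\in\MM_\xx$, so $\cc$ is a right-lcm inside~$\MM_\xx$. Finally, to see that~$\D_\xx$ is a left-Garside element, its divisors left-generate~$\MM_\xx$ by the first clause of~$\LGloc$, and if $\aa\dive\D_\xx$ then $\aa\in\MM_\xx$, so $\D_{\xx\act\aa}=\D_\xx$ and the second clause of~$\LGloc$ yields $\D_\xx\dive\aa\D_{\xx\act\aa}=\aa\D_\xx$, a divisibility that again takes place inside~$\MM_\xx$ by closure under right-division.

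The step I expect to be the real obstacle is precisely closure under right-division: a submonoid of a left-Garside monoid need not be left-Garside, and what can fail is exactly the survival of lcm-complements and Garside-complements inside the submonoid. It is here, and essentially only here, that the hypothesis $\D_\yy=\D_\xx$ enters, through the left-generation clause of~$\LGloc$ at the points $\xx\act\cc$.
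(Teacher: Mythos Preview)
Your proof is correct and follows the same route as the paper's---check $\LG$, $\LGi$, $\LGii$ by restriction from~$\MM$, then verify $\LGiii$ for~$\D_\xx$ using $\LGloc$ and the constancy hypothesis $\D_{\xx\act\aa}=\D_\xx$---but you add one ingredient the paper leaves implicit: closure of~$\MM_\xx$ under right-division. The paper simply asserts that the $\MM$-right-lcm~$\cc'$ ``is a right-lcm of~$\aa$ and~$\bb$ in~$\MM_\xx$'' and that $\D_\xx\dive\aa\D_\xx$ holds; strictly speaking, both statements require the divisibility witnesses to lie in~$\MM_\xx$, \ie, that $\dive$ in~$\MM_\xx$ coincides with the restriction of~$\dive$ in~$\MM$, which is exactly your closure claim. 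Your inductive argument for it (essentially the mechanism of Lemma~\ref{L:LeftGenerate} run at the moving base point~$\xx\act\cc$, using $\D_{\xx\act\cc}=\D_\xx$ to land each successive factor in $\Div(\D_\xx)\subseteq\MM_\xx$) is sound, and your diagnosis that this is where the constancy hypothesis does its essential work is accurate.
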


\begin{proof}
By definition of a partial action, $\xx \act 1$ is defined, so
$\MM_\xx$ contains~$1$, and it is a submonoid of~$\MM$. We show that $\MM_\xx$ satisfies~$\LG, \LGi,
\LGii$, and~$\LGiii$. First, a counter-example to~$\LG$ in~$\MM_\xx$ would be a
counter-example to~$\LG$ in~$\MM$, hence $\MM_\xx$ satisfies~$\LG$. Similarly, an
equality $\aa\bb = \aa\bb'$ with $\bb \not= \bb'$ in~$\MM_\xx$ would also contradict~$\LGi$ in~$\MM$,
so $\MM_\xx$ satisfies~$\LGi$. Now,  assume that $\aa$ and $\bb$ admit in~$\MM_\xx$, hence in~$\MM$,
a common right-multiple~$\cc$. Then $\aa$ and $\bb$ admit a right-lcm~$\cc'$ in~$\MM$. By hypothesis,
$\xx \act \cc$ is defined, and $\cc' \dive \cc$ holds. By definition of a partial action, $\xx
\act \cc'$ is defined as well, \ie, $\cc'$ lies in~$\MM_\xx$, and it is a right-lcm of~$\aa$ and~$\bb$
in~$\MM_\xx$. So $\MM_\xx$ satisfies~$\LGii$, and it is left-preGarside. 

Next, $\D_\xx$ is a left-Garside element in~$\MM_\xx$. Indeed, let~$\aa$ be any nontrivial element
of~$\MM_\xx$. By~$\LGloc$, there exists a nontrivial divisor~$\aa'$ of~$\aa$ satisfying $\aa' \dive \D_\xx$.
By definition of a partial action, $\xx \act
\aa'$ is defined, so it belongs to~$\MM_\xx$, and $\D_\xx$ left-generates~$\MM_\xx$. Finally,
assume $\aa \dive \D_\xx$. As $\D_\xx$ belongs to~$\MM_\xx$, this implies $\aa \in
\MM_\xx$, hence $\D_\xx \dive \aa\D_{\xx\act\aa}$ by~$\LGloc$, \ie, $\D_\xx \dive \aa \D_\xx$ since we
assumed that the sequence~$\D$ is constant on~$\MM_\xx$. So $\D_\xx$  a left-Garside in~$\MM_\xx$.
\end{proof}

\goodbreak

\section{Simple morphisms}
\label{S:Simple}

We return to general left-Garside categories and establish a few basic results. 
As in the case of Garside monoids, an important role is played by the divisors of~$\D$, a local notion here.

\subsection{Simple morphisms and the functor~$\f$}

\begin{defi}
\label{D:Simple}
Assume that $\CCC$ is a left-Garside category. A morphism~$\ff$ of~$\CCC$ is called
\emph{simple} if it is a left-divisor of~$\D(\source\ff)$. In this case, we denote by~$\ff^*$
the unique simple morphism satisfying $\ff \, \ff^* = \D(\source\ff)$. The family of all simple morphisms
in~$\CCC$ is denoted by~$\Sim(\CCC)$.
\end{defi}

By definition, every identity morphism~$\id\xx$ is a left-divisor of every morphism with
source~$\xx$, hence in particular of~$\D(\xx)$. Therefore $\id\xx$ is simple.

\begin{defi}
\label{D:Phi}
Assume that $\CCC$ is a left-Garside category. We put
$\f(\xx) = \target{(\D(\xx))}$ for~$\xx$ in~$\Obj(\CCC)$, and $\f(\ff) =
\ff^{**}$ for~$\ff$ in~$\Sim(\CCC)$.
\end{defi}

Although straightforward, the following result is fundamental---and it is the main
argument for stating $\LGGiii$ in the way we did.

\begin{lemm}
\label{L:Simple}
Assume that $\CCC$ is a left-Garside category. 

$(i)$ If $\ff$ is a simple morphism, so are~$\ff^*$ and~$\f(\ff)$.

$(ii)$ Every right-divisor of a morphism~$\D(\xx)$ is simple.
\end{lemm}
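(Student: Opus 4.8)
The plan is to extract everything from the second clause of the left-Garside map axiom $\LGiii$, namely that $\ff \dive \D(\source\ff)$ forces $\D(\source\ff) \dive \ff\,\D(\target\ff)$, together with left-cancellation $\LGGi$. Throughout, for a simple morphism $\ff$ I read $\ff^*$ as the unique morphism with $\ff\,\ff^* = \D(\source\ff)$, uniqueness being guaranteed by $\LGGi$; part of the point of~(i) is precisely to check that this cofactor deserves to be called \emph{simple}, as anticipated in Definition~\ref{D:Simple}. The real content is the single assertion that $\ff^*$ is simple whenever $\ff$ is; once that is available, the simplicity of $\f(\ff) = \ff^{**}$ and statement~(ii) both follow formally.

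So first I would prove: \emph{if $\ff$ is simple, then $\ff^*$ is simple.} Put $\xx = \source\ff$ and $\yy = \target\ff$, so that $\ff\,\ff^* = \D(\xx)$ with $\ff^*$ running from $\yy$ to $\f(\xx)$. Since $\ff \dive \D(\xx)$, axiom $\LGiii$ gives $\D(\xx) \dive \ff\,\D(\yy)$, so there is $\gg$ with $\D(\xx)\,\gg = \ff\,\D(\yy)$ (the composite $\ff\,\D(\yy)$ being defined because $\target\ff = \yy = \source{\D(\yy)}$). Substituting $\D(\xx) = \ff\,\ff^*$ gives $\ff\,\ff^*\,\gg = \ff\,\D(\yy)$, and cancelling $\ff$ on the left via $\LGGi$ leaves
$$\ff^*\,\gg = \D(\yy) = \D(\source{\ff^*}).$$
Thus $\ff^* \dive \D(\source{\ff^*})$, i.e. $\ff^*$ is simple; and uniqueness of the cofactor lets me read off $\gg = \ff^{**} = \f(\ff)$ at the same time.

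Part~(i) is now immediate: $\ff^*$ is simple by the displayed argument, and applying that same argument to the simple morphism $\ff^*$ shows that $(\ff^*)^* = \ff^{**} = \f(\ff)$ is simple too. For part~(ii), let $\hh$ be a right-divisor of $\D(\xx)$, say $\D(\xx) = \gg\,\hh$. Then $\source\gg = \source{\D(\xx)} = \xx$, so $\gg$ left-divides $\D(\source\gg)$ and is therefore simple, while $\hh$ is the unique cofactor of $\gg$, that is $\hh = \gg^*$; by part~(i) this is simple.

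The computations are short enough that I do not anticipate a genuine obstacle; the only thing demanding care is the source/target bookkeeping, i.e. checking at each step that the composites $\ff\,\D(\yy)$, $\D(\xx)\,\gg$ and $\ff^*\,\gg$ are defined and that $\source{\ff^*} = \yy$, so that the left-cancellation is legitimate and $\D(\yy)$ really is $\D(\source{\ff^*})$. It is worth recording that this argument is also exactly what makes $\f(\ff) = \ff^{**}$ well defined in Definition~\ref{D:Phi}, since it is what guarantees that $\ff^*$ is simple and hence that $(\ff^*)^*$ exists.
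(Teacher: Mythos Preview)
Your proof is correct and follows essentially the same route as the paper: use $\LGiii$ to obtain $\ff\ff^* = \D(\source\ff) \dive \ff\,\D(\target\ff)$, left-cancel $\ff$ to see $\ff^*$ is simple, then iterate for $\f(\ff)=\ff^{**}$ and deduce~(ii) by writing a right-divisor of~$\D(\xx)$ as some~$\gg^*$. Your additional remark that the witness~$\gg$ in $\D(\xx)\,\gg=\ff\,\D(\yy)$ is precisely $\f(\ff)$ anticipates the content of Lemma~\ref{L:Basic}(iii)--(iv), but does no harm here.
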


\begin{proof}
$(i)$ By $\LGGiii$, we have $\ff \ff^* = \D(\source\ff) \dive \ff \D(\target\ff)$, hence $\ff^* \dive
\D(\target\ff)$ by left-cancelling~$\ff$. This shows that $\ff^*$ is simple. Applying the result to~$\ff^*$ shows
that $\f(\ff)$---as well as $\f^\kk(\ff)$ for each positive~$\kk$---is simple.

$(ii)$ Assume that $\gg$ is a right-divisor of~$\D(\xx)$. This means that there exists~$\ff$ satisfying $\ff \gg
= \D(\xx)$, hence $\gg = \ff^*$ by~$\LGi$. Then $\gg$ is simple by~$(i)$
\end{proof}

\begin{lemm}
\label{L:Basic}
Assume that $\CCC$ is a left-Garside category. 

$(i)$ The morphisms~$\id\xx$ are the only left- or right-invertible morphisms in~$\CCC$.

$(ii)$ Every morphism of~$\CCC$ is a product of simple morphisms.

$(iii)$ There is a unique
way to extend~$\f$ into a functor of~$\CCC$ into itself.

$(iv)$ The map~$\D$ is a natural
transformation of the identity functor into~$\f$, \ie, for each morphism~$\ff$, we have
\begin{equation}
\label{E:Phi}
\ff \, \D(\target\ff) = \D(\source\ff) \ \f(\ff).
\end{equation}
\end{lemm}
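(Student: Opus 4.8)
The four assertions are largely independent, so the plan is to treat them in turn, handling $(iii)$ and $(iv)$ together, since defining~$\f$ through relation~\eqref{E:Phi} is what makes both transparent.

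For $(i)$, I would first reduce one-sided invertibility to genuine invertibility using only left-cancellation~$\LGGi$: if $\ff\,\gg = \id{\source\ff}$, then applying left-cancellation to $\ff\,(\gg\ff) = \ff\,\id{\target\ff}$ gives $\gg\ff = \id{\target\ff}$, and an analogous computation handles a one-sided inverse on the other side. So it suffices to rule out a nontrivial isomorphism $\ff\colon\xx\to\yy$ with inverse~$\gg$, both nontrivial. Here $\ff\,\gg = \id\xx$ and $\gg\ff = \id\yy$ show that $\id\xx \div \ff$ and $\ff \div \id\xx$ are both proper, so the alternating sequence $\id\xx, \ff, \id\xx, \ff, \dots$ is an infinite $\div$-ascending sequence lying in~$\Div(\id\xx)$, contradicting~$\LGG$. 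Hence $\ff = \id\xx$.

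For $(ii)$, I would observe that $\LGGiii$ says exactly that, for each object~$\xx$, every nontrivial morphism with source~$\xx$ has a nontrivial simple left-divisor; thus $\Sim(\CCC)$ left-generates~$\Hom(\CCC)$. Then the argument of Lemma~\ref{L:LeftGenerate}, transposed to the category, applies verbatim: peeling off nontrivial simple left-divisors produces a $\div$-ascending sequence in~$\Div(\ff)$, which must terminate by~$\LGG$, exhibiting~$\ff$ as a product of simple morphisms.

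The core of $(iii)$ and $(iv)$ is the divisibility $\D(\source\ff) \dive \ff\,\D(\target\ff)$ for every morphism~$\ff$. For a simple~$\ff$ this is immediate: from $\ff\,\ff^* = \D(\source\ff)$ and $\ff^*\f(\ff) = \ff^*\ff^{**} = \D(\target\ff)$ one gets $\ff\,\D(\target\ff) = \ff\,\ff^*\f(\ff) = \D(\source\ff)\,\f(\ff)$, which is both the divisibility and relation~\eqref{E:Phi} for simple morphisms. I would then promote it to arbitrary~$\ff$ by induction on the length of a decomposition $\ff = \ff_1 \cdots \ff_\dd$ into simples furnished by~$(ii)$, pushing~$\D$ leftwards one simple factor at a time. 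Granting this, I define $\f(\ff)$ to be the unique morphism (unique by~$\LGGi$) with $\D(\source\ff)\,\f(\ff) = \ff\,\D(\target\ff)$; this is~\eqref{E:Phi} by fiat and agrees with Definition~\ref{D:Phi} on simples by the computation above. Functoriality is then a one-line left-cancellation: for $\ff\colon\xx\to\yy$ and $\gg\colon\yy\to\zz$, from $\D(\xx)\,\f(\ff\gg) = \ff\gg\,\D(\zz) = \ff\,\D(\yy)\,\f(\gg) = \D(\xx)\,\f(\ff)\,\f(\gg)$ one reads off $\f(\ff\gg) = \f(\ff)\f(\gg)$, and similarly $\f(\id\xx) = \id{\f(\xx)}$; uniqueness of the extension follows because~$(ii)$ forces any functor agreeing with~$\f$ on simples to agree everywhere. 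The step I expect to resist is the promotion of $\D(\source\ff)\dive\ff\,\D(\target\ff)$ from simple to general morphisms: the induction requires that each intermediate cofactor is again of the form $\f(\text{simple})$ so that $\D$ keeps sliding across, which is where Lemma~\ref{L:Simple}$(i)$ (closure of simples under~$*$ and~$\f$) is essential. Once that divisibility is in hand, defining~$\f$ directly by~\eqref{E:Phi}, rather than by multiplying $\f$-images of simple factors, is precisely what lets me bypass any separate verification that such a product is independent of the chosen factorization.
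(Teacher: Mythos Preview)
Your argument is correct and tracks the paper's proof closely. The only difference is organizational: for $(iii)$ and~$(iv)$, the paper defines $\f(\ff)$ as the product $\f(\ff_1)\cdots\f(\ff_\dd)$ along a chosen decomposition into simples and then checks independence of the decomposition by left-cancelling in $\ff\,\D(\target\ff) = \D(\source\ff)\,\f(\ff_1)\cdots\f(\ff_\dd)$, whereas you first establish the divisibility $\D(\source\ff)\dive\ff\,\D(\target\ff)$ and define $\f(\ff)$ as the unique cofactor, getting well-definedness for free. These are two packagings of the same computation.

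Your closing caveat is over-cautious. The induction that pushes $\D$ leftwards uses only the simple-case identity $\ff_\ii\,\D(\target{\ff_\ii}) = \D(\source{\ff_\ii})\,\f(\ff_\ii)$ at each step; the accumulated cofactor $\f(\ff_1)\cdots\f(\ff_\kk)$ plays no role in the next step, so nothing requires it to be simple. Lemma~\ref{L:Simple}$(i)$ is needed earlier---to know that $\ff^*$ is simple so that $\f(\ff)=\ff^{**}$ is defined---but not in the induction itself.
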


\begin{proof}
$(i)$ Assume $\ff \gg = \id\xx$ with $\ff \not= \id\xx$ and $\gg \not= \id{\target\ff}$.
Then we have
$$\id\xx \div \ff \div \ff\gg \div \ff \div \ff\gg \div ..., $$
an infinite $\div$-increasing sequence in~$\Div(\id\xx)$ that contradicts~$\LGG$.

$(ii)$ Let $\ff$ be a morphism of~$\CCC$, and let $\xx = \source\ff$. If $\ff$ is trivial,
then it is simple, as observed above. We wish to prove that simple morphisms generate~$\Hom(\CCC)$. Owing
to Lemma~\ref{L:LeftGenerate}, it is enough to prove that simple morphisms left-generate~$\Hom(\CCC)$, \ie,
that every nontrivial morphism with source~$\xx$ is left-divisible by a simple morphism with source~$\xx$,
in other words by a left-divisor of~$\D(\xx)$. This is exactly what the first part of Condition~$\LGiii$
claims.

$(iii)$ Up to now, $\f$ has been defined on objects, and on simple morphisms. Note that,
by construction,  \eqref{E:Phi} is satisfied for each simple morphism~$\ff$.
Indeed, applying Definition~\ref{D:Simple} for~$\ff$ and $\ff^*$ gives the relations
$$\ff \ff^* = \D(\source\ff)
\text{\quad and \quad}
\ff^* \ff^{**} = \D(\source{\ff^*}) = \D(\target\ff),$$
whence
$$\ff \D(\target\ff) = \ff \ff^* \ff^{**} = \D(\source\ff) \ff^{**} = \D(\source\ff)
\f(\ff).$$
Applying this to~$\ff = \id\xx$ gives $\D(\xx) = \D(\xx) \f(\id\xx)$, hence $\f(\id\xx) =
\id{\f(\xx)}$ by~$\LGGi$.

Let $\ff$ be an arbitrary morphism of~$\CCC$, and let $\ff_1 ... \ff_\pp$ and $\gg_1 ...
\gg_\qq$ be two decompositions of~$\ff$ as a product of simple morphisms, which exist by~$(ii)$.
Repeatedly applying~\eqref{E:Phi} to $\ff_\pp, ..., \ff_1$ and $\gg_\qq, ..., \gg_1$ gives
\begin{align*}
\ff \D(\target\ff) 
&= \ff_1 ... \ff_\pp \D(\target\ff) = \D(\source\ff) \f(\ff_1) ... \f(\ff_\pp)\\
&= \gg_1 ... \gg_\qq \D(\target\ff) = \D(\source\ff) \f(\gg_1) ... \f(\gg_\qq).
\end{align*}
By~$\LGGi$, we deduce $\f(\ff_1) ... \f(\ff_\pp) = \f(\gg_1) ... \f(\gg_\qq)$, and therefore there is no
ambiguity in defining $\f(\ff)$ to be the common value. In this way, $\f$ is extended to
all morphisms in such a way that $\f$ is a functor  and \eqref{E:Phi} always holds.
Conversely, the above definition is clearly the only one that extends~$\f$ into a functor.

$(iv)$ We have seen above that \eqref{E:Phi} holds for every morphism~$\ff$, so nothing new is needed here.
See Figure~\ref{F:NatTrans} for an illustration.
\end{proof}

\begin{figure}[htb]
\begin{picture}(38,18)(0,-1)
\put(8,0){\includegraphics{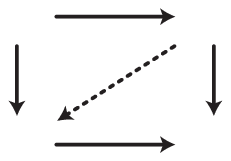}}
\put(0,7){$\D(\xx)$}
\put(30,7){$\D(\yy)$}
\put(5,0){$\f(\xx)$}
\put(26,0){$\f(\yy)$}
\put(15,-2.5){$\f(\ff)$}
\put(15.5,7.5){$\ff^*$}
\put(28,13){$\yy$}
\put(8,13){$\xx$}
\put(17,15.5){$\ff$}
\end{picture}
\caption{\sf\smaller Relation~\eqref{E:Phi}: the Garside map~$\D$ viewed as a natural transformation from
the identity functor to the functor~$\f$.}
\label{F:NatTrans}
\end{figure}

\begin{lemm}
\label{L:Dual}
Assume that $\CCC$ is a left-Garside category. Then, for each object~$\xx$
and each simple morphism~$\ff$, we have
\begin{equation}
\label{E:Dual}
\f(\D(\xx)) = \D(\f(\xx))
\mbox{\quad and \quad}
\f(\ff^*) = \f(\ff)^*.
\end{equation}
\end{lemm}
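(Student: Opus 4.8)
The plan is to deduce both identities from a single structural fact, namely that the complementation $\ff\mapsto\ff^*$ exchanges, at each object, the two canonical simple morphisms: the identity and the Garside morphism. Granting this, everything reduces to reading off Definition~\ref{D:Phi}, which says $\f(\ff)=\ff^{**}$ for every simple~$\ff$, and iterating~$*$. So first I would isolate the behaviour of~$*$ on these two morphisms, and then compute.

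First I would record, for an arbitrary object~$\yy$, two elementary facts. Since $\id\yy$ is simple (as noted after Definition~\ref{D:Simple}) with $\source{\id\yy}=\yy$, the equality $\id\yy\,\D(\yy)=\D(\yy)=\D(\source{\id\yy})$ together with the simplicity of~$\D(\yy)$ shows, by the uniqueness clause of Definition~\ref{D:Simple}, that $(\id\yy)^*=\D(\yy)$. Conversely $\D(\yy)$ is simple, because $\D(\yy)\dive\D(\source{\D(\yy)})=\D(\yy)$, and its target is~$\f(\yy)$; the equality $\D(\yy)\,\id{\f(\yy)}=\D(\yy)=\D(\source{\D(\yy)})$, with $\id{\f(\yy)}$ simple and $\LGGi$ giving uniqueness, yields $\D(\yy)^*=\id{\f(\yy)}$. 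In short, $*$ carries $\id\yy$ to $\D(\yy)$ and $\D(\yy)$ back to $\id{\f(\yy)}$.

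The two equalities then drop out. For the first, $\D(\xx)$ is simple, so Definition~\ref{D:Phi} gives $\f(\D(\xx))=\D(\xx)^{**}=(\id{\f(\xx)})^*=\D(\f(\xx))$, applying the two facts above in turn. For the second, $\ff^*$ is again simple by Lemma~\ref{L:Simple}$(i)$, so Definition~\ref{D:Phi} applies to it and gives $\f(\ff^*)=(\ff^*)^{**}=\ff^{***}$, while $\f(\ff)^*=(\ff^{**})^*=\ff^{***}$; the two sides agree. I expect no genuine obstacle: the content is purely the uniqueness of complements in Definition~\ref{D:Simple} together with left-cancellativity~$\LGGi$, and the only care needed is the bookkeeping of sources and targets ensuring that each successive complement~$\ff^*$, $\ff^{**}$, $\ff^{***}$ is itself simple, which is Lemma~\ref{L:Simple}$(i)$ applied repeatedly.
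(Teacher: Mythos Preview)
Your argument is correct, and it differs from the paper's proof in an interesting way. The paper establishes the first equality by applying the natural transformation relation~\eqref{E:Phi} (from Lemma~\ref{L:Basic}) to~$\D(\xx)$ and left-cancelling~$\D(\xx)$; for the second equality it uses that $\f$ is a functor (Lemma~\ref{L:Basic}$(iii)$), applying $\f$ to the relation $\ff\ff^*=\D(\xx)$ to obtain $\f(\ff)\f(\ff^*)=\f(\D(\xx))=\D(\f(\xx))$ and then left-cancelling~$\f(\ff)$. Your route bypasses Lemma~\ref{L:Basic} entirely: you work purely with Definition~\ref{D:Phi} on simple morphisms, together with the two elementary identities $(\id\yy)^*=\D(\yy)$ and $\D(\yy)^*=\id{\f(\yy)}$, and then both claims reduce to iterating~$*$. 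The second equality in particular becomes the tautology $\ff^{***}=\ff^{***}$. Your approach is slightly more economical in that it does not invoke the extension of~$\f$ to a functor on arbitrary morphisms; the paper's approach, on the other hand, illustrates how the natural transformation property~\eqref{E:Phi} is the engine behind these compatibilities.
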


\begin{proof}
By definition, the source of~$\D(\xx)$ is~$\xx$ and its target is~$\f(\xx)$, hence
applying \eqref{E:Phi} with $\ff = \D(\xx)$ yields
$\D(\xx) \D(\f(\xx)) = \D(\xx) \f(\D(\xx))$, hence $\D(\f(\xx)) = \f(\D(\xx))$ after
left-cancelling~$\D(\xx)$.

On the other hand, let $\xx = \source\ff$. Then we have $\ff \ff^* = \D(\xx)$, and
$\source{(\f(\ff))} = \f(\xx)$. Applying~$\f$ and  the above relation, we find
$$\f(\ff) \f(\ff^*) = \f(\D(\xx)) = \D(\f(\xx)) = \D(\source{(\f(\ff))}) = \f(\ff) \f(\ff)^*.$$
Left-cancelling~$\f(\ff)$ yields $\f(\ff^*) = \f(\ff)^*$.
\end{proof}

\begin{rema}
\label{R:Equivalence}
We can now see that Definition~\ref{D:GarCat} is equivalent to Definition~2.10 of~\cite{DiM}: the only
difference is that, in the latter, the functor~$\f$ is part of the definition. Lemma~\ref{L:Basic}$(iv)$ shows
that a left-Garside category in our sense is a left-Garside category in the sense of~\cite{DiM}. Conversely, the
hypothesis that $\f$ and~$\D$ satisfy~\eqref{E:Phi} implies that, for $\ff : \xx \to \yy$, we have $\D(\xx)
\f(\yy) =  \ff \D(\yy)$, whence $\D(\xx)
\dive \ff \D(\yy)$ and $\ff^* \f(\yy) = \D(\yy)$, which, by~$\LGi$,
implies $\f(\ff) = \ff^{**}$. So every left-Garside category in the sense of~\cite{DiM} is a left-Garside category
in the sense of Definition~\ref{D:GarCat}.
\end{rema}

\subsection{The case of a locally left-Garside monoid}

We now consider the particular case of a category associated with a partial action of a
monoid~$\MM$.

\begin{lemm}
\label{L:Phi}
Assume that $\MM$ is a locally left-Garside monoid with left-Garside sequence $(\D_\xx)_{\xx\in\XX}$. Then
$\D_\xx \dive \aa\D_{\xx\act\aa}$ holds whenever $\xx \act
\aa$ is defined, and, defining $\f_\xx(\aa)$ by $\D_\xx \f_\xx(\aa) = \aa \D_{\xx\act\aa}$, we
have
\begin{equation}
\f(\xx) = \xx \act \D_\xx, \quad
\f((\xx, \aa, \yy)) = (\f(\xx), \f_\xx(\aa), \f(\yy)).
\end{equation}
\end{lemm}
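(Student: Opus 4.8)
The plan is to prove the two assertions of Lemma~\ref{L:Phi} separately, working inside the monoid~$\MM$ and then transferring everything to the associated category~$\CMX$ via Proposition~\ref{P:Criterion}, which tells us that $\CMX$ is left-Garside with left-Garside map $\D(\xx) = (\xx, \D_\xx, \xx\act\D_\xx)$. For the first assertion, namely $\D_\xx \dive \aa\D_{\xx\act\aa}$ whenever $\xx\act\aa$ is defined, I would first observe that, for a \emph{simple} $\aa$ (that is, $\aa\dive\D_\xx$), this is exactly the second half of Condition~$\LGloc$ from Definition~\ref{D:LocGarside}. The content is therefore to upgrade this from simple $\aa$ to arbitrary $\aa$ in~$\MM_\xx$.

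\medskip

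First I would handle the general case by decomposing~$\aa$ into simple pieces. Since $\aa$ lies in~$\MM_\xx$ and $\Div(\D_\xx)$ left-generates~$\MM_\xx$ by~$\LGloc$, repeated extraction of simple left-divisors (exactly as in the proof of Lemma~\ref{L:LeftGenerate}, using that $\LG$ holds in~$\MM$) writes $\aa = \aa_1 \cdots \aa_\dd$ with each $\aa_\ii$ a nontrivial left-divisor of $\D_{\xx_{\ii-1}}$, where $\xx_0 = \xx$ and $\xx_\ii = \xx_{\ii-1}\act\aa_\ii$; condition $(ii)$ of Definition~\ref{D:Action} guarantees each $\xx_\ii = \xx\act(\aa_1\cdots\aa_\ii)$ is defined. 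Now I would proceed by induction on~$\dd$. For $\dd=1$ the claim is the simple case above. For the inductive step, the cleanest tool is the category translation: in $\CMX$ the functor~$\f$ exists by Lemma~\ref{L:Basic}$(iii)$, and relation~\eqref{E:Phi} of Lemma~\ref{L:Basic}$(iv)$ applied to the morphism $(\xx, \aa, \xx\act\aa)$ gives $(\xx, \aa, \xx\act\aa)\,\D(\xx\act\aa) = \D(\xx)\,\f((\xx,\aa,\xx\act\aa))$. Reading off the middle coordinates and using the definition $\D(\yy) = (\yy, \D_\yy, \yy\act\D_\yy)$ yields precisely $\aa\,\D_{\xx\act\aa} = \D_\xx\,\gg$ for some $\gg\in\MM$, which is the divisibility $\D_\xx \dive \aa\D_{\xx\act\aa}$ we want.

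\medskip

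That observation in fact dispatches both parts at once, so the better structure is to prove the \emph{second} displayed formula first and deduce the first from it. For the second formula, I would note that $\D((\xx,\aa,\xx\act\aa))$ is well-defined by Lemma~\ref{L:Basic}$(iii)$ because $(\xx,\aa,\xx\act\aa)$ is an arbitrary morphism of~$\CMX$ (its decomposition into simples exists by part~$(ii)$ of that lemma). Writing $\f((\xx,\aa,\xx\act\aa)) = (\uu, \vv, \ww)$, the naturality square~\eqref{E:Phi} forces the source and target: $\source{\f((\xx,\aa,\xx\act\aa))} = \f(\source{(\xx,\aa,\xx\act\aa)}) = \f(\xx)$ since $\f$ is a functor, and $\f(\xx) = \target{\D(\xx)} = \xx\act\D_\xx$ by Definition~\ref{D:Phi} together with the formula for~$\D$. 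Symmetrically the target is $\f(\xx\act\aa) = (\xx\act\aa)\act\D_{\xx\act\aa}$. Comparing middle coordinates in~\eqref{E:Phi} gives $\D_\xx\,\vv = \aa\,\D_{\xx\act\aa}$, so $\vv$ is exactly the element $\f_\xx(\aa)$ defined by the relation $\D_\xx \f_\xx(\aa) = \aa\D_{\xx\act\aa}$; left-cancellativity~$\LGi$ makes this $\vv$ unique, completing the identification $\f((\xx,\aa,\yy)) = (\f(\xx), \f_\xx(\aa), \f(\yy))$.

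\medskip

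The main obstacle I anticipate is mostly bookkeeping rather than a conceptual difficulty: one must be careful that $\f_\xx(\aa)$ is genuinely well-defined, i.e.\ that $\D_\xx$ really does left-divide $\aa\D_{\xx\act\aa}$ (so that a quotient exists) and that the quotient is unique. Uniqueness is immediate from~$\LGGi$/$\LGi$, and existence is exactly the first assertion of the lemma, which is why the divisibility statement and the formula for~$\f$ are genuinely two faces of the same computation and should be proved together through~\eqref{E:Phi}. The only point needing a little care is that~\eqref{E:Phi} is stated for morphisms of the category, so one must translate back and forth between $\aa\in\MM_\xx$ and the morphism $(\xx,\aa,\xx\act\aa)$, checking at each step that the relevant partial-action expressions are defined—this is guaranteed throughout by clause~$(ii)$ of Definition~\ref{D:Action}, since $\xx\act\aa$ being defined propagates to $\xx\act\aa\D_{\xx\act\aa}$ once $(\xx\act\aa)\act\D_{\xx\act\aa}$ is known to be defined, which is part of the hypothesis that $(\D_\xx)_{\xx\in\XX}$ is a left-Garside sequence.
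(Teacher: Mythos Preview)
Your proposal is correct and follows essentially the same route as the paper: decompose the morphism $(\xx,\aa,\xx\act\aa)$ into simples via Lemma~\ref{L:Basic}$(ii)$, use~$\LGloc$ on each simple factor, and then match against the natural-transformation relation~\eqref{E:Phi} to identify the middle coordinate of~$\f((\xx,\aa,\yy))$ with~$\f_\xx(\aa)$. The only organizational difference is that the paper writes out the chain $\D_\xx \dive \aa_1\D_{\xx_1} \dive \aa_1\aa_2\D_{\xx_2} \dive \cdots \dive \aa\D_{\xx\act\aa}$ explicitly in~$\MM$ to obtain the divisibility first, whereas you read it off directly from~\eqref{E:Phi}; since the proof of~\eqref{E:Phi} in Lemma~\ref{L:Basic} is precisely that same chain done categorically, the two arguments are the same computation.
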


\begin{proof}
Assume that $\xx \act \aa$ is defined. By Lemma~\ref{L:Basic}$(ii)$, the morphism~$(\xx, \aa, \xx\act\aa)$
of~$\CMX$ can be decomposed into a finite product of simple morphisms $(\xx_0, \aa_1, \xx_1)$, ..., ,
$(\xx_{\dd-1}, \aa_\dd, \xx_\dd)$. This implies $\aa = \aa_1 ... \aa_\dd$ in~$\MM$. The hypothesis that
each morphism $(\xx_{\ii-1}, \aa_\ii, \xx_\ii)$ is simple implies $\D_{\xx_{\ii-1}} \dive \aa_\ii \D_{\xx_\ii}$
for each~$\ii$, whence
$$\D_\xx \dive \aa_1 \D_{\xx_1} \dive \aa_1\aa_2 \D_{\xx_2}
\dive ... \dive \aa_1...\aa_\dd \D_{\xx_\dd} = \aa \D_{\xx \act \aa}.$$
Hence, for each element~$\aa$ in~$\MM_\xx$, there exists a unique element~$\aa'$ satisfying $\D_\xx \aa' =
\aa \D_{\xx\act\aa}$, and this is the element we define to be~$\f_\xx(\aa)$.

Then, $\xx \act \aa = \yy$ implies
$$(\xx, \aa, \yy)(\yy, \D_\yy, \f(\yy)) = (\xx, \D_\xx, \f(\xx))(\f(\xx), \f_\xx(\aa),
\f(\yy)).$$
By uniqueness, we deduce $\f((\xx, \aa, \yy)) = (\f(\xx), \f_\xx(\aa), \f(\yy))$.
\end{proof}

\subsection{Greatest common divisors}
\label{S:Gcd}

We observe---or rather recall---that left-gcd's always exist in a left-preGarside category. We begin with a
standard consequence of the noetherianity assumption~$\LG$.

\begin{lemm}
\label{L:Head}
Assume that $\CCC$ is a left-preGarside category  and $\SSS$ is a subset of $\Hom(\CCC)$ that contains
the identity-morphisms and is closed under right-lcm. Then every morphism has a unique
maximal left-divisor that lies in~$\SSS$.
\end{lemm}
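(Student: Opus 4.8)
The plan is to fix a morphism~$\ff$ and work inside the set $D = \Div(\ff) \cap \SSS$ of those left-divisors of~$\ff$ that lie in~$\SSS$, partially ordered by~$\dive$; I will show that $D$ has a greatest element, which is then the unique $\dive$-maximal left-divisor of~$\ff$ in~$\SSS$. First note that $D$ is nonempty: since $\SSS$ contains the identity morphisms and $\id{\source\ff} \dive \ff$, we have $\id{\source\ff} \in D$.

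The key structural point is that $D$ is directed, in fact closed under right-lcm. If $\ss, \tt \in D$, then both left-divide~$\ff$, so $\ff$ is a common right-multiple of~$\ss$ and~$\tt$; by~$\LGGii$ they admit a right-lcm~$\lcm(\ss, \tt)$, which, being the least common right-multiple, satisfies $\lcm(\ss, \tt) \dive \ff$ and hence still lies in~$\Div(\ff)$, and it lies in~$\SSS$ because $\SSS$ is closed under right-lcm. Thus $\lcm(\ss, \tt) \in D$ dominates both $\ss$ and~$\tt$ inside~$D$.

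With directedness secured, I would extract a greatest element using the noetherianity axiom~$\LGG$. Suppose for contradiction that $D$ has no greatest element. Set $\ss_0 = \id{\source\ff}$. Whenever $\ss_\ii$ fails to be greatest in~$D$, there is some $\tt \in D$ with $\tt \not\dive \ss_\ii$; putting $\ss_{\ii+1} = \lcm(\ss_\ii, \tt)$ produces an element of~$D$ with $\ss_\ii \dive \ss_{\ii+1}$, and this is proper, since $\ss_\ii = \ss_{\ii+1}$ would force $\tt \dive \ss_\ii$. Iterating yields an infinite $\div$-ascending sequence $\ss_0 \div \ss_1 \div \cdots$ inside~$\Div(\ff)$, contradicting~$\LGG$. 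Hence $D$ has a greatest element~$\ss$, which is automatically its unique $\dive$-maximal element; this settles both existence and uniqueness at once.

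The main obstacle—really the only delicate point—is checking that the right-lcm of two elements of~$D$ stays in~$D$: one must verify simultaneously that the common right-multiple~$\ff$ supplies existence of the right-lcm through~$\LGGii$, that this right-lcm again left-divides~$\ff$, and that closure of~$\SSS$ keeps it in~$\SSS$. Once this closure is in place, the passage from a directed poset to a greatest element is a standard application of~$\LGG$, the only thing to watch being that each step of the constructed chain is a \emph{proper} $\div$-extension, so that the noetherianity axiom genuinely applies.
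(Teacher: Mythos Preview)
Your proof is correct and uses the same ingredients as the paper's argument: the noetherianity axiom~$\LGG$ to bound ascending chains in~$\Div(\ff)$, and closure of~$\SSS$ under right-lcm to control the set $D = \Div(\ff)\cap\SSS$. The only organizational difference is that the paper first produces a \emph{maximal} element of~$D$ by a plain ascending-chain argument (not invoking lcm's at that stage) and then proves uniqueness separately via the right-lcm, whereas you use the lcm closure from the outset to show~$D$ is directed and extract a \emph{greatest} element in one step; this is a matter of packaging rather than a genuinely different route.
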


\begin{proof}
Let $\ff$ be an arbitrary morphism. Starting from $\ff_0 = \id{\source\ff}$, which belongs to~$\SSS$ by
hypothesis, we construct a
$\div$-increasing sequence~$\ff_0, \ff_1, ...$ in~$\SSS \cap \Div(\ff)$. As long as $\ff_\ii$ is not
$\dive$-maximal in~$\SSS
\cap \Div(\ff)$, we can find~$\ff_{\ii+1}$ in~$\SS$ satisfying $\ff_\ii \div
\ff_{\ii+1} \dive \ff$. Condition~$\LGG$ implies that the construction stops after a finite number~$\dd$ of
steps. Then $\ff_\dd$ is a maximal left-divisor of~$\ff$ lying in~$\SSS$. 

As for uniqueness, assume that $\gg'$ and $\gg''$ are maximal left-divisors of~$\ff$ that lie in~$\SSS$. By
construction, $\gg'$ and $\gg''$ admit a common right-multiple, namely~$\ff$, hence, by~$\LGGii$, they
admit a right-lcm~$\gg$. By construction, $\gg$ is a left-divisor of~$\ff$, and it belongs to~$\SSS$ since $\gg'$
and $\gg''$ do. The maximality of~$\gg$ and~$\gg'$ implies $\gg' = \gg = \gg''$.
\end{proof}

\begin{prop}
\label{P:Gcd}
Assume that $\CCC$ is a left-preGarside category. Then any two morphisms of~$\CCC$ sharing a common
source admit a unique left-gcd.
\end{prop}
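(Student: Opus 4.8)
The plan is to realize the left-gcd of two morphisms $\ff,\gg$ sharing a common source $\xx$ as a $\dive$-\emph{maximal} common left-divisor, produced by Lemma~\ref{L:Head}, and then to upgrade maximality to a genuine greatest-lower-bound property. First I would set
\[
\SSS = \Div(\gg) \cup \{\,\id\yy \mid \yy \in \Obj(\CCC)\,\},
\]
so that $\SSS$ records exactly the left-divisors of~$\gg$ while also containing every identity morphism. Applying Lemma~\ref{L:Head} to~$\ff$ then yields a unique $\dive$-maximal left-divisor~$\dd$ of~$\ff$ lying in~$\SSS$, and this~$\dd$ is the candidate for $\gcd(\ff,\gg)$.

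To invoke Lemma~\ref{L:Head} I must check that $\SSS$ contains the identity morphisms---true by construction---and is closed under right-lcm. So suppose $\aa,\bb \in \SSS$ admit a common right-multiple. If both lie in $\Div(\gg)$, then $\gg$ itself is a common right-multiple, so by~$\LGGii$ their right-lcm~$\cc$ exists; as $\cc$ is the $\dive$-least upper bound and $\gg$ is an upper bound, $\cc \dive \gg$, whence $\cc \in \Div(\gg) \subseteq \SSS$. If one of $\aa,\bb$ is an identity, a common right-multiple forces the sources to coincide, and the right-lcm is simply the other morphism (or the shared identity), again in~$\SSS$. Thus $\SSS$ is closed under right-lcm and Lemma~\ref{L:Head} applies. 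Since $\dd \dive \ff$ forces $\source\dd = \xx$, and the only identity with source~$\xx$ is $\id\xx \in \Div(\gg)$, in all cases $\dd \in \Div(\gg)$; hence $\dd \dive \ff$ and $\dd \dive \gg$, so $\dd$ is a common left-divisor of~$\ff$ and~$\gg$.

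The step I expect to be the crux is showing that this maximal common divisor is actually the \emph{greatest} one, which is what a gcd requires. Here I would argue: let $\ee$ be any common left-divisor of~$\ff$ and~$\gg$. Then $\ee \in \Div(\gg) \cap \Div(\ff) = \SSS \cap \Div(\ff)$, so $\ee$ and~$\dd$ are both left-divisors of~$\ff$ and in particular share the common right-multiple~$\ff$; by~$\LGGii$ their right-lcm exists, lies again in $\SSS \cap \Div(\ff)$, and $\dive$-dominates both~$\ee$ and~$\dd$. Maximality of~$\dd$ forces this right-lcm to equal~$\dd$, whence $\ee \dive \dd$. Thus $\dd$ is a lower bound of $\{\ff,\gg\}$ that dominates every common lower bound, \ie~a left-gcd. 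Uniqueness is then automatic: the relation~$\dive$ is antisymmetric, since $\aa \dive \bb \dive \aa$ would, after left-cancelling via~$\LGGi$, make some cofactor left-invertible, and the argument of Lemma~\ref{L:Basic}$(i)$---which uses only~$\LGG$---forces that cofactor to be an identity, so $\aa = \bb$; hence a greatest lower bound, once it exists, is unique. The only genuine subtlety is this promotion of ``maximal'' to ``greatest'', for which the local-right-lcm axiom~$\LGGii$ is precisely the tool that lets two common divisors be amalgamated without leaving the relevant divisor set.
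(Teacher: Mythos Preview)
Your proof is correct and follows the same approach as the paper's: both apply Lemma~\ref{L:Head} to the family of common left-divisors of~$\ff$ and~$\gg$ and use closure under right-lcm to see that the resulting maximal common divisor is in fact greatest. The paper's version is terser---it takes $\SSS = \Div(\ff)\cap\Div(\gg)$ directly and leaves the maximal-to-greatest promotion implicit---while you spell out the hypotheses of Lemma~\ref{L:Head} and that promotion step more carefully, but the substance is identical.
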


\begin{proof}
Let $\SSS$ be the family of all common left-divisors of~$\ff$ and~$\gg$. It
contains~$\id{\source\ff}$, and it is closed under~$\lcm$. A left-gcd of~$\ff$ and~$\gg$
is a maximal left-divisor of~$\ff$ lying in~$\SSS$. Lemma~\ref{L:Head} gives the result.
\end{proof}

\subsection{Least common multiples}

As for right-lcm, the axioms of left-Garside categories only demand that a right-lcm exists when a common
right-multiple does. A necessary condition for such a common right-multiple to exist is to share a common
source. This condition is also sufficient. Again we begin with an auxiliary result.

\begin{lemm}
\label{L:Lcm}
Assume that $\CCC$ is a left-Garside category. Then, for $\ff = \ff_1 ... \ff_\dd$ with $\ff_1, ..., \ff_\dd$
simple and
$\xx = \source{\ff}$, we have 
\begin{equation}
\ff \dive \D(\xx) \, \D(\f(\xx))\,  ... \, \D(\f^{\dd-1}(\xx)).
\end{equation}
\end{lemm}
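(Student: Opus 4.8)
The plan is to argue by induction on~$\dd$. Throughout, write $R_\dd(\xx)$ for the right-hand side $\D(\xx)\,\D(\f(\xx))\cdots\D(\f^{\dd-1}(\xx))$, with the convention that $R_0(\xx) = \id\xx$; this is a morphism from~$\xx$ to~$\f^\dd(\xx)$, and it satisfies the recursion $R_\dd(\xx) = \D(\xx)\,R_{\dd-1}(\f(\xx))$. Note also that $\dive$ is preserved under left-multiplication (from $\aa\hh = \bb$ one gets $(\cc\aa)\hh = \cc\bb$) and is transitive, both of which I use freely. For the base case $\dd = 1$, the morphism $\ff = \ff_1$ is simple, so by definition $\ff \dive \D(\xx) = R_1(\xx)$, which is the claim.

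For the inductive step, assume $\dd \ge 2$ and that the statement holds for products of $\dd-1$ simple morphisms, for every source object. Factor $\ff = \ff_1\,\gg$, where $\gg = \ff_2\cdots\ff_\dd$ is a product of $\dd-1$ simple morphisms whose source is $\yy := \target{\ff_1}$. The induction hypothesis applied to~$\gg$ gives $\gg \dive R_{\dd-1}(\yy)$, whence $\ff = \ff_1\,\gg \dive \ff_1\,R_{\dd-1}(\yy)$. By transitivity it therefore suffices to establish the inequality $\ff_1\,R_{\dd-1}(\yy) \dive R_\dd(\xx)$.

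This last inequality is the crux, and it is handled by pushing~$\ff_1$ through the leading Garside morphism using the naturality relation~\eqref{E:Phi}. Using the recursion, $\ff_1\,R_{\dd-1}(\yy) = \ff_1\,\D(\yy)\,R_{\dd-2}(\f(\yy))$, and \eqref{E:Phi} applied to $\ff_1 : \xx \to \yy$ gives $\ff_1\,\D(\yy) = \D(\xx)\,\f(\ff_1)$, so that $\ff_1\,R_{\dd-1}(\yy) = \D(\xx)\,\bigl(\f(\ff_1)\,R_{\dd-2}(\f(\yy))\bigr)$. Now the bracketed morphism is a product of $\f(\ff_1)$ with the $\dd-2$ factors of $R_{\dd-2}(\f(\yy))$; here $\f(\ff_1)$ is simple by Lemma~\ref{L:Simple}$(i)$ and has source~$\f(\xx)$ and target~$\f(\yy)$ by functoriality, while each $\D(\cdot)$ is trivially simple. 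Thus $\f(\ff_1)\,R_{\dd-2}(\f(\yy))$ is a product of $\dd-1$ simple morphisms with source~$\f(\xx)$, and a second application of the induction hypothesis (to the object~$\f(\xx)$) yields $\f(\ff_1)\,R_{\dd-2}(\f(\yy)) \dive R_{\dd-1}(\f(\xx))$. Multiplying on the left by~$\D(\xx)$ and using the recursion once more gives $\ff_1\,R_{\dd-1}(\yy) \dive \D(\xx)\,R_{\dd-1}(\f(\xx)) = R_\dd(\xx)$, which completes the induction.

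The only genuine obstacle is the reindexing step: after moving~$\ff_1$ across $\D(\xx)$ via~\eqref{E:Phi}, the functor~$\f$ shifts the entire tail, and one must recognize that prepending $\f(\ff_1)$ to the $\f$-shifted chain again produces a length-$(\dd-1)$ product of simple morphisms based at~$\f(\xx)$, so that the induction hypothesis reapplies. Everything else is bookkeeping of sources and targets together with the monotonicity and transitivity of~$\dive$; the two facts that need citing are the naturality identity~\eqref{E:Phi} and the closure of simplicity under~$\f$ from Lemma~\ref{L:Simple}.
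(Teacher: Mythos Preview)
Your proof is correct. It follows the same inductive skeleton as the paper's argument: factor $\ff = \ff_1\,(\ff_2\cdots\ff_\dd)$, apply the induction hypothesis to the tail, and use the naturality relation~\eqref{E:Phi} to trade $\ff_1\,\D(\yy)$ for $\D(\xx)\,\f(\ff_1)$.

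The one organizational difference worth noting is how the rest of the argument is finished. The paper applies~\eqref{E:Phi} a total of $\dd-1$ times to push $\ff_1$ all the way to the right, obtaining
\[
\ff_1\,\D(\yy)\,\D(\f(\yy))\cdots\D(\f^{\dd-2}(\yy)) \;=\; \D(\xx)\,\D(\f(\xx))\cdots\D(\f^{\dd-2}(\xx))\,\f^{\dd-1}(\ff_1),
\]
and then bounds the trailing $\f^{\dd-1}(\ff_1)$ by $\D(\f^{\dd-1}(\xx))$ using simplicity. You instead apply~\eqref{E:Phi} only once and then invoke the induction hypothesis a second time, on the product $\f(\ff_1)\,R_{\dd-2}(\f(\yy))$ of $\dd-1$ simple morphisms based at~$\f(\xx)$. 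Both routes are valid; yours hides the iterated commutation inside the recursion at the cost of a second appeal to the hypothesis, while the paper's makes the commutation explicit but uses the hypothesis only once.
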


\begin{proof}
We use induction on~$\dd$. For $\dd = 1$, this is the definition of simplicity. Assume $\dd \ge 2$. Put
$\yy = \target{\ff_1}$. Applying the induction hypothesis to $\ff_2 ... \ff_\dd$, we
find
\begin{align*}
\ff = \ff_1 (\ff_2 ... \ff_\dd)
&\dive \ff_1 \, \D(\yy) \, \D(\f(\yy)) \, ...\, \D(\f^{\dd-2}(\yy))\\
&= \D(\xx) \, \D(\f(\xx)) \, ...\, \D(\f^{\dd-2}(\xx)) \, \f^{\dd-1}(\ff_1)\\
&\dive \D(\xx) \, \D(\f(\xx)) \, ...\, \D(\f^{\dd-2}(\xx)) \, \D(\f^{\dd-1}(\xx)).
\end{align*}
The second equality comes from applying~\eqref{E:Phi} $\dd-1$ times, and the last inequality comes from
the fact that $\f^{\dd-1}(\ff_1)$ is simple with source~$\f^{\dd-1}(\xx)$.
\end{proof}

\begin{prop}
\label{P:Lcm}
Assume that $\CCC$ is a left-Garside category. Then any two morphisms of~$\CCC$ sharing a common
source admit a unique right-lcm.
\end{prop}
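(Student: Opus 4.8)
The plan is to reduce the statement to axiom~$\LGGii$, which only guarantees a right-lcm when a common right-multiple already exists. The whole content therefore lies in exhibiting, for any two morphisms~$\ff, \gg$ with common source~$\xx = \source\ff = \source\gg$, at least one common right-multiple; Lemma~\ref{L:Lcm} is tailor-made for exactly this purpose.

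First I would apply Lemma~\ref{L:Basic}$(ii)$ to write $\ff = \ff_1 \Ldots \ff_\pp$ and $\gg = \gg_1 \Ldots \gg_\qq$ as products of simple morphisms, and set $\nn = \max(\pp, \qq)$. Consider the composite $\D(\xx) \, \D(\f(\xx)) \Ldots \D(\f^{\nn-1}(\xx))$; this is a legitimate morphism because $\target{(\D(\f^\ii(\xx)))} = \f^{\ii+1}(\xx) = \source{(\D(\f^{\ii+1}(\xx)))}$, by the definition of~$\f$ on objects. By Lemma~\ref{L:Lcm} we have $\ff \dive \D(\xx) \Ldots \D(\f^{\pp-1}(\xx))$, and since $\pp \le \nn$ the right-hand side is itself a left-divisor of the full $\nn$-fold product (the latter being obtained by right-multiplying by $\D(\f^\pp(\xx)) \Ldots \D(\f^{\nn-1}(\xx))$). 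By transitivity of~$\dive$, $\ff$ left-divides $\D(\xx) \Ldots \D(\f^{\nn-1}(\xx))$, and the same argument applies verbatim to~$\gg$. Hence $\ff$ and $\gg$ share this explicit common right-multiple, and $\LGGii$ now produces a right-lcm.

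For uniqueness I would argue that two right-lcm's $\dd, \dd'$ must each be a $\dive$-least upper bound, hence left-divide one another: $\dd \uu = \dd'$ and $\dd' \vv = \dd$ for suitable morphisms $\uu, \vv$. Then $\dd \, \uu\vv = \dd = \dd \, \id{\target\dd}$, and left-cancelling~$\dd$ by~$\LGGi$ gives $\uu\vv = \id{\target\dd}$; thus $\uu$ is left-invertible, so $\uu = \id{\target\dd}$ by Lemma~\ref{L:Basic}$(i)$, forcing $\dd = \dd'$.

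I do not expect any serious obstacle here: once the common right-multiple is in hand, both existence and uniqueness are formal. The only point needing care is the bookkeeping that the factors $\D(\f^\ii(\xx))$ genuinely compose and that padding the shorter product up to length~$\nn$ preserves left-divisibility, but this is routine given the definition of~$\f$ and Lemma~\ref{L:Lcm}.
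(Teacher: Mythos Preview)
Your proof is correct and follows exactly the same line as the paper's: exhibit the common right-multiple $\D(\xx)\,\D(\f(\xx))\cdots\D(\f^{\nn-1}(\xx))$ via Lemma~\ref{L:Lcm}, invoke~$\LGGii$ for existence, and deduce uniqueness from Lemma~\ref{L:Basic}$(i)$. The only quibble is terminological: from $\uu\vv = \id{\target\dd}$ the morphism~$\uu$ is \emph{right}-invertible rather than left-invertible, but Lemma~\ref{L:Basic}$(i)$ covers both cases, so nothing is lost.
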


\begin{proof}
Let $\ff, \gg$ be any two morphisms with source~$\xx$. By
Lem\-ma~\ref{L:Basic}, there exists~$\dd$ such that $\ff$ and $\gg$ can be expressed as the product of at
most $\dd$ simple morphisms. Then, by Lemma~\ref{L:Lcm}, $\D(\xx) \, \D(\f(\xx))\,  ... \,
\D(\f^{\dd-1}(\xx))$ is a common right-multiple of~$\ff$ and~$\gg$. Finally, $\LGGii$ implies that $\ff$ and
$\gg$ admit a right-lcm. The uniqueness of the latter is guaranteed by Lem\-ma~\ref{L:Basic}$(i)$.
\end{proof}

In a general context of categories, right-lcm's are usually called \emph{push-outs} (wher\-eas
left-lcm's are called \emph{pull-backs}). So Proposition~\ref{P:Lcm} states that every left-Garside category
admits pushouts.

Applying the previous results to the special case of categories associated with a partial action gives analogous
results for all locally left-Garside monoids.

\begin{coro}
\label{C:Lcm}
Assume that $\MM$ is a locally left-Garside monoid with respect to some partial action of~$\MM$ on~$\XX$.

$(i)$ Any two elements of~$\MM$ admit  a unique left-gcd and a unique right-lcm. 

$(ii)$ For each~$\xx$ in~$\XX$, the
subset~$\MM_\xx$ of~$\MM$ is closed under right-lcm.
\end{coro}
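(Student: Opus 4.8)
The plan is to transfer everything to the associated category~$\CMX$, which is left-Garside by Proposition~\ref{P:Criterion}, and then read the two monoid assertions off the corresponding categorical facts (Propositions~\ref{P:Gcd} and~\ref{P:Lcm}). First I would fix two elements $\aa, \bb$ of~$\MM$ and, invoking condition~$(iii)$ of Definition~\ref{D:Action} for the finite set $\{\aa, \bb\}$, choose an object~$\xx$ with $\aa, \bb \in \MM_\xx$, so that $(\xx, \aa, \xx\act\aa)$ and $(\xx, \bb, \xx\act\bb)$ are morphisms of~$\CMX$ sharing the source~$\xx$. The key bookkeeping device, which I would isolate as a preliminary observation, is that for $\cc, \cc'$ in~$\MM_\xx$ one has $(\xx, \cc, \xx\act\cc) \dive (\xx, \cc', \xx\act\cc')$ if and only if $\cc \dive \cc'$ in~$\MM$: the forward direction is immediate from the definition of composition, while the backward direction lifts a factorization $\cc' = \cc\gg$ to a genuine morphism using the defined-ness rule $\aa\bb \in \MM_\xx \Leftrightarrow (\aa \in \MM_\xx \ \&\ \bb \in \MM_{\xx\act\aa})$ recorded after Definition~\ref{D:Action}.

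With this correspondence in hand the left-gcd is easy. Applying Proposition~\ref{P:Gcd} to the two morphisms yields a unique categorical left-gcd $(\xx, \cc_0, \xx\act\cc_0)$. Since the above rule also shows that \emph{every} common left-divisor of~$\aa$ and~$\bb$ in~$\MM$ automatically lies in~$\MM_\xx$ (again apply the defined-ness rule to $\aa = \cc\gg$), the bijection $\cc \mapsto (\xx, \cc, \xx\act\cc)$ identifies common left-divisors in~$\MM$ with common left-divisors of the two morphisms; hence $\cc_0 = \gcd(\aa,\bb)$, and uniqueness follows from the antisymmetry of~$\dive$ on~$\MM$.

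The delicate point, and the part I expect to require the most care, is the right-lcm, because $\MM$ is only assumed to admit \emph{local} right-lcm's through~$\LGii$. Proposition~\ref{P:Lcm} furnishes a categorical right-lcm $(\xx, \dd, \xx\act\dd)$; projecting, $\dd$ is a common right-multiple of~$\aa$ and~$\bb$ in~$\MM$, so now~$\LGii$ finally \emph{applies} and provides an honest right-lcm~$\dd'$ of~$\aa$ and~$\bb$ in~$\MM$, with $\dd' \dive \dd$. To prove $\dd' = \dd$ I would note that $\dd' \dive \dd$ together with the defined-ness rule forces $\xx\act\dd'$ to be defined, so $(\xx, \dd', \xx\act\dd')$ is a genuine common right-multiple of the two morphisms over~$\xx$; minimality of the categorical right-lcm then gives $\dd \dive \dd'$, whence $\dd = \dd'$ by antisymmetry. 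Thus $\dd$ is the right-lcm of~$\aa$ and~$\bb$ in~$\MM$, and it exists \emph{unconditionally} precisely because the Garside maps along the action always supply a common right-multiple (this is what Proposition~\ref{P:Lcm} exploits); this completes~$(i)$.

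Finally, for~$(ii)$ I would run the same construction but starting from an~$\xx$ for which $\aa, \bb \in \MM_\xx$ is given rather than chosen: the categorical right-lcm $(\xx, \dd, \xx\act\dd)$ has $\xx\act\dd$ defined by construction, so the element $\dd$, which we have just seen is the right-lcm of~$\aa$ and~$\bb$ in~$\MM$, lies in~$\MM_\xx$. Hence $\MM_\xx$ is closed under right-lcm. The only subtlety to watch throughout is to keep the monoid-level divisibility and the category-level divisibility scrupulously in step via the defined-ness rule, since that is exactly what converts the purely local hypothesis~$\LGii$ into the global existence of right-lcm's.
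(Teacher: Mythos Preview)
Your proof is correct and, for the right-lcm parts~$(i)$ and~$(ii)$, follows exactly the paper's route: pass to~$\CMX$, use Proposition~\ref{P:Lcm} to get a categorical right-lcm, project to obtain a common right-multiple in~$\MM$, and then invoke~$\LGii$. You are in fact more careful than the paper, which for~$(ii)$ simply asserts that the categorical right-lcm ``must be $(\xx, \cc, \xx\act\cc)$ when $\cc$ is the right-lcm of~$\aa$ and~$\bb$'' without spelling out the two-way divisibility argument $\dd' \dive \dd$ and $\dd \dive \dd'$ that you give.

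The one genuine difference is the left-gcd. The paper does not go through~$\CMX$ at all: it simply observes that $\MM$ is left-preGarside \emph{by definition} of a locally left-Garside monoid, so Proposition~\ref{P:Gcd} applies directly to~$\MM$ (viewed as a one-object category). This is shorter and avoids your bookkeeping about common left-divisors automatically lying in~$\MM_\xx$. Your detour is not wrong, but it is unnecessary here precisely because left-gcd's, unlike right-lcm's, require no existence input from the Garside map---they already exist in any left-preGarside structure.
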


\begin{proof}
$(i)$ As for left-gcd's, the result directly follows from Proposition~\ref{P:Gcd} since, by definition, $\MM$ is
left-preGarside.

As for right-lcm's, assume that $\MM$ is locally
left-Garside with left-Garside sequence $(\D_\xx)_{\xx\in\XX}$. Let $\aa, \bb$ be two elements of~$\MM$. By
definition of a partial action, there exists~$\xx$ in~$\XX$ such that both $\xx \act\aa$ and $\xx\act\bb$ are
defined. By Proposition~\ref{P:Lcm}, $(\xx, \aa, \xx\act\aa)$ and $(\xx, \bb, \xx\act\bb)$ admit a 
right-lcm $(\xx, \cc, \zz)$ in the category~$\CMX$. By construction, $\cc$ is a common right-multiple of~$\aa$
and~$\bb$ in~$\MM$. As $\MM$ is assumed to satisfy~$\LGii$, $\aa$ and $\bb$ admit a right-lcm
in~$\MM$. 

$(ii)$ Fix now~$\xx$ in~$\XX$, and let $\aa, \bb$ belong to~$\MM_\xx$, \ie, assume that $\xx\act\aa$ and
$\xx\act\bb$ are defined. Then $(\xx, \aa, \xx\act\aa)$ and $(\xx, \bb, \xx\act\bb)$ are morphisms
of~$\CMX$. As above, they admit a right-lcm, which must be $(\xx, \cc, \xx\act \cc)$ when $\cc$ is the
right-lcm of~$\aa$ and~$\bb$. Hence $\cc$ belongs to~$\MM_\xx$.
\end{proof}

\section{Regular left-Garside categories}
\label{S:Normal}

The main interest of Garside structures is the existence of a canonical normal forms, the so-called greedy
normal form~\cite{Eps}. In this section, we adapt the construction of the normal form to the context of
left-Garside categories---this was done in~\cite{DiM} already---and of locally left-Garside monoids.
The point here is that studying the computation of the normal form naturally leads to introducing
the notion of a regular left-Garside category, crucial in Section~\ref{S:Main}.

\subsection{The head of a morphism}

By Lemma~\ref{L:Basic}$(ii)$, every morphism in a left-Garside category is a product
of simple morphisms. The decomposition need not to be unique in general, and the first step for constructing
a normal form consists in isolating a particular simple morphism that left-divides the considered morphism.
It will be useful to develop the construction in a general framework where the distinguished morphisms need
not necessarily be the simple ones. 

\begin{nota}
We recall that, for $\ff, \gg$ in~$\Hom(\CCC)$, where $\CCC$ is a left-preGarside category, 
$\lcm(\ff, \gg)$ is the right-lcm of~$\ff$ and~$\gg$, when it exists. In this case, we denote
by~$\ff\under\gg$ the unique morphism that satisfies
\begin{equation}
\ff \cdot \ff\under\gg = \lcm(\ff,\gg).
\end{equation}
\end{nota}

We use a similar notation in the case of a (locally) left-Garside monoid. 

\begin{defi}
\label{D:Seed}
Assume that $\CCC$ is a left-preGarside category and $\SSS$ is included in~$\Hom(\CCC)$. We say
that $\SSS$ is a \emph{seed} for~$\CCC$ if 

$(i)$ $\SSS$ left-generates~$\Hom(\CCC)$,

$(ii)$ $\SSS$ is closed under the operations~$\lcm$ and~$\under$,

$(iii)$ $\SSS$ is closed under left-divisor.
\end{defi}

In other words, $\SSS$ is a seed for~$\CCC$ if $(i)$ every nontrivial morphism of~$\CCC$ is left-divisible by a
nontrivial element of~$\SSS$, $(ii)$ for all~$\ff,\gg$ in~$\SSS$, the morphisms
$\lcm(\ff,\gg)$ and $\ff\under\gg$ belong to~$\SSS$ whenever they exist, and $(iii)$ for each~$\ff$
in~$\SSS$, the relation $\hh \dive \ff$ implies $\hh \in \SSS$.

\begin{lemm}
If $\CCC$ is a left-Garside category, then $\Sim(\CCC)$ is a seed for~$\CCC$.
\end{lemm}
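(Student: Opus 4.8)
The plan is to verify the three defining conditions of a seed (Definition~\ref{D:Seed}) directly for $\SSS = \Sim(\CCC)$, relying on the lemmas already established for left-Garside categories.

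First I would check condition~$(i)$, that $\Sim(\CCC)$ left-generates $\Hom(\CCC)$. This is immediate: a simple morphism is by definition a left-divisor of $\D(\source{\ff})$, and the first part of Axiom~$\LGGiii$ states precisely that $\D(\xx)$ left-generates $\hom(\xx,-)$. So every nontrivial morphism with source~$\xx$ is left-divisible by a left-divisor of~$\D(\xx)$, i.e.\ by a nontrivial simple morphism with source~$\xx$.

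Next I would handle condition~$(iii)$, closure under left-divisor, which is nearly as quick: if $\gg \dive \ff$ and $\ff$ is simple, then $\ff \dive \D(\source\ff)$ and, since left-divisibility is transitive and $\source\gg = \source\ff$, we get $\gg \dive \D(\source\gg)$, so $\gg$ is simple. The main work is condition~$(ii)$, closure under $\lcm$ and under $\under$. For $\lcm$, suppose $\ff,\gg$ are simple morphisms sharing source~$\xx$ whose right-lcm $\lcm(\ff,\gg)$ exists; I want to show this right-lcm is again a left-divisor of~$\D(\xx)$. Since $\ff \dive \D(\xx)$ and $\gg \dive \D(\xx)$, the element $\D(\xx)$ is a common right-multiple of $\ff$ and $\gg$, so by the defining property of the least common right-multiple we have $\lcm(\ff,\gg) \dive \D(\xx)$; hence $\lcm(\ff,\gg)$ is simple. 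For the quotient $\ff \under \gg$, note that $\ff \cdot (\ff\under\gg) = \lcm(\ff,\gg)$, so $\ff\under\gg$ is a right-divisor of $\lcm(\ff,\gg)$, which in turn is a right-divisor of~$\D(\xx)$; thus $\ff\under\gg$ is a right-divisor of~$\D(\xx)$, and Lemma~\ref{L:Simple}$(ii)$ tells us every right-divisor of a morphism~$\D(\xx)$ is simple. Therefore $\ff\under\gg$ is simple as well.

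The step I expect to require the most care is closure under~$\under$, since it is the only place where one leaves the world of left-divisors of~$\D(\xx)$ and must argue via right-divisors; the essential input is Lemma~\ref{L:Simple}$(ii)$, and the only subtlety is verifying that $\ff\under\gg$ is genuinely a right-divisor of $\D(\xx)$ by composing the two divisibility relations $\lcm(\ff,\gg) \multe \D(\xx)$ and $(\ff\under\gg) \multe \lcm(\ff,\gg)$. All three conditions thus reduce to $\LGGiii$ together with Lemma~\ref{L:Simple}, and no serious obstacle arises.
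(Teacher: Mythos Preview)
Your treatment of conditions~$(i)$ and~$(iii)$, and of closure under~$\lcm$, matches the paper's proof exactly. The argument for closure under~$\under$, however, has a genuine gap.

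You write that $\ff\under\gg$ is a right-divisor of $\lcm(\ff,\gg)$, ``which in turn is a right-divisor of~$\D(\xx)$''. But what you established just before is $\lcm(\ff,\gg) \dive \D(\xx)$, i.e.\ $\lcm(\ff,\gg)$ is a \emph{left}-divisor of~$\D(\xx)$, not a right-divisor. And these two relations do not compose the way you want: from $\lcm(\ff,\gg) = \ff\,(\ff\under\gg)$ and $\D(\xx) = \lcm(\ff,\gg)\,\hh$ you get $\D(\xx) = \ff\,(\ff\under\gg)\,\hh$, which exhibits $(\ff\under\gg)\,\hh$ as a right-divisor of~$\D(\xx)$, not $\ff\under\gg$ itself. (In a free monoid, $y$ right-divides $xy$ and $xy$ left-divides $xyz$, yet $y$ does not right-divide~$xyz$.) Your use of the symbol~$\multe$ in the last paragraph repeats the same confusion.

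The paper's proof closes this gap with one extra step: since $(\ff\under\gg)\,\hh$ is a right-divisor of~$\D(\xx)$, Lemma~\ref{L:Simple}$(ii)$ makes $(\ff\under\gg)\,\hh$ simple; then $\ff\under\gg$, being a \emph{left}-divisor of the simple morphism $(\ff\under\gg)\,\hh$, is simple by transitivity of~$\dive$. So your overall strategy is the same as the paper's, but the final step needs this two-stage argument rather than a direct appeal to right-divisibility of~$\D(\xx)$.
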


\begin{proof}
First, $\Sim(\CCC)$ left-generates~$\Hom(\CCC)$ by Condition~$\LGGiii$.

Next, assume that $\ff, \gg$ are simple morphisms sharing the same source~$\xx$. By
Proposition~\ref{P:Lcm}, the morphisms $\lcm(\ff,\gg)$ and $\ff\under\gg$ exist.
By definition, we have $\ff \dive \D(\xx)$ and $\gg
\dive \D(\xx)$, hence $\lcm(\ff, \gg) \dive \D_\xx$. Hence $\lcm(\ff,\gg)$ is simple. Let $\hh$ satisfy
$\lcm(\ff,\gg)\hh = \D(\xx)$. This is also $\ff \,(\ff\under\gg)\,\hh = \D(\xx)$. By
Lemma~\ref{L:Simple}$(ii)$,
$(\ff\under\gg)\,\hh$, which is a right-divisor of~$\D(\xx)$, is simple, and, therefore, $\ff\under\gg$, which
is a left-divisor of $(\ff\under\gg)\,\hh$, is simple as well by transitivity of~$\dive$.

Finally, $\Sim(\CCC)$ is closed under left-divisor by definition.
\end{proof}

Lemma~\ref{L:Head} guarantees that, if $\SSS$ is a seed for~$\CCC$, then every morphism~$\ff$
of~$\CCC$ has a unique maximal left-divisor~$\gg$ lying in~$\SSS$, and Condition~$(i)$ of
Definition~\ref{D:Seed} implies that $\gg$ is nontrivial whenever $\ff$ is.

\begin{defi}
In the context above, the morphism~$\gg$ is called the
\emph{$\SSS$-head} of~$\ff$, denoted~$\Head\SSS\ff$.
\end{defi}

In the case of~$\Sim(\CCC)$, it is easy to check, for each~$\ff$ in~$\Hom(\CCC)$, the equality
\begin{equation}
\Head{\Sim(\CCC)}\ff = \gcd(\ff, \D(\source\ff));
\end{equation}
in this case, we shall simply write~$\head\ff$ for $\Head{\Sim(\CCC)}\ff$.

\subsection{Normal form}

The following result is an adaptation of a result that is classical in the framework
of Garside monoids.

\begin{prop}
\label{P:NF}
Assume that $\CCC$ is a left-preGarside category and $\SSS$ is a seed for~$\CCC$. 
Then every nontrivial morphism~$\ff$ of~$\CCC$ admits a unique decomposition
\begin{equation}
\label{E:Normal}
\ff = \ff_1 ...\ff_\dd,
\end{equation}
where $\ff_1, ..., \ff_\dd$ lie in~$\SSS$, $\ff_\dd$ is nontrivial, and
$\ff_\ii$ is the $\SSS$-head of~$\ff_\ii ... \ff_\dd$ for each~$\ii$.
\end{prop}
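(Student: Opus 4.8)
The plan is to prove existence and uniqueness of the greedy decomposition~\eqref{E:Normal} by the standard "strip off the head repeatedly" argument, adapted to the seed~$\SSS$. For existence, I would proceed by induction on the length of~$\ff$ as a product of elements of~$\SSS$; such a length exists because, by Condition~$(i)$ of Definition~\ref{D:Seed} together with Lemma~\ref{L:LeftGenerate}, $\SSS$ generates~$\Hom(\CCC)$, and $\LGG$ bounds the number of factors. Concretely, set $\ff_1 = \Head\SSS\ff$, which is well defined and nontrivial by Lemma~\ref{L:Head} and the remark following Definition~\ref{D:Seed}. Write $\ff = \ff_1 \gg$ by left-cancellativity~$\LGGi$; since $\ff_1$ is a proper left-divisor of~$\ff$ whenever $\gg$ is nontrivial, the morphism~$\gg$ has strictly smaller $\SSS$-length, so by induction it admits a decomposition $\gg = \ff_2 \cdots \ff_\dd$ of the required form. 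Concatenating gives $\ff = \ff_1 \cdots \ff_\dd$, and by construction each $\ff_\ii$ is the $\SSS$-head of $\ff_\ii \cdots \ff_\dd$ for $\ii \ge 2$; the case $\ii = 1$ holds because $\ff_1 = \Head\SSS\ff$.

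For uniqueness, I would first isolate the key local statement: in any decomposition $\ff = \ff_1 \cdots \ff_\dd$ satisfying the stated conditions, the first factor~$\ff_1$ is necessarily the $\SSS$-head of~$\ff$. This forces $\ff_1$ to be uniquely determined, whence $\gg = \ff_2 \cdots \ff_\dd$ is uniquely determined by left-cancellation, and an induction on $\SSS$-length finishes the argument. So the crux is showing that the leading factor of a valid decomposition maximizes among left-divisors of~$\ff$ lying in~$\SSS$.

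The main obstacle, and the step I would spend the most care on, is precisely this maximality of~$\ff_1$. The issue is that $\SSS$-head is defined as the $\dive$-maximal element of $\SSS \cap \Div(\ff)$, so I must rule out the existence of some $\hh \in \SSS$ with $\ff_1 \div \hh \dive \ff$. The standard trick is to consider $\lcm(\ff_1, \hh)$: since both lie in $\SSS \cap \Div(\ff)$, they share the source~$\source\ff$, so by Proposition~\ref{P:Lcm} (or~$\LGGii$, as both divide~$\ff$) their right-lcm exists and lies in~$\SSS$ by closure under~$\lcm$. Writing $\lcm(\ff_1, \hh) = \ff_1 \cdot (\ff_1 \under \hh)$, the factor $\ff_1 \under \hh$ is a nontrivial element of~$\SSS$ (by Condition~$(ii)$) that left-divides $\ff_2 \cdots \ff_\dd$. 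This contradicts the assumption that $\ff_2 = \Head\SSS(\ff_2 \cdots \ff_\dd)$ already captured the maximal $\SSS$-divisor of the tail—more precisely, it shows $\ff_2$ was not maximal, since $\lcm(\ff_1 \under \hh, \ff_2)$ would be a strictly larger $\SSS$-divisor of the tail. Pinning down this contradiction cleanly—tracking that $\ff_1 \under \hh$ genuinely divides the tail and invoking the right combination of the closure axioms~$(ii)$ and~$(iii)$—is the delicate part; everything else is bookkeeping via left-cancellation and the noetherianity of~$\LGG$.
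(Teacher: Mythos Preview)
Your existence argument is essentially the paper's: peel off the $\SSS$-head, recurse, and use~$\LGG$ to terminate. (A minor quibble: inducting on ``$\SSS$-length'' requires knowing that the tail~$\gg$ has strictly smaller $\SSS$-length than~$\ff$, which is not immediate; the paper sidesteps this by simply observing that $\id{\source\ff} \div \ff_1 \div \ff_1\ff_2 \div \cdots$ is $\div$-increasing in~$\Div(\ff)$ and invoking~$\LGG$ directly. This is cleaner.)

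For uniqueness, however, you are working much too hard, because you have overlooked that the condition you isolate as the ``crux'' is already part of the hypothesis. The statement requires that \emph{$\ff_\ii$ is the $\SSS$-head of~$\ff_\ii \cdots \ff_\dd$ for each~$\ii$}; taking $\ii = 1$ gives $\ff_1 = \Head\SSS\ff$ outright. So if $(\ff_1, \ldots, \ff_\dd)$ and $(\gg_1, \ldots, \gg_\ee)$ both satisfy the stated conditions, then $\ff_1 = \Head\SSS\ff = \gg_1$ immediately, and after left-cancelling one inducts on~$\min(\dd,\ee)$. That is the entire uniqueness argument in the paper. Your lcm manoeuvre---arguing that a hypothetical $\hh \in \SSS$ with $\ff_1 \div \hh \dive \ff$ would force $\ff_1 \under \hh$ to enlarge~$\ff_2$---is not needed, and as written it does not quite close: nothing prevents $\ff_1 \under \hh \dive \ff_2$, in which case $\lcm(\ff_1 \under \hh, \ff_2) = \ff_2$ and no contradiction arises. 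The step you flagged as ``delicate'' is in fact vacuous once you read the hypothesis correctly.
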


\begin{proof}
Let $\ff$ be a nontrivial morphism of~$\CCC$, and let $\ff_1$ be the $\SSS$-head of~$\ff$. Then $\ff_1$
belongs to~$\SSS$, it is nontrivial, and we have $\ff = \ff_1 \ff'$ for some unique~$\ff'$.
If $\ff'$ is trivial, we are done, otherwise we repeat the argument with~$\ff'$. In this way we obtain a
$\div$-increasing sequence $\id{\source\ff} \div \ff_1 \div \ff_1\ff_2 \div ...$\,. Condition~$\LGG$ implies
that the construction stops after a finite number of steps, yielding a decomposition of the
form~\eqref{E:Normal}.

As for uniqueness, assume that $(\ff_1, ..., \ff_\dd)$ and $(\gg_1, ..., \gg_\ee)$ are decomposition of~$\ff$
that satisfy the conditions of the statement. We prove $(\ff_1, ..., \ff_\dd) = (\gg_1, ..., \gg_\ee)$ using
induction on $\min(\dd, \ee)$. First, $\dd = 0$ implies $\ee = 0$ by Lemma~\ref{L:Basic}$(i)$. Otherwise, the
hypotheses imply $\ff_1 = \Head{\SSS}{\ff} = \gg_1$. Left-cancelling~$\ff_1$ gives two decompositions
$(\ff_2, ..., \ff_\dd)$ and $(\gg_2, ..., \gg_\ee)$ of~$\ff_2...\ff_\dd$, and we apply the
induction hypothesis.
\end{proof}

\begin{defi}
In the context above, the sequence $(\ff_1, ..., \ff_\dd)$ is called the \emph{$\SSS$-normal form} of~$\ff$.
\end{defi}

When $\SSS$ turns out to be the family~$\Sim(\CCC)$, the $\SSS$-normal form will be simply called the
\emph{normal form}. The interest of the $\SSS$-normal form lies in that
it is easily characterized and easily computed. First, one has the following local characterization of normal
sequences.

\begin{prop}
\label{P:Local}
Assume that $\CCC$ is a left-preGarside category and $\SSS$ is a seed for~$\CCC$. 
Then a sequence of morphisms $(\ff_1, ..., \ff_\dd)$ is  $\SSS$-normal if and
only if each length two subsequence $(\ff_\ii, \ff_{\ii+1})$ is  $\SSS$-normal.
\end{prop}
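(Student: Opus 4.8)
The statement is a standard "normality is local" result: an $\SSS$-normal sequence is characterized by the normality of all consecutive pairs. One direction is immediate, so the work is in the converse. Let me sketch both.

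First, the forward implication. If $(\ff_1, \ldots, \ff_\dd)$ is $\SSS$-normal, I must show each pair $(\ff_\ii, \ff_{\ii+1})$ is $\SSS$-normal. By Proposition~\ref{P:NF}, normality of the full sequence means $\ff_\ii$ is the $\SSS$-head of $\ff_\ii \ldots \ff_\dd$ for every~$\ii$; I want to extract from this that $\ff_\ii = \Head\SSS{\ff_\ii \ff_{\ii+1}}$. The key point is that the $\SSS$-head is a maximal left-divisor lying in~$\SSS$, so enlarging the tail from $\ff_{\ii+1}$ to $\ff_{\ii+1} \ldots \ff_\dd$ can only enlarge (or preserve) that head. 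Concretely, since $\ff_{\ii+1} \dive \ff_{\ii+1} \ldots \ff_\dd$, any element of~$\SSS$ that left-divides $\ff_\ii \ff_{\ii+1}$ also left-divides $\ff_\ii \ldots \ff_\dd$; because $\ff_\ii$ is $\dive$-maximal among $\SSS$-left-divisors of the latter and $\ff_\ii \dive \ff_\ii \ff_{\ii+1}$, it is a fortiori maximal among $\SSS$-left-divisors of $\ff_\ii \ff_{\ii+1}$. Hence $\ff_\ii = \Head\SSS{\ff_\ii \ff_{\ii+1}}$, which says precisely that $(\ff_\ii, \ff_{\ii+1})$ is $\SSS$-normal.

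Now the converse, which is the substantive part. Assume every consecutive pair is normal, i.e.\ $\ff_\ii = \Head\SSS{\ff_\ii \ff_{\ii+1}}$ for each~$\ii$, and I must prove $\ff_1 = \Head\SSS{\ff_1 \ldots \ff_\dd}$ (and then conclude by induction on the tail). The plan is to show that no element~$\hh$ of~$\SSS$ with $\ff_1 \div \hh \dive \ff_1 \ldots \ff_\dd$ can exist, which forces $\ff_1$ to be the $\SSS$-head. The natural approach is a \emph{domino / transport} argument: write $\gg = \Head\SSS{\ff_1 \ldots \ff_\dd}$, so $\ff_1 \dive \gg$ since $\ff_1 \in \SSS$ and $\ff_1 \dive \ff_1 \ldots \ff_\dd$. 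Set $\hh = \ff_1 \under \gg$, so that $\gg = \ff_1 \hh$ with $\hh \in \SSS$ (seeds are closed under~$\under$). Since $\gg \dive \ff_1 \ldots \ff_\dd$, left-cancelling~$\ff_1$ gives $\hh \dive \ff_2 \ldots \ff_\dd$. The goal becomes showing $\hh$ is trivial; I would do this by pushing~$\hh$ through the sequence one factor at a time, using the pairwise-normality hypothesis to show that at each stage the portion of~$\hh$ that can be absorbed into the next factor $\ff_\ii$ is already accounted for.

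The main obstacle—and the place where the hypotheses must be used carefully—is controlling this transport. The cleanest route is to invoke the characterization of normality of a pair $(\ff_\ii, \ff_{\ii+1})$ as: every simple (more generally, every $\SSS$-) left-divisor of $\ff_{\ii+1}$ that can be appended to $\ff_\ii$ while staying in~$\SSS$ is already absorbed, i.e.\ $\ff_\ii = \gcd\bigl(\ff_\ii \ff_{\ii+1}, \text{(the relevant } \SSS\text{-bound)}\bigr)$ forces any such extension to be trivial. I expect the argument to run by induction on~$\dd$: the inductive hypothesis handles $(\ff_2, \ldots, \ff_\dd)$, giving $\ff_2 = \Head\SSS{\ff_2 \ldots \ff_\dd}$, and then I combine this with normality of the single pair $(\ff_1, \ff_2)$ to conclude $\ff_1 = \Head\SSS{\ff_1 \ldots \ff_\dd}$. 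The subtle step is the combination: from $\hh \dive \ff_2 \ldots \ff_\dd$ with $\hh \in \SSS$ I would derive $\hh \dive \Head\SSS{\ff_2 \ldots \ff_\dd} = \ff_2$ (using that $\hh$, being an $\SSS$-left-divisor, is bounded by the $\SSS$-head), whence $\ff_1 \hh \dive \ff_1 \ff_2$ with $\ff_1 \hh \in \SSS$; but then maximality of $\ff_1 = \Head\SSS{\ff_1 \ff_2}$ forces $\hh$ trivial, closing the induction. The delicate point to verify is exactly that an $\SSS$-left-divisor of a morphism is always bounded by its $\SSS$-head, which follows from Lemma~\ref{L:Head} together with closure of~$\SSS$ under~$\lcm$ (the head being the $\dive$-largest $\SSS$-divisor, any $\SSS$-divisor right-lcm's into it).
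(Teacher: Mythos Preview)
Your proposal is correct and follows essentially the same idea as the paper. The paper, however, packages the key step as a separate auxiliary lemma (Lemma~\ref{L:Aux}): if $(\ff_1,\ff_2)$ is $\SSS$-normal and $\gg\in\SSS$, then $\gg\dive \ff\,\ff_1\ff_2$ implies $\gg\dive \ff\,\ff_1$ for an \emph{arbitrary} prefix~$\ff$ (using iterated closure of~$\SSS$ under~$\under$ to show $\ff\under\gg\in\SSS$). With this in hand, the paper treats the case $\dd=3$ directly: from $\gg\dive\ff_1\ff_2\ff_3$ and normality of $(\ff_2,\ff_3)$ one gets $\gg\dive\ff_1\ff_2$, then normality of $(\ff_1,\ff_2)$ gives $\gg\dive\ff_1$. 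Your argument achieves the same effect by induction on the tail and an lcm-with-the-head step; the paper's formulation has the small advantage that Lemma~\ref{L:Aux} is reused verbatim in the proof of Proposition~\ref{P:LeftProduct}.
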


This follows from an auxiliary lemma.

\begin{lemm}
\label{L:Aux}
Assume that $(\ff_1, \ff_2)$ is  $\SSS$-normal and $\gg$ belongs to~$\SSS$. Then $\gg \dive \ff\ff_1\ff_2$
implies $\gg \dive \ff\ff_1$.
\end{lemm}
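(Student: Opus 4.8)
The plan is to induct on the number~$\mm$ of factors in an expression of~$\ff$ as a product of elements of~$\SSS$; such an expression exists because $\SSS$ left-generates~$\Hom(\CCC)$ and, by~$\LGG$, the argument of Lemma~\ref{L:Basic}$(ii)$ carries over verbatim to an arbitrary seed. Read this way, the statement asserts that sliding a seed element~$\gg$ across a prefix~$\ff$ can never let it reach strictly inside the second factor~$\ff_2$ of a normal pair $(\ff_1,\ff_2)$. The two ingredients I will combine are the maximality of the $\SSS$-head (for the base case) and the closure of a seed under the complement operation~$\under$ (to keep the transported morphism inside~$\SSS$ at each step).

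For the base case $\mm=0$, i.e.\ $\ff$ trivial, I argue directly from maximality. Since $(\ff_1,\ff_2)$ is $\SSS$-normal, $\ff_1=\Head{\SSS}{\ff_1\ff_2}$ is the maximal left-divisor of~$\ff_1\ff_2$ lying in~$\SSS$ (Lemma~\ref{L:Head}, which applies as a seed contains the identities and is closed under~$\lcm$). Given $\gg\in\SSS$ with $\gg\dive\ff_1\ff_2$, the morphisms $\ff_1$ and~$\gg$ share the source~$\source{\ff_1}$ and admit~$\ff_1\ff_2$ as common right-multiple, so by~$\LGGii$ their right-lcm $\lcm(\ff_1,\gg)$ exists; it lies in~$\SSS$ (closure under~$\lcm$) and left-divides~$\ff_1\ff_2$. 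As it has $\ff_1$ as a left-divisor, maximality of~$\ff_1$ forces $\lcm(\ff_1,\gg)=\ff_1$, whence $\gg\dive\ff_1$.

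For the inductive step I write $\ff=\ee\,\ff'$ with $\ee\in\SSS$ and $\ff'$ a product of $\mm-1$ seed elements, and I slide~$\gg$ past the single element~$\ee$. From $\gg\dive\ee\ff'\ff_1\ff_2$ and $\ee\dive\ee\ff'\ff_1\ff_2$ I get, via~$\LGGii$, that $\lcm(\ee,\gg)\dive\ee\ff'\ff_1\ff_2$; left-cancelling~$\ee$ by~$\LGGi$ gives $\ee\under\gg\dive\ff'\ff_1\ff_2$. The decisive point is that $\ee\under\gg$ again belongs to~$\SSS$, since a seed is closed under~$\under$ and both $\ee$ and~$\gg$ lie in~$\SSS$. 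Hence the induction hypothesis applies with the shorter prefix~$\ff'$ and the seed element~$\ee\under\gg$, yielding $\ee\under\gg\dive\ff'\ff_1$; left-multiplying by~$\ee$ and using $\gg\dive\lcm(\ee,\gg)=\ee\,(\ee\under\gg)$, I conclude $\gg\dive\ee\ff'\ff_1=\ff\ff_1$.

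The step I expect to be the crux is exactly keeping the transported morphism inside~$\SSS$: the whole induction rests on $\ee\under\gg\in\SSS$ whenever $\ee,\gg\in\SSS$, which is precisely the closure of a seed under~$\under$. In the principal case $\SSS=\Sim(\CCC)$ this closure can be seen concretely and the induction even bypassed: since $\gg$ is simple, $\gg\dive\D(\source{\ff})\dive\ff\,\D(\target{\ff})$ by the naturality relation~\eqref{E:Phi}, so that $\lcm(\ff,\gg)\dive\ff\,\D(\target{\ff})$ and therefore $\ff\under\gg\dive\D(\target{\ff})$ is simple in one stroke, after which the base-case argument finishes the proof. The inductive formulation above is, however, what makes the argument valid for an arbitrary seed.
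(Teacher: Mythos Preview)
Your proof is correct and follows essentially the same approach as the paper's. The paper phrases the induction as showing directly that $\ff\under\gg$ lies in~$\SSS$ (by an induction on the length of the $\SSS$-normal form of~$\ff$), then applies the normality of~$(\ff_1,\ff_2)$ once; you induct on the full statement, sliding~$\gg$ past one seed factor at a time. These are the same computation organized differently, and your closing remark on the special case $\SSS=\Sim(\CCC)$ is a pleasant bonus not present in the paper.
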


\begin{proof}
The hypothesis implies that $\ff$ and $\gg$ have the same source. Put $\gg' = \ff\under\gg$. The
hypothesis that $\SSS$ is closed under~$\under$ and an easy induction on the length of the $\SSS$-normal
form of~$\ff$ show that $\gg'$ belongs to~$\SSS$. By hypothesis, we have both $\gg \dive \ff\ff_1\ff_2$ and
$\ff \dive \ff\ff_1\ff_2$, hence $\lcm(\ff,\gg) = \ff \gg' \dive \ff\ff_1\ff_2$ whence $\gg' \dive \ff_1\ff_2$
by left-cancelling~$\ff$. As $\gg$ belongs to~$\SSS$ and $(\ff_1, \ff_2)$ is normal, this implies $\gg' \dive
\ff_1$, and finally $\gg \dive \ff\gg' \dive \ff\ff_1$.
\end{proof}

\begin{proof}[Proof of Proposition~\ref{P:Local}]
It is enough to consider the case $\dd = 2$, from which an easy induction on~$\dd$ gives the general case.
So we assume that $(\ff_1, \ff_2)$ and $(\ff_2, \ff_3)$ are $\SSS$-normal, and aim at proving that $(\ff_1,
\ff_2,
\ff_3)$ is $\SSS$-normal. The point is to prove that, if $\gg$ belongs to~$\SSS$, then $\gg \dive \ff_1\ff_2\ff_3$ implies $\gg
\dive \ff_1$. So assume $\gg \dive\ff_1\ff_2\ff_3$. As $(\ff_2, \ff_3)$ is $\SSS$-normal, Lemma~\ref{L:Aux}
implies $\gg \dive \ff_1\ff_2$. As $(\ff_1, \ff_2)$ is $\SSS$-normal, this implies $\gg \dive \ff_1$.
\end{proof}

\subsection{A computation rule}

We establish now a recipe for inductively computing the $\SSS$-normal form, namely determining the
$\SSS$-normal form of~$\gg\ff$ when that of~$\ff$ is known and $\gg$ belongs to~$\SSS$. 

\begin{prop}
\label{P:LeftProduct}
Assume that $\CCC$ is a left-preGarside category, $\SSS$ is a seed for~$\CCC$, and
$(\ff_1, ..., \ff_\dd)$ is the $\SSS$-normal form of~$\ff$. Then, for each~$\gg$ in~$\SSS$, the
$\SSS$-normal form of~$\gg\ff$ is $(\ff'_1, ..., \ff'_\dd, \gg_\dd)$, where
$\gg_0 = \gg$ and $(\ff'_\ii, \gg_\ii)$ is the $\SSS$-normal form of~$\gg_{\ii-1}
\ff_\ii$ for~$\ii$ increasing from~$1$ to~$\dd$---see Figure~\ref{F:LeftProduct}.
\end{prop}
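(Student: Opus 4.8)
The plan is to prove the statement by induction on the length~$\dd$ of the $\SSS$-normal form of~$\ff$, using the local characterization of normal sequences from Proposition~\ref{P:Local} as the main engine. The base case $\dd = 0$ is trivial: then $\ff$ is an identity morphism, $\gg\ff = \gg$ lies in~$\SSS$, and its normal form is the length-one sequence~$(\gg)$, matching~$(\gg_0) = (\ff'_0, \gg_0)$ with the empty $\ff'$-part. For the inductive step, I would first record that the construction is well-posed: since $\gg$ and~$\ff_1$ share the source~$\source\ff$, the pair $(\ff'_1, \gg_1)$ given as the $\SSS$-normal form of~$\gg_0\ff_1 = \gg\ff_1$ exists, and a straightforward check that $\SSS$ is closed under the relevant operations (using Definition~\ref{D:Seed}) shows that each $\gg_\ii$ again lies in~$\SSS$ and shares its source with~$\ff_{\ii+1}$, so the recursion can be iterated all the way to $\ii = \dd$.

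The heart of the argument is to verify that the candidate sequence $(\ff'_1, \dots, \ff'_\dd, \gg_\dd)$ is genuinely $\SSS$-normal, i.e.\ that each consecutive pair is $\SSS$-normal, whence the whole sequence is by Proposition~\ref{P:Local}. By construction each pair $(\ff'_\ii, \gg_\ii)$ is $\SSS$-normal, being the normal form of~$\gg_{\ii-1}\ff_\ii$. So the remaining obligation is to show that each \emph{crossing} pair $(\gg_\ii, \ff'_{\ii+1})$ is $\SSS$-normal for $1 \le \ii \le \dd-1$. This is where Lemma~\ref{L:Aux} does the work: to prove $(\gg_\ii, \ff'_{\ii+1})$ is normal I must show that any $\hh$ in~$\SSS$ with $\hh \dive \gg_\ii\ff'_{\ii+1}$ already satisfies $\hh \dive \gg_\ii$. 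The key relation is that $\gg_\ii\ff'_{\ii+1}$ is a factor of~$\gg_{\ii-1}\ff_\ii\ff_{\ii+1}$ up to the normalization already performed, so one relates left-divisibility by~$\hh$ back to the normality of $(\ff_\ii, \ff_{\ii+1})$. Concretely, I would push~$\hh$ through the commuting square defining the recursion and invoke Lemma~\ref{L:Aux} (applied with the appropriate prefix in the role of its~$\ff$) together with the hypothesis that $(\ff_\ii, \ff_{\ii+1})$ is $\SSS$-normal.

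The step I expect to be the main obstacle is precisely this verification of the crossing pairs: one must carefully track how the left-divisor~$\hh$ of~$\gg_\ii\ff'_{\ii+1}$ interacts with the two superimposed normalizations (the one along the~$\ff_\ii$ row and the one defining~$\gg_\ii$), and make sure the factorizations line up so that Lemma~\ref{L:Aux} applies with the correct grouping of factors. A diagram of the commuting grid---as promised by the reference to Figure~\ref{F:LeftProduct}---makes the bookkeeping transparent: the cell at position~$\ii$ asserts $\gg_{\ii-1}\ff_\ii = \ff'_\ii\gg_\ii$, and chaining these cells expresses the common value $\gg\ff$ in two ways. Once the crossing pairs are shown normal, Proposition~\ref{P:Local} immediately yields that $(\ff'_1, \dots, \ff'_\dd, \gg_\dd)$ is the $\SSS$-normal form of~$\gg\ff$, and uniqueness from Proposition~\ref{P:NF} finishes the proof.
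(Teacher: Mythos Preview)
Your overall architecture (induction on~$\dd$, reduction via Proposition~\ref{P:Local} to length-two checks, appeal to Lemma~\ref{L:Aux}) is the right one and matches the paper's. But there is a genuine error in identifying \emph{which} length-two pairs must be verified. The output sequence is $(\ff'_1,\dots,\ff'_\dd,\gg_\dd)$, so by Proposition~\ref{P:Local} the pairs you need are $(\ff'_\ii,\ff'_{\ii+1})$ for $1\le\ii\le\dd-1$, together with $(\ff'_\dd,\gg_\dd)$. Only the last one is ``by construction''; the pairs $(\ff'_\ii,\gg_\ii)$ that you flag as automatic are not consecutive in the output (except when $\ii=\dd$). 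Conversely, the ``crossing pairs'' $(\gg_\ii,\ff'_{\ii+1})$ you set out to prove normal are not even composable: $\gg_\ii$ and~$\ff'_{\ii+1}$ share the \emph{source} $\target{\ff'_\ii}$, they are not head-to-tail, so speaking of their normality is meaningless.

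The paper's proof fixes exactly this: after reducing to $\dd=2$, it proves the Claim that if the square $\gg_0\ff_1=\ff'_1\gg_1$, $\gg_1\ff_2=\ff'_2\gg_2$ commutes with $(\ff_1,\ff_2)$ and $(\ff'_1,\gg_1)$ normal, then $(\ff'_1,\ff'_2)$ is normal. The argument is precisely the one you sketch, but applied to the correct pair: given $\hh\in\SSS$ with $\hh\dive\ff'_1\ff'_2$, one has $\hh\dive\ff'_1\ff'_2\gg_2=\gg_0\ff_1\ff_2$, whence $\hh\dive\gg_0\ff_1$ by Lemma~\ref{L:Aux} and the normality of $(\ff_1,\ff_2)$; this gives $\hh\dive\ff'_1\gg_1$, and then $\hh\dive\ff'_1$ by normality of $(\ff'_1,\gg_1)$. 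So your instinct about how Lemma~\ref{L:Aux} enters is right; you just need to aim it at $(\ff'_\ii,\ff'_{\ii+1})$ rather than at the non-composable pair $(\gg_\ii,\ff'_{\ii+1})$.
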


\begin{figure}[tb]
\begin{picture}(45,15)(0,0)
\put(0,0){\includegraphics{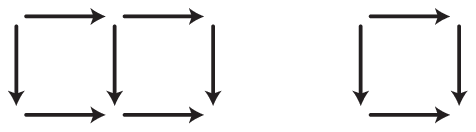}}
\put(4,-2.5){$\ff_1$}
\put(14,-2.5){$\ff_2$}
\put(4,12.5){$\ff'_1$}
\put(14,12.5){$\ff'_2$}
\put(-3,5){$\gg_0$}
\put(12,5){$\gg_1$}
\put(22,5){$\gg_2$}
\put(27,1){...}
\put(27,11){...}
\put(36.5,5){$\gg_{\dd\!-\!1}$}
\put(47,5){$\gg_\dd$}
\put(39,-2.5){$\ff_\dd$}
\put(39,12.5){$\ff'_\dd$}
\end{picture}
\caption{\sf\smaller Adding one $\SSS$-factor~$\gg_0$ on the left of an $\SSS$-normal sequence $(\ff_1, ...,
\ff_\dd)$: compute the $\SSS$-normal form~$(\ff'_1, \gg_1)$ of~$\gg_0\ff_1$, then the $\SSS$-normal
form~$(\ff'_2, \gg_2)$ of~$\gg_1\ff_2$, and so on from left to right; the sequence $(\ff'_1, ..., \ff'_\dd,
\gg_\dd)$ is $\SSS$-normal.}
\label{F:LeftProduct}
\end{figure}

\begin{proof}
For an induction, it is enough to consider the case $\dd = 2$, hence to prove

\begin{claim}
Assume that the diagram
\vrule width0pt height9mm depth9mm
\begin{picture}(29,6)(-4,5)
\put(0,0){\includegraphics{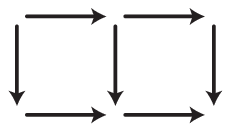}}
\put(4,-2.5){$\ff_1$}
\put(14,-2.5){$\ff_2$}
\put(4,12.5){$\ff'_1$}
\put(14,12.5){$\ff'_2$}
\put(-3,6){$\gg_0$}
\put(12,6){$\gg_1$}
\put(22,6){$\gg_2$}
\end{picture}
is commutative and $(\ff_1, \ff_2)$ and $(\ff'_1, \gg_1)$ are $\SSS$-normal. Then $(\ff'_1, \ff'_2)$ is $\SSS$-normal.
\end{claim}

\noindent So assume that $\hh$ belongs to~$\SSS$ and satisfies $\hh \dive \ff'_1 \ff'_2$. Then, a fortiori,  we
have $\hh \dive \ff'_1 \ff'_2 \gg_2 = \gg_0 \ff_1 \ff_2$, hence $\hh \dive \gg_0\ff_1$ by Lemma~\ref{L:Aux}
since $(\ff_1, \ff_2)$ is $\SSS$-normal. Therefore we have $\hh \dive \ff'_1 \gg_1$, hence $\hh \dive \ff'_1$
since $(\ff'_1, \gg_1)$ is $\SSS$-normal.
\end{proof}

The results of Proposition~\ref{P:Local} and~\ref{P:LeftProduct} apply in particular when $\CCC$ is
left-Garside and $\SSS$ is the family of all simple morphisms, in which case they involve the standard
normal form.

In the case of lcm's, Corollary~\ref{C:Lcm} shows how a result established for
general left-Garside categories can induce a similar result for locally left-Garside monoids. The situation is
similar with the normal form, provided some additional assumption is satisfied.

\begin{defi}
\label{D:Coherent}
A left-Garside sequence $(\D_\xx)_{\xx \in \XX}$ witnessing that a certain monoid is locally
left-Garside is said to be \emph{coherent} if, for all~$\aa, \xx, \xx'$ such that $\aa \act \xx$ is defined,
$\aa\dive\D_{\xx'}$ implies $\aa \dive \D_\xx$. 
\end{defi}

For instance, the family $(\D_\nn)_{\nn\in\Nat}$ witnessing for the locally left-Garside structure of the
monoid~$\BP\infty$ is coherent. Indeed, a positive $\nn$-strand braid~$\aa$ is a left-divisor of~$\D_\nn$ if
and only if it is a left-divisor of~$\D_{\nn'}$ for every $\nn' \ge \nn$. The reason is that being simple is an
intrinsic property of positive braids: a positive braid is simple if and only if it can be represented by a braid
diagram in which any two strands cross at most once~\cite{ElM}.

\begin{prop}
\label{P:Coherent}
Assume that $\MM$ is a locally left-Garside monoid associated with a coherent left-Garside sequence
$(\D_\xx)_{\xx \in \XX}$. Let $\Sig = \{\aa\in\MM \mid \exists\xx\in\XX(\aa\dive \D_\xx)\}$. Then $\Sig$ is
a seed for~$\MM$, every element of~$\MM$ admits a unique $\Sig$-normal form, and the counterpart of
Propositions~\ref{P:Local} and~\ref{P:LeftProduct} hold for the $\Sig$-normal form in~$\MM$.
\end{prop}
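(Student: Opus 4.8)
The plan is to verify that $\Sig$ satisfies the three seed axioms of Definition~\ref{D:Seed} and then to read off the three normal-form assertions from the associated category $\CMX$, which is left-Garside by Proposition~\ref{P:Criterion}. Everything hinges on one consequence of coherence that I would record first: \emph{if $\aa\in\Sig$ and $\xx\act\aa$ is defined, then $\aa\dive\D_\xx$.} Indeed, $\aa\in\Sig$ means $\aa\dive\D_{\xx'}$ for some~$\xx'$, and coherence upgrades this to $\aa\dive\D_\xx$. Equivalently, for $\aa\in\MM_\xx$ the morphism $(\xx,\aa,\xx\act\aa)$ of~$\CMX$ is simple exactly when $\aa\in\Sig$, so that $\Sig$ is precisely the set of $\MM$-components of the simple morphisms of~$\CMX$.

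I would then check the seed axioms. Closure under left-divisor is immediate, since $\hh\dive\aa\dive\D_\xx$ gives $\hh\dive\D_\xx$; I would prove this first so as to use it below. For left-generation, given a nontrivial~$\aa$ I pick $\xx$ with $\aa\in\MM_\xx$, available by clause~$(iii)$ of Definition~\ref{D:Action}; then~$\LGloc$ supplies a nontrivial left-divisor of~$\aa$ dividing~$\D_\xx$, hence lying in~$\Sig$. The substance is closure under~$\lcm$ and~$\under$. Given $\aa,\bb\in\Sig$, clause~$(iii)$ of Definition~\ref{D:Action} yields a single~$\yy$ with $\aa,\bb\in\MM_\yy$, and the recorded observation then gives $\aa\dive\D_\yy$ and $\bb\dive\D_\yy$ simultaneously; since $\D_\yy$ is a common right-multiple, $\lcm(\aa,\bb)$ exists and divides~$\D_\yy$, so $\lcm(\aa,\bb)\in\Sig$. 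Writing $\lcm(\aa,\bb)\,\hh=\D_\yy$ exhibits $(\aa\under\bb)\,\hh$ as a right-divisor of~$\D_\yy$; transcribing Lemma~\ref{L:Simple}$(ii)$ through~$\CMX$ shows that every right-divisor of a~$\D_\yy$ lies in~$\Sig$, and closure under left-divisor then gives $\aa\under\bb\in\Sig$. Thus $\Sig$ is a seed.

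For the normal-form statements I would establish the head correspondence: for any~$\xx$ with $\aa\in\MM_\xx$, the element $\gcd(\aa,\D_\xx)$ (which exists by Corollary~\ref{C:Lcm}) is the maximal left-divisor of~$\aa$ lying in~$\Sig$. It lies in~$\Sig$ and divides~$\aa$; and any $\hh\in\Sig$ with $\hh\dive\aa$ lies in~$\MM_\xx$, hence divides~$\D_\xx$ by the recorded observation, hence divides $\gcd(\aa,\D_\xx)$. This simultaneously shows that the $\Sig$-head of~$\aa$ exists, is independent of the auxiliary~$\xx$, and equals the $\MM$-component of the simple head $\head{(\xx,\aa,\xx\act\aa)}$ in~$\CMX$. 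Consequently the normal form of $(\xx,\aa,\xx\act\aa)$ in~$\CMX$ projects, factor by factor, to a $\Sig$-normal form of~$\aa$ in~$\MM$; conversely any $\Sig$-normal form of~$\aa$ lifts, using the product rule for the sets~$\MM_\yy$, to a $\Sim(\CMX)$-normal form of $(\xx,\aa,\xx\act\aa)$, the head conditions matching because both heads are computed as the same $\gcd$. Existence and uniqueness, together with the counterparts of Propositions~\ref{P:Local} and~\ref{P:LeftProduct}, then descend from the corresponding statements in~$\CMX$ by projecting morphisms onto their $\MM$-components.

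The main obstacle is the closure of~$\Sig$ under~$\lcm$ and~$\under$: this is the one place where coherence is genuinely used, to realise $\aa$ and~$\bb$ as divisors of a single~$\D_\yy$ rather than of two unrelated~$\D_\xx,\D_{\xx'}$, and where one must import from~$\CMX$ the fact that right-divisors of a~$\D_\yy$ are simple. The remaining steps are essentially a transcription of the category arguments, the only recurring care being to check that the auxiliary object~$\xx$ never affects the outcome---which is exactly what the head correspondence guarantees.
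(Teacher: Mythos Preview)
Your proof is correct, and the verification that $\Sig$ is a seed follows essentially the same line as the paper's: the key coherence observation (that $\aa\in\Sig$ together with $\xx\act\aa$ defined forces $\aa\dive\D_\xx$) is exactly what the paper isolates, and your closure arguments for $\lcm$ and~$\under$ match the paper's---you invoke Lemma~\ref{L:Simple}$(ii)$ through~$\CMX$ where the paper invokes~$\LGloc$ directly, but these encode the same fact.

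Where you diverge is in the normal-form part. You establish a head correspondence and then transport normal forms back and forth between~$\MM$ and the category~$\CMX$, reading off existence, uniqueness, and the counterparts of Propositions~\ref{P:Local} and~\ref{P:LeftProduct} from the \emph{category} results. The paper is more direct: once $\Sig$ is known to be a seed, it simply notes that $\MM$ is itself a left-preGarside monoid (hence a one-object left-preGarside category), so Propositions~\ref{P:NF}, \ref{P:Local}, and~\ref{P:LeftProduct} apply to~$\MM$ with seed~$\Sig$ without any detour through~$\CMX$. Your route is sound and has the side benefit of explicitly identifying the $\Sig$-normal form with the projection of the $\Sim(\CMX)$-normal form, but the paper's observation that those propositions were stated for arbitrary left-preGarside categories with a seed---and hence already cover the monoid case---saves the transport argument entirely.
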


\begin{proof}
Axiom~$\LGloc$ guarantees that every nontrivial element of~$\MM$ is left-divisible by some nontrivial
element of~$\Sig$. Then, by hypothesis, $\id\xx \dive \D_\xx$ holds for each object~$\xx$. Then, assume
$\aa, \bb \in \Sig$. There exists~$\xx$ such that $\xx\act\aa$ and $\xx\act\bb$ is defined. By definition
of~$\Sig$, there exists~$\xx'$ satisfying $\aa \dive \D_{\xx'}$, hence, by definition of coherence, we have
$\aa \dive \D_\xx$. A similar argument gives $\bb \dive\D_\xx$, whence $\lcm(\aa,\bb) \dive \D_\xx$, and
$\lcm(\aa,\bb) \in
\Sig$. So there exists~$\cc$ satisfying $\aa \, (\aa\under\bb) \, \cc = \D_\xx$. By~$\LGloc$, we deduce
$(\aa\under\bb)\,\cc \dive \D_{\xx\act\aa}$, whence $\aa\under\bb \dive \D_{\xx\act\aa}$, and we
conclude that $\aa\under\bb$ belongs to~$\Sig$. Finally, it directly results from its definition that $\Sig$ is
closed under left-divisor. Hence $\Sig$ is a seed for~$\MM$ in the sense of Definition~\ref{D:Seed}.

As, by definition, $\MM$ is a left-preGarside monoid, Proposition~\ref{P:NF} applies, guaranteeing the
existence and uniqueness of the $\Sig$-normal form on~$\MM$, and so do Propositions~\ref{P:Local}
and~\ref{P:LeftProduct}.
\end{proof}

Thus, the good properties of the greedy normal form are preserved when the assumption that a
global Garside element~$\D$ exists is replaced by the weaker assumption that local Garside
elements~$\D_\xx$ exist, provided they satisfy some coherence. 

\subsection{Regular left-Garside categories}

It is natural to look for a counterpart of the recipe of Proposition~\ref{P:LeftProduct} involving
right-multiplication by an element of the seed instead of left-multiplication. Such a counterpart exists but,
interestingly, the situation is not symmetric, and we need a new argument. The latter demands
that the considered category satisfies an additional condition, which is automatically satisfied in  a two-sided
Garside category, but not in a left-Garside category.

In this section, we only consider the case of a left-Garside category and its simple morphisms, and not the case
of a general left-preGarside category with an arbitrary seed---see Remark~\ref{R:Seed}. So, we only refer to the
standard normal form.

\begin{defi}
We say that a left-Garside category~$\CCC$ is \emph{regular} if the functor~$\f$ preserves normality of
length~$2$ sequences: for $\ff_1, \ff_2$ simple with $\target{\ff_1} = \source{\ff_2}$,
\begin{equation}
(\ff_1, \ff_2)
\mbox{\ normal \quad implies \quad}
(\f(\ff_1), \f(\ff_2))
\mbox{\ normal.}
\end{equation}
\end{defi}

\begin{prop}
\label{P:RightProduct}
Assume that $\CCC$ is a regular left-Garside category, that $(\ff_1, ..., \ff_\dd)$ is
the normal form of a morphism~$\ff$, and that $\gg$ is simple.  
Then the normal form of~$\ff\gg$ is $(\gg_0, \ff'_1, ..., \ff'_\dd)$, where
$\gg_\dd = \gg$ and $(\gg_{\ii-1}, \ff'_\ii)$ is the normal form
of~$\ff_\ii \gg_\ii$ for $\ii$ decreasing from~$\dd$ to~$1$---see
Figure~\ref{F:RightProduct}.
\end{prop}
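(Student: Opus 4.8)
The plan is to prove the statement by induction on $\dd$, reducing as usual to the case $\dd = 1$, which is the genuine content. So the key claim to establish is the following dual of the Claim in the proof of Proposition~\ref{P:LeftProduct}: if the square
\begin{equation*}
\ff_1 \gg_1 = \gg_0 \ff'_1,
\end{equation*}
with $\gg_1$ simple, $\ff_1$ simple, and $(\gg_0, \ff'_1)$ the normal form of $\ff_1\gg_1$, is placed to the right of a normal pair $(\ff_0, \ff_1)$, then the resulting pair $(\gg_{-1}, \ff'_0)$ produced on the previous column composes correctly, \emph{i.e.} $(\gg_{-1}, \gg_0)$ is normal. Concretely, I would formulate and prove the two-step claim: if $(\ff_1, \ff_2)$ is normal, $\gg_2$ is simple, $(\gg_1, \ff'_2)$ is the normal form of $\ff_2\gg_2$, and $(\gg_0, \ff'_1)$ is the normal form of $\ff_1\gg_1$, then $(\gg_0, \gg_1)$ is normal. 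Granting this, the full decomposition $(\gg_0, \ff'_1, \ldots, \ff'_\dd)$ is normal by Proposition~\ref{P:Local}, since each consecutive pair is normal.

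First I would set up the geometry: each small square $\ff_\ii \gg_\ii = \gg_{\ii-1} \ff'_\ii$ is a right-lcm square, so $\gg_{\ii-1} = \ff_\ii \under (\ff_\ii\gg_\ii) = \gcd(\ldots)$-type data, but the cleanest description is that $\gg_{\ii-1} = \head{(\ff_\ii\gg_\ii)}$ is the simple head. The task is to show these heads chain up normally. To prove $(\gg_0, \gg_1)$ normal, I would take a simple $\hh$ with $\hh \dive \gg_0\gg_1$ and aim to show $\hh \dive \gg_0$. The natural route is to compare $\gg_0\gg_1$ against the column data: since $\gg_0\gg_1 \ff'_2 = \ff_1 \gg_1 \gg_1^{?}$… here is where regularity must enter. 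The point of regularity is exactly that $(\f(\ff_1), \f(\ff_2))$ is normal when $(\ff_1,\ff_2)$ is, and the squares relate $\gg$'s to the functor $\f$ via Lemma~\ref{L:Basic}$(iv)$ and Lemma~\ref{L:Dual}: in the special case $\gg_\ii = \D$, the morphisms $\gg_{\ii-1}$ reduce to applying $\f$, so regularity is the infinitesimal/$\D$-level version of the statement I want to upgrade to arbitrary simple~$\gg$.

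The hard part will be passing from the regularity hypothesis, which is stated only for the functor $\f$ (equivalently, right-multiplication by the full $\D(\xx)$), to right-multiplication by an arbitrary simple $\gg$. I expect to handle this by factoring: write $\gg \, \gg^* = \D(\source\gg)$ so that right-multiplying by $\D$ factors through right-multiplying by $\gg$ and then by $\gg^*$. Right-multiplication by $\D$ is governed by $\f$ via \eqref{E:Phi}, hence by regularity preserves normality; I would then argue that if right-multiplying by $\gg$ \emph{broke} normality of $(\gg_0, \gg_1)$, the defect could not be repaired by the subsequent right-multiplication by $\gg^*$, contradicting the $\f$-level statement. Making this ``defect persists'' argument precise is the crux: concretely I would show that $\head{(\gg_0\gg_1)} = \gg_0$ follows from $\head{(\f(\ff_1)\cdots)} $ being correctly positioned, using Lemma~\ref{L:Aux} to localize the divisibility tests to length-two windows and using that $\head{}$ commutes appropriately with the lcm-complement operation $\under$.

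Finally I would assemble the induction: assuming the result for normal forms of length $\dd-1$, apply the length-two Claim to the last square $(\ff_\dd, \gg)$ to produce $(\gg_{\dd-1}, \ff'_\dd)$, invoke the induction hypothesis on $(\ff_1, \ldots, \ff_{\dd-1})$ with the simple element $\gg_{\dd-1}$ to get that $(\gg_0, \ff'_1, \ldots, \ff'_{\dd-1})$ is normal and that each produced pair is normal, and then check that the single new junction $(\ff'_{\dd-1}, \ff'_\dd)$—equivalently $(\gg_{\dd-1}, \gg_{\dd-1}')$ at the seam—is normal by the Claim. Proposition~\ref{P:Local} then certifies that the whole sequence $(\gg_0, \ff'_1, \ldots, \ff'_\dd)$ is the normal form of $\ff\gg$, and uniqueness in Proposition~\ref{P:NF} identifies it as such.
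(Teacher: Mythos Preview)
Your plan has the right overall shape (reduce by induction to a two-column claim, exploit the factorisation $\gg\,\gg^* = \D$ to bring in~$\f$, and invoke regularity), but the central claim you write down is wrong, and this is not just a slip of indexing.

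You state: \emph{if $(\ff_1,\ff_2)$ is normal, $(\gg_1,\ff'_2)$ is the normal form of $\ff_2\gg_2$, and $(\gg_0,\ff'_1)$ is the normal form of $\ff_1\gg_1$, then $(\gg_0,\gg_1)$ is normal.} But $\gg_0$ and $\gg_1$ do not compose: $\gg_0$ has source $\source{\ff_1}$ and $\gg_1$ has source $\target{\ff_1}$, so the pair $(\gg_0,\gg_1)$ is not even a candidate for being normal. The sequence whose normality you must establish is $(\gg_0,\ff'_1,\ldots,\ff'_\dd)$; the pair $(\gg_0,\ff'_1)$ is normal by construction, so, by Proposition~\ref{P:Local}, the genuine content is that each $(\ff'_\ii,\ff'_{\ii+1})$ is normal. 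Your two-column claim should therefore read: \emph{under the above hypotheses, $(\ff'_1,\ff'_2)$ is normal.} You almost say this in your last paragraph (``the single new junction $(\ff'_{\dd-1},\ff'_\dd)$''), but then immediately re-identify it with a $(\gg,\gg')$ pair, which is the same error again.

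Once the claim is stated correctly, your ``defect persists'' outline is too vague to be a proof. The paper's argument is sharper and worth internalising: extend each vertical $\gg_\ii$ by its complement $\gg_\ii^*$ so that $\gg_\ii\gg_\ii^* = \D(\source{\gg_\ii})$. A direct computation using~\eqref{E:Phi} shows that the new bottom row is exactly $(\f(\ff_1),\f(\ff_2))$, with $\ff'_\ii\gg_\ii^* = \gg_{\ii-1}^*\,\f(\ff_\ii)$. Now take simple $\hh \dive \ff'_1\ff'_2$; then $\hh \dive \ff'_1\ff'_2\gg_2^* = \gg_0^*\,\f(\ff_1)\f(\ff_2)$. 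Regularity says $(\f(\ff_1),\f(\ff_2))$ is normal, so Lemma~\ref{L:Aux} gives $\hh \dive \gg_0^*\f(\ff_1) = \ff'_1\gg_1^*$. Hence $\hh \dive \ff'_1\,\gcd(\ff'_2,\gg_1^*)$, and Lemma~\ref{L:Coprime} applied to the normal pair $(\gg_1,\ff'_2)$ yields $\gcd(\gg_1^*,\ff'_2)=1$, whence $\hh \dive \ff'_1$. This is the precise mechanism by which regularity for $\f$ transfers to right-multiplication by an arbitrary simple~$\gg$; your ``defect cannot be repaired by $\gg^*$'' heuristic is pointing at the same phenomenon but does not yet isolate the two key ingredients, namely the commutativity $\ff'_\ii\gg_\ii^* = \gg_{\ii-1}^*\f(\ff_\ii)$ and the coprimality criterion of Lemma~\ref{L:Coprime}.
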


\begin{figure}[tb]
\begin{picture}(45,15)(0,0)
\put(0,0){\includegraphics{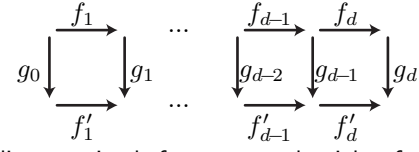}}
\put(4,-2.5){$\ff'_1$}
\put(4,12.5){$\ff_1$}
\put(-3,5){$\gg_0$}
\put(12,5){$\gg_1$}
\put(17,1){...}
\put(17,11){...}
\put(26.5,5){$\gg_{\dd\!-\!2}$}
\put(36.5,5){$\gg_{\dd\!-\!1}$}
\put(47,5){$\gg_\dd$}
\put(27.5,-2.5){$\ff'_{\dd\!-\!1}$}
\put(27.5,12.5){$\ff_{\dd\!-\!1}$}
\put(39,-2.5){$\ff'_\dd$}
\put(39,12.5){$\ff_\dd$}
\end{picture}
\caption{\sf\smaller Adding one simple factor~$\gg_\dd$ on the right of a simple sequence $(\ff_1, ...,
\ff_\dd)$: compute the normal form~$(\gg_{\dd-1} \ff'_\dd)$ of~$\ff_\dd \gg_\dd)$, then the normal
form~$(\gg_{\dd-2} \ff'_{\dd-2})$ of~$\ff_{\dd-1} \gg_{\dd-1}$, and so on from right to left; the sequence
$(\gg_0, \ff'_1, ..., \ff'_\dd)$ is normal.}
\label{F:RightProduct}
\end{figure}

We begin with an auxiliary observation.

\begin{lemm}
\label{L:Coprime}
Assume that $\CCC$ is a left-Garside category and $\ff_1, \ff_2$ are
simple morphisms satisfying $\target\ff_1 = \source\ff_2$. Then  $(\ff_1, \ff_2)$ is normal if and
only if $\ff_1^*$ and
$\ff_2$ are left-coprime, \ie, $\gcd(\ff_1^*, \ff_2)$ is trivial.
\end{lemm}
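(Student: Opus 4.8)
The plan is to unfold the definition of normality of $(\ff_1, \ff_2)$ and relate it to the gcd of $\ff_1^*$ and $\ff_2$. Recall that $(\ff_1, \ff_2)$ is normal means precisely that $\ff_1$ is the $\Sim(\CCC)$-head of $\ff_1\ff_2$, i.e., $\ff_1 = \gcd(\ff_1\ff_2, \D(\source{\ff_1}))$; equivalently, $\ff_1$ is the maximal simple left-divisor of $\ff_1\ff_2$. So the statement to prove is the equivalence
\begin{equation}
\ff_1 = \gcd(\ff_1\ff_2, \D(\xx))
\quad\Longleftrightarrow\quad
\gcd(\ff_1^*, \ff_2) = \id{\target{\ff_1}},
\end{equation}
where $\xx = \source{\ff_1}$ and I have written $\ff_1\ff_1^* = \D(\xx)$.

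First I would translate the right-hand condition into a statement about left-divisors of $\ff_1\ff_2$. The natural bridge is: a simple morphism $\hh$ with source $\xx$ satisfies $\ff_1 \dive \hh \dive \ff_1\ff_2$ if and only if the morphism $\ff_1^{-1}\hh$ (i.e. the unique $\kk$ with $\ff_1\kk = \hh$, which exists by left-cancellation) is a common left-divisor of $\ff_1^*$ and $\ff_2$. The forward direction here needs that such $\kk$ is simple: since $\hh$ is simple, $\hh \dive \D(\xx) = \ff_1\ff_1^*$, and left-cancelling $\ff_1$ gives $\kk \dive \ff_1^*$, so $\kk$ is simple by Lemma~\ref{L:Simple}$(i)$ together with closure under left-divisor. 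Meanwhile $\hh \dive \ff_1\ff_2$ left-cancels to $\kk \dive \ff_2$. Conversely, if $\kk \dive \ff_1^*$ and $\kk \dive \ff_2$, then $\ff_1\kk$ is a left-divisor of both $\D(\xx)$ and $\ff_1\ff_2$, hence a simple left-divisor of $\ff_1\ff_2$ properly extending $\ff_1$ as soon as $\kk$ is nontrivial.

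Given this correspondence, both implications follow quickly. If $(\ff_1, \ff_2)$ is normal, then $\ff_1$ is the maximal simple left-divisor of $\ff_1\ff_2$, so no nontrivial $\kk$ can be a common left-divisor of $\ff_1^*$ and $\ff_2$, giving $\gcd(\ff_1^*, \ff_2) = \id{\target{\ff_1}}$. Conversely, if that gcd is trivial, then any simple $\hh$ with $\ff_1 \dive \hh \dive \ff_1\ff_2$ forces the associated $\kk$ to be trivial, hence $\hh = \ff_1$; since every simple left-divisor of $\ff_1\ff_2$ is comparable with $\ff_1$ via their right-lcm (which is simple, being a left-divisor of $\D(\xx)$, by the seed property of $\Sim(\CCC)$), the maximal simple left-divisor is exactly $\ff_1$, so $(\ff_1, \ff_2)$ is normal.

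The main obstacle I anticipate is the last step of the converse, namely upgrading ``no simple left-divisor strictly between $\ff_1$ and $\ff_1\ff_2$'' to ``$\ff_1$ is the \emph{maximal} simple left-divisor of $\ff_1\ff_2$.'' One must rule out a simple left-divisor $\gg$ of $\ff_1\ff_2$ that is not itself $\dive$-above $\ff_1$; the remedy is to replace $\gg$ by $\lcm(\ff_1, \gg)$, which is again simple (since $\ff_1, \gg \dive \D(\xx)$ gives $\lcm(\ff_1,\gg) \dive \D(\xx)$) and still a left-divisor of $\ff_1\ff_2$, reducing to the already-handled case. Modulo that verification, which uses only that $\Sim(\CCC)$ is closed under $\lcm$ and left-divisor, the argument is a routine translation through left-cancellation.
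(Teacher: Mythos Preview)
Your argument is correct. The forward direction is immediate, and in the converse you correctly anticipated the only real issue---a simple left-divisor~$\gg$ of~$\ff_1\ff_2$ need not lie above~$\ff_1$---and resolved it by passing to $\lcm(\ff_1,\gg)$, which is simple because both factors divide~$\D(\xx)$ and still left-divides~$\ff_1\ff_2$.

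The paper's proof reaches the same conclusion by a single algebraic identity rather than an element-by-element analysis: since $\ff_1$ left-divides both $\ff_1\ff_2$ and $\D(\xx) = \ff_1\ff_1^*$, left-cancellativity gives
\[
\head{\ff_1\ff_2} = \gcd(\ff_1\ff_2,\ \ff_1\ff_1^*) = \ff_1\,\gcd(\ff_2,\ff_1^*),
\]
so $\head{\ff_1\ff_2} = \ff_1$ is equivalent to $\gcd(\ff_2,\ff_1^*) = \id{\target{\ff_1}}$ after cancelling~$\ff_1$. This is exactly the content of your bijection $\hh \leftrightarrow \kk$ between simple morphisms $\hh$ with $\ff_1 \dive \hh \dive \ff_1\ff_2$ and common left-divisors~$\kk$ of $\ff_1^*$ and~$\ff_2$, packaged as the general fact $\gcd(\ff_1 a, \ff_1 b) = \ff_1\gcd(a,b)$ in a left-cancellative setting. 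The paper's formulation is terser and avoids the separate $\lcm$ step; yours makes the mechanism more visible. Same idea, different packaging.
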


\begin{proof}
The following equalities always hold:
$$\head{\ff_1\ff_2} 
= \gcd(\ff_1\ff_2, \D(\source{\ff_1}))
= \gcd(\ff_1\ff_2, \ff_1 \ff_1^*)
= \ff_1\, \gcd(\ff_2, \ff_1^*).$$
Hence $(\ff_1, \ff_2)$ is normal, \ie, $\ff_1 = \head{\ff_1\ff_2}$ holds, if and only if
$\ff_1 = \ff_1\, \gcd(\ff_2, \ff_1^*)$ does, which is $\gcd(\ff_2, \ff_1^*) =
\id{\target{\ff_1}}$ as left-cancelling~$\ff_1$ is allowed.
\end{proof}

\begin{proof}[Proof of Proposition~\ref{P:RightProduct}]
As in the case of Proposition~\ref{P:LeftProduct} it is enough to consider the case $\dd = 2$, and therefore it is
enough to prove

\begin{claim}
Assume that the diagram
\vrule width0pt height9mm depth9mm
\begin{picture}(29,6)(-4,5)
\put(0,0){\includegraphics{LeftProduct.eps}}
\put(4,-2.5){$\ff'_1$}
\put(14,-2.5){$\ff'_2$}
\put(4,12.5){$\ff_1$}
\put(14,12.5){$\ff_2$}
\put(-3,6){$\gg_0$}
\put(12,6){$\gg_1$}
\put(22,6){$\gg_2$}
\end{picture}
is commutative and $(\ff_1, \ff_2)$ and $(\gg_1, \ff_2)$ are normal. Then $(\ff'_1, \ff'_2)$ is normal.
\end{claim}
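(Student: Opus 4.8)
The plan is to reduce the claim to a single coprimality statement and then feed that statement to the regularity hypothesis. By Lemma~\ref{L:Coprime}, proving that $(\ff'_1, \ff'_2)$ is normal amounts to showing that $\gcd((\ff'_1)^*, \ff'_2)$ is trivial (note $\target{\ff'_1} = \source{\ff'_2}$, so the pair is composable and both are simple). The naive approach—sliding a simple left-divisor of $\ff'_1\ff'_2$ back along the diagram and invoking Lemma~\ref{L:Aux}—breaks down exactly here, because the morphism one would have to push back, a product $\qq\hh$ of two simple morphisms, need not itself be simple; this is the asymmetry announced before the statement, and it is why regularity is needed. Indeed, the special case $\gg_2 = \D(\target{\ff_2})$ of the claim, in which the squares force $\ff'_1 = \f(\ff_1)$ and $\ff'_2 = \f(\ff_2)$ through~\eqref{E:Phi}, is \emph{literally} the definition of regularity; the work lies in reducing an arbitrary simple $\gg_2$ to this case.

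First I would extract two algebraic identities from the fact that $\D$ is a natural transformation, i.e. from Lemma~\ref{L:Basic} and~\eqref{E:Phi}. Since $(\gg_0, \ff'_1)$ is the normal form of $\ff_1\gg_1$, we have $\ff_1 \dive \gg_0$, so we may write $\gg_0 = \ff_1\qq$; cancelling $\ff_1$ in $\gg_0\ff'_1 = \ff_1\gg_1$ gives $\qq\ff'_1 = \gg_1$, while $\ff_1\qq = \gg_0 \dive \D(\source{\ff_1}) = \ff_1\ff_1^*$ gives $\qq \dive \ff_1^*$. Applying~\eqref{E:Phi} to $\qq$ and using $\ff'_1(\ff'_1)^* = \D(\source{\ff'_1})$ together with $\gg_1\gg_1^* = \D(\source{\gg_1})$, a short cancellation of $\gg_1$ yields the key identity
\[
(\ff'_1)^* = \gg_1^*\,\f(\qq).
\]
The same manipulation on the right-hand square, applying~\eqref{E:Phi} to $\ff_2$ together with $\D(\target{\ff_2}) = \gg_2\gg_2^*$ and the commutativity $\ff_2\gg_2 = \gg_1\ff'_2$, produces the companion identity $\ff'_2\gg_2^* = \gg_1^*\,\f(\ff_2)$; in particular $\ff'_2 \dive \gg_1^*\,\f(\ff_2)$.

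Now regularity enters. From $\qq \dive \ff_1^*$ and Lemma~\ref{L:Dual} we get $\f(\qq) \dive \f(\ff_1^*) = \f(\ff_1)^*$; since $(\ff_1, \ff_2)$ is normal, regularity makes $(\f(\ff_1), \f(\ff_2))$ normal, so $\gcd(\f(\ff_1)^*, \f(\ff_2))$ is trivial by Lemma~\ref{L:Coprime}, whence $\gcd(\f(\qq), \f(\ff_2))$ is trivial. To conclude, set $\hh = \gcd((\ff'_1)^*, \ff'_2)$. By the first identity $\hh \dive \gg_1^*\,\f(\qq)$, and by the companion identity $\hh \dive \ff'_2 \dive \gg_1^*\,\f(\ff_2)$; hence, using that left-cancellation (by $\LGGi$) makes $\gcd$ distribute over a common left factor,
\[
\hh \dive \gcd\bigl(\gg_1^*\,\f(\qq),\, \gg_1^*\,\f(\ff_2)\bigr) = \gg_1^*\,\gcd\bigl(\f(\qq), \f(\ff_2)\bigr) = \gg_1^*.
\]
As also $\hh \dive \ff'_2$ and $(\gg_1, \ff'_2)$ is normal, Lemma~\ref{L:Coprime} gives $\hh \dive \gcd(\gg_1^*, \ff'_2)$, which is trivial; so $\hh$ is trivial and $(\ff'_1, \ff'_2)$ is normal.

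The main obstacle is the one flagged above: coprimality of $\gg_1^*$ with $\ff'_2$ is \emph{not} by itself enough to kill the extra factor $\f(\qq)$ that appears in $(\ff'_1)^*$, because in a general left-Garside category $\hh \dive uw$ with $\gcd(u,\hh)$ trivial does not force $\hh \dive w$ (this already fails in the braid category). Regularity supplies precisely the missing input—that $\f(\qq)$ and $\f(\ff_2)$ are coprime—which, after the $\gcd$-distribution step, traps $\hh$ inside $\gg_1^*$ and then collapses it. I expect the only delicate points to be the bookkeeping of sources and targets in the two identities and verifying that each cancellation is licensed by $\LGGi$.
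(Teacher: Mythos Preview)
Your argument is correct, and the core strategy matches the paper's: extend the diagram downward by the morphisms~$\gg_\ii^*$, use~\eqref{E:Phi} to identify the new bottom row with $\f(\ff_1),\f(\ff_2)$, invoke regularity to make $(\f(\ff_1),\f(\ff_2))$ normal, and finish with the normality of~$(\gg_1,\ff'_2)$ via Lemma~\ref{L:Coprime}.

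The execution differs in two places. First, the paper does not introduce your auxiliary morphism~$\qq$; it derives instead the identity $\ff'_1\gg_1^* = \gg_0^*\,\f(\ff_1)$ directly, takes an arbitrary simple~$\hh \dive \ff'_1\ff'_2$, observes $\hh \dive \ff'_1\ff'_2\gg_2^* = \gg_0^*\f(\ff_1)\f(\ff_2)$, and applies Lemma~\ref{L:Aux} with the normal pair $(\f(\ff_1),\f(\ff_2))$ to obtain $\hh \dive \gg_0^*\f(\ff_1) = \ff'_1\gg_1^*$; then $\hh \dive \ff'_1\gcd(\ff'_2,\gg_1^*) = \ff'_1$. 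Second, and relatedly, the paper never uses that $(\gg_0,\ff'_1)$ is the normal form of~$\ff_1\gg_1$---only the commutativity of the squares together with the normality of $(\ff_1,\ff_2)$ and of $(\gg_1,\ff'_2)$---whereas your route requires $\ff_1 \dive \gg_0$ to define~$\qq$. That extra hypothesis is of course available in the ambient proof of Proposition~\ref{P:RightProduct}, so your argument is fine there, but the paper's claim is marginally more self-contained. What you gain is a pleasant explicit formula, $(\ff'_1)^* = \gg_1^*\,\f(\qq)$, and the nice remark that the case $\gg_2 = \D(\target{\ff_2})$ \emph{is} regularity; what the paper gains is that it avoids the gcd-distribution step and proves a slightly stronger standalone claim.
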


\noindent To prove the claim, we introduce the morphisms~$\gg_0^*, \gg_1^*, \gg_2^*$ defined
by $\gg_\ii \gg_\ii^* = \D(\source{\gg_\ii})$ (Definition~\ref{D:Simple}). Then the diagram
\vrule width0pt height8mm depth11mm
\begin{picture}(31,16)(-5,0)
\put(0,-10){\includegraphics{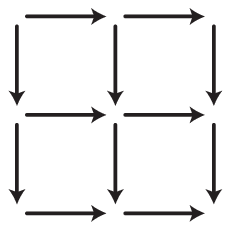}}
\put(4,2.5){$\ff'_1$}
\put(14,2.5){$\ff'_2$}
\put(4,12.5){$\ff_1$}
\put(14,12.5){$\ff_2$}
\put(-3,5.5){$\gg_0$}
\put(12,5.5){$\gg_1$}
\put(22,5.5){$\gg_2$}
\put(-3,-5){$\gg_0^*$}
\put(12,-5.5){$\gg_1^*$}
\put(22,-5){$\gg_2^*$}
\put(2,-13){$\f(\ff_1)$}
\put(12,-13){$\f(\ff_2)$}
\end{picture}
 is commutative. Indeed, appying~\eqref{E:Phi}, we find
\begin{align*}
\gg_0 \ff'_1 \gg_1^* = \ff_1 \gg_1 \gg_1^* 
&= \ff_1 \D(\source{\gg_0}) = \ff_1
\D(\target{\ff_1})\\
&= \D(\source{\ff_1}) \f(\ff_1)
= \D(\source{\gg_0}) \f(\ff_1)
= \gg_0 \gg_0^* \f(\ff_1),
\end{align*}
hence $\ff'_1\gg_1^* = \gg_0^* \f(\ff_1)$ by left-cancelling~$\gg_0$. A  similar argument gives $\ff'_2
\gg_2^* =\nobreak \gg_1^* \f(\ff_2)$.

Assume that $\hh$ is simple and satisfies $\hh \dive \ff'_1 \ff'_2$. We deduce 
$$\hh \dive \ff'_1
\ff'_2\gg_2^* = \gg_0^* \f(\ff_1) \f(\ff_2).$$
By hypothesis, $(\ff_1, \ff_2)$ is normal. Hence the hypothesis that $\CCC$ is regular implies that $(\f(\ff_1),
\f(\ff_2))$ is normal as well. By Lemma~\ref{L:Aux}, $\hh \dive \gg_0^*
\f(\ff_1) \f(\ff_2)$ implies $\hh \dive\nobreak \gg_0^* \f(\ff_1) = \ff'_1 \gg_1^*$. We deduce $\hh \dive
\gcd(\ff'_1\ff'_2, \ff'_1\gg_1^*) = \ff'_1 \, \gcd(\ff'_2, \gg_1^*)$. By Lemma~\ref{L:Coprime}, the hypothesis
that $(\gg_1, \ff'_1)$ is normal implies $\gcd(\gg_1^*, \ff'_2) = 1$, and, finally, we deduce $\hh \dive
\ff'_1$, \ie, $(\ff'_1, \ff'_2)$ is normal.
\end{proof}

\begin{rema}
\label{R:Seed}
It might be tempting to mimick the arguments of this section in the general framework of a left-preGarside
category~$\CCC$ and a seed~$\SSS$, provided some additional conditions are satisfied. However, it is unclear
that the extension can be a genuine one. For instance, if we require that, for
each~$\ff$ in~$\SSS$, there exists~$\ff^*$ in~$\SSS$ such that $\ff\ff^*$ exists and depends on~$\source\ff$
only, then the map $\source\ff \mapsto \ff\ff^*$ is a left-Garside map and we are back to left-Garside
categories.
\end{rema}

\subsection{Regularity criteria}
\label{S:Criteria}

We conclude with some sufficient conditions implying regularity. In particular, we observe that, in the
two-sided case, regularity is automatically satisfied.

\begin{lemm}
\label{L:Bijective}
Assume that $\CCC$ is a left-Garside category~$\CCC$. Then a sufficient condition for~$\CCC$ to be regular is
that the functor~$\f$ is bijective on~$\Hom(\CCC)$.
\end{lemm}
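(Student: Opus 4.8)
The plan is to show that if $\f$ is bijective on $\Hom(\CCC)$, then it preserves normality of length-two sequences. The key observation is the characterization of normality given by Lemma~\ref{L:Coprime}: a pair $(\ff_1, \ff_2)$ of simple morphisms is normal if and only if $\gcd(\ff_1^*, \ff_2) = \id{\target{\ff_1}}$, that is, $\ff_1^*$ and $\ff_2$ are left-coprime. So I would reduce regularity to the statement that $\f$ sends left-coprime pairs to left-coprime pairs, where the first entry of the second pair should be $\f(\ff_1)^*$. This is where Lemma~\ref{L:Dual} enters, since it gives precisely $\f(\ff_1^*) = \f(\ff_1)^*$, matching up the two characterizations.

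First I would take $(\ff_1, \ff_2)$ normal and simple, put $\xx = \target{\ff_1} = \source{\ff_2}$, and invoke Lemma~\ref{L:Coprime} to get $\gcd(\ff_1^*, \ff_2) = \id\xx$. Applying $\f$, and using that $\f$ is a functor together with $\f(\ff_1^*) = \f(\ff_1)^*$ from Lemma~\ref{L:Dual}, the goal becomes showing $\gcd(\f(\ff_1)^*, \f(\ff_2)) = \id{\f(\xx)}$; by Lemma~\ref{L:Coprime} again (applied to $(\f(\ff_1), \f(\ff_2))$, which are simple by Lemma~\ref{L:Simple}$(i)$) this is exactly the normality of $(\f(\ff_1), \f(\ff_2))$. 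So the whole proof hinges on the single claim that $\f$ preserves left-gcd's, or at least preserves the triviality of a left-gcd.

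The main obstacle is precisely this: establishing that $\f$ commutes with $\gcd$, and this is where bijectivity is needed. The direction $\gcd(\f(\aa), \f(\bb)) \dive \f(\gcd(\aa,\bb))$ is the delicate one. I would argue as follows: let $\cc = \gcd(\f(\ff_1^*), \f(\ff_2))$. Since $\f$ is bijective, write $\cc = \f(\dd)$ for a unique morphism $\dd$. From $\f(\dd) \dive \f(\ff_1^*)$ and $\f(\dd) \dive \f(\ff_2)$, one wants to descend to $\dd \dive \ff_1^*$ and $\dd \dive \ff_2$; this requires that $\f$ reflects divisibility, i.e. $\f(\dd) \dive \f(\ee)$ implies $\dd \dive \ee$. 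Since $\f$ is a bijective functor, if $\f(\dd)\,\hh = \f(\ee)$ then writing $\hh = \f(\hh')$ gives $\f(\dd\hh') = \f(\ee)$, whence $\dd\hh' = \ee$ by injectivity, so $\dd \dive \ee$. Thus $\dd$ is a common left-divisor of $\ff_1^*$ and $\ff_2$, forcing $\dd = \id{}$ and hence $\cc = \f(\id{}) = \id{}$, as wanted.

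I would therefore structure the proof in two clean steps: establish the auxiliary fact that a bijective functor reflects divisibility (a one-line argument from injectivity of $\f$ on morphisms and surjectivity applied to the quotient), and then chain together Lemma~\ref{L:Coprime}, Lemma~\ref{L:Dual}, and this reflection property to conclude. The subtle point worth flagging is that surjectivity of $\f$ is used to pull back the cofactor $\hh$, not merely the gcd itself, so both halves of bijectivity are genuinely in play; injectivity alone gives one inclusion of the gcd comparison but not the descent of the witness morphism. If bijectivity turns out to be stronger than necessary, one could try to isolate exactly ``$\f$ reflects divisibility and triviality of gcd,'' but for the stated lemma the bijectivity hypothesis makes the reflection argument immediate and I would not try to weaken it here.
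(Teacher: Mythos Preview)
Your argument is correct. Both your proof and the paper's begin with the same key step: bijectivity of~$\f$ implies that $\f$ \emph{reflects} left-divisibility (if $\f(\dd)\dive\f(\ee)$ then $\dd\dive\ee$), via exactly the one-line surjectivity/injectivity manoeuvre you describe. From there the two proofs diverge.

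The paper works directly with the definition of normality: given a simple~$\gg$ with $\gg\dive\f(\ff_1)\f(\ff_2)$, it writes $\gg=\f(\gg')$, shows that $\gg'$ is \emph{simple} (this requires a separate ``reflection of simplicity'' step, using $\f(\D(\xx))=\D(\f(\xx))$ from Lemma~\ref{L:Dual}), pulls back $\gg'\dive\ff_1\ff_2$, and concludes $\gg'\dive\ff_1$, hence $\gg\dive\f(\ff_1)$. Your route instead passes through the coprimeness characterization of Lemma~\ref{L:Coprime}, using the second identity $\f(\ff^*)=\f(\ff)^*$ of Lemma~\ref{L:Dual}; you then only need \emph{preservation} of simplicity (Lemma~\ref{L:Simple}$(i)$) rather than reflection, and you reduce everything to showing that $\f$ preserves coprimeness, which follows from the reflection of divisibility. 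In effect you are verifying condition~$(i)$ of Proposition~\ref{P:CriterionTer} under the bijectivity hypothesis. This is marginally more economical, since it avoids the auxiliary reflection-of-simplicity claim; the paper's version is closer to the raw definition and makes the role of simplicity more explicit. Either way, both halves of bijectivity are genuinely used, as you correctly flag.
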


\begin{proof}
Assume that $\CCC$ is a left-Garside category and $\f$ is bijective on~$\Hom(\CCC)$.
First we claim that $\f(\ff) \dive \f(\gg)$ implies $\ff \dive \gg$ in~$\CCC$. Indeed, assume $\f(\gg) = \f(\ff)
\hh$. As $\f$ is surjective, we have $\f(\gg) = \f(\ff) \f(\hh')$ for some~$\hh'$, hence $\f(\gg) = \f(\ff\hh')$
since $\f$ is a functor, hence $\gg = \ff \hh'$ since $\f$ is injective.

Next, we claim that, if $\f(\ff)$ is simple if and only if $\ff$ is simple. That the condition is sufficient directly
follows from Definition~\ref{D:Simple}. Conversely, assume that $\f(\ff)$ is simple. This means that there
exists $\gg$ satisfying $\f(\ff) \gg = \D(\source{\f(\ff)})$. As $\f$ is surjective, there exists $\gg'$ satisfying
$\gg = \f(\gg')$. Applying~\eqref{E:Dual}, we obtain $\f(\ff\gg') = \D(\source{\f(\ff)}) = \f(\D(\source{\ff})$,
hence $\ff\gg' = \D(\source{\ff})$ by injectivity of~$\f$.

Finally, assume that $(\ff_1, \ff_2)$ is normal, and $\gg$ is a simple morphism left-dividing $\f(\ff_1)
\f(\ff_2)$, hence satisfying $\gg\hh = \f(\ff_1) \f(\ff_2)$ for some~$\hh$. As $\f$ is surjective, we have
$\gg = \f(\gg')$ and $\hh = \f(\hh')$ for some~$\gg', \hh'$. Moreover, by the claim above, the hypothesis that
$\gg$ is simple implies that $\gg'$ is simple as well. Then we have $\f(\gg') \f(\hh') = \f(\ff_1)
\f(\ff_2)$, hence $\gg'\hh' = \ff_1 \ff_2$ since $\f$ is a functor and it is injective. The hypothesis that
$(\ff_1, \ff_2)$ is normal implies $\gg' \dive \ff_1$, hence $\gg = \f(\gg') \dive \f(\ff_1)$. So $(\f(\ff_1),
\f(\ff_2))$ is normal, and $\CCC$ is regular.
\end{proof}

\begin{prop}
Every Garside category is regular.
\end{prop}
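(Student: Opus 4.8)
The plan is to reduce the statement to Lemma~\ref{L:Bijective} by showing that in a Garside category the functor~$\f$ is bijective on~$\Hom(\CCC)$. The natural candidate for the inverse is the functor~$\psi$ attached to the right-Garside structure: applying the symmetric versions of Definition~\ref{D:Phi} and Lemma~\ref{L:Basic} (obtained by exchanging left and right, and source and target, throughout Section~\ref{S:Simple}) to the right-Garside map~$\nabla$, one gets a functor~$\psi$ with $\psi(\yy) = \source{(\nabla(\yy))}$ on objects and, as the mirror of~\eqref{E:Phi}, the natural-transformation identity
$$\nabla(\source\gg)\,\gg = \psi(\gg)\,\nabla(\target\gg) \qquad\text{for every morphism }\gg.$$

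First I would check that $\f$ and $\psi$ are mutually inverse on objects. The compatibility conditions of Definition~\ref{D:Garside}, namely $\D(\xx) = \nabla(\target{(\D(\xx))})$ and $\nabla(\yy) = \D(\source{(\nabla(\yy))})$, rewrite as $\nabla(\f(\xx)) = \D(\xx)$ and $\D(\psi(\yy)) = \nabla(\yy)$. Taking the source in the first and the target in the second yields $\psi(\f(\xx)) = \source{(\D(\xx))} = \xx$ and $\f(\psi(\yy)) = \target{(\nabla(\yy))} = \yy$, so $\f$ is a bijection of~$\Obj(\CCC)$ with inverse~$\psi$.

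The core of the argument is the morphism level. For $\ff : \xx \to \yy$, I would apply the dual identity above to $\gg = \f(\ff) : \f(\xx) \to \f(\yy)$, giving $\psi(\f(\ff))\,\nabla(\f(\yy)) = \nabla(\f(\xx))\,\f(\ff)$. Substituting $\nabla(\f(\xx)) = \D(\xx)$ and $\nabla(\f(\yy)) = \D(\yy)$ from the compatibility relations, and then using~\eqref{E:Phi} on the right-hand side, this becomes
$$\psi(\f(\ff))\,\D(\yy) = \D(\xx)\,\f(\ff) = \ff\,\D(\yy).$$
Since a Garside category is in particular right-cancellative (the right form of~$\LGGi$), cancelling~$\D(\yy)$ on the right gives $\psi(\f(\ff)) = \ff$, that is $\psi\circ\f = \ID$ on~$\Hom(\CCC)$. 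The mirror computation---applying~\eqref{E:Phi} to~$\psi(\gg)$, substituting $\D(\psi(\xx)) = \nabla(\xx)$ and $\D(\psi(\yy)) = \nabla(\yy)$ via compatibility, invoking the dual of~\eqref{E:Phi}, and left-cancelling~$\nabla(\xx)$---gives $\f\circ\psi = \ID$. Hence $\f$ is bijective on morphisms, and Lemma~\ref{L:Bijective} yields regularity.

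The main obstacle is foundational rather than computational: one must make sure the right-Garside structure genuinely supplies a functor~$\psi$ satisfying the mirror of~\eqref{E:Phi}, which means invoking the symmetric analogues of Lemma~\ref{L:Basic}$(iii)$--$(iv)$ for the map~$\nabla$. Once this dual apparatus is in place, everything reduces to the two cancellation steps above, and the only subtlety is to keep track of which cancellation---left or right---is used in each direction, both being available precisely because a Garside category is two-sided.
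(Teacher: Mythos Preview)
Your proposal is correct and follows essentially the same route as the paper: construct the dual functor~$\psi$ from the right-Garside map~$\nabla$, use the compatibility $\nabla(\f(\xx))=\D(\xx)$ to compare the naturality identity~\eqref{E:Phi} with its mirror applied to~$\f(\ff)$, right-cancel to obtain $\psi\circ\f=\ID$, symmetrize, and invoke Lemma~\ref{L:Bijective}. Your explicit verification that $\f$ and $\psi$ are mutually inverse on objects, and your remark that the only genuine burden is supplying the dual of Lemma~\ref{L:Basic}$(iii)$--$(iv)$, match the paper's treatment exactly (the paper simply cites ``arguments similar to those of Lemma~\ref{L:Basic}'' for that step).
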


\begin{proof}
Assume that $\CCC$ is a left-Garside with respect to~$\D$ and right-Garside with respect to~$\nabla$
satisfying $\D(\xx) = \nabla(\xx')$ for $\xx' = \target{\D(\xx)}$. Put
$\psi(\xx') = \source{\nabla(\xx')}$ for $\xx'$ in~$\Obj(\CCC)$ and, for~$\gg$ simple
in~$\Hom(\CCC)$, hence a right-divisor of~$\nabla(\target{\gg})$, denote by~${}^*\gg$ the unique simple
morphism satisfying ${}^*\gg \gg = \nabla(\source{\gg})$, and put $\psi(\gg) = {}^{**}\gg$. Then arguments
similar to those of Lemma~\ref{L:Basic} give the equality
\begin{equation}
\label{E:Psi}
\nabla(\source\gg) \, \gg = \psi(\gg) \, \nabla(\target\gg) 
\end{equation}
which is an exact counterpart of~\eqref{E:Phi}. Let $\ff : \xx \to \yy$ be any morphism in~$\CCC$. Put $\xx'
= \f(\xx)$ and $\yy' = \f(\yy)$. By construction, we also have $\xx = \psi(\xx')$ and $\yy = \psi(\yy')$.
Applying~\eqref{E:Phi} to~$\ff : \xx \to \yy$, we obtain $\D(\xx) \, \f(\ff) = \ff \, \D(\yy),$
which is also
\begin{equation}
\label{E:DN1}
\nabla(\xx') \, \f(\ff) = \ff \, \nabla(\yy').
\end{equation}
On the other hand, applying~\eqref{E:Psi} to~$\f(\ff) : \xx' \to \yy'$ yields
\begin{equation}
\label{E:DN2}
\nabla(\xx') \, \f(\ff) = \psi(\f(\ff))\, \nabla(\yy').
\end{equation}
Comparing~\eqref{E:DN1} and~\eqref{E:DN2} and right-cancelling~$\nabla(\yy')$, we deduce $\psi(\f(\ff)) =
\ff$. A symmetric argument gives $\f(\psi(\gg)) = \gg$ for each~$\gg$, and we conclude that $\psi$ is the
inverse of~$\f$, which is therefore bijective. Then we apply Lemma~\ref{L:Bijective}.
\end{proof}

\begin{rema}
The above proof shows that, if $\CCC$ is a left-Garside category that is Garside, then the associated
functor~$\f$ is bijective both on~$\Obj(\CCC)$ and on~$\Hom(\CCC)$. Let us mention without proof that
this necessary condition is actually also sufficient.
\end{rema}

Apart from the previous very special case, we can state several weaker regularity criteria that are close to
the definition and will be useful in Section~\ref{S:Main}. We recall that
$\head\ff$ denoted the maximal simple morphism left-dividing~$\ff$.

\begin{prop}
\label{P:CriterionBis}
A left-Garside category $\CCC$ is regular if and only if $\f$ preserves the head function on
product of two simples: for $\ff_1,
\ff_2$ simple with $\target{\ff_1} = \source{\ff_2}$,
\begin{equation}
\label{E:PhiHead}
\head{\f(\ff_1 \ff_2)} = \f(\head{\ff_1 \ff_2});
\end{equation}
\end{prop}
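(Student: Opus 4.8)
The statement is an equivalence, so the plan is to prove the two implications separately. The ingredients I would use are the identity $\head\ff = \gcd(\ff, \D(\source\ff))$, the fact that $(\ff_1, \ff_2)$ is normal exactly when $\ff_1 = \head{\ff_1\ff_2}$, that $\f$ is a functor (Lemma~\ref{L:Basic}$(iii)$) carrying simple morphisms to simple morphisms (Lemma~\ref{L:Simple}$(i)$), and the definition of regularity.

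For the implication \eqref{E:PhiHead} $\Rightarrow$ regular, I would argue directly. Take a normal pair $(\ff_1, \ff_2)$ of simple morphisms, so that $\ff_1 = \head{\ff_1\ff_2}$. Applying $\f$ and using that it is a functor gives $\f(\ff_1\ff_2) = \f(\ff_1)\f(\ff_2)$, with $\f(\ff_1), \f(\ff_2)$ simple; combining this with \eqref{E:PhiHead} yields $\head{\f(\ff_1)\f(\ff_2)} = \f(\head{\ff_1\ff_2}) = \f(\ff_1)$, which is precisely the assertion that $(\f(\ff_1), \f(\ff_2))$ is normal. This direction is routine.

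For the converse, regular $\Rightarrow$ \eqref{E:PhiHead}, the plan is first to describe the normal form of a product of two simples. I would show that for simple $\ff_1, \ff_2$ the product $\ff_1\ff_2$ has a normal form of length at most two, say $(\gg, \cc)$ with $\gg, \cc$ simple and $\gg = \head{\ff_1\ff_2}$: the length bound comes from the computation rule of Proposition~\ref{P:LeftProduct} applied to the normal form $(\ff_2)$ with the single left factor $\ff_1$, while the simplicity of the entries and the identification of the first one as the head are built into Proposition~\ref{P:NF}. Then I would apply the regularity hypothesis to the normal pair $(\gg, \cc)$, concluding that $(\f(\gg), \f(\cc))$ is normal, \ie $\head{\f(\gg)\f(\cc)} = \f(\gg)$; since $\f$ is a functor this reads $\head{\f(\ff_1\ff_2)} = \f(\gg) = \f(\head{\ff_1\ff_2})$, giving \eqref{E:PhiHead}.

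The main obstacle is the claim that the remainder $\cc$ in $\ff_1\ff_2 = \gg\cc$ is itself simple, since the regularity hypothesis is stated only for pairs of simple morphisms. I expect to secure this either through the length bound of Proposition~\ref{P:LeftProduct} (the greedy normal form never exceeds the length of a given simple decomposition, and its factors are simple by Proposition~\ref{P:NF}), or by a direct divisibility computation: from $\ff_1 \dive \gg$ write $\gg = \ff_1\hh$, cancel $\ff_1$ in $\ff_1\ff_2 = \ff_1\hh\cc$ to get $\ff_2 = \hh\cc$, observe that $\cc\,\ff_2^*$ is then a right-divisor of $\D(\source{\ff_2}) = \hh\,\cc\,\ff_2^*$ and hence simple by Lemma~\ref{L:Simple}$(ii)$, and finally note that $\cc$, being a left-divisor of the simple morphism $\cc\,\ff_2^*$, is simple as well. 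The trivial case $\cc = \id{\target\gg}$ makes \eqref{E:PhiHead} immediate.
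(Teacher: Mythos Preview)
Your proposal is correct and follows essentially the same approach as the paper: for the converse you pass to the length-$\le 2$ normal form $(\gg,\cc)$ of $\ff_1\ff_2$ via Proposition~\ref{P:LeftProduct}, apply regularity, and read off~\eqref{E:PhiHead}; for the forward direction you unwind the definition of normality as $\ff_1 = \head{\ff_1\ff_2}$ and apply~\eqref{E:PhiHead}. Your extra care in justifying that the second factor~$\cc$ is simple (via the direct divisibility argument with Lemma~\ref{L:Simple}$(ii)$) is more explicit than the paper, which simply invokes Proposition~\ref{P:LeftProduct}, but the substance is the same.
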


\begin{proof}
Assume that $\CCC$ is regular, and let $\ff_1, \ff_2$ satisfy $\target\ff_1 = \source\ff_2$. Let $(\ff'_1,
\ff'_2)$ be the formal form of~$\ff_1\ff_2$---which has length~$2$ at most by Proposition~\ref{P:LeftProduct}.
Then, $(\f(\ff'_1), \f(\ff'_2))$ is normal and satisfies $\f(\ff'_1)\f(\ff'_2) = \f(\ff_1\ff_2)$, so $(\f(\ff'_1),
\f(\ff'_2))$ is the normal form of~$f(\ff_1\ff_2)$. Hence we have
$\head{\ff_1\ff_2} = \ff'_1$ and $\head{\f(\ff_1\ff_2)} = \f(\ff'_1)$, which is~\eqref{E:PhiHead}.

Conversely, assume~\eqref{E:PhiHead} and let $(\ff_1, \ff_2)$ be normal. By construction, we have $\ff_1 =
\head{\ff_1\ff_2}$, hence $\f(\ff_1) = \head{\f(\ff_1\ff_2)}$ by hypothesis. This means that the normal form
of~$\f(\ff_1\ff_2)$ is $(\f(\ff_1), \gg)$ for some~$\gg$ satisfying $\f(\ff_1\ff_2) = \f(\ff_1)\gg$. Now
$\f(\ff_2)$ is such a morphism~$\gg$, and, by~$\LGGi$, it is the only one. So the normal form
of~$\f(\ff_1\ff_2)$ is $(\f(\ff_1), \f(\ff_2))$, and $\CCC$ is regular.
\end{proof}

\begin{prop}
\label{P:CriterionTer}
Assume that $\CCC$ is a left-Garside category~$\CCC$. Then two sufficient conditions for~$\CCC$ to be
regular are

$(i)$ The functor~$\f$ preserves left-coprimeness of simple morphisms: for $\ff, \gg$ simple with
$\source{\ff} = \source{\gg}$,
\begin{equation}
\label{E:PhiCoprime}
\gcd(\ff, \gg) = 1
\mbox{\quad implies \quad}
\gcd(\f(\ff), \f(\gg)) = 1.
\end{equation}

$(ii)$ The functor~$\f$ preserves the gcd operation on simple morphisms:  for $\ff,
\gg$ simple with $\source{\ff} = \source{\gg}$,
\begin{equation}
\gcd(\f(\ff), \f(\gg)) = \f(\gcd(\ff, \gg)),
\end{equation}
and, moreover, $\f(\ff)$ is nontrivial whenever $\ff$ is nontrivial.
\end{prop}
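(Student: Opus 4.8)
The plan is to reduce both statements to the coprimeness characterisation of normality provided by Lemma~\ref{L:Coprime}---a length-two simple sequence $(\ff_1, \ff_2)$ is normal if and only if $\gcd(\ff_1^*, \ff_2)$ is trivial---combined with the duality identity $\f(\ff^*) = \f(\ff)^*$ of Lemma~\ref{L:Dual}. With these two facts in hand, regularity becomes a matter of transporting a coprimeness relation across~$\f$.

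First I would treat~$(i)$. Start from a normal pair $(\ff_1, \ff_2)$ of simple morphisms with $\target{\ff_1} = \source{\ff_2}$; by Lemma~\ref{L:Coprime} this says exactly that $\gcd(\ff_1^*, \ff_2) = \id{\target{\ff_1}}$. The observation that makes hypothesis~$(i)$ applicable is that $\ff_1^*$ and $\ff_2$ are both simple (Lemma~\ref{L:Simple}$(i)$) and share the common source $\target{\ff_1}$: indeed $\source{\ff_1^*} = \target{\ff_1} = \source{\ff_2}$, since $\ff_1\ff_1^* = \D(\source{\ff_1})$. Applying~\eqref{E:PhiCoprime} to the pair $(\ff_1^*, \ff_2)$ then yields $\gcd(\f(\ff_1^*), \f(\ff_2)) = \id{}$, and rewriting $\f(\ff_1^*) = \f(\ff_1)^*$ by Lemma~\ref{L:Dual} turns this into $\gcd(\f(\ff_1)^*, \f(\ff_2)) = \id{}$. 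Since $\f(\ff_1), \f(\ff_2)$ are simple and composable (as $\f$ is a functor, $\target{\f(\ff_1)} = \f(\target{\ff_1}) = \f(\source{\ff_2}) = \source{\f(\ff_2)}$), the backward direction of Lemma~\ref{L:Coprime} gives that $(\f(\ff_1), \f(\ff_2))$ is normal, which is regularity.

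For~$(ii)$ it suffices to show that the gcd-preservation hypothesis implies~$(i)$ and then invoke the first part. If $\ff, \gg$ are simple with common source and $\gcd(\ff, \gg) = \id{\source\ff}$, then the preservation identity gives $\gcd(\f(\ff), \f(\gg)) = \f(\gcd(\ff, \gg)) = \f(\id{\source\ff}) = \id{\f(\source\ff)}$, using that $\f$ carries identities to identities (Lemma~\ref{L:Basic}). Thus $\gcd(\f(\ff), \f(\gg))$ is trivial, which is precisely the conclusion of~$(i)$; the additional clause that $\f(\ff)$ is nontrivial whenever $\ff$ is is recorded because it is what one verifies in practice and makes~$(ii)$ manifestly a strengthening of~$(i)$, though the reduction itself rests only on the gcd identity.

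I do not expect a genuine obstacle here: the argument is entirely a matter of correctly matching sources and targets so that the hypotheses apply, and of the star-duality bookkeeping. The one point deserving care is the verification that $\ff_1^*$ and $\ff_2$ actually share a source---so that~$(i)$ may be invoked on them---and that the ``if'' direction of Lemma~\ref{L:Coprime} is legitimately applied to $(\f(\ff_1), \f(\ff_2))$, which requires exactly the simplicity and composability guaranteed by $\f$ being a functor valued in $\Sim(\CCC)$.
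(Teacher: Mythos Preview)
Your proof is correct and follows essentially the same route as the paper's own argument: for~$(i)$, translate normality of $(\ff_1,\ff_2)$ into $\gcd(\ff_1^*,\ff_2)=1$ via Lemma~\ref{L:Coprime}, push through~$\f$, use $\f(\ff_1^*)=\f(\ff_1)^*$ from Lemma~\ref{L:Dual}, and invoke Lemma~\ref{L:Coprime} again; for~$(ii)$, observe that gcd-preservation specialises to coprimeness-preservation. Your additional bookkeeping (checking that $\ff_1^*$ and $\ff_2$ share a source, that $\f(\ff_1),\f(\ff_2)$ are simple and composable) and your remark that the nontriviality clause in~$(ii)$ is not actually used in the reduction to~$(i)$ are accurate and make the argument more explicit than the paper's.
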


\begin{proof}
Assume~$(i)$. Let $(\ff, \gg)$ be normal.
By Lemma~\ref{L:Coprime}, we have $\gcd(\ff^*, \gg) =\nobreak 1$. By~\eqref{E:PhiCoprime}, we
deduce $\gcd(\f(\ff^*), \f(\gg)) = 1$. By Lemma~\ref{L:Dual}, this equality is also
$\gcd(\f(\ff)^*, \f(\gg)) = 1$, which, by Lemma~\ref{L:Coprime} again, means that
$(\f(\ff), \f(\gg))$ is normal. Hence $\CCC$ is regular.

On the other hand, it is clear that $(ii)$ implies~$(i)$.
\end{proof}

\section{Self-distributivity}
\label{S:LD}

We quit general left-Garside categories, and turn to the description of one particular
example, namely a certain category (two categories actually) associated with the left self-distributive law. The
latter is the algebraic law
\begin{equation}
\label{E:LD}
\tag{\hbox{LD}}
\xx(\yy\zz) = (\xx\yy)(\xx\zz)
\end{equation}
extensively investigated in~\cite{Dgd}.

We first review some basic results about this law and the associated free LD-systems, \ie, the binary systems
that obey the LD-law. The key notion is the notion of an LD-expansion, with two
derived categories~$\CLDm$ and~$\CLD$ that will be our main subject of investigation from now on.

\subsection{Free LD-systems}
\label{S:Free}

For each algebraic law (or family of algebraic laws), there exist universal objects in the
category of structures that satisfy this law, namely the free systems. Such structures can
be uniformly described as the quotient of some absolutely free structures under a
convenient congruence.

\begin{defi}
We let~$\TT_\nn$ be the set of all bracketed expressions involving variables~$\xx_1, ..., \xx_\nn$, \ie,
the closure of~$\{\xx_1, ..., \xx_\nn\}$ under $\tt_1 \op \tt_2 =
(\tt_1)(\tt_2)$. We use~$\TT$ for the union of all sets~$\TT_\nn$. Elements of~$\TT$ are called
\emph{terms}.
\end{defi}

Typical terms are $\xx_1$, $\xx_2
\op \xx_1$, $\xx_3 \op (\xx_3 \op \xx_1)$, etc. It is convenient to think of terms as
rooted binary trees with leaves indexed by the variables: the trees associated with the
previous terms are 
\vrule width0pt height7mm depth6mm
\begin{picture}(4,1)(-1,-1.5)
\put(0,0){$\scriptscriptstyle\bullet$}
\put(-1,-3){$\xx_1$}
\end{picture},
\begin{picture}(7,1)(-1,0)
\put(0,0){\includegraphics{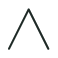}}
\put(-1,-3){$\xx_2$}
\put(3,-3){$\xx_1$}
\end{picture}, and
\begin{picture}(10,1)(-2,2)
\put(0,0){\includegraphics{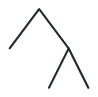}}
\put(-1,1){$\xx_3$}
\put(3,-3){$\xx_3$}
\put(7,-3){$\xx_1$}
\end{picture}
, respectively. The system $(\TT_\nn, \op)$ is the absolutely free
system (or algebra) generated by~$\xx_1, ..., \xx_\nn$, and every binary system
generated by $\nn$~elements is a quotient of this system. So is in particular the free LD-system of
rank~$\nn$.

\begin{defi}
We denote by~$\eLD$ the least congruence (\ie, equivalence relation compatible with
the product) on~$(\TT_\nn, \op)$ that contains all pairs of the form
$$(\tt_1 \op (\tt_2 \op \tt_3), (\tt_1 \op \tt_2) \op (\tt_1 \op \tt_3)).$$
Two terms~$\tt, \tt'$ satisfying $\tt \eLD \tt'$ are called \emph{LD-equivalent}.
\end{defi}

The following result is then standard.

\begin{prop}
\label{P:Free}
For each~$\nn \le \infty$, the binary system $(\TT_\nn\quot\eLD, \op)$ is a free
LD-system based on $\{\xx_1, ..., \xx_\nn\}$.
\end{prop}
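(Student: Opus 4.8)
Proposition~\ref{P:Free} asserts that $(\TT_\nn\quot\eLD, \op)$ is the free LD-system based on $\{\xx_1, \ldots, \xx_\nn\}$.

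The plan is to verify the universal property directly, exhibiting $(\TT_\nn\quot\eLD, \op)$ as the solution of the appropriate universal mapping problem. First I would check that $(\TT_\nn\quot\eLD, \op)$ is a genuine LD-system: since $\eLD$ is by definition a congruence containing all instances of the defining pair, the operation $\op$ descends to the quotient, and the image of each relator-pair becomes an equality, so the LD-law holds in the quotient. This is essentially immediate from the construction. Next I would set up the universal property: given any LD-system $(\SS, \cdot)$ together with an arbitrary map $\f\colon \{\xx_1, \ldots, \xx_\nn\} \to \SS$, I must produce a unique LD-homomorphism $\overline{\f}\colon \TT_\nn\quot\eLD \to \SS$ extending~$\f$.

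The construction proceeds in two stages. Since $(\TT_\nn, \op)$ is the \emph{absolutely} free binary system on $\{\xx_1, \ldots, \xx_\nn\}$---a fact I would take as known, provable by structural induction on terms---the map $\f$ extends uniquely to a homomorphism $\widehat{\f}\colon (\TT_\nn, \op) \to (\SS, \cdot)$ of binary systems, defined by $\widehat{\f}(\tt_1\op\tt_2) = \widehat{\f}(\tt_1)\cdot\widehat{\f}(\tt_2)$. The key point is then that $\widehat{\f}$ factors through~$\eLD$, \ie, that $\tt \eLD \tt'$ implies $\widehat{\f}(\tt) = \widehat{\f}(\tt')$. To see this, consider the kernel relation $\ker(\widehat{\f}) = \{(\tt, \tt') \mid \widehat{\f}(\tt) = \widehat{\f}(\tt')\}$. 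It is an equivalence relation, it is compatible with~$\op$ because $\widehat{\f}$ is a homomorphism, and it contains every pair $(\tt_1 \op (\tt_2 \op \tt_3),\ (\tt_1 \op \tt_2) \op (\tt_1 \op \tt_3))$ precisely because $\SS$ satisfies the LD-law. Hence $\ker(\widehat{\f})$ is a congruence containing all the defining pairs, so by minimality $\eLD \subseteq \ker(\widehat{\f})$, which is exactly the required factorization. This yields the induced map~$\overline{\f}$, which is an LD-homomorphism since $\widehat{\f}$ respects $\op$ and the quotient map does too.

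For uniqueness, any LD-homomorphism extending~$\f$ must agree with $\widehat{\f}$ on representatives by induction on term structure (the values on variables are prescribed, and the homomorphism property determines the rest), so it coincides with $\overline{\f}$ on every class. I expect the main---though still routine---obstacle to be the minimality argument for $\eLD$: one must be careful that $\ker(\widehat{\f})$ really is a \emph{congruence} (not merely an equivalence relation closed under the relator pairs) so that the defining minimality of $\eLD$ applies. Everything else is a standard application of the universal property of absolutely free algebras, and the result is correctly described as ``standard.''
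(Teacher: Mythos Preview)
Your argument is correct and is precisely the standard universal-algebra verification the paper has in mind; the paper itself gives no proof, merely declaring the result ``standard'' immediately before the statement. There is nothing to compare: your kernel-congruence argument is the expected route.
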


\subsection{LD-expansions}

The relation~$\eLD$ is a complicated object, about which many questions remain open.
In order to investigate it, it proved useful to introduce the subrelation of~$\eLD$ that
corresponds to applying the LD-law in the expanding direction only.

\begin{defi}
Let $\tt, \tt'$ be terms. We say that $\tt'$ is an \emph{atomic LD-expansion} of~$\tt$,
denoted $\tt \exp^1 \tt'$, if $\tt'$ is obtained from~$\tt$ by replacing some subterm of
the form $\tt_1 \op (\tt_2 \op \tt_3)$ with the corresponding term $(\tt_1 \op \tt_2) \op
(\tt_1 \op \tt_3)$. We say that $\tt'$ is an \emph{LD-expansion} of~$\tt$, denoted $\tt
\exp \tt'$, if there exists a finite sequence of terms $\tt_0, ..., \tt_\pp$ satisfying $\tt_0 =
\tt$, $\tt_\pp = \tt'$, and $\tt_{\ii-1} \exp^1 \tt_\ii$ for $1 \le  \ii \le \pp$.
\end{defi}

By definition, being an LD-expansion implies being LD-equivalent, but the converse is not
true. For instance, the term $(\xx \op \xx) \op (\xx \op \xx)$ is an (atomic) LD-expansion
of $\xx \op (\xx \op \xx)$, but the latter is not an LD-expansion of the former. However,
it should be clear that $\eLD$ is generated by~$\exp$, so that
two terms~$\tt, \tt'$ are LD-equivalent if and only if there exists a finite zigzag $\tt_0,
\tt_1, ..., \tt_{2\pp}$ satisfying $\tt_0 = \tt$, $\tt_{2\pp} = \tt'$, and $\tt_{\ii-1} \exp
\tt_\ii \antiexp \tt_{\ii+1}$ for each odd~$\ii$.

The first nontrivial result about LD-equivalence is that the previous zigzags may always be
assumed to have length two. 

\begin{prop} \cite{Dep}
\label{P:Confluence}
Two terms are LD-equivalent if and only if they admit a common LD-expansion.
\end{prop}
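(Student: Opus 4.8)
The plan is to prove the two implications separately, the forward one being essentially immediate and the converse carrying all the weight.

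For the easy direction, suppose $\tt$ and $\tt'$ admit a common LD-expansion~$\uu$, so that $\tt \exp \uu$ and $\tt' \exp \uu$. As observed just before the statement, $\exp$ is contained in~$\eLD$; since $\eLD$ is symmetric and transitive, $\tt \eLD \uu \eLD \tt'$ yields $\tt \eLD \tt'$. For the converse, recall that $\eLD$ is generated by~$\exp$, so $\tt \eLD \tt'$ gives a finite zigzag $\tt_0, \tt_1, \dots, \tt_{2\pp}$ with $\tt_0 = \tt$, $\tt_{2\pp} = \tt'$, and $\tt_{\ii-1} \exp \tt_\ii \antiexp \tt_{\ii+1}$ at the valleys of odd index. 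I would collapse this zigzag to length two by induction on~$\pp$, the crucial tool being the \emph{confluence} of~$\exp$, namely
\[ \tt \exp \ss \ \text{and}\ \tt \exp \ss' \quad\text{imply}\quad \ss \exp \uu \antiexp \ss' \ \text{for some}\ \uu. \]
Granting confluence, each interior peak $\tt_{2\kk}$ (for $1 \le \kk < \pp$) satisfies $\tt_{2\kk} \exp \tt_{2\kk-1}$ and $\tt_{2\kk} \exp \tt_{2\kk+1}$; applying confluence to these two expansions merges the valleys $\tt_{2\kk-1}$ and $\tt_{2\kk+1}$ into a single common expansion, removing one peak and shortening the zigzag. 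Iterating leaves a single common expansion of~$\tt$ and~$\tt'$, as desired.

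Everything thus reduces to proving confluence of~$\exp$. I would first establish \emph{local confluence}: if $\tt \exp^1 \ss$ and $\tt \exp^1 \ss'$ are two atomic expansions, applying the LD-rule at tree-addresses~$\a$ and~$\b$, then $\ss$ and~$\ss'$ admit a common LD-expansion. This is a case analysis on the relative position of~$\a$ and~$\b$. When the two redexes are disjoint, the expansions simply commute and one further atomic step on each side reconverges (a genuine diamond). When one redex lies inside the subterm~$\tt_1$ of a pattern $\tt_1 \op (\tt_2 \op \tt_3)$ that the other redex duplicates, the LD-rule produces two copies of~$\tt_1$, so reconvergence requires firing the inner rule in \emph{both} copies; together with the critical overlap where $\b$ points at the $\tt_2 \op \tt_3$ part of the other redex, these nested cases are the technically delicate ones.

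The main obstacle is that local confluence does not by itself give confluence here, because $\exp$ is not terminating: the term $\xx \op (\xx \op \xx)$ already admits arbitrarily long expansions, so Newman's lemma is unavailable. The hard part will be circumventing this. Rather than appeal to termination, I would construct common expansions \emph{directly}, by induction on the structure of the two given expansions $\tt \exp \ss$ and $\tt \exp \ss'$: decompose each into atomic steps and push them past one another using the local-confluence diagrams, while carefully tracking how distribution duplicates left subterms, so that an appropriately weighted inductive measure on the pair of expansions strictly decreases despite this growth. Organizing that induction so that it closes—precisely in the face of the unbounded duplication produced by the nested redex case—is the technical crux, and is what makes Proposition~\ref{P:Confluence} the first genuinely nontrivial result about LD-equivalence; it is exactly here that the original argument of~\cite{Dep} does its real work.
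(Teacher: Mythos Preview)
Your overall architecture---reduce to confluence of~$\exp$ and then collapse the zigzag---is correct and matches the paper. But your route to confluence diverges substantially from the paper's, and the place where you wave your hands is exactly the place where the paper supplies a concrete tool.

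You propose to prove confluence by establishing local confluence (case analysis on redex positions) and then tiling the two expansions against each other, hoping that some unspecified ``appropriately weighted inductive measure'' decreases despite the duplication caused by nested redexes. You correctly note that this is the crux and that Newman's lemma is unavailable; but you never say what the measure is, and finding one that survives the doubling in the nested case is not at all obvious.

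The paper (following~\cite{Dep}) avoids this difficulty entirely by introducing an \emph{explicit} function~$\f$ on terms: $\f(\tt)$ is the ``fundamental LD-expansion'' obtained by replacing~$\op$ with the fully-distributed product~$\OP$ throughout. The key facts (Lemma~\ref{L:Confluence}) are that $\f(\tt)$ is an LD-expansion of \emph{every} atomic LD-expansion of~$\tt$, and that $\f$ is monotone under~$\exp$. From these, an easy induction gives that any $\dd$-step LD-expansion~$\tt'$ of~$\tt$ satisfies $\tt' \exp \f^\dd(\tt)$, so the common expansion is produced \emph{explicitly} as a power of~$\f$, with no tiling and no measure needed. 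The induction establishing Lemma~\ref{L:Confluence} is on the \emph{size of the term}, not on the lengths of expansion sequences, which is why the non-termination of~$\exp$ causes no trouble.

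So your plan is not wrong in spirit, but it is incomplete at precisely the decisive point, and the paper's approach via~$\f$ is both different and sharper: it trades your open-ended diagram chase for a single canonical object~$\f(\tt)$ that absorbs all one-step expansions. This~$\f$ is moreover not a throwaway device---it later becomes the Garside map of the category~$\CLD$, so the paper's choice of proof is structurally load-bearing.
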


This result is similar to the property that, if a monoid~$\MM$ satisfies Ore's condi\-tions---as the braid
monoid~$\BP\nn$ does for instance---then every element in the universal group of~$\MM$ can be expressed as
a fraction of the form~$\aa\bb\inv$ with $\aa, \bb$ in~$\MM$. Proposition~\ref{P:Confluence} plays a
fundamental role in the sequel, and we need to recall some elements of its proof.

\begin{defi}\cite{Dep}
First, a binary operation~$\OP$ on terms is recursively  defined by
\begin{equation}
\tt \OP \xx_\ii = \tt \op \xx_\ii, 
\quad
\tt \OP (\tt_1 \op \tt_2) = (\tt \OP \tt_1) \op (\tt \OP \tt_2).
\end{equation}
Next, for each term~$\tt$, the term~$\f(\tt)$
 is recursively defined by\footnote{In~\cite{Dep} and~\cite{Dgd}, 
$\partial$ is used instead of~$\phi$, an inappropriate notation in the current context.}
\begin{equation}
\f(\xx_\ii) = \xx_\ii
\quad
\f(\tt_1 \op \tt_2) = \f(\tt_1) \OP \f(\tt_2).
\end{equation}
\end{defi}

The idea is that $\tt \OP \tt'$ is obtained by distributing~$\tt$  everywhere in~$\tt'$
once. Then $\f(\tt)$ is the image of~$\tt$ when $\op$ is replaced with~$\OP$ everywhere
in the unique expression of~$\tt$ in terms of variables. Examples are given in
Figure~\ref{F:Phi}. A straightforward induction shows that $\tt \OP \tt'$ is always an
LD-expansion of~$\tt \op \tt'$ and, therefore, that $\f(\tt)$ is an LD-expansion of~$\tt$.

\begin{figure}[htb]
\begin{picture}(75,16)(0,2)
\put(0,0){\includegraphics[scale = 1]{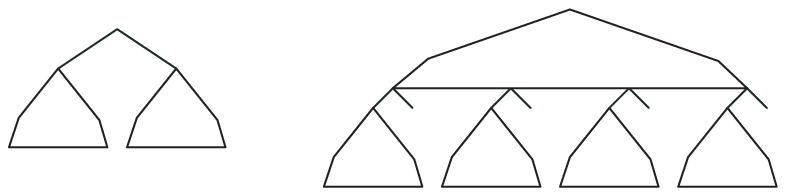}}
\put(-5,10){$\f\Bigg($}
\put(24,10){$\Bigg)\ =$}
\put(4.5,7){$\tt_1$}
\put(16.5,7){$\tt_2$}
\put(34.5,2.5){$\scriptstyle\f(\tt_1)$}
\put(46.5,2.5){$\scriptstyle\f(\tt_1)$}
\put(58.5,2.5){$\scriptstyle\f(\tt_1)$}
\put(70.5,2.5){$\scriptstyle\f(\tt_1)$}
\put(53.5,13.5){$\f(\tt_2)$}
\end{picture}
\caption{\sf\smaller The fundamental LD-expansion~$\f(\tt)$ of a term~$\tt$, recursive definition:  $\f(\tt_1
\op \tt_2)$ is obtained by distributing~$\f(\tt_1)$ everywhere in~$\f(\tt_2)$.}
\label{F:Phi}
\end{figure}

\begin{figure}[htb]
\begin{picture}(52,12)(0,0)
\put(0,0.5){\includegraphics[scale = 1]{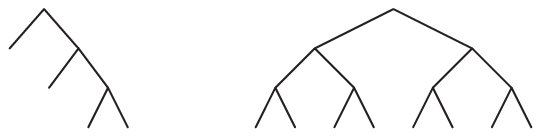}}
\put(-7,4){$\f\Bigg($}
\put(15,4){$\Bigg)\ =$}
\put(-1,6){$\xx_1$}
\put(3,2){$\xx_2$}
\put(7,-2){$\xx_3$}
\put(11,-2){$\xx_4$}
\put(24,-2){$\xx_1$}
\put(28,-2){$\xx_2$}
\put(32,-2){$\xx_1$}
\put(36,-2){$\xx_3$}
\put(40,-2){$\xx_1$}
\put(44,-2){$\xx_2$}
\put(48,-2){$\xx_1$}
\put(52,-2){$\xx_4$}
\end{picture}
\caption{\sf\smaller The fundamental LD-expansion~$\f(\tt)$ of a term~$\tt$: an example; 
$\tt = \xx_1 \op (\xx_2 \op (\xx_3 \op \xx_4))$ implies
$\f(\tt) = \xx_1 \OP (\xx_2  \OP (\xx_3  \OP \xx_4))$.}
\label{F:PhiBis}
\end{figure}

The main step for establishing Proposition~\ref{P:Confluence} consists in proving that
$\f(\tt)$ plays with respect to atomic LD-expansions a role similar to Garside's
fundamental braid~$\D_\nn$ with respect to Artin's generators~$\sig\ii$---which makes it natural to
call~$\f(\tt)$ the \emph{fundamental LD-expansion} of~$\tt$.

\begin{lemm}
\label{L:Confluence} \cite{Dep} \cite[Lemmas V.3.11 and V.3.12]{Dgd} 
$(i)$ The term~$\f(\tt)$ is an LD-expansion of each atomic LD-expansion \nobreak of~$\tt$.

$(ii)$ If $\tt'$ is an LD-expansion of~$\tt$, then $\f(\tt')$ is an LD-expansion of~$\f(\tt)$.
\end{lemm}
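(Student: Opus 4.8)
The plan is to prove both assertions by reducing to a single atomic expansion and then inducting on the term structure, using a small toolkit of facts about the operator~$\OP$. Two things are already recorded in the text: the relation~$\exp$ is transitive and compatible with~$\op$ (so $\ss \exp \ss'$ and $\tt \exp \tt'$ give $\ss \op \tt \exp \ss' \op \tt'$), and both $\ss \op \tt \exp \ss \OP \tt$ and $\tt \exp \f(\tt)$ hold for all terms. For~$(ii)$, since an LD-expansion is a finite chain of atomic ones and $\exp$ is transitive, it suffices to treat a single step: I would show that $\tt \exp^1 \tt'$ implies $\f(\tt) \exp \f(\tt')$. Assertion~$(i)$ is already phrased for atomic expansions.

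Before the two main inductions I would establish four auxiliary properties of~$\OP$, each by a short induction. First, monotonicity in each variable: $\tt \exp \tt'$ implies $\ss \OP \tt \exp \ss \OP \tt'$, and $\ss \exp \ss'$ implies $\ss \OP \tt \exp \ss' \OP \tt$ (induct on~$\tt$; the root-redex case of the first closes via a single root expansion once one writes $\ss \OP (\uu \op (\vv \op \ww))$ out by the defining clauses of~$\OP$). Second, the two ``$\OP$ is left-self-distributive up to expansion'' identities
\begin{equation*}
(\ss \OP \tt) \op (\ss \OP \uu) \exp \ss \OP (\tt \OP \uu)
\quad\text{and}\quad
\ss \OP (\tt \OP \uu) \exp (\ss \OP \tt) \OP (\ss \OP \uu),
\end{equation*}
each proved by induction on~$\uu$ using $\ss \OP (\uu_1 \op \uu_2) = (\ss \OP \uu_1) \op (\ss \OP \uu_2)$.

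With this toolkit, $(i)$ is an induction on~$\tt$. If $\tt = \tt_1 \op \tt_2$ and the redex lies in, say,~$\tt_1$, then $\tt' = \tt_1' \op \tt_2$; the induction hypothesis gives $\tt_1' \exp \f(\tt_1)$ while $\tt_2 \exp \f(\tt_2)$ is the fundamental expansion, and compatibility together with $\ss \op \tt \exp \ss \OP \tt$ yields $\tt' \exp \f(\tt_1) \OP \f(\tt_2) = \f(\tt)$. If the redex sits at the root, so $\tt = \tt_1 \op (\tt_{21} \op \tt_{22})$ and $\tt' = (\tt_1 \op \tt_{21}) \op (\tt_1 \op \tt_{22})$, I first expand every $\tt_i$ to $\f(\tt_i)$ and straighten every $\op$ to~$\OP$ in the two bracketed factors, then apply the \emph{first} self-distributivity identity with $\ss = \f(\tt_1)$, $\tt = \f(\tt_{21})$, $\uu = \f(\tt_{22})$ to reach $\f(\tt_1) \OP (\f(\tt_{21}) \OP \f(\tt_{22})) = \f(\tt)$. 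For~$(ii)$ the same case split applies: when the redex lies in a subterm, the monotonicity of~$\OP$ propagates the induction hypothesis through $\f(\tt) = \f(\tt_1) \OP \f(\tt_2)$; when it sits at the root, $\f(\tt) = A \OP (B \OP C)$ and $\f(\tt') = (A \OP B) \OP (A \OP C)$ with $A, B, C$ the $\f$-images of the three subterms, so the claim is exactly the \emph{second} self-distributivity identity.

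The main obstacle is that second identity, $\ss \OP (\tt \OP \uu) \exp (\ss \OP \tt) \OP (\ss \OP \uu)$: its inductive step over~$\uu$ closes up cleanly, but the base case $\uu = \xx_\ii$ is not a componentwise expansion. There one must first perform a root expansion of $(\ss \OP \tt) \op (\ss \op \xx_\ii)$ and only afterwards turn the resulting $\op$ into~$\OP$ via $\ss \op \tt \exp \ss \OP \tt$. Keeping track of which occurrences of~$\op$ must become~$\OP$, and in which order, is the delicate bookkeeping of the whole lemma; everything else is routine structural induction combined with compatibility of~$\exp$ with~$\op$.
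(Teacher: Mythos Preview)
Your proposal is correct and follows the same approach as the paper, which merely sketches ``induction on the size of the involved terms'' and defers the details to the cited references. You have correctly identified and verified the auxiliary identities for~$\OP$ (monotonicity and the two self-distributivity-up-to-expansion facts) that make the structural induction go through, including the subtle base case of the second identity.
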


\begin{proof}[Sketch of proof]
One uses induction on the size of the involved terms. Once
Lemma~\ref{L:Confluence} is established, an easy induction on~$\dd$ shows that, if 
there exists a length~$\dd$ sequence of atomic LD-expansions connecting~$\tt$ to~$\tt'$,
then $\f^\dd(\tt)$ is an LD-expansion of~$\tt'$. Then a
final induction on the length of a zigzag connecting~$\tt$ to~$\tt'$ shows that, if $\tt$
and $\tt'$ are LD-equivalent, then
$\f^\dd(\tt)$ is an LD-expansion of~$\tt'$ for sufficiently large~$\dd$ (namely for
$\dd$ at least the number of ``zag''s in the zigzag).
\end{proof}

\subsection{The category $\CLDm$}
\label{S:LCDm}

A category (and a quiver) is naturally associated with every graph, and the previous
results invite to introduce the category associated with the LD-expansion relation~$\exp$.

\begin{defi}
We denote by~$\CLDm$ the category whose objects are terms, and whose morphisms are
pairs of terms $(\tt, \tt')$ satisfying $\tt \exp \tt'$.
\end{defi}

By construction, the category~$\CLDm$ is left- and right-cancellative, and
Proposiion~\ref{P:Confluence} means that any two morphisms of~$\CLDm$ with the same
source admit a common right-multiple.  Moreover, a natural candidate for being a
left-Garside map is obtained by defining $\D(\tt) = (\tt, \f(\tt))$ for each term~$\tt$.

\begin{ques}
\label{Q:EmbConj1}
Is $\CLDm$ a left-Garside category?
\end{ques}

Question~\ref{Q:EmbConj1} is currently open. We shall see in Section~\ref{S:EmbConj}
that it is one of the many forms of the so-called Embedding Conjecture. The
missing part is that we do not know that \emph{least} common multiples exist
in~$\CLDm$, the problem being that we have no method for proving that a common
LD-expansion of two terms is possibly a least common LD-expansion.

\section{The monoid~$\MLD$ and the category~$\CLD$}
\label{S:CLD}

The solution for overcoming the above difficulty consists in developing a more precise
study of LD-expansions that takes into account the position where the LD-law is applied.
This leads to introducing a certain monoid~$\MLD$ whose elements provide natural
labels for LD-expansions, and, from there, a new category~$\CLD$, of which $\CLDm$ is a
projection. This category~$\CLD$ is the one on which a left-Garside structure will be
proved to exist.

\subsection{Labelling LD-expansions}
\label{S:Labelling}

By definition, applying the LD-law to a term~$\tt$ means selecting some subterm
of~$\tt$ and replacing it with a new, LD-equivalent term. When terms are viewed as
binary rooted trees, the position of a subterm can be specified by describing the
path that connects the root of the tree to the root of the considered subtree, hence
typically by a binary address, \ie, a finite sequence of~$0$'s and~$1$'s, according to the
convention that $0$ means ``forking to the left'' and $1$ means ``forking to the right''.
Hereafter, we use $\Add$ for the set of all such addresses, and $\ea$ for the empty
address, which corresponds to the position of the root in a tree.

\begin{nota}
For $\tt$ a term and $\a$ an address, we denote by~$\sub\tt\a$ the subtree of~$\tt$
whose root has address~$\a$, if it exists, \ie, if $\a$ is short enough.
\end{nota}

So, for instance, if $\tt$ is the tree $\xx_1 \op (\xx_2 \op \xx_3)$, we have $\sub\tt0 =
\xx_1$, $\sub\tt{10} = \xx_2$, whereas $\sub\tt{00}$ is not defined, and $\sub\tt\ea =
\tt$ holds, as it holds for every term.

\begin{defi}
\label{D:LDAction}
(See Figure~\ref{F:Apply}.)
We say that $\tt'$ is a \emph{$\DD\a$-expansion} of~$\tt$, denoted $\tt' = \tt \act
\DD\a$, if
$\tt'$ is the atomic LD-expansion of~$\tt$ obtained by applying LD at the position~$\a$,
\ie, replacing the subterm~$\sub\tt\a$, which is $\sub\tt0 \op (\sub\tt{10} \op
\sub\tt{11})$, with the term $(\sub\tt0 \op \sub\tt{10}) \op (\sub\tt0 \op \sub\tt{11})$.
\end{defi}

\begin{figure}[tb]
\begin{picture}(85,32)(0,-4)
\put(0,0){\includegraphics{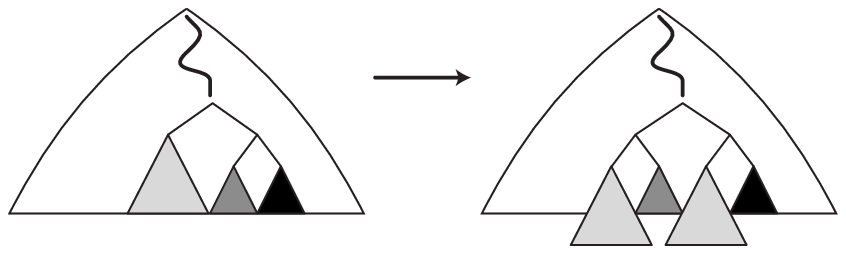}}
\put(18,25){$\tt$}
\put(39,19){$\DD\a$}
\put(65,25){$\tt\act\DD\a$}
\put(22,15){$\a$}
\put(70,15){$\a$}
\put(14,0){$\sub\tt{\a\!0}$}
\put(20,0){$\sub\tt{\a\!1\!0}$}
\put(25.5,0){$\sub\tt{\a\!1\!1}$}
\put(58,-3){$\sub\tt{\a\!0}$}
\put(63,-5){$\sub\tt{\a\!1\!0}$}
\put(68.5,-3){$\sub\tt{\a\!0}$}
\put(74,-5){$\sub\tt{\a\!1\!1}$}
\put(75,-2.5){$\uparrow$}
\put(65,-2.5){$\uparrow$}
\end{picture}
\caption{\sf\smaller  Action of~$\DD\a$ to a term~$\tt$: the LD-law is applies to
expand~$\tt$ at position~$\a$, \ie, to replace the subterm~$\sub\tt\a$, which is
$\sub\tt{\a\0} \op (\sub\tt{\a\1\0}\op\sub\tt{\a\1\1})$, with
$(\sub\tt{\a\0} \op \sub\tt{\a\1\0})\op (\sub\tt{\a\0} \op \sub\tt{\a\1\1})$; in other
words, the light grey subtree is duplicated and distributed to the left of the dark grey
and black subtrees.}
\label{F:Apply}
\end{figure}

By construction, every atomic LD-expansion is a $\DD\a$-expansion for a unique~$\a$.
The idea is to use the letters~$\DD\a$ as labels for LD-expansions. As arbitrary
LD-expansions are compositions of finitely many atomic LD-expansions, hence of
$\DD\a$-expansions, it is natural to use finite sequences of~$\DD\a$ to
label LD-expansions. In other words, we extend the (partial) action of~$\DD\a$
on terms into a (partial) action of finite sequences of~$\DD\a$'s. Thus, for instance, we
write
$$\tt' = \tt \act \DD\a \DD\b \DD\g$$
to indicate that $\tt'$ is the LD-expansion of~$\tt$ obtained by successively applying the
LD-law (in the expanding direction) at the positions~$\a$, then~$\b$, then~$\g$.

\begin{lemm}
Definition~\ref{D:LDAction} gives a partial action of the free monoid $\{\DD\a\mid\a\in\Add\}^*$ on~$\TT$
(the set of terms), in the sense of Definition~\ref{D:Action}.
\end{lemm}

\begin{proof}
Conditions~$(i)$ and~$(ii)$ of Definition~\ref{D:Action} follow from the construction. The point is to
prove~$(iii)$, \ie, to prove that, if $\ww_1, ..., \ww_\nn$ are arbitrary finite sequences of letters~$\DD\a$,
then there exists at least one term~$\tt$ such that $\tt \act\ww_\ii$ is defined for each~$\ii$. This is what
\cite[Proposition~VII.1.21]{Dgd} states.
\end{proof}

\subsection{The monoid~$\MLD$}
\label{S:MLD}

There exist clear connections between the action of various~$\DD\a$'s: different sequences may
lead to the same transformations of trees. Our approach will consist in identifying
a natural family of such relations and introducing the monoid presented by these
relations. 

\begin{lemm}
\label{L:RLD}
For all $\a, \b, \g$, the following pairs have the same action on trees:

$(i)$ $\DD{\a0\b} \DD{\a1\g}$ and $\DD{\a1\g} \DD{\a0\b}$;
\hfill(``parallel case'')\hspace{1cm}

$(ii)$ $\DD{\a0\b} \DD{\a}$ and $\DD\a \DD{\a00\b} \DD{\a10\b}$;
\hfill(``nested case 1'')\hspace{1cm}

$(iii)$ $\DD{\a10\b} \DD{\a}$ and $\DD\a \DD{\a01\b}$;
\hfill(``nested case 2'')\hspace{1cm}

$(iv)$ $\DD{\a11\b} \DD{\a}$ and $\DD\a \DD{\a11\b}$;
\hfill(``nested case 3'')\hspace{1cm}

$(v)$ $\DD{\a1} \DD\a$ and $\DD{\a\1} \DD{\a} \DD{\a1} \DD{\a\0}$.
\hfill(``critical case'')\hspace{1cm}\null
\end{lemm}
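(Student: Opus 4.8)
The plan is to verify all five identities by direct computation, exploiting the fact that the action is strictly local: a letter~$\DD\a$ rewrites only the subtree~$\sub\tt\a$, replacing it by its atomic LD-expansion, and leaves every node lying outside the address~$\a$ untouched. Consequently, if every letter of a word~$\ww$ has an address beginning with~$\a$, then $\tt\act\ww$ depends only on~$\sub\tt\a$ and alters only that subtree; all five pairs in the statement have this form, so for each it suffices to compare the effect of the two words on~$\sub\tt\a$, and, renaming the subtree, we may assume $\a=\ea$ throughout. I would also record once and for all the ``dictionary'' describing how $\DD\a$ relocates the three relevant pieces: writing $\sub\tt\a = \pp\op(\qq\op\rr)$ with $\pp=\sub\tt{\a\0}$, $\qq=\sub\tt{\a\1\0}$, $\rr=\sub\tt{\a\1\1}$, the expanded subtree is $(\pp\op\qq)\op(\pp\op\rr)$, so after $\DD\a$ the piece $\pp$ occurs at the two addresses $\a\0\0$ and $\a\1\0$, the piece $\qq$ occurs at $\a\0\1$, and the piece $\rr$ occurs at~$\a\1\1$.

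Cases (i)--(iv) then reduce to bookkeeping with this dictionary, together with a routine check that the two words are defined on exactly the same terms. For~(i) the addresses $\a\0\b$ and $\a\1\g$ are incomparable, so the two letters act inside the disjoint subtrees $\sub\tt{\a\0}$ and $\sub\tt{\a\1}$ and hence commute. For (ii)--(iv) the letter~$\DD\a$ is the one that moves material, while its partner acts strictly inside one of $\pp,\qq,\rr$; performing that inner expansion before or after~$\DD\a$ yields the same tree once its address is translated according to the dictionary. This is precisely what the right-hand sides encode: an expansion inside~$\pp$ (address~$\a\0\b$) reappears, after~$\DD\a$, as the two \emph{parallel} expansions at $\a\0\0\b$ and $\a\1\0\b$ because $\pp$ has been duplicated, explaining the two letters $\DD{\a\0\0\b}\DD{\a\1\0\b}$ of~(ii); an expansion inside~$\qq$ (address~$\a\1\0\b$) reappears at $\a\0\1\b$, giving~(iii); and an expansion inside~$\rr$ (address~$\a\1\1\b$) reappears at the unchanged address~$\a\1\1\b$, giving~(iv). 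In each case both words require the very same shape of~$\sub\tt\a$ and of the relevant deeper subterm, so their domains of definition coincide as well.

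The critical case~(v) is the only genuinely two-dimensional one and is where the real work lies, since here the two moving letters~$\DD\a$ and~$\DD{\a\1}$ interact near the root of~$\sub\tt\a$ rather than one sitting inside a subtree left fixed by the other. I would check it by brute force on the minimal shape $\sub\tt\a = \pp\op(\qq\op(\rr\op\ss))$ forced by requiring both words to be defined, carrying out the two expansion sequences $\DD\a\DD{\a\1}\DD\a$ and $\DD{\a\1}\DD\a\DD{\a\1}\DD{\a\0}$ step by step and confirming that they produce the common tree $((\pp\op\qq)\op(\pp\op\rr))\op((\pp\op\qq)\op(\pp\op\ss))$. This identity, equating a three-letter word with a four-letter one, is the combinatorial heart of the lemma; unlike (i)--(iv) it cannot be reduced to a pure address translation but genuinely records how the LD-law interacts with a second application of itself, and it is the relation that will later project onto the braid relation. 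I expect this computation to be the main obstacle, the remaining four cases being the localization-plus-dictionary arguments sketched above; once~(v) is verified the lemma follows.
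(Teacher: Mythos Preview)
Your proposal is correct and follows essentially the same approach as the paper's sketch: reduce to $\a=\ea$ by locality, dispose of the parallel case via disjoint subtrees, handle the three nested cases by tracking how $\DD\a$ relocates (and in case~(ii) duplicates) the inner subterm, and verify the critical case by direct computation on the generic term $\pp\op(\qq\op(\rr\op\ss))$. Your write-up is in fact more explicit than the paper's, which defers the critical case to a figure; your final tree $((\pp\op\qq)\op(\pp\op\rr))\op((\pp\op\qq)\op(\pp\op\ss))$ is the right target, and you have correctly read the intended three-letter word $\DD\a\DD{\a1}\DD\a$ on the left of~(v) despite the visible typo in the statement.
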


\begin{proof}[Sketch of proof]
The commutation relation of the parallel case is clear, as the transformations involve disjoint
subterms. The nested cases are commutation relations as well, but, because one of the involved subtree is
nested in the other, it may be moved, and even possibly duplicated when the main expansion is performed, so
that the nested expansion(s) correspond to different names before and after the main expansion. Finally, the
critical case is specific to the LD-law, and there is no way to predict it except the verification, see
Figure~\ref{F:Critical}.
\end{proof}

\begin{figure}[tb]
\begin{picture}(101,37)(0,1)
\put(0,0){\includegraphics{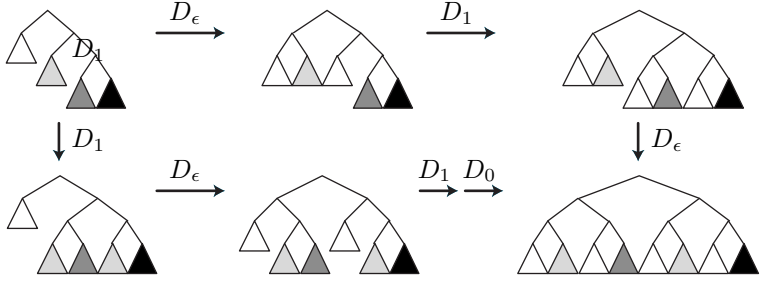}}
\put(10,18){$\DD\1$}
\put(23,35){$\DD\ea$}
\put(10,30){$\DD\1$}
\put(59,35){$\DD\1$}
\put(87,18){$\DD\ea$}
\put(23,14){$\DD\ea$}
\put(56,14){$\DD\1$}
\put(62,14){$\DD\0$}
\end{picture}
\caption{\sf\smaller  Relations between $\DD\a$-expansions: the critical case. We read
that the action of $\DD\ea \DD\1 \DD\ea$ and $\DD\1 \DD\ea \DD\1 \DD\0$
coincide.}
\label{F:Critical}
\end{figure}

\begin{defi}
\label{D:RLD}
Let $\RLD$ be the family of all relations of Lemma~\ref{L:RLD}. We define $\MLD$ to be
the monoid $\Mon{\{\DD\a \mid \a \in \Add\} \mid \RLD}$.
\end{defi}

Lemma~\ref{L:RLD} immediately implies

\begin{prop}
The partial action of the free monoid $\{\DD\a \mid \a\in\Add\}^*$ on terms induces a well defined partial
action of the monoid~$\MLD$.
\end{prop}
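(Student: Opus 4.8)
The plan is to exploit the fact that ``having the same partial action on all terms'' is itself a congruence on the free monoid $\{\DD\a \mid \a\in\Add\}^*$, and that Lemma~\ref{L:RLD} places each defining relation of~$\MLD$ inside this congruence. Write $\ww \approx \ww'$ to mean that $\tt\act\ww$ and $\tt\act\ww'$ agree for every term~$\tt$, in the strong sense that for each~$\tt$ either both are defined and equal or neither is defined. Since $\MLD$ is, by Definition~\ref{D:RLD}, the quotient of the free monoid by the congruence~$\sim$ generated by~$\RLD$, it suffices to prove that $\ww \sim \ww'$ implies $\ww \approx \ww'$; the induced action is then given unambiguously by $\tt\act[\ww] := \tt\act\ww$.

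First I would check that $\approx$ is a monoid congruence. It is plainly an equivalence relation. For compatibility with concatenation, suppose $\uu \approx \vv$ and let $\pp,\qq$ be arbitrary words; I would trace a term~$\tt$ through $\pp\uu\qq$ and $\pp\vv\qq$ using property~$(ii)$ of Definition~\ref{D:Action}: if $\tt\act\pp$ is undefined both sides are undefined, and otherwise, writing $\ss = \tt\act\pp$, the hypothesis $\uu\approx\vv$ gives $\ss\act\uu = \ss\act\vv$ in the strong sense, after which applying $\act\qq$ preserves the agreement. Hence $\pp\uu\qq \approx \pp\vv\qq$, so $\approx$ is a congruence. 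By Lemma~\ref{L:RLD}, every pair in~$\RLD$ lies in~$\approx$; since $\sim$ is the smallest congruence containing~$\RLD$, we obtain $\sim\ \subseteq\ \approx$, which is exactly the well-definedness we need.

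It then remains to verify that $\tt\act[\ww] := \tt\act\ww$ is a partial action in the sense of Definition~\ref{D:Action}. Condition~$(i)$ is immediate, since the empty word represents~$1$ and acts as the identity. Condition~$(ii)$ follows from the corresponding property for the free monoid together with $[\ww][\ww'] = [\ww\ww']$: indeed $(\tt\act[\ww])\act[\ww'] = (\tt\act\ww)\act\ww' = \tt\act\ww\ww' = \tt\act[\ww\ww']$. For condition~$(iii)$, given a finite subset~$\SS$ of~$\MLD$, I would choose one representative word for each element of~$\SS$, obtaining finitely many words in the free monoid, and then invoke condition~$(iii)$ for the free-monoid action (the previous lemma, resting on \cite[Proposition~VII.1.21]{Dgd}) to produce a single term~$\tt$ on which all these words are defined; well-definedness then makes $\tt\act\ss$ defined for every $\ss\in\SS$.

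The entire argument is routine once Lemma~\ref{L:RLD} and the free-monoid action are in hand---this is why the proposition is announced as an immediate consequence. The only point demanding a little care is the partial nature of the action: one must consistently read every equality $\tt\act\ww = \tt\act\ww'$ in the strong ``both-or-neither'' sense of Definition~\ref{D:Action}$(ii)$, both when verifying that $\approx$ is a congruence and when checking condition~$(iii)$. There is no genuine obstacle here; all the real content sits in the verification of the five relations of Lemma~\ref{L:RLD} and in the existence statement \cite[Proposition~VII.1.21]{Dgd}, both of which I may assume.
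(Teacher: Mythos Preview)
Your proposal is correct and matches the paper's approach: the paper simply states that the proposition ``immediately'' follows from Lemma~\ref{L:RLD}, and what you have written is precisely the routine unpacking of that word---showing that ``same action on terms'' is a congruence containing~$\RLD$, hence containing the congruence defining~$\MLD$, and then reading off conditions~$(i)$--$(iii)$ of Definition~\ref{D:Action}. Your care about the ``both-or-neither'' reading of equality in the partial setting is the only subtlety, and you handle it correctly.
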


For $\tt$ a term and $\aa$ in~$\MLD$, we shall naturally
denote by $\tt \act \aa$ the common value of~$\tt \act \ww$ for all sequences~$\ww$
of~$\DD\a$ that represent~$\aa$. 

\begin{rema}
In this way, each LD-expansion receives a label that is an element of~$\MLD$, thus
becoming a labelled LD-expansion. However, we do not claim that a labelled LD-expansion
are the same as an LD-expansion. Indeed, we do not claim that the relations of
Lemma~\ref{L:RLD} exhaust all possible relations between the action of the~$\DD\a$'s
on terms. A priori, it might be that different elements of~$\MLD$ induce the same action
on terms, so that one pair $(\tt, \tt')$ might correspond to several labelled
expansions with different labels. As we shall see below, the uniqueness of the labelling is another form of the
above mentioned Embedding Conjecture.
\end{rema}

\subsection{The category $\CLD$}
\label{SS:CLD}

We are now ready to introduce our main subject of interest, namely the category~$\CLD$
of labelled LD-expansions. The starting point is the same as for~$\CLDm$, but the
difference is that, now, we explicitly take into account the way of expanding the source
is expanded into the target.

\begin{defi}
We denote by~$\CLD$ the category whose objects are terms, and whose morphisms are
triples $(\tt, \aa, \tt')$ with $\aa$ in~$\MLD$ and $\tt \act \aa = \tt'$.
\end{defi}

In other words, $\CLD$ is the category associated with the partial action of~$\MLD$ on terms, in the sense of
Section~\ref{D:ActionCat}. We recall our convention that, when the morphisms of a category are triples, the
source is the first entry, and the target is the the last entry. So, for instance, a typical morphism
in~$\CLD$ is the triple 
$$
\vrule width0pt height7mm depth4mm
\begin{picture}(50,5)(0,0)
\put(0,0){\Bigg(}
\put(3,-4){\includegraphics{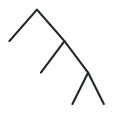}}
\put(15,0){, $\DD\ea \DD\1$ , }
\put(30,-4){\includegraphics{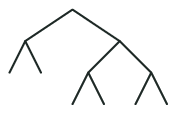}}
\put(47,0){\Bigg),}
\end{picture},
$$
whose source is the term $\xx\op(\xx\op(\xx\op\xx))$ (we the default convention that unspecified variables
means some fixed variable~$\xx$), and whose target is the term $(\xx\op\xx)\op
((\xx\op\xx) \op(\xx\op\xx))$.

\subsection{The element~$\D_\tt$}
\label{S:Delta}

We aim at proving that the category~$\CLD$ is a left-Garside
category. To this end, we need to define the $\D$-morphisms. As planned in
Section~\ref{S:LCDm}, the latter will be constructed using the LD-expansions~$(\tt,
\f(\tt))$. Defining a labelled version of this expansion means fixing some
canonical way of expanding a term~$\tt$ into the corresponding term~$\f(\tt)$. A
natural solution then exists, namely following the recursive definition of the
operations~$\OP$ and~$\f$.

For~$\ww$ a word in the letters~$\DD\a$, we denote by $\sh0(\ww)$ the word obtained
by replacing each letter~$\DD\a$ of~$\ww$ with the corresponding letter~$\DD{0\a}$,
\ie, by \emph{shifting} all indices by~$0$. Similarly, we denote by~$\sh\g(\ww)$
the word obtained by appending~$\g$ on the left of each address in~$\ww$. The
LD-relations of Lemma~\ref{L:RLD} are invariant under shifting: if $\ww$ and
$\ww'$ represent the same element~$\aa$ of~$\MLD$, then, for each~$\g$, the words
$\sh\g(\ww)$ and $\sh\g(\ww')$ represent the same element, naturally
denoted~$\sh\g(\aa)$, of~$\MLD$. By construction, $\sh\g$ is an endomorphism of the
monoid~$\MLD$. For each~$\aa$ in~$\MLD$, the action of~$\sh\g(\aa)$ on a
term~$\tt$ corresponds to the action of~$\aa$ to the $\g$-subterm of~$\tt$: so, for
instance, if $\tt' = \tt \act \aa$ holds, then $\tt' \op \tt_1 = (\tt \op \tt_1) \act
\sh0(\aa)$ holds as well, since the $0$-subterm of~$\tt \op \tt_1$ is~$\tt$, whereas that
of~$\tt' \op \tt_1$ is~$\tt'$.

\begin{defi}
\label{D:Delta}
For each term~$\tt$, the elements~$\d_\tt$ and~$\D_\tt$ of~$\MLD$ are
defined by the recursive rules
\begin{gather}
\label{ED:delta}
\d_\tt = 
\begin{cases}
1
& \mbox{for $\tt$ of size~$1$, \ie, when $\tt$ is a variable~$\xx_\ii$,}\\
\DD\ea \cdot \sh0(\d_{\tt_0}) \cdot \sh1(\d_{\tt_1})
&\mbox{for $\tt = \tt_0 \op \tt_1$.}
\end{cases}\\
\label{ED:Delta}
\D_\tt = 
\begin{cases}
1
& \mbox{for $\tt$ of size~$1$,}\\
\sh\0(\D_{\tt_\0}) \cdot \sh\1(\D_{\tt_\1}) \cdot \d_{\f(\tt_\1)}
&\mbox{for $\tt = \tt_\0 \op \tt_\1$.}
\end{cases}
\end{gather}
\end{defi}

\begin{exam}
Let $\tt$ be $\xx \op (\xx \op (\xx \op \xx))$. Then $\sub\tt\0$ is $\xx$, and, therefore,
$\D_{\sub\tt\0}$ is~$1$. Next, $\sub\tt\1$ is $\xx \op (\xx \op \xx)$, so
\eqref{ED:Delta} reads $\D_\tt = \sh\1(\D_{\sub\tt\1}) \cdot \d_{\f(\sub\tt\1)}$.
Then $\f(\sub\tt\1)$ is $(\xx \op \xx) \op (\xx \op \xx)$. Applying \eqref{ED:delta},
we obtain 
$$\d_{\f(\sub\tt\1)} = \DD\ea \cdot \sh\0(\d_{\xx \op \xx}) \cdot
\sh\1(\d_{\xx \op \xx}) = \DD\ea \DD\0 \DD\1.$$
On the other hand, using \eqref{ED:Delta} again, we fing 
$$\D_{\sub\tt\1} = \sh\0(\D_\xx) \cdot \sh\1(\D_{\xx \op \xx}) \cdot \d_{\xx \op \xx}
= 1 \cdot 1 \cdot \DD\ea = \DD\ea,$$
and, finally, we obtain $\D_\tt = \DD\1 \DD\ea \DD\0 \DD\1$. According to the
defining relations of the monoid~$\MLD$, this element is also $\DD\ea \DD\1 \DD\ea$. Note the
compatibility of the result with the examples of Figures~\ref{F:PhiBis} and~\ref{F:Critical}.
\end{exam}

\begin{lemm}
\label{L:Delta}
For all terms~$\tt_0, \tt$, we have
\begin{gather}
\label{E:delta}
(\tt_0 \op \tt) \act \d_\tt = \tt_0 \OP \tt,\\
\label{E:Delta}
\tt \act \D_\tt = \f(\tt).
\end{gather}
\end{lemm}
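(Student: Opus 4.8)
The plan is to prove both identities by structural induction on the term~$\tt$, establishing \eqref{E:delta} first and then using it as the crucial input for \eqref{E:Delta}. The two tools I would lean on throughout are the recursive definitions of~$\d$, $\D$, $\OP$, and~$\f$, together with the shift-localization property recorded just before the lemma: namely that $\sh\0(\aa)$ realizes the action of~$\aa$ on the $\0$-subterm and $\sh\1(\aa)$ its action on the $\1$-subterm, so that $(\ss\op\uu)\act\sh\0(\aa) = (\ss\act\aa)\op\uu$ and $(\uu\op\ss)\act\sh\1(\aa) = \uu\op(\ss\act\aa)$ whenever $\ss\act\aa$ is defined. Each inductive step then amounts to reading off the recursive factorization of~$\d_\tt$ or~$\D_\tt$ and pushing the action through one letter at a time.

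For \eqref{E:delta}, the base case $\tt = \xx_\ii$ is immediate since $\d_{\xx_\ii} = 1$ and $\tt_0 \OP \xx_\ii = \tt_0 \op \xx_\ii$ by definition of~$\OP$. For the inductive step I would write $\tt = \tt_1 \op \tt_2$, so that $\d_\tt = \DD\ea \cdot \sh\0(\d_{\tt_1}) \cdot \sh\1(\d_{\tt_2})$, and evaluate the action of this word on $\tt_0 \op (\tt_1 \op \tt_2)$ from left to right. Applying~$\DD\ea$ at the root gives $(\tt_0 \op \tt_1) \op (\tt_0 \op \tt_2)$. Then $\sh\0(\d_{\tt_1})$ acts on the left subterm $\tt_0 \op \tt_1$, which by the induction hypothesis becomes $\tt_0 \OP \tt_1$; likewise $\sh\1(\d_{\tt_2})$ turns the right subterm $\tt_0 \op \tt_2$ into $\tt_0 \OP \tt_2$. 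The resulting term $(\tt_0 \OP \tt_1) \op (\tt_0 \OP \tt_2)$ is exactly $\tt_0 \OP (\tt_1 \op \tt_2)$ by the recursive clause defining~$\OP$, which closes the induction.

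For \eqref{E:Delta} the base case is again trivial, since $\D_{\xx_\ii} = 1$ and $\f(\xx_\ii) = \xx_\ii$. In the step, with $\tt = \tt_1 \op \tt_2$, the definition gives $\D_\tt = \sh\0(\D_{\tt_1}) \cdot \sh\1(\D_{\tt_2}) \cdot \d_{\f(\tt_2)}$. Evaluating on $\tt_1 \op \tt_2$, the factor $\sh\0(\D_{\tt_1})$ acts on the left subterm and, by the induction hypothesis, sends $\tt_1$ to $\f(\tt_1)$; then $\sh\1(\D_{\tt_2})$ sends $\tt_2$ to $\f(\tt_2)$, leaving the term $\f(\tt_1) \op \f(\tt_2)$. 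The final factor~$\d_{\f(\tt_2)}$ acts at the root, and here I would invoke the already-proved \eqref{E:delta} with distributed term~$\f(\tt_1)$ and target~$\f(\tt_2)$, giving $(\f(\tt_1) \op \f(\tt_2)) \act \d_{\f(\tt_2)} = \f(\tt_1) \OP \f(\tt_2)$, which equals $\f(\tt_1 \op \tt_2)$ by definition of~$\f$. This completes the induction.

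I do not expect a serious obstacle here: the argument is a direct double induction, and the content is really the bookkeeping. The one point requiring genuine care is the handling of the partial action, namely checking at each intermediate stage that the action is actually defined so that the shift-localization identities apply; this is guaranteed because $\f(\tt)$ is known to be a genuine LD-expansion of~$\tt$, so the recursive word built into~$\D_\tt$ traces out a valid sequence of atomic expansions. A secondary subtlety is that $\d_\tt$ and $\D_\tt$ are elements of~$\MLD$ rather than fixed words, so I would use that the action of~$\MLD$ on terms is well defined (the relations of~$\RLD$ preserve it) to legitimately compute with any chosen representative word, in particular the left-to-right reading dictated by the recursion.
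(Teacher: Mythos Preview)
Your argument is correct and is exactly the ``easy inductive verification'' the paper alludes to without spelling out; the paper gives no further details beyond that phrase. Your careful handling of the shift-localization and of the well-definedness of the partial action is appropriate and fills in precisely what the paper leaves implicit.
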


The proof is an easy inductive verification.

\subsection {Connection with braids}
\label{S:Connection}

Before investigating the category~$\CLD$ more precisely, we describe the simple
connection existing between the category~$\CLD$ and the positive braid category~$\BBB$
of Example~\ref{X:Braid}.

\begin{lemm}
\label{L:Proj}
Define $\pi : \{\DD\a \mid \a \in \Add\} \to \{\sig\ii \mid \ii\ge 1\} \cup\{1\}$ by
\begin{equation}
\label{E:Proj}
\pi(\DD\a) = 
\begin{cases}
\sig{\ii+1}
&\mbox{ if $\a$ is the address $\1^\ii$, \ie, $\1\1... \1$, $\ii$~times~$\1$},\\
1
&\mbox{ otherwise.}
\end{cases}
\end{equation}
Then $\pi$ induces a surjective monoid homomorphism of~$\MLD$ onto~$\BP\infty$.
\end{lemm}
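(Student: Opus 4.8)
The plan is to build the homomorphism in two stages. First I would extend $\pi$ by the universal property of free monoids to a homomorphism $\pi^\ast$ from the free monoid $\{\DD\a \mid \a \in \Add\}^*$ to $\BP\infty$. Since $\MLD = \Mon{\{\DD\a \mid \a \in \Add\} \mid \RLD}$, it then suffices to check that $\pi^\ast$ respects the defining relations, i.e.\ that it sends the two sides of each relation of Lemma~\ref{L:RLD} to the same element of~$\BP\infty$; this forces $\pi^\ast$ to factor through~$\MLD$, giving the desired homomorphism, after which surjectivity is immediate.

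The observation driving every verification is that $\pi(\DD\a) = 1$ as soon as the address~$\a$ contains a letter~$\0$, that is, whenever $\a$ is not of the form~$\1^\ii$; so $\pi$ detects only expansions performed along the rightmost branch. I would then run through the five families in turn. In the parallel case (i) and the nested cases (ii) and (iii), every address carrying an internal~$\0$ is annihilated by~$\pi$, so on each side all but one factor collapse to~$1$ and the two words map to the same single letter (namely $\pi(\DD{\a1\g})$ in case~(i) and $\pi(\DD\a)$ in cases (ii) and (iii)), or to~$1$. These relations are thus preserved without even invoking any relation of~$\BP\infty$.

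The two remaining families are where the braid structure genuinely enters. For nested case~(iv), $\DD{\a11\b}\DD\a$ versus $\DD\a\DD{\a11\b}$, if either $\a$ or~$\b$ carries a~$\0$ then a factor dies and both sides agree; otherwise $\a = \1^\kk$ and $\b = \1^\mm$, and the two sides become $\sig{\kk+\mm+3}\sig{\kk+1}$ and $\sig{\kk+1}\sig{\kk+\mm+3}$, equal because the indices differ by $|\,(\kk+\mm+3)-(\kk+1)\,| = \mm+2 \ge 2$, i.e.\ by the far-commutation relation of~$\BP\infty$. In the critical case~(v) (depicted in Figure~\ref{F:Critical}), with the relation applied along the spine $\a = \1^\ii$, the trailing factor $\DD{\a\0}$ dies under~$\pi$ and the two sides reduce to $\sig{\ii+1}\sig{\ii+2}\sig{\ii+1}$ and $\sig{\ii+2}\sig{\ii+1}\sig{\ii+2}$, which coincide by the braid relation for adjacent generators; while if $\a$ carries a~$\0$ both sides are~$1$. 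I expect this last step to require the most care: it is exactly the point where the LD-specific relation of Lemma~\ref{L:RLD}(v) must line up index-for-index with a genuine braid relation rather than a mere commutation, and one must be sure the shift of addresses by $\a = \1^\ii$ produces consecutive indices $\ii+1, \ii+2$.

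Finally, surjectivity is clear: for each $\ii \ge 1$ one has $\sig\ii = \pi(\DD{\1^{\ii-1}})$, so every Artin generator lies in the image, and since the $\sig\ii$ generate~$\BP\infty$ the induced homomorphism $\MLD \to \BP\infty$ is onto.
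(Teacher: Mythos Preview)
Your proof is correct and follows essentially the same approach as the paper: verify that each family of relations in Lemma~\ref{L:RLD} is sent by~$\pi$ to a valid equality in~$\BP\infty$, observing that addresses containing a~$\0$ collapse to~$1$ so that only the commutation relations~(iv) and the critical relations~(v) along the spine $\a = \1^\ii$ produce nontrivial braid relations (far-commutation and the adjacent braid relation, respectively). Your case analysis is in fact more explicit and your indices more carefully tracked than in the paper's own sketch.
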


\begin{proof}
The point is that each LD-relation of Lemma~\ref{L:RLD} projects
under~$\pi$ onto a braid equivalence. All relations involving addresses that contain at
least one~$\0$ collapse to mere equalities. The remaining relations are 
$$\DD{\1^{\ii}} \DD{\1^{\jj}} = \DD{\1^{\jj}} \DD{\1^{\ii}}
\mbox{\quad with $\jj \ge \ii + 2$},$$
which projects to the valid braid relation
$\sig{\ii -1} \sig{\jj - 1} = \sig{\jj - 1} \sig{\ii -1}$, 
and 
$$\DD{\1^{\ii}} \DD{\1^{\jj}} \DD{\1^{\ii}} = \DD{\1^{\jj}} \DD{\1^{\ii}}
\DD{\1^{\jj}} \DD{\1^{\ii}\0},
\mbox{\quad with $\jj = \ii + 1$},$$
which projects to the not less valid braid relation
$\sig{\ii -1} \sig{\jj - 1} \sig{\ii -1} = \sig{\jj - 1} \sig{\ii -1} \sig{\jj - 1}$.
\end{proof}

We introduced a category~$\CMX$ for each monoid~$\MM$ partially acting on~$\XX$ in
Definition~\ref{D:ActionCat}. The braid category~$\BBB$ and our current category~$\CLD$
are of these type. For such categories, natural functors arise from morphisms between the involved monoids,
and we fix the following notation.

\begin{defi}
Assume that $\MM, \MMM$ are monoids acting on sets~$\XX$ and~$\XXX$, respectively. A
morphism~$\varphi{:}\,\MM {\to} \MMM$ and a map~$\psi{:}\,\XX {\to} \XXX$ are called
\emph{compatible} if 
\begin{equation}
\label{E:Compat}
\psi(\xx\act\aa) = \psi(\xx) \act \varphi(\aa)
\end{equation}
holds whenever $\xx\act\aa$ is defined. Then, we denote by~$\MAct\varphi\psi$ the functor
of~$\CAct\MM\XX$ to~$\CAct\MMM\XXX$ that coincides with~$\psi$ on objects and maps $(\xx, \aa,
\yy)$ to~$(\psi(\xx), \varphi(\aa), \psi(\yy))$.
\end{defi}

\begin{prop}
\label{P:Proj}
Define the \emph{right-height}~$\RH(\tt)$ of a term~$\tt$ by $\RH(\xx_\ii) =\nobreak 0$ and
$\RH(\tt_\0 \op \tt_\1) = \RH(\tt_\1) + 1$. Then the morphism~$\pi$ of~\eqref{E:Proj} is compatible
with~$\RH$, and $\MAct\pi\RH$ is a surjective functor of~$\CLD$ onto~$\BBB$. 
\end{prop}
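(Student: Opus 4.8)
The plan is to verify the compatibility condition \eqref{E:Compat} for $\pi$ and $\RH$, since, once it holds, the definition of the functor $\MAct\pi\RH$ immediately produces a functor of $\CLD = \CAct\MLD\TT$ to $\BBB = \CAct{\BP\infty}{\Nat}$ sending $(\tt,\aa,\tt')$ to $(\RH(\tt),\pi(\aa),\RH(\tt'))$; there will then remain only surjectivity. Because $\pi$ is a monoid homomorphism and the partial action of $\MLD$ on $\TT$ is associative in the sense of Definition~\ref{D:Action}, both sides of \eqref{E:Compat} are multiplicative in~$\aa$: if \eqref{E:Compat} holds for $\aa$ and for $\bb$ (read on the appropriate terms) and $\tt\act\aa\bb$ is defined, then, writing $\tt' = \tt\act\aa$, we get $\RH(\tt\act\aa\bb) = \RH(\tt'\act\bb) = \RH(\tt')\act\pi(\bb) = (\RH(\tt)\act\pi(\aa))\act\pi(\bb) = \RH(\tt)\act\pi(\aa\bb)$. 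So it suffices to check \eqref{E:Compat} when $\aa$ is a single generator $\DD\a$.

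The key point is the behaviour of $\RH$ under one $\DD\a$-expansion. First I record the elementary identity that, whenever $\RH(\tt)\ge\jj$, the subterm $\sub\tt{\1^\jj}$ is defined with $\RH(\sub\tt{\1^\jj}) = \RH(\tt)-\jj$; this is an immediate induction on $\jj$ from $\RH(\tt_\0\op\tt_\1) = \RH(\tt_\1)+1$. I then claim that every $\DD\a$-expansion preserves the right-height. If $\a$ contains at least one~$\0$, say $\a = \1^\jj\0\g$, then the modified subterm $\sub\tt\a$ sits inside $\sub\tt{\1^\jj\0}$, which is disjoint from the right spine $\{\ea, \1, \1\1, \dots\}$ of~$\tt$; hence the right spine, and with it $\RH(\tt)$, is untouched. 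If $\a = \1^\ii$ lies on the right spine, then by Definition~\ref{D:LDAction} the expansion replaces $\sub\tt{\1^\ii} = A\op(B\op C)$ by $(A\op B)\op(A\op C)$, and $\RH(A\op(B\op C)) = \RH(C)+2 = \RH((A\op B)\op(A\op C))$; combined with $\RH(\tt) = \RH(\sub\tt{\1^\ii})+\ii$ (the case $\jj=\ii$ of the identity above), this gives $\RH(\tt\act\DD{\1^\ii}) = \RH(\tt)$.

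With this in hand, \eqref{E:Compat} for a generator is a matter of matching domains. For $\a$ containing a~$\0$ we have $\pi(\DD\a) = 1$ by \eqref{E:Proj}, so $\RH(\tt)\act\pi(\DD\a) = \RH(\tt) = \RH(\tt\act\DD\a)$. For $\a = \1^\ii$ we have $\pi(\DD{\1^\ii}) = \sig{\ii+1}$; moreover $\tt\act\DD{\1^\ii}$ is defined exactly when $\sub\tt{\1^\ii}$ has the form $A\op(B\op C)$, i.e. when $\RH(\tt)\ge\ii+2$, which is precisely the condition $\sig{\ii+1}\in\BP{\RH(\tt)}$ for $\RH(\tt)\act\sig{\ii+1}$ to be defined by \eqref{E:BraidAction}; in that case both sides equal $\RH(\tt)$. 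This proves compatibility and hence yields the functor. For surjectivity on objects, every value of $\RH$ is realized by a right vine $\xx\op(\xx\op(\cdots\op\xx))$, so $\RH$ maps onto $\Obj(\BBB)$. For morphisms, fix $\nn$ and a braid $\bb\in\BP\nn$, so $(\nn,\bb,\nn)$ is an arbitrary morphism of~$\BBB$; write $\bb = \sig{\kk_1}\cdots\sig{\kk_\mm}$ with $1\le\kk_\jj\le\nn-1$ and set $\aa = \DD{\1^{\kk_1-1}}\cdots\DD{\1^{\kk_\mm-1}}$, so that $\pi(\aa) = \bb$ by \eqref{E:Proj} (using Lemma~\ref{L:Proj}). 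Let $\tt_\nn$ be the right vine with $\RH(\tt_\nn)=\nn$. Since every spine expansion preserves the right-height, each term $\tt^{(\jj)} = \tt_\nn\act\DD{\1^{\kk_1-1}}\cdots\DD{\1^{\kk_\jj-1}}$ reached along the way has right-height~$\nn$, and the next generator $\DD{\1^{\kk_{\jj+1}-1}}$ is applicable to $\tt^{(\jj)}$ because $\RH(\sub{\tt^{(\jj)}}{\1^{\kk_{\jj+1}-1}}) = \nn-\kk_{\jj+1}+1\ge 2$; an induction on~$\jj$ shows $\tt_\nn\act\aa = \tt^{(\mm)}$ is defined, and $(\tt_\nn,\aa,\tt_\nn\act\aa)$ maps to $(\nn,\bb,\nn)$.

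The main obstacle is the morphism-surjectivity step: one must realize the lifted braid word as an honest chain of \emph{applicable} $\DD\a$-expansions starting from a term of the prescribed right-height. This is exactly what the invariance of the right-height under spine expansions, together with the applicability criterion $\RH(\tt)\ge\ii+2$, secures, and the induction on the word length is the device that assembles the single steps. By contrast, the off-spine triviality $\pi(\DD\a)=1$, the domain matching for compatibility, and object-surjectivity are all routine verifications.
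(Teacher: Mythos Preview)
Your proof is correct and follows essentially the same approach as the paper: verify that the LD-law preserves right-height, check the compatibility condition~\eqref{E:Compat} on generators, and conclude. Your surjectivity argument is in fact more careful than the paper's, which simply asserts that surjectivity ``is clear, as each braid~$\sig\ii$ belongs to the image of~$\pi$''; you take the extra step of exhibiting a concrete term of the prescribed right-height and verifying inductively that the lifted word is applicable at each stage, which is the honest content behind that assertion.
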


The parameter~$\RH(\tt)$ is  the length of the rightmost branch in~$\tt$ viewed as a tree
or, equivalently, the number of final~$)$'s in~$\tt$ viewed as a bracketed expression. 

\begin{proof}
Assume that $(\tt, \aa, \tt')$ belongs to~$\Hom(\CLD)$. Put $\nn = \RH(\tt)$. The LD-law preserves the
right-height of terms, so we have $\RH(\tt') = \nn$ as well. The hypothesis that $\tt \act \aa$ exists implies
that the factors~$\DD{\1^\ii}$ that occur in some (hence in every) expression of~$\aa$ satisfy $\ii < \nn-1$.
Hence $\pi(\aa)$ is a braid of~$\BP\nn$, and $\nn \act \pi(\aa)$ is defined. Then the compatibility
condition~\eqref{E:Compat} is clear, $\MAct\pi\RH$ is a functor of~$\CLD$ to~$\BBB$. 

Surjectivity is clear, as each braid~$\sig\ii$ belongs to the image of~$\pi$.
\end{proof}

Moreover, a simple relation connects the elements~$\D_\tt$ of~$\MLD$ and the
braids~$\D_\nn$.

\begin{prop}
\label{P:ProjDelta}
We have $\pi(\D_\tt) = \D_\nn$ whenever~$\tt$ has right-height~$\nn \ge 1$.
\end{prop}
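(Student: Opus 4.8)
The plan is to argue by induction on the right-height $\nn = \RH(\tt)$, feeding the recursive definitions~\eqref{ED:delta} and~\eqref{ED:Delta} of $\d_\tt$ and $\D_\tt$ through the homomorphism~$\pi$. The whole computation hinges on understanding how $\pi$ composes with the two shift endomorphisms $\sh0$ and $\sh1$ of~$\MLD$. Since both $\pi\circ\sh0$ and $\pi\circ\sh1$ are homomorphisms $\MLD\to\BP\infty$, it suffices to check their effect on generators. Prepending a~$\0$ to an address never yields an address of the form $\1^\ii$, so $\pi(\DD{\0\a}) = 1$ for every~$\a$; hence $\pi\circ\sh0$ is the trivial homomorphism. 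Prepending a~$\1$ sends $\DD{\1^\ii}$ to $\DD{\1^{\ii+1}}$ and sends every $\0$-containing address to a $\0$-containing address, so on generators $\pi(\sh1(\DD\a)) = \tau(\pi(\DD\a))$, where $\tau$ is the (well defined, as the braid relations are shift-invariant) endomorphism of~$\BP\infty$ determined by $\tau(\sig\ii) = \sig{\ii+1}$; thus $\pi\circ\sh1 = \tau\circ\pi$.

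Next I would isolate the behaviour of the ``$\d$'' elements under~$\pi$, proving by induction on the structure of a term~$\ss$ that $\pi(\d_\ss) = \sig1\sig2\cdots\sig{\RH(\ss)}$ (the empty product when $\RH(\ss)=0$). For $\ss$ a variable this is immediate; for $\ss = \ss_0 \op \ss_1$, applying $\pi$ to $\d_\ss = \DD\ea\cdot\sh0(\d_{\ss_0})\cdot\sh1(\d_{\ss_1})$ and using $\pi(\DD\ea) = \sig1$ together with the two shift facts gives $\pi(\d_\ss) = \sig1\cdot\tau(\pi(\d_{\ss_1}))$, and the induction hypothesis turns this into $\sig1\,\sig2\cdots\sig{\RH(\ss_1)+1} = \sig1\cdots\sig{\RH(\ss)}$ since $\RH(\ss) = \RH(\ss_1)+1$.

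With these two lemmas I would run the main induction. The base case $\nn=1$ (so $\tt = \tt_0\op\tt_1$ with $\tt_1$ a variable) gives $\D_\tt = \sh0(\D_{\tt_0})$, whence $\pi(\D_\tt) = 1 = \D_1$. For the step, write $\tt = \tt_0\op\tt_1$ with $\RH(\tt_1) = \nn-1$ and apply $\pi$ to~\eqref{ED:Delta}: the $\sh0$-factor is killed, the $\sh1$-factor becomes $\tau(\pi(\D_{\tt_1})) = \tau(\D_{\nn-1})$ by the induction hypothesis, and---using that $\f$ preserves right-height, since LD-expansions do (as recalled in the proof of Proposition~\ref{P:Proj}), so $\RH(\f(\tt_1)) = \nn-1$---the $\d$-lemma gives $\pi(\d_{\f(\tt_1)}) = \sig1\cdots\sig{\nn-1}$. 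Hence $\pi(\D_\tt) = \tau(\D_{\nn-1})\cdot\sig1\sig2\cdots\sig{\nn-1}$.

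The remaining, and I expect only genuinely non-bookkeeping, step is to recognise the right-hand side as $\D_\nn$. This is the classical recursion for Garside's braid: starting from $\D_\nn = (\sig1\cdots\sig{\nn-1})\,\D_{\nn-1}$ with $\D_{\nn-1}$ a word in $\sig1,\dots,\sig{\nn-2}$, one uses that the band $\sig1\cdots\sig{\nn-1}$ conjugates $\sig\ii$ to $\sig{\ii+1}$ for $1\le\ii\le\nn-2$, so conjugating $\D_{\nn-1}$ by it produces exactly $\tau(\D_{\nn-1})$, which rearranges the recursion into $\D_\nn = \tau(\D_{\nn-1})\,\sig1\cdots\sig{\nn-1}$. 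The delicate point throughout is the asymmetry of~\eqref{ED:Delta}: $\pi$ annihilates the $\0$-shifted left contribution and converts the $\1$-shifted right contribution into a braid-index shift, so everything rests on the residual factor $\d_{\f(\tt_1)}$ projecting precisely to the ascending band $\sig1\cdots\sig{\nn-1}$ that Garside's recursion demands---which is exactly what the $\d$-lemma secures.
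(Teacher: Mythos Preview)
Your proof is correct and follows essentially the same route as the paper's: first establish the shift compatibilities $\pi\circ\sh0 = 1$ and $\pi\circ\sh1 = \sh{}\circ\pi$, then prove the auxiliary identity $\pi(\d_\ss) = \sig1\cdots\sig{\RH(\ss)}$ by structural induction, and finally obtain $\pi(\D_\tt) = \sh{}(\D_{\nn-1})\,\sig1\cdots\sig{\nn-1} = \D_\nn$. The only cosmetic differences are that the paper inducts on the size of~$\tt$ rather than its right-height (both work, since the recursive call is on~$\tt_1$) and simply asserts the final braid identity, whereas you supply a conjugation argument for it.
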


\begin{proof}
We first prove that $\RH(\tt) = \nn$ implies
\begin{equation}
\label{E:delta}
\pi(\d_\tt) = \sig1\sig2 ... \sig\nn
\end{equation}
using induction on the size of~$\tt$. If~$\tt$ is a variable, we have
$\RH(\tt) = 0$ and $\d_\tt = 1$, so the equality is clear. Otherwise, write $\tt = \tt_\0 \op
\tt_\1$. By definition, we have
$$\d_\tt = \DD\ea \cdot \sh\0(\d_{\tt_\0}) \cdot \sh\1(\d_{\tt_\1}).$$
Let $\sh{}$ denote the endomorphism of~$\BP\infty$ that maps~$\sig\ii$ to~$\sig{\ii+1}$ for
each~$\ii$. Then $\pi$ collapses every term in the image of~$\sh\0$, and 
$\pi(\sh\1(\aa)) = \sh{}(\pi(\aa))$ holds for each~$\aa$ in~$\MLD$. Hence, using the induction hypothesis
$\pi(\d_{\tt_\1}) = \sig1...\sig{\nn-1}$, we deduce
$$\pi(\d_\tt) = \sig1 \cdot 1 \cdot \sh{}(\sig1...\sig{\nn-1})
= \sig1...\sig{\nn},$$
which is~\eqref{E:delta}. 
Put~$\D_0 = 1$ (= $\D_1$). We prove that $\RH(\tt) = \nn$ implies
$\pi(\D_\tt) = \D_\nn$ for~$\nn \ge 0$, using induction on the size of~$\tt$ again. If~$\tt$ is a variable, we
have $\nn = 0$ and $\D_\tt = 1$, as expected. Otherwise, write
$\tt = \tt_\0 \op \tt_\1$. The definition gives
$$\D_\tt = \sh\0(\D_{\tt_\0}) \cdot \sh\1(\D_{\tt_\1}) \cdot \d_{\f(\tt_\1)}.$$
As above, $\pi$ collapses the term in the image of~$\sh\0$, and it transforms~$\sh\1$
into~$\sh{}$. Hence, using the induction hypothesis $\pi(\D_{\tt_\1}) = \D_\nno$ and
\eqref{E:delta} for~$\f(\tt_1)$, whose right-height is that of~$\tt_\1$, we obtain
$$\pi(\D_\tt) = 1 \cdot \sh{}(\D_\nno) \cdot \sig1 \sig2 \,...\, \sig\nno = \D_\nn.
\eqno{\square}$$
\def\qed{\relax}
\end{proof}

\section{The main results}
\label{S:Main}

We can now state the two main results of this paper.

\begin{thrm}
\label{T:Main}
For each term~$\tt$, put $\D(\tt) = (\tt, \D_\tt, \f(\tt))$. Then $\CLD$ is a left-Garside category with 
left-Garside map~$\D$, and $\MAct\pi\RH$ is a surjective right-lcm
preserving functor of~$\CLD$ onto the positive braid category~$\BBB$.
\end{thrm}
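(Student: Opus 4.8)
The plan is to deduce the first assertion from Proposition~\ref{P:Criterion}. Since $\CLD$ is by construction the category $\CAct\MLD\TT$ associated with the partial action of~$\MLD$ on the set of terms, it suffices to prove that $\MLD$ is locally left-Garside with respect to this action, with left-Garside sequence $(\D_\tt)_{\tt\in\TT}$; the map~$\D$ displayed in the statement is then exactly the one furnished by Proposition~\ref{P:Criterion}. So I would verify in turn the three left-preGarside axioms $\LG$, $\LGi$, $\LGii$ for~$\MLD$, and then Condition~$\LGloc$ for the family~$(\D_\tt)$.

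For noetherianity~$\LG$, I would use that each atomic LD-expansion strictly increases the size (number of leaves) of a term: given~$\aa$ in~$\MLD$, choose by Definition~\ref{D:Action}(iii) a term~$\tt$ with $\tt\act\aa$ defined; then along any $\div$-increasing sequence in~$\Div(\aa)$ the sizes of the corresponding expansions of~$\tt$ strictly increase while remaining bounded by the size of~$\tt\act\aa$, so the sequence is finite. The genuinely hard part is $\LGi$ (left-cancellation) and $\LGii$ (local right-lcm's): these cannot follow from the partial action alone, since a priori distinct elements of~$\MLD$ might act identically on terms---this is precisely the content of the Embedding Conjecture. Instead I would obtain them from the internal structure of the presentation, namely from the fact that the relations of Lemma~\ref{L:RLD} form a complete right-complemented presentation in the sense of subword reversing; cancellativity and the existence of right-lcm's for elements sharing a common right-multiple are then standard consequences, as worked out for~$\MLD$ in~\cite{Dgd}. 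This completeness verification, a finite but delicate case analysis dominated by the critical case~(v), is the main obstacle.

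It then remains to check Condition~$\LGloc$. The first half, that $\Div(\D_\tt)$ left-generates $\MLD_\tt$, follows from Lemma~\ref{L:Confluence}(i): the fundamental expansion~$\f(\tt)$ dominates every atomic expansion of~$\tt$, so any nontrivial~$\aa$ with $\tt\act\aa$ defined begins with some~$\DD\b$ that left-divides~$\D_\tt$. The second half, that $\aa\dive\D_\tt$ implies $\D_\tt\dive\aa\,\D_{\tt\act\aa}$, is subtler: Lemma~\ref{L:Confluence}(ii) shows that $\f(\tt\act\aa)$ is an LD-expansion of~$\f(\tt)$ at the level of \emph{terms}, but to get the divisibility \emph{inside} the monoid~$\MLD$ one may not merely compare actions, exactly because of the potential noninjectivity above. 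I would therefore establish the labelled relation $\D_\tt\dive\aa\,\D_{\tt\act\aa}$ directly, by induction on the size of~$\tt$ using the recursive definitions of~$\d_\tt$ and~$\D_\tt$ in Definition~\ref{D:Delta} and the identities of Lemma~\ref{L:Delta}, reducing a general simple~$\aa$ to the atomic case through left-cancellation (now available) and the relations~$\RLD$.

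For the second assertion, surjectivity of $\MAct\pi\RH$ is already Proposition~\ref{P:Proj}. For right-lcm preservation, let $\cc$ be the right-lcm in~$\MLD$ of two elements~$\aa,\bb$ sharing a common source; writing $\cc=\aa\aa'=\bb\bb'$ and applying the homomorphism~$\pi$ of Lemma~\ref{L:Proj} shows at once that $\pi(\cc)$ is a common right-multiple of~$\pi(\aa)$ and~$\pi(\bb)$ in~$\BP\infty$, so $\lcm(\pi(\aa),\pi(\bb))\dive\pi(\cc)$. For the reverse divisibility I would use that right-lcm's in both monoids are computed by the same subword reversing process, together with the fact, read off from Lemma~\ref{L:Proj}, that every relation of~$\RLD$ is sent by~$\pi$ either to a defining braid relation or to a trivial equality; hence the reversing of $\aa\bb$ projects onto the reversing of $\pi(\aa)\pi(\bb)$, forcing $\pi(\cc)=\lcm(\pi(\aa),\pi(\bb))$. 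Proposition~\ref{P:ProjDelta}, giving $\pi(\D_\tt)=\D_\nn$, confirms that the two Garside structures are matched by the functor.
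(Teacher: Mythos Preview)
Your outline matches the paper's proof almost exactly: reduce to Proposition~\ref{P:Main} via Proposition~\ref{P:Criterion}, get~$\LG$ from the size function, get~$\LGi$ and~$\LGii$ from the complemented presentation via the cube condition (the paper invokes Proposition~\ref{P:Cube} and cites Proposition~VIII.1.9 of~\cite{Dgd} for the verification), get~$\LGloc$ from the explicit results on~$\D_\tt$ in~\cite{Dgd}, and get right-lcm preservation from the compatibility of~$\pi$ with the two complements.

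There is, however, one genuine inconsistency in your treatment of~$\LGloc$. For the second half you correctly insist that Lemma~\ref{L:Confluence}(ii) is only a \emph{term-level} statement and that the divisibility $\D_\tt \dive \aa\,\D_{\tt\act\aa}$ must be established \emph{inside}~$\MLD$, because the action might not be faithful. But you then commit exactly this error for the first half: Lemma~\ref{L:Confluence}(i) only says that $\f(\tt)$ is an LD-expansion of~$\tt\act\DD\b$, \ie, that some element~$\cc$ satisfies $\tt\act(\DD\b\,\cc)=\tt\act\D_\tt$. This does \emph{not} give $\DD\b\,\cc=\D_\tt$ in~$\MLD$, hence does not give $\DD\b\dive\D_\tt$ in~$\MLD$, for precisely the reason you yourself articulate two sentences later. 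The paper closes both halves by citing the monoid-level statements directly (Lemmas~VII.3.16 and~VII.3.17 of~\cite{Dgd}), which are proved by explicit manipulations of the recursive definition of~$\D_\tt$ together with the relations~$\RLD$, not by comparing actions on terms. Your proposed inductive strategy for the second half is the right idea, and the same kind of argument is what is needed for the first half as well.
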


\begin{thrm}
\label{T:Embedding}
Unless the category~$\CLD$ is not regular, the Embedding Conjecture of~\cite[Chapter~IX]{Dgd} is true.
\end{thrm}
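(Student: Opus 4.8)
The plan is to prove the contrapositive reading of the statement, namely that regularity of~$\CLD$ forces the Embedding Conjecture. I will use the form of the conjecture that, as announced before Question~\ref{Q:EmbConj1} and to be established in Section~\ref{S:EmbConj}, is equivalent to it: the \emph{uniqueness of the labelling}, i.e.\ the faithfulness of the canonical projection functor~$\Pi\colon \CLD \to \CLDm$ that is the identity on objects and forgets the $\MLD$-label, sending $(\tt, \aa, \tt')$ to~$(\tt, \tt')$. As $\Pi$ is surjective on morphisms and bijective on objects (by Proposition~\ref{P:Confluence} every LD-expansion carries at least one label), faithfulness turns~$\Pi$ into an isomorphism, and $\CLDm$ then inherits from~$\CLD$ the left-Garside structure of Theorem~\ref{T:Main}; this is precisely the content of Question~\ref{Q:EmbConj1}, hence of the conjecture.

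Since $\CLD$ is left-Garside, every morphism admits a unique greedy normal form, a finite sequence of simple morphisms (Proposition~\ref{P:NF} with $\SSS = \Sim(\CLD)$). First I would reduce faithfulness to the \emph{geometricity} of this normal form, i.e.\ to the statement that the normal form of $(\tt, \aa, \tt')$ depends only on the image pair~$(\tt, \tt')$. Indeed, once that is known, two morphisms with the same image under~$\Pi$ share the same normal form, hence the same product of simple factors, hence the same label~$\aa$. Geometricity rests on two ingredients: \textbf{(a)} two simple morphisms of~$\CLD$ with the same source and the same target are equal, so that $\Pi$ is injective on $\Sim(\CLD)$ and a normal form is entirely recorded by the sequence of intermediate trees $\tt = \tt_0 \exp \tt_1 \exp \cdots \exp \tt_\dd = \tt'$ it runs through; and \textbf{(b)} this sequence of intermediate trees is a function of~$(\tt, \tt')$ alone.

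By the inductive description of the normal form as an iterated head---$\ff_1 = \head{(\tt, \aa, \tt')}$, then recurse on the residual morphism with source~$\target{\ff_1}$ (Proposition~\ref{P:NF})---ingredient~\textbf{(b)} reduces further to the single assertion that the target tree of $\head{(\tt, \aa, \tt')} = \gcd\bigl((\tt, \aa, \tt'), \D(\tt)\bigr)$ depends only on~$(\tt, \tt')$. This is exactly where regularity of~$\CLD$ enters. The plan is to exploit regularity through the right-multiplication recipe of Proposition~\ref{P:RightProduct}, together with its reformulation $\head{\f(\ff_1\ff_2)} = \f(\head{\ff_1\ff_2})$ of Proposition~\ref{P:CriterionBis}: writing $\aa = \DD{\a_1} \cdots \DD{\a_\kk}$ as a product of atomic expansions and inserting them one at a time on the right, Proposition~\ref{P:RightProduct} computes the normal form by purely local steps, each the normal form of a product of two simple morphisms. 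Using~\textbf{(a)}, each such local step is an operation on trees; the functor~$\f$ is itself geometric, being the fundamental expansion on objects and, by~\textbf{(a)}, a tree operation on simples; and regularity is the precise guarantee that~$\f$ respects these length-two normal forms, so that the whole computation---and in particular the head---is carried out independently of the chosen label.

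The main obstacle will be ingredient~\textbf{(b)}, equivalently the label-independence of the head. The difficulty is a genuine circularity: the divisibility order on~$\CLD$ that defines $\head{}$ and $\gcd$ is phrased in terms of the $\MLD$-labels, which are exactly the data we are trying to prove redundant. Regularity is designed to break this circularity by replacing the global head computation with the local, manifestly geometric moves of Proposition~\ref{P:RightProduct}; but checking that every such move is label-independent and that the induction closes over all terms is substantial, and is precisely the ``long sequence of verifications'' to which the theorem reduces the conjecture. Ingredient~\textbf{(a)}, by contrast, I expect to be comparatively routine, following from a direct analysis of the left-divisors of~$\D_\tt$ and of their action on trees.
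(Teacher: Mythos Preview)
Your reduction has a genuine gap at ingredient~\textbf{(b)}, and you have in fact misread what the theorem asserts. The statement is a clean conditional: regularity of~$\CLD$ \emph{implies} the Embedding Conjecture, full stop. The ``long sequence of verifications'' alluded to in the paper concerns the \emph{separate} and still open problem of proving that~$\CLD$ is regular (via Proposition~\ref{P:PhiGcd}), not the conditional itself. Your last paragraph effectively concedes that your argument for~\textbf{(b)} is incomplete, but the theorem requires a complete proof of the conditional, and the paper gives one in a few lines.

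The specific difficulty with your route is that the right-multiplication recipe of Proposition~\ref{P:RightProduct} takes as input a decomposition $\aa = \DD{\a_1}\cdots\DD{\a_\kk}$, and different labels~$\aa$ yield different sequences of atoms; regularity tells you that~$\f$ preserves each local length-two normal form, but it says nothing about why two different atom sequences leading from~$\tt$ to~$\tt'$ should produce the same chain of intermediate trees. The circularity you identify is real, and nothing in your outline breaks it.

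The paper avoids this entirely by targeting a different equivalent form of the conjecture, namely right-cancellation in~$\Hom(\CLD)$ (form~$(ii)$ of Proposition~\ref{P:Embedding}), and by working with the functor~$\f$ rather than the projection~$\Pi$. The key general fact is Proposition~\ref{P:RightCancellation}: in any left-Garside category where~$\f$ is injective on objects, right-cancellation is equivalent to injectivity of~$\f$ on~$\Hom(\CCC)$; and if the category is moreover regular, injectivity of~$\f$ on \emph{simple} morphisms already suffices, because regularity means~$\f$ preserves normal forms, so injectivity on simples propagates to injectivity on arbitrary morphisms via uniqueness of the normal form. The paper then checks the two hypotheses for~$\CLD$ directly: Lemma~\ref{L:PartialInj} shows~$\f$ is injective on terms, and Lemma~\ref{L:PartialInj2} shows~$\f$ is injective on simple morphisms---the latter being essentially your ingredient~\textbf{(a)}, established using results from~\cite{Dgd}. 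The conditional then follows immediately. The moral is that~$\f$, unlike~$\Pi$, interacts with the Garside structure through the identity~\eqref{E:Phi}, and this is what makes the argument close.
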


\subsection{Recognizing left-preGarside monoids}
\label{S:Recognizing}

Owing to Proposition~\ref{P:Criterion} and to the construction of~$\CLD$ from the partial action
of the monoid~$\MLD$ on terms, the first part of Theorem~\ref{T:Main} is a direct
consequence of

\begin{prop}
\label{P:Main}
The monoid~$\MLD$ equipped with its partial action on terms via self-distributivity is a locally left-Garside 
monoid with associated left-Garside sequence~$(\D_\tt)_{\tt \in \TT}$.
\end{prop}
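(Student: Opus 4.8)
The plan is to check the two halves of the definition of a locally left-Garside monoid separately: that $\MLD$ is left-preGarside (conditions $\LG$, $\LGi$, $\LGii$), and that $(\D_\tt)_{\tt\in\TT}$ is a left-Garside sequence for the action, i.e.\ that $\tt\act\D_\tt$ is defined and $\LGloc$ holds. Definedness is free: Lemma~\ref{L:Delta} gives $\tt\act\D_\tt=\f(\tt)$. For left-cancellativity $\LGi$ and the existence of local right-lcm's $\LGii$ I would treat these as intrinsic properties of the monoid $\MLD$ coming from its presentation: both are delivered as a package once one knows that the complemented presentation $\RLD$ is \emph{complete} for subword reversing (equivalently satisfies the cube condition), which is the analysis carried out in~\cite{Dgd}. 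I stress that the action on terms is useless for $\LGi$: from $\aa\bb=\aa\bb'$ one does get $(\tt\act\aa)\act\bb=(\tt\act\aa)\act\bb'$, but deducing $\bb=\bb'$ would require the action to be faithful, and faithfulness of the labelling is precisely the Embedding Conjecture.

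For the noetherianity condition $\LG$ I would argue through the action rather than through a grading of $\MLD$: the critical relation of Lemma~\ref{L:RLD}$(v)$ forbids any additive $\Nat$-valued invariant positive on the generators, so no length argument on words is available. The key remark is that a single $\DD\a$-expansion duplicates the subtree $\sub\tt{\a0}$ and so strictly increases the number of leaves of a term; hence any nonempty word in the $\DD\a$ strictly increases the leaf count wherever its action is defined. Consequently no nonempty word represents $1$ in $\MLD$, and every nontrivial element of $\MLD$ acts by strictly increasing the leaf count. Given a $\div$-increasing sequence $\bb_0\div\bb_1\div\cdots$ inside $\Div(\aa)$, I fix (partial-action axiom $(iii)$) a term $\tt$ for which $\tt\act\aa$ is defined; then every $\tt\act\bb_\ii$ is defined, their leaf counts are strictly increasing and bounded by that of $\tt\act\aa$, so the sequence is finite.

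The heart of the proof is $\LGloc$, which I would establish entirely syntactically inside $\MLD$, by induction on the term $\tt=\tt_0\op\tt_1$ using the recursion $\D_\tt=\sh0(\D_{\tt_0})\,\sh1(\D_{\tt_1})\,\d_{\f(\tt_1)}$ of Definition~\ref{D:Delta} together with the relations $\RLD$. The first half (that $\Div(\D_\tt)$ left-generates $\MM_\tt$) reduces, since any nontrivial $\aa\in\MM_\tt$ begins with some applicable generator $\DD\b$, to the labelled form of Lemma~\ref{L:Confluence}$(i)$: for each position $\b$ where LD applies in $\tt$ one has $\DD\b\dive\D_\tt$ in $\MLD$. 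The second half is the labelled form of Lemma~\ref{L:Confluence}$(ii)$: $\aa\dive\D_\tt$ implies $\D_\tt\dive\aa\,\D_{\tt\act\aa}$. In both halves the inductive step splits according to whether the relevant activity lies in the left subtree, the right subtree, or at the root; the two subtree cases are handled by transporting the induction hypothesis through the shift endomorphisms $\sh0,\sh1$ (which respect $\RLD$) together with the parallel and nested cases of Lemma~\ref{L:RLD}.

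I expect the main obstacle to be exactly this syntactic lifting. At the level of the action both halves are already recorded in Lemma~\ref{L:Confluence}, but they cannot be invoked directly: they yield equalities of \emph{actions} on terms, whereas $\LGloc$ demands genuine divisibility relations in $\MLD$, and turning an equality of actions into an equality in $\MLD$ is once more the Embedding Conjecture. So the work is to reprove Lemma~\ref{L:Confluence} one level down, manipulating the explicit word $\D_\tt$ modulo $\RLD$. The hardest case is the root, $\b=\ea$, where the fundamental expansion must absorb the root expansion $\DD\ea$ and leave behind the correct cofactor; this is exactly where the critical relation of Lemma~\ref{L:RLD}$(v)$ is indispensable. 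This case is the monoid counterpart of the ``main step'' Lemmas~V.3.11 and~V.3.12 of~\cite{Dgd}, and the bookkeeping---tracking how the duplicated copies of a subtree are renamed before and after the root expansion---is where the real effort will go.
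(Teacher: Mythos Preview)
Your proposal is correct and follows essentially the same route as the paper: noetherianity~$\LG$ via a size function on terms that strictly increases under any nontrivial action (the paper packages this as Proposition~\ref{P:Mass}), left-cancellativity~$\LGi$ and local right-lcm's~$\LGii$ via the cube condition for the complemented presentation~$\RLD$ (the paper's Proposition~\ref{P:Cube}, citing Proposition~VIII.1.9 of~\cite{Dgd}), and $\LGloc$ by the labelled, monoid-level versions of Lemma~\ref{L:Confluence} (the paper cites Lemmas~VII.3.16 and~VII.3.17 of~\cite{Dgd}, which are proved exactly by the inductive scheme on the term structure you outline). Your emphasis that the action-level statements of Lemma~\ref{L:Confluence} cannot be invoked directly---because promoting equalities of actions to equalities in~$\MLD$ is the Embedding Conjecture---is exactly the right diagnosis, and the paper's proof implicitly relies on the same distinction by citing the monoid-level lemmas from~\cite{Dgd} rather than Lemma~\ref{L:Confluence} itself.
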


This is the result we shall prove now. The first step is to prove that $\MLD$ is left-preGarside. To do it, we
appeal to general tools that we now describe. As for~$\LG$, we have an easy sufficient condition when the
action turns out to be monotonous in the following sense.

\goodbreak

\begin{prop}
\label{P:Mass}
Assume that $\MM$ has a partial action on~$\XX$ and there exists a map
$\mu:
\XX
\to \Nat$ such that $\aa \not= 1$ implies
$\mu(\xx\act\aa) > \mu(\xx)$. Then $\MM$ satisfies~$\LG$.
\end{prop}

\begin{proof}
Assume that $(\aa_1, ..., \aa_\ell)$ is a $\div$-increasing sequence
in~$\Div(\aa)$. By definition of a partial action, there exists~$\xx$ in~$\XX$ such that $\xx \act \aa$ is
defined, and this  implies that $\xx \act
\aa_\ii$ is defined for each~$\ii$. Next, the hypothesis that $(\aa_1, ..., \aa_\ell)$ is
$\div$-increasing implies that there exist
$\bb_2, ..., \bb_\ell \not= 1$ satisfying $\aa_\ii = \aa_{\ii-1} \bb_\ii$ for each~$\ii$. We find
$$\mu(\xx \act \aa_\ii) = \mu((\xx \act \aa_{\ii-1}) \act \bb_\ii) > \mu(\xx
\act \aa_{\ii-1}),$$ 
and the sequence $(\mu(\xx \act \aa_1), ..., \mu(\xx \act \aa_\ell))$ is increasing. As
$\mu(\xx \act \aa_1) \ge \mu(\xx)$ holds, we deduce $\ell \le \mu(\xx \act \aa) -
\mu(\xx) +1$ and, therefore, $\MM$ satisfies~$\LG$. 
\end{proof}

As for conditions~$\LGi$ and~$\LGii$, we appeal to the subword reversing method
of~\cite{Dgp}. If $\SS$ is any set, we denote by~$\SS^*$ the set of all finite sequences of elements
of~$\SS$, \ie, of words on the alphabet~$\SS$. Then $\SS^*$ equipped with concatenation is a free monoid.
We use~$\ew$ for the empty word.

\begin{defi}
Let $\SS$ be any set. A map $\CC: \SS\times\SS \to \SS^*$ is called a
\emph{complement} on~$\SS$. Then, we denote by~$\RR_\CC$ the
family of all relations $\aa \CC(\aa,\bb) = \bb \CC(\bb,\aa)$ with $\aa \not= \bb$
in~$\SS$, and by~$\CCh$ the unique (possibly partial) map of~$\SS^* \times \SS^*$
to~$\SS^*$ that extends~$\CC$ and obeys the recursive rules
\begin{equation}
\CCh(\uu, \vv_1 \vv_2) = \CCh(\uu,\vv_1) \CCh(\CCh(\vv_1,\uu), \vv_2), \ 
\CCh(\vv_1\vv_2, \uu) =  \CCh(\vv_2, \CCh(\uu,\vv_1)).
\end{equation}
\end{defi}

\begin{prop} [\cite{Dgp} or {\cite[Prop.\,II.2.5.]{Dgd}}]
\label{P:Cube}
Assume that $\MM$ is a monoid satisfying~$\LG$ and admitting the presentation
$\Mon{\SS, \RR_\CC}$, where $\CC$ is a complement on~$\SS$. Then the following are equivalent:

$(i)$ The monoid~$\MM$ is left-preGarside; 

$(ii)$ For all~$\aa, \bb, \cc$ in~$\SS$, we have 
\begin{equation}
\label{E:Cube}
\CCh(\CCh(\CCh(\aa,\bb), \CCh(\aa,\cc)), \CCh(\CCh(\bb, \aa),
\CCh(\bb,\cc))) = \ew.
\end{equation}
\end{prop}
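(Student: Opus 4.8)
The plan is to prove Proposition~\ref{P:Cube}, the equivalence (i)$\Leftrightarrow$(ii), for a monoid~$\MM$ satisfying~$\LG$ and presented by $\Mon{\SS, \RR_\CC}$. Since $\LG$ is assumed throughout and $\RR_\CC$ is a complemented presentation, both $\LGi$ (left-cancellativity) and $\LGii$ (local right-lcm's) are exactly what must be analyzed; being left-preGarside means precisely that $\LG$, $\LGi$, $\LGii$ all hold, so under the standing hypothesis (i) reduces to the conjunction of $\LGi$ and $\LGii$. The whole proof is therefore a study of the subword reversing process attached to~$\CC$, and the single equation~\eqref{E:Cube} is the ``cube condition'' that makes reversing well-behaved.

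First I would recall the mechanics of reversing. Given two words $\uu, \vv$ on~$\SS$, the word $\CCh(\uu,\vv)$ is computed by repeatedly rewriting a factor $\bb\inv\aa$ (reading $\uu\inv\vv$) into $\CC(\aa,\bb)\CC(\bb,\aa)\inv$, so that $\uu\,\CCh(\uu,\vv) = \vv\,\CCh(\vv,\uu)$ holds in~$\MM$ by construction of the relations in~$\RR_\CC$. The two recursive rules defining~$\CCh$ are exactly the statement that reversing is compatible with concatenation on each side. The key structural fact, which I would isolate as a lemma, is that reversing is \emph{confluent} (the result is independent of the order of rewriting) precisely when the cube condition~\eqref{E:Cube} holds for all triples $\aa,\bb,\cc$ of generators. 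Concretely, \eqref{E:Cube} says that the two ways of reversing the three-letter configuration coming from $\aa,\bb,\cc$ agree; a standard ``van Kampen / diamond'' argument then bootstraps this single-cube coherence to coherence of arbitrarily large reversing diagrams, by induction on the number of elementary reversing steps and pasting together unit cubes. This is the technical heart borrowed from~\cite{Dgp}.

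From confluence of reversing the two implications come out cleanly. For (ii)$\Rightarrow$(i): local right-lcm's follow because, whenever $\aa, \bb$ share a common right-multiple, reversing $\uu\inv\vv$ terminates (here $\LG$ is what guarantees termination, via a noetherianity/length argument) and yields $\CCh(\uu,\vv), \CCh(\vv,\uu)$ computing the right-lcm; confluence ensures the output is canonical, hence the lcm is well defined. Left-cancellativity $\LGi$ is obtained from the same confluence applied to the configuration $\aa\inv\aa$: reversing $\uu\inv\uu\vv$ against $\uu\inv\uu\vv'$ forces $\vv$ and $\vv'$ to reverse to the empty word against each other, which under confluence plus $\LG$ yields $\vv = \vv'$ in~$\MM$. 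For the converse (i)$\Rightarrow$(ii): if $\MM$ is left-preGarside then right-lcm's exist and are unique wherever common multiples exist, and the two sides of~\eqref{E:Cube} both represent the result of reversing the same three-generator word; uniqueness of the lcm (together with left-cancellativity to strip common prefixes) forces the two reversing outputs to coincide, and since the common multiple built from $\aa,\bb,\cc$ is genuinely an lcm, the ``excess'' word produced by reversing must be empty, which is exactly \eqref{E:Cube}$=\ew$.

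The main obstacle I expect is the confluence lemma itself: promoting the single cube condition~\eqref{E:Cube} to full confluence of reversing on arbitrary words is not a routine calculation but a genuine combinatorial induction, requiring care in organizing the double induction (on the lengths of~$\uu$ and~$\vv$, or equivalently on the area of the reversing diagram) and in checking that every elementary reversing square can be tiled by unit cubes. A secondary subtlety is the role of~$\LG$: it is needed twice, once to guarantee \emph{termination} of reversing (so that $\CCh(\uu,\vv)$ is actually defined as a word rather than an infinite process) and once to pass from ``reverses to the empty word'' back to ``equal in~$\MM$''. I would therefore state termination as an explicit preliminary consequence of~$\LG$ before invoking confluence, and only then assemble the two implications. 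Since the proposition is quoted from~\cite{Dgp, Dgd}, the honest shortest route is to present this as a recollection of the reversing framework and cite those sources for the detailed confluence induction, spelling out only the two short derivations of $\LGi$ and~$\LGii$ from confluence.
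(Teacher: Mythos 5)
The first thing to note is that the paper itself contains no proof of Proposition~\ref{P:Cube}: it is imported wholesale from \cite{Dgp} and \cite[Prop.~II.2.5]{Dgd}, so your sketch must be measured against the reversing machinery of those sources, whose overall architecture you do follow (termination, a word-level coherence property, then deriving \LGi\ and \LGii). But your central lemma is misstated in a way that inverts where the difficulty lies: you claim that reversing is confluent \emph{precisely when} \eqref{E:Cube} holds. Confluence of subword reversing is in fact unconditional and has nothing to do with the cube condition: a redex is a length-two factor $\ss\inv\tt$ consisting of a negative letter followed by a positive one, so two redexes can never overlap, the rewriting system is orthogonal, and the outcome is independent of the strategy whenever the process terminates --- this is exactly why the paper's recursive rules determine a well-defined partial map~$\CCh$ in the first place. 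What \eqref{E:Cube} actually encodes is \emph{completeness} of the presentation: the semantic property that two positive words $\uu, \vv$ represent the same element of~$\MM$ if and only if $\uu\inv\vv$ reverses to the empty word. That is not a diamond lemma for a rewriting system; the technical heart of \cite{Dgp} is a noetherian induction (this is where \LG\ enters) propagating the cube condition from triples of \emph{letters} to triples of arbitrary \emph{words}, and completeness --- not order-independence --- is what makes $\uu\,\CCh(\uu,\vv)$ a \emph{least} common right-multiple rather than merely a common one, and what powers your cancellation argument (completeness applied to $(\uu\vv)\inv(\uu\vv')$, then stripping $\uu\inv\uu$). Your two derivations survive once ``confluence'' is systematically replaced by ``completeness'', but as written the key lemma is false-as-stated and true-for-free.

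The converse direction $(i)\Rightarrow(ii)$ is also too glib. You argue that ``uniqueness of the lcm forces the two reversing outputs to coincide'', but this hides two real steps. First, the defining relations only guarantee that $\aa\,\CCh(\aa,\bb)$ represents \emph{some} common right-multiple of $\aa$ and~$\bb$; that under hypothesis~$(i)$ reversing computes the right-lcm is itself a nontrivial noetherian induction (the II.2 material of \cite{Dgd} that the paper invokes elsewhere, e.g.\ its appeal to Prop.~II.2.16), not a consequence of abstract lcm uniqueness. Second, \eqref{E:Cube} is a word-level identity: you must show that the reversing of the two inner words against each other terminates \emph{in the empty word}, whereas left-cancelling a common prefix in~$\MM$ only yields equality of the represented elements, which is weaker precisely when completeness is not yet known. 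A small convention slip as well: with $\aa\,\CC(\aa,\bb) = \bb\,\CC(\bb,\aa)$, the elementary step rewrites $\bb\inv\aa$ into $\CC(\bb,\aa)\,\CC(\aa,\bb)\inv$, not $\CC(\aa,\bb)\,\CC(\bb,\aa)\inv$. Since the paper simply cites its sources here, your closing plan to delegate the hard part is the right shape --- but the delegation must carry the completeness induction and the ``reversing computes lcm's'' lemma, not a confluence claim.
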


\subsection{Proof of Therem~\ref{T:Main}}

We shall now prove that the monoid~$\MLD$ equip\-ped its partial action on terms via left self-distributivity
satisfies the criteria of Section~\ref{S:Recognizing}. Here, and in most subsequent developments, we heavily
appeal to the results of~\cite{Dgd}, some of which have quite intricate proofs.

\begin{proof}[Proof of Theorem~\ref{T:Main}]
First, each term~$\tt$ has a size~$\mu(\tt)$, which is the number of inner nodes in the associated binary
tree. Then the hypothesis of Proposition~\ref{P:Mass} clearly holds: if $\tt'$
is a nontrivial LD-expansion of~$\tt$, then the size of~$\tt'$ is larger than that
of~$\tt$. Then, by Proposition~\ref{P:Mass},  $\MLD$ satisfies~$\LG$.

Next, we observe that the presentation of~$\MLD$ in
Definition~\ref{D:RLD} is associated with a complement on the set $\{\DD\a \mid \a
\in \Add\}$. Indeed, for each pair of addresses~$\a, \b$, there exists in the list~$\RLD$
exactly one relation of the type $\DD\a ...  = \DD\b ...$. Hence, in view of
Proposition~\ref{P:Cube}, and because we know that $\MLD$ satisfies~$\LG$, it suffices to check that
\eqref{E:Cube} holds in~$\MLD$ for each triple~$\DD\a, \DD\b, \DD\g$. This is Proposition~VIII.1.9
of~\cite{Dgd}. Hence $\MLD$ satisfies~$\LGi$ and~$\LGii$, and it is a left-preGarside monoid.

Let us now consider the elements~$\D_\tt$ of Definition~\ref{D:Delta}. First, by
Lemma~\ref{L:Delta}, $\tt \act \D_\tt$ is defined for each term~$\tt$, and it is equal
to~$\f(\tt)$. Next, assume that $\tt \act \DD\a$ is defined. Then Lemma~VII.3.16
of~\cite{Dgd} states that $\DD\a$ is a left-divisor of~$\D_\tt$ in~$\MLD$,
whereas Lemma~VII.3.17 of~\cite{Dgd} states that $\D_\tt$ is a left-divisor of~$\DD\a
\D_{\tt\act\DD\a}$. Hence Condition~$\LGloc$ of Definition~\ref{D:LocGarside} is satisfied, and the sequence
$(\D_\tt)_{\tt \in \TT}$ is a left-Garside sequence in~$\MLD$. Hence $\MLD$ is a locally left-Garside
monoid, which completes the proof of Proposition~\ref{P:Main}. 

By Proposition~\ref{P:Criterion}, we deduce that $\CLD$, which is $\CAct\MLD\TT$ by definition,  is a
left-Garside category with left-Garside map~$\D$ as defined in~Theorem~\ref{T:Main}.

As for the connection with the braid category~$\BBB$, we saw in Proposition~\ref{P:Proj} that  $\MAct\pi\RH$
is a surjective functor of~$\CLD$ onto~$\BBB$, and it just remains to prove that it preserves right-lcm's. This
follows from the fact that the homomorphism~$\pi$ of~$\MLD$ to~$\BP\infty$ preserves
right-lcm's, which in turn follows from the fact that $\MLD$ and~$\BP\infty$ are associated with
complements~$\CC$ and~$\CCb$ satisfying, for each pair of addresses~$\a, \b$, 
\begin{equation}
\label{E:ProjComp1}
\pi(\CC(\DD\a, \DD\b)) = \CCb(\pi(\DD\a), \pi(\DD\b)).
\end{equation}
Indeed, let $\aa, \bb$ be any two elements of~$\MLD$. Let $\uu, \vv$ be words on the alphabet $\{\DD\a
\mid
\a\in\Add\}$ that represent~$\aa$ and~$\bb$, respectively. By Proposition~II.2.16 of~\cite{Dgd}, the
word~$\CCh(\uu,
\vv)$ exists, and $\uu
\CCh(\uu,\vv)$ represents~$\lcm(\aa, \bb)$. Then $\pi(\uu \CCh(\uu, \vv))$ represents a
common right-multiple of the braids~$\pi(\aa)$ and~$\pi(\bb)$, and, by~\eqref{E:ProjComp1}, we
have
$$\pi(\uu \CCh(\uu, \vv)) = \pi(\uu) \CCbh(\pi(\uu),\pi(\vv)).$$
This shows that the braid represented by~$\pi(\uu \CCh(\uu, \vv))$, which is $\pi(\lcm(\aa,\bb))$ by
definition, is the right-lcm of the braids~$\pi(\aa)$ and~$\pi(\bb)$. So the morphism~$\pi$ preserves
right-lcm's, and the proof of Theorem~\ref{T:Main} is complete.
\end{proof}

\subsection{The Embedding Conjecture}
\label{S:EmbConj}

From the viewpoint of self-distributive algebra, the main benefit of the current approach might be that its
leads to a natural program for possibly establishing the so-called Embedding Conjecture. This conjecture, at the
moment the most puzzling open question involving free LD-systems, can
be stated in several equivalent forms.

\begin{prop} \cite[Section~IX.6]{Dgd}
\label{P:Embedding}
The following are equivalent:\\
$(i)$ The monoid~$\MLD$ embeds in a group;\\
$(ii)$ The monoid~$\MLD$ admits right-cancellation;\\
$(iii)$ The categories $\CLDm$ and $\CLD$ are isomorphic;\\ 
$(iv)$ The functor~$\f$ associated with the category $\CLD$ is injective;\\
$(v)$ For each term~$\tt$, the LD-expansions of~$\tt$ make an upper-semilattice;\\
$(vi)$ The relations of Lemma~\ref{L:RLD} generate all relations that connect the action of~$\DD\a$'s by
self-distributivity.
\end{prop}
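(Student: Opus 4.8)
The plan is to treat the six statements as two clusters joined by a single genuinely self-distributive argument. I would first introduce the projection functor $\Pi\colon\CLD\to\CLDm$ that is the identity on objects and sends $(\tt,\aa,\tt')$ to $(\tt,\tt')$; by construction it is bijective on objects and surjective on morphisms (every LD-expansion carries at least one label), and $\CLDm$ is \emph{thin}, i.e.\ it has at most one morphism between any two objects. With this set up, statements (iii) and (vi) are two readings of the single property that $\Pi$ is injective on morphisms: an isomorphism $\CLDm\cong\CLD$ forces $\CLD$ itself to be thin, which is exactly the assertion that the only relations among the actions of the $\DD\a$'s are consequences of $\RLD$, i.e.\ that the labelling of an LD-expansion by an element of $\MLD$ is unique (using Definition~\ref{D:Action}$(iii)$ to pass between ``same action at one term'' and ``same action everywhere''). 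I will call this common property~(U).

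The cluster $\{(i),(ii),(iv)\}$ I would dispatch by soft monoid- and Garside-theoretic arguments, using that $\MLD$ is locally left-Garside (Proposition~\ref{P:Main}) and $\CLD$ is left-Garside (Theorem~\ref{T:Main}). For (i)~$\Leftrightarrow$~(ii): a submonoid of a group is cancellative, giving (i)~$\Rightarrow$~(ii); conversely $\MLD$ is left-cancellative by $\LGi$ and any two of its elements admit a right-lcm by Corollary~\ref{C:Lcm}$(i)$, so a right-cancellative $\MLD$ satisfies Ore's conditions and embeds in its group $G$ of right fractions, which is (ii)~$\Rightarrow$~(i). For (ii)~$\Rightarrow$~(iv): once $\MLD\hookrightarrow G$, the defining relation $\D_\tt\,\f_\tt(\aa)=\aa\,\D_{\tt'}$ of Lemma~\ref{L:Phi} reads $\f_\tt(\aa)=\D_\tt^{-1}\aa\,\D_{\tt'}$ in $G$; since $\tt\mapsto\f(\tt)$ is injective on terms (an elementary property of the fundamental expansion, see~\cite{Dgd}), an equality $\f(\ff)=\f(\gg)$ forces equal sources and targets and then $\aa=\bb$ in $G$, hence $\ff=\gg$. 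For the reverse (iv)~$\Rightarrow$~(ii) I would prove $\CLD$ right-cancellative directly: reducing by Lemma~\ref{L:Basic}$(ii)$ to cancelling a single simple morphism $\hh$, from $\ff\hh=\gg\hh$ one right-multiplies by $\hh^*$ to get $\ff\,\D(\target\ff)=\gg\,\D(\target\gg)$, rewrites both sides by~\eqref{E:Phi} as $\D(\source\ff)\,\f(\ff)$ and $\D(\source\gg)\,\f(\gg)$, left-cancels by $\LGGi$, and concludes $\f(\ff)=\f(\gg)$, whence $\ff=\gg$ by~(iv).

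The easy half of the bridge between the two clusters is also soft. Property~(U) makes $\CLD$ thin, so $\ff\hh=\gg\hh$ with $\ff,\gg$ sharing source and target forces $\ff=\gg$ outright; hence (iii)~$\Rightarrow$~(ii). Likewise (iii)~$\Rightarrow$~(v): transporting the right-lcm's of $\CLD$ (push-outs, Proposition~\ref{P:Lcm}) across the isomorphism $\Pi$ equips $\CLDm$ with right-lcm's, and in the thin category $\CLDm$ the right-lcm of two expansions of $\tt$ is precisely their least upper bound for the expansion ordering, so the expansions of $\tt$ form an upper-semilattice.

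The hard direction—closing the cycle by returning from the algebraic or lattice-theoretic statements to~(U)—is where the self-distributive combinatorics of~\cite[Chapter~IX]{Dgd} are indispensable, and I expect this to be the main obstacle. The braid monoid shows that right-cancellativity alone cannot force a category of the form $\CMX$ to be thin, so no general Garside argument can yield (ii)~$\Rightarrow$~(iii): the missing input is specific to LD. Concretely, I would invoke the confluence behind Lemma~\ref{L:Confluence} together with the LD-reversing computations of~\cite{Dgd}, which show that if two parallel expansions $\tt\act\aa=\tt\act\bb$ carried distinct labels then right-cancellativity (equivalently, group-embeddability) would fail, so that (ii) forces~(U), i.e.\ (ii)~$\Rightarrow$~(iii); the same analysis gives (v)~$\Rightarrow$~(iii), since a genuine labelling ambiguity produces two expansions admitting no least common expansion. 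Combined with the soft implications above, these close every cycle and yield the equivalence of (i)--(vi).
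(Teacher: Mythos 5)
You cannot actually be compared against an internal proof here: the paper imports this proposition wholesale from \cite[Section~IX.6]{Dgd} (hence the bracketed citation in its statement) and proves none of the six equivalences itself, so your proposal must stand on its own. Its soft parts are mostly sound, and partly reproduce machinery the paper does prove: (i)$\Leftrightarrow$(ii) by Ore's criterion using $\LGi$ and Corollary~\ref{C:Lcm}$(i)$ is fine, and your two arguments linking (ii) and (iv) are in substance Proposition~\ref{P:RightCancellation}, whose hypothesis (injectivity of~$\f$ on objects) is Lemma~\ref{L:PartialInj}. But there are two technical slips. First, passing between the monoid statement~(ii) and right-cancellativity of~$\Hom(\CLD)$ silently requires that, for fixed~$\cc$, the map $\tt \mapsto \tt\act\cc$ is injective on terms: from $\aa\cc = \bb\cc$ you must get $\tt\act\aa = \tt\act\bb$ before there is a \emph{single} morphism~$\hh$ to cancel, or a single hom-set for thinness to act on. This does hold (an atomic $\DD\a$-expansion is invertible by inspection of the subterm at~$\a$), but it is a step, not a triviality. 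Second, Definition~\ref{D:Action}$(iii)$ does \emph{not} let you pass from ``same action at one term'' to ``same action everywhere''; it only produces a common term where both actions are defined, which is the easy direction. The tool you need for the other direction is the comparison property, Proposition~VII.1.26 of~\cite{Dgd}, exactly what the paper invokes in the proof of Lemma~\ref{L:PartialInj2}.

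The genuine gap is where you yourself locate the difficulty: the implications (ii)$\Rightarrow$(iii)/(vi) and (v)$\Rightarrow$(iii) are not proven but attributed to ``the confluence behind Lemma~\ref{L:Confluence} together with the LD-reversing computations of~\cite{Dgd}''. Confluence cannot do this job: Proposition~\ref{P:Confluence} and Lemma~\ref{L:Confluence} produce \emph{common} LD-expansions and say nothing about \emph{least} ones or about uniqueness of labels --- the paper stresses precisely this after Question~\ref{Q:EmbConj1}, where the missing ingredient for $\CLDm$ being left-Garside is identified as least common multiples. If confluence implied your property~(U), all six statements would be theorems, whereas they constitute the open Embedding Conjecture; so the sentence ``if two parallel expansions $\tt\act\aa = \tt\act\bb$ carried distinct labels then right-cancellativity would fail'' is exactly the implication (ii)$\Rightarrow$(vi) you are supposed to prove, not a consequence of anything you quote. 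What the equivalence really rests on is the specific Chapter~IX analysis of~\cite{Dgd} (the comparison property, the normal decomposition of simple elements as in Proposition~VIII.5.15 quoted later in this paper, and the semilattice/lcm dictionary), and a pointer to unnamed ``LD-reversing computations'' does not reconstruct it. Since the proposition is itself a citation, leaning on~\cite{Dgd} is legitimate; but then the honest form of your hard directions is ``see \cite[Section~IX.6]{Dgd}'', which is exactly what the paper does, rather than the confluence-based sketch, which as written would fail.
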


Each of the above properties is conjectured to be true: this is the \emph{Embedding Conjecture}.

We turn to the proof of Theorem~\ref{T:Embedding}. So our aim is to show that the Embedding Conjecture is
true whenever the category~$\CLD$ is regular. To this end, we shall use some technical results
from~\cite{Dgd}, plus the following criterion, which enables one to prove right-cancellability by only using
simple morphisms.

\begin{prop}
\label{P:RightCancellation}
Assume that $\CCC$ is a left-Garside category and the associated functor~$\f$ is injective on~$\Obj(\CCC)$.
Then the following are equivalent: 

$(i)$ $\Hom(\CCC)$ admits right-cancellation;

$(ii)$ The functor~$\f$ is injective on~$\Hom(\CCC)$.\\
Moreover, if $\CCC$ is regular, $(i) $ and $(ii)$ are equivalent to

$(iii)$ The functor~$\f$ is injective on simple morphisms of~$\CCC$.
\end{prop}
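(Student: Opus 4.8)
The plan is to prove the cycle $(i)\Rightarrow(ii)\Rightarrow(iii)$ trivially, and then close the loop with the genuinely substantive implications $(ii)\Rightarrow(i)$ and, under regularity, $(iii)\Rightarrow(ii)$. The direction $(i)\Rightarrow(ii)$ is immediate: if $\ff$ and $\gg$ satisfy $\f(\ff)=\f(\gg)$, then by the naturality relation~\eqref{E:Phi} we have $\ff\,\D(\target\ff)=\D(\source\ff)\,\f(\ff)=\D(\source\gg)\,\f(\gg)=\gg\,\D(\target\gg)$; since $\f$ is injective on objects and $\source\ff=\source\gg$ (both equal, as $\f(\ff)$ determines its source $\f(\source\ff)$), the two $\D$-factors coincide, and right-cancelling gives $\ff=\gg$. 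The implication $(ii)\Rightarrow(iii)$ is trivial since simple morphisms form a subclass.

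The first real step is $(ii)\Rightarrow(i)$. Here I would argue that if $\f$ is injective on all morphisms, then $\Hom(\CCC)$ admits right-cancellation. Suppose $\ff\hh=\gg\hh$ with a common right-factor $\hh$; I want $\ff=\gg$. The idea is to pass through $\f$: by functoriality $\f(\ff)\f(\hh)=\f(\gg)\f(\hh)$, and I would like to cancel $\f(\hh)$ on the right. This suggests an induction reducing right-cancellation to the case where the cancelled factor is simple, using Lemma~\ref{L:Basic}$(ii)$ to write $\hh$ as a product of simples, so it suffices to right-cancel a single simple morphism. For a simple morphism $\ss$, I would use its complement $\ss^*$: from $\ff\ss=\gg\ss$ and the relation $\ss\ss^*=\D(\source\ss)$ one gets $\ff\,\D(\source\ss)=\ff\ss\ss^*=\gg\ss\ss^*=\gg\,\D(\source\ss)$. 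Now applying~\eqref{E:Phi} to both $\ff$ and $\gg$ and using injectivity of $\f$ on objects to identify targets, I can convert this into an equation of the form $\D(\source\ff)\,\f(\ff)=\D(\source\gg)\,\f(\gg)$, whence $\f(\ff)=\f(\gg)$ by left-cancellation ($\LGGi$), and then $\ff=\gg$ by hypothesis~$(ii)$.

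The main obstacle is the implication $(iii)\Rightarrow(ii)$ under the regularity hypothesis, i.e.\ bootstrapping injectivity of $\f$ on simple morphisms up to injectivity on all morphisms. The natural approach is to use the normal form of Proposition~\ref{P:NF} for the seed $\SSS=\Sim(\CCC)$: write two morphisms $\ff,\gg$ with $\f(\ff)=\f(\gg)$ in normal form $\ff=\ff_1\cdots\ff_\dd$ and $\gg=\gg_1\cdots\gg_\ee$. Regularity is exactly what guarantees (by Proposition~\ref{P:CriterionBis} and an induction) that $\f$ carries a normal sequence to a normal sequence, so $(\f(\ff_1),\dots,\f(\ff_\dd))$ and $(\f(\gg_1),\dots,\f(\gg_\ee))$ are the normal forms of $\f(\ff)$ and $\f(\gg)$ respectively. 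The uniqueness of the normal form then forces $\dd=\ee$ and $\f(\ff_\ii)=\f(\gg_\ii)$ for each~$\ii$; since each factor is simple, hypothesis~$(iii)$ gives $\ff_\ii=\gg_\ii$, hence $\ff=\gg$. The delicate point I expect to need care with is that $\f$ preserves normality only because $\CCC$ is regular, and that $\f$ applied to a normal sequence with a nontrivial last term still has a nontrivial last term (so that lengths match); this should follow since $\f$ sends identities to identities and, being injective on simples, sends nontrivial simples to nontrivial simples.

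\begin{proof}
$(i)\Rightarrow(ii)$. Assume right-cancellation holds and $\f(\ff)=\f(\gg)$. Comparing sources, $\f(\source\ff)=\source{\f(\ff)}=\source{\f(\gg)}=\f(\source\gg)$, so $\source\ff=\source\gg$ by injectivity of~$\f$ on objects; call it~$\xx$. By~\eqref{E:Phi}, $\ff\,\D(\target\ff)=\D(\xx)\,\f(\ff)=\D(\xx)\,\f(\gg)=\gg\,\D(\target\gg)$. Again $\f(\target\ff)=\target{\f(\ff)}=\target{\f(\gg)}=\f(\target\gg)$ gives $\target\ff=\target\gg$, so the two morphisms $\D(\target\ff)$ and $\D(\target\gg)$ coincide, and right-cancellation yields $\ff=\gg$.

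$(ii)\Rightarrow(iii)$ is immediate, as simple morphisms are particular morphisms.

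$(ii)\Rightarrow(i)$. It suffices to show that every simple morphism is right-cancellable, since every morphism is a product of simple morphisms by Lemma~\ref{L:Basic}$(ii)$. So let $\ss$ be simple and assume $\ff\ss=\gg\ss$. Let $\xx=\source\ss$, and recall $\ss\ss^*=\D(\xx)$. Then
$$\ff\,\D(\xx) = \ff\ss\ss^* = \gg\ss\ss^* = \gg\,\D(\xx).$$
Comparing sources as above shows $\source\ff=\source\gg$ and $\target\ff=\target\gg=\xx$. Applying~\eqref{E:Phi} to~$\ff$ and to~$\gg$,
$$\D(\source\ff)\,\f(\ff) = \ff\,\D(\xx) = \gg\,\D(\xx) = \D(\source\gg)\,\f(\gg),$$
and since $\source\ff=\source\gg$, left-cancellation~$\LGGi$ gives $\f(\ff)=\f(\gg)$. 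By~$(ii)$, $\ff=\gg$.

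$(iii)\Rightarrow(ii)$, assuming $\CCC$ regular. Let $\ff,\gg$ satisfy $\f(\ff)=\f(\gg)$; we may assume both are nontrivial, the trivial case following from injectivity on objects. Comparing sources and targets as above, $\source\ff=\source\gg$ and $\target\ff=\target\gg$. Write the normal forms $\ff=\ff_1\cdots\ff_\dd$ and $\gg=\gg_1\cdots\gg_\ee$ of Proposition~\ref{P:NF} for the seed $\Sim(\CCC)$. Since $\CCC$ is regular, repeated use of Proposition~\ref{P:CriterionBis} shows that $(\f(\ff_1),\dots,\f(\ff_\dd))$ is normal; moreover each $\f(\ff_\ii)$ is simple by Lemma~\ref{L:Simple}$(i)$, and $\f(\ff_\dd)$ is nontrivial because $\f$, being injective on simple morphisms, sends the nontrivial simple~$\ff_\dd$ to a nontrivial morphism. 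Hence $(\f(\ff_1),\dots,\f(\ff_\dd))$ is the normal form of~$\f(\ff)$, and likewise $(\f(\gg_1),\dots,\f(\gg_\ee))$ is the normal form of~$\f(\gg)$. As $\f(\ff)=\f(\gg)$, uniqueness of the normal form (Proposition~\ref{P:NF}) gives $\dd=\ee$ and $\f(\ff_\ii)=\f(\gg_\ii)$ for each~$\ii$. By~$(iii)$, $\ff_\ii=\gg_\ii$ for each~$\ii$, whence $\ff=\gg$.
\end{proof}
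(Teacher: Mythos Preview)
Your proof is correct and follows essentially the same approach as the paper's own argument: the same use of~\eqref{E:Phi} and right-cancellation of~$\D(\target\ff)$ for $(i)\Rightarrow(ii)$, the same reduction to a simple factor via~$\ss\ss^*=\D(\source\ss)$ for $(ii)\Rightarrow(i)$, and the same normal-form comparison for $(iii)\Rightarrow(ii)$ under regularity. Two tiny remarks: in $(ii)\Rightarrow(i)$ the equality $\source\ff=\source\gg$ follows directly from $\ff\ss=\gg\ss$ (no need to invoke $\f$-injectivity on objects ``as above''); and in $(iii)\Rightarrow(ii)$ the relevant input is the \emph{definition} of regularity together with Proposition~\ref{P:Local}, rather than Proposition~\ref{P:CriterionBis}.
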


\begin{proof}
Assume that $\ff, \gg$ are morphisms of~$\CCC$ that satisfy $\f(\ff) = \f(\gg)$. As $\f$ is a functor,
we first deduce
$$\f(\source\ff) = \source(\f(\ff)) = \source(\f(\gg)) = \f(\source\gg),$$
hence $\source\ff = \source\gg$ as $\f$ is injective on objects. A similar argument gives
$\target\ff = \target\gg$. Then, \eqref{E:Phi} gives
$$\ff \D(\target\ff) = \D(\source\ff) \f(\ff) = \D(\source\gg) \f(\gg) 
= \gg \D(\target\gg) = \gg\D(\target\ff).$$
If we can cancel $\D(\target\ff)$ on the right, we deduce $\ff =\gg$ and, therefore, $(i)$ implies~$(ii)$. 

Conversely, assume that $\hh$ is simple and $\ff \hh = \gg \hh$ holds. By multiplying
by~$\hh^*$, we deduce
$\ff \hh \hh^*  = \gg \hh \hh^*$, \ie, $\ff \D(\source\hh) = \gg \D(\source\hh)$.
As we have $\target\ff = \source\hh = \target\gg$ by hypothesis, 
applying~\eqref{E:Phi} gives
$$\D(\source\ff) \f(\ff) = \ff \D(\target\ff) = \gg \D(\target\gg) = \D(\source\gg)
\f(\gg) = \D(\source\ff) \f(\gg),$$
hence $\f(\ff) = \f(\gg)$ by left-cancelling~$\D(\source\ff)$. If $(ii)$ holds, we
deduce~$\ff = \gg$, \ie, $\hh$ is right-cancellable. As simple morphisms generate~$\Hom(\CCC)$, we deduce
that every morphism is right-cancellable and, therefore, $(ii)$ implies~$(i)$.

It is clear that $(ii)$ implies~$(iii)$. So assume that $\CCC$ is regular and $(iii)$ holds. Let $\ff, \gg$ satisfy
$\f(\ff) = \f(\gg)$. Let $(\ff_1, ..., \ff_\dd)$ and $(\gg_1, ..., \gg_\ee)$ be the normal forms of~$\ff$
and~$\gg$, respectively. The regularity assumption implies that every length~$2$ subsequence of $(\f(\ff_1),
..., \f(\ff_\dd))$ and $(\f(\gg_1), ..., \f(\gg_\ee))$ is normal. Moreover, $(iii)$ guarantees that $\f(\ff_\ii)$
and $\f(\gg_\jj)$ is nontrivial. Hence $(\f(\ff_1), ..., \f(\ff_\dd))$ and $(\f(\gg_1), ..., \f(\gg_\ee))$ are
normal. As $\f$ is a functor, we have $\f(\ff_1) ... \f(\ff_\dd) = \f(\ff) = \f(\gg) = \f(\gg_1) ...
\f(\gg_\ee)$, and the uniqueness of the normal form implies $\dd = \ee$, and $\f(\ff_\ii) = \f(\gg_\ii)$ for
each~$\ii$. Then $(iii)$ implies $\ff_\ii = \gg_\ii$ for each~$\ii$, hence $\ff = \gg$.
\end{proof}

So, in order to prove Theorem~\ref{T:Embedding}, it suffices to show that the category~$\CLD$ satisfies the
hypotheses of Proposition~\ref{P:RightCancellation}, and this is what we do now.

\begin{lemm}
\label{L:PartialInj}
The functor~$\f$ of~$\CLD$ is injective on objects, \ie, on terms.
\end{lemm}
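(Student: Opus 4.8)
The plan is to unwind what injectivity on objects means and reduce everything to a structural study of the distributive product~$\OP$. Since in the category~$\CLD$ the Garside map is $\D(\tt) = (\tt, \D_\tt, \f(\tt))$ (Theorem~\ref{T:Main}), the functor~$\f$ sends an object~$\tt$ to $\target{\D(\tt)} = \f(\tt)$, the fundamental LD-expansion of~$\tt$ (Definition~\ref{D:Phi} and~\eqref{E:Delta}). So the statement to prove is exactly that the term map $\tt \mapsto \f(\tt)$ is injective. I would argue by induction on the size of~$\tt$, using the recursion $\f(\tt_\0 \op \tt_\1) = \f(\tt_\0) \OP \f(\tt_\1)$ together with the fact that right-height is preserved: a one-line induction from the defining rules of~$\OP$ gives $\RH(\uu \OP \vv) = \RH(\vv) + 1$, whence $\RH(\f(\tt)) = \RH(\tt)$.

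The whole argument rests on two elementary facts about~$\OP$, both proved by induction from its defining rules $\uu \OP \xx_\ii = \uu \op \xx_\ii$ and $\uu \OP (\vv_1 \op \vv_2) = (\uu \OP \vv_1) \op (\uu \OP \vv_2)$. \emph{First}, the left factor can be read off the right spine: following the right spine of $\uu \OP \vv$ down to its lowest interior node, which sits at address~$\1^{\RH(\vv)}$, the left subtree there is exactly~$\uu$, that is, $\sub{(\uu \OP \vv)}{\1^{\RH(\vv)}\0} = \uu$. This holds because, under distribution, the rightmost leaf~$\xx$ of~$\vv$ is replaced by the subtree $\uu \op \xx$. \emph{Second}, $\OP$ is injective in its second argument: $\uu \OP \vv = \uu \OP \vv'$ implies $\vv = \vv'$. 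The key observation here is that $\uu \OP \ww$ always has~$\op$ as its root, so the right child of $\uu \OP \vv$ is a variable if and only if $\vv$ is a variable; hence $\vv$ and $\vv'$ are simultaneously variables (in which case they both equal the common right child) or simultaneously products, in which case equating the two left children and the two right children and applying the induction hypothesis on size gives $\vv = \vv'$.

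With these in hand the induction closes quickly. Suppose $\f(\tt) = \f(\ss)$. Comparing right-heights gives $\RH(\tt) = \RH(\ss) = \nn$. If $\nn = 0$, both terms are variables, each equal to its own image, hence equal. If $\nn \ge 1$, write $\tt = \tt_\0 \op \tt_\1$ and $\ss = \ss_\0 \op \ss_\1$, so that $\f(\tt) = \f(\tt_\0) \OP \f(\tt_\1)$ and $\f(\ss) = \f(\ss_\0) \OP \f(\ss_\1)$. Reading off the left factor at address~$\1^{\nn-1}\0$ (first fact) gives $\f(\tt_\0) = \f(\ss_\0)$, whence $\tt_\0 = \ss_\0$ by the induction hypothesis; then injectivity in the second argument (second fact) gives $\f(\tt_\1) = \f(\ss_\1)$, whence $\tt_\1 = \ss_\1$, again by induction. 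Therefore $\tt = \ss$. The one genuinely substantial point is the first fact: \emph{a priori} the same tree could split as $\uu \OP \vv$ in several ways, and disentangling the distributed copies of~$\uu$ from the underlying shape of~$\vv$ is what makes the recovery of $\f(\tt_\0)$ possible. The right-spine observation is precisely what pins this down, and I expect it to be the only step requiring care.
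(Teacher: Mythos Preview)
Your proof is correct and follows essentially the same route as the paper's: induction on the size of~$\tt$, recovery of~$\f(\tt_0)$ as the subterm of~$\f(\tt)$ at address~$1^{n-1}0$, then recovery of~$\f(\tt_1)$. The only cosmetic difference is that you isolate left-cancellativity of~$\OP$ as a standalone lemma (your second fact), whereas the paper phrases the same step operationally as ``replace each subterm~$\f(\tt_0)\op\xx_\ii$ of~$\f(\tt)$ by~$\xx_\ii$ to recover~$\f(\tt_1)$''; these are two formulations of the same observation.
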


\begin{proof}
We show using induction on the size of~$\tt$ that $\f(\tt)$
determines~$\tt$. The result is obvious if $\tt$ has size~$0$. Assume
$\tt = \tt_\0 \op \tt_\1$. By construction, the term~$\f(\tt)$ is obtained by
substituting every variable~$\xx_\ii$ occurring in the term~$\f(\tt_\1)$ with the
term~$\f(\tt_\0)\op\xx_\ii$. Hence $\f(\tt_\0)$ is the $\1^{\nn-1}\0$th subterm
of~$\f(\tt)$, where $\nn$ is the common right-height of~$\tt$ and~$\f(\tt)$. From
there, $\f(\tt_\1)$ can be recovered by replacing the subterms~$\f(\tt_\0)\op\xx_\ii$
of~$\f(\tt)$ by~$\xx_\ii$. Then, by induction hypothesis, $\tt_\0$ and~$\tt_\1$,
hence~$\tt$, can be recovered from~$\f(\tt_\1)$ and~$\f(\tt_\0)$.
\end{proof}

\begin{lemm}
\label{L:PartialInj2}
The functor~$\f$ of~$\CLD$ is injective on simple morphisms.
\end{lemm}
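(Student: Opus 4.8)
The plan is to reconstruct $\aa$ directly from the decorated data carried by $\f(\ff)$, in the same spirit as the proof of Lemma~\ref{L:PartialInj}. The essential point to keep in mind is that right-cancellation is precisely what we must \emph{not} assume here (it is one of the open equivalents of the Embedding Conjecture in Proposition~\ref{P:Embedding}), so no purely Garside-theoretic argument is admissible: one cannot cancel $\D_{\tt'}$ on the right, nor invoke injectivity of the $\ast$-operation, since every such move is itself equivalent to the cancellation we are trying to establish.

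First I would reduce to a statement inside~$\MLD$. Suppose $\ff,\gg$ are simple morphisms of~$\CLD$ with $\f(\ff)=\f(\gg)$. Since $\f$ is a functor, $\f(\source\ff)=\source{\f(\ff)}=\source{\f(\gg)}=\f(\source\gg)$, so $\source\ff=\source\gg=:\tt$ by Lemma~\ref{L:PartialInj} ($\f$ injective on objects); the symmetric argument on targets gives $\target\ff=\target\gg=:\tt'$. Writing $\ff=(\tt,\aa,\tt')$ and $\gg=(\tt,\bb,\tt')$ with $\aa,\bb\dive\D_\tt$, Lemma~\ref{L:Phi} identifies $\f(\ff)$ and $\f(\gg)$ with $(\f(\tt),\f_\tt(\aa),\f(\tt'))$ and $(\f(\tt),\f_\tt(\bb),\f(\tt'))$, so the hypothesis becomes $\f_\tt(\aa)=\f_\tt(\bb)$; equivalently, by the defining relation $\D_\tt\,\f_\tt(\aa)=\aa\,\D_{\tt'}$ (the special case of~\eqref{E:Phi} for $\ff$), we obtain the single equality $\aa\,\D_{\tt'}=\bb\,\D_{\tt'}$ between simple expansions of~$\tt$. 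The task is to deduce $\aa=\bb$ without cancelling $\D_{\tt'}$.

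Second, I would prove the reconstruction by induction on the size of~$\tt$, using the recursive shape of~$\D_\tt$ from Definition~\ref{D:Delta}. For $\tt$ a variable there is nothing to prove. For $\tt=\tt_0\op\tt_1$ one has $\D_\tt=\sh0(\D_{\tt_0})\,\sh1(\D_{\tt_1})\,\d_{\f(\tt_1)}$, and a simple expansion $\aa\dive\D_\tt$ splits into a part acting inside the copy of~$\tt_0$, a part acting inside~$\tt_1$, and a part coming from the $\d_{\f(\tt_1)}$ block that distributes~$\tt_0$ over~$\tt_1$. The fundamental expansion replicates the subterm $\sub\tt0$ across the leaves of $\f(\tt_1)$ (Figures~\ref{F:Phi},~\ref{F:PhiBis}), and $\f_\tt(\aa)$ performs, at each such copy, the image under~$\f$ of the corresponding local move. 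I would read off, inside $\f(\tt)$ and $\f(\tt')$ (whose internal positions are already under control by Lemma~\ref{L:PartialInj}), the pattern of these copied moves, recover the $\tt_0$- and $\tt_1$-components of $\aa$ by the induction hypothesis applied to $\tt_0$ and $\tt_1$, and recover the distributing block from the explicit form of $\d_{\f(\tt_1)}$ in Definition~\ref{D:Delta}; the shift endomorphisms $\sh0,\sh1$ are injective and act on disjoint regions of the tree, so the three components do not interfere.

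The main obstacle is exactly this duplication: because $\f$ replicates the left subtree $\sub\tt0$ across many leaves, a single move of $\aa$ in that region produces several moves in $\f_\tt(\aa)$, and one must verify that the replicated pattern still determines the original move unambiguously, and that no two distinct simple expansions can yield the same pattern. I expect to handle this by the subterm surgery already used in Lemma~\ref{L:PartialInj} --- locating the $\1^{\nn-1}\0$-copy of $\f(\tt_0)$ and stripping off the distributed copies --- combined with the detailed description of the divisors of $\D_\tt$ and of the operator $\f$ (denoted $\partial$ there) established in~\cite{Dgd}. Once the three components are recovered injectively, $\aa=\bb$ follows, hence $\ff=\gg$.
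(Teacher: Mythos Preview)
Your initial reduction is correct and matches the paper exactly: using Lemma~\ref{L:PartialInj} on sources and targets, you obtain $\ff=(\tt,\aa,\tt')$, $\gg=(\tt,\bb,\tt')$ with $\aa,\bb$ simple. From that point on, however, you diverge from the paper and head into a much harder argument than is needed.

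The paper does \emph{not} use the equality $\f_\tt(\aa)=\f_\tt(\bb)$ (equivalently $\aa\,\D_{\tt'}=\bb\,\D_{\tt'}$) at all. The decisive observation is one you already have but do not exploit: the mere equality of targets gives $\tt\act\aa=\tt\act\bb$. The paper then invokes two results from~\cite{Dgd}: Proposition~VII.1.26, which says that if two elements of~$\MLD$ act identically on one term, they act identically on every term on which both are defined; and Proposition~IX.6.6, which says that two elements of~$\MLD$ whose actions coincide everywhere are equal provided one of them is simple. Combining these yields $\aa=\bb$ in two lines, with no induction and no structural decomposition of~$\D_\tt$.

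Your proposed inductive route has a real gap beyond the duplication issue you flag. You assert that a simple $\aa\dive\D_\tt$ ``splits'' into a $\sh0$-part, a $\sh1$-part, and a prefix of the $\d_{\f(\tt_1)}$ block, and that these ``do not interfere''. But the block $\d_{\f(\tt_1)}$ begins with $\DD\ea$ and involves addresses in both subtrees, so the three pieces do interact nontrivially in~$\MLD$; it is not clear that an arbitrary left-divisor of $\D_\tt$ admits a decomposition of the shape you need, let alone a unique one recoverable from its image under~$\f_\tt$. Making this precise would essentially require the full machinery behind Proposition~VIII.5.1 (and more) of~\cite{Dgd}, whereas the paper's route uses that machinery only through the two cited propositions as black boxes.
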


\begin{proof}
Assume that $\ff, \ff'$ are morphisms of~$\CLD$ satisfying 
$\f(\ff) = \f(\ff')$, say \linebreak
 $\ff = (\tt, \aa, \ss)$ and $\ff' = (\tt', \aa', \ss')$.
The explicit description of Lemma~\ref{L:Phi} implies $\f(\tt) = \f(\tt')$, hence $\tt =
\tt'$ by Lemma~\ref{L:PartialInj}. Similarly, we have $\f(\ss) = \f(\ss')$, hence $\ss' =
\ss$. Therefore, we have $\tt \act \aa = \tt \act \aa' = \ss$. By Proposition~VII.126
of~\cite{Dgd}, we deduce that $\tt \act \aa = \tt \act \aa'$ holds for every term~$\tt$ for
which both $\tt \act \aa$ and $\tt \act \aa'$ are defined. Then Proposition~IX.6.6
of~\cite{Dgd} implies $\aa = \aa'$ provided $\aa$ or~$\aa'$ is simple.
\end{proof}

We can now complete the argument.

\begin{proof}[Proof of Theorem~\ref{T:Embedding}]
The category~$\CLD$ is left-Garside, with an associated functor~$\f$ that is injective both on objects and on
simple morphisms. By Proposition~\ref{P:RightCancellation}, if $\CLD$ is regular, then $\Hom(\CLD)$
admits right-cancellation, which is one of the forms of the Embedding Conjecture, namely $(ii)$
in Proposition~\ref{P:Embedding}.
\end{proof}

\subsection{A program for proving the regularity of~$\CLD$}

At this point, we are left with the question of proving (or disproving)

\begin{conj}
\label{C:Main} 
The left-Garside category $\CLD$ is regular.
\end{conj}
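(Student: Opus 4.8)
The plan is to reduce the regularity of~$\CLD$ to a statement about left-gcd's of simple elements of the monoid~$\MLD$, and then to attack that statement by induction on term size. First I would invoke Proposition~\ref{P:CriterionTer}$(i)$, which reduces regularity to the assertion that the functor~$\f$ preserves left-coprimeness of simple morphisms of~$\CLD$. By the explicit description of~$\f$ on morphisms in Lemma~\ref{L:Phi}, together with the fact that the left-gcd of two morphisms sharing a common source~$\tt$ is computed on their labels in~$\MLD$, this assertion becomes a pure statement about the monoid: for simple elements~$\aa, \bb$ of~$\MLD$ both left-dividing~$\D_\tt$, the equality $\gcd(\aa, \bb) = 1$ should imply $\gcd(\f_\tt(\aa), \f_\tt(\bb)) = 1$, where $\f_\tt$ is defined by $\D_\tt\, \f_\tt(\aa) = \aa\,\D_{\tt\act\aa}$. (Equivalently, by Proposition~\ref{P:CriterionBis}, one may instead aim directly at the head-preservation identity $\head{\f(\ff_1\ff_2)} = \f(\head{\ff_1\ff_2})$, which is the tightest, \emph{iff} form of regularity.)

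Next I would use the braid projection to localise where the genuine content lies. Since $\MAct\pi\RH$ is right-lcm preserving (Theorem~\ref{T:Main}) and the braid category~$\BBB$ is a two-sided Garside category, hence regular, the image under~$\pi$ of the desired coprimeness relation holds automatically; so the difficulty concerns only the kernel of~$\pi$, namely the generators~$\DD\a$ whose address~$\a$ contains a~$0$, which encode the duplications produced by self-distributivity. To control these, I would exploit the recursive definition~\eqref{ED:Delta} of~$\D_\tt$ together with the invariance of the relations of Lemma~\ref{L:RLD} under the shift endomorphisms~$\sh\0$ and~$\sh\1$: for $\tt = \tt_\0 \op \tt_\1$ these should let one express~$\f_\tt$ in terms of~$\f_{\tt_\0}$, $\f_{\tt_\1}$ and the distribution operator, reducing coprimeness preservation for~$\tt$ to coprimeness preservation for the strictly smaller terms~$\tt_\0$ and~$\tt_\1$, plus a bounded computation located at the root. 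For that root computation I would hope to reach a finite, local verification — in the spirit of the cube condition~\eqref{E:Cube} already used to certify~$\LGi$ and~$\LGii$ — by checking head-preservation on the images under~$\f$ of the finitely many relation patterns of Lemma~\ref{L:RLD}.

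The main obstacle is that~$\CLD$ is strongly asymmetric: it carries no right-Garside structure, so the automatic regularity of two-sided Garside categories is unavailable and a genuinely self-distributive argument is forced. Concretely, while right-lcm's behave transparently through the complement formalism, left-gcd's of simple elements of~$\MLD$ admit no comparably simple description, and applying~$\f$ distributes~$\f(\tt_\0)$ throughout~$\f(\tt_\1)$, duplicating subterms and thereby threatening to manufacture a common left-divisor of~$\f_\tt(\aa)$ and~$\f_\tt(\bb)$ out of coprime~$\aa, \bb$. Excluding every such spurious common divisor is exactly the verification that has not yet been compressed into finitely many cases; carrying it out — presumably through a careful analysis, anchored in the results of~\cite{Dgd} on the operator formerly denoted~$\partial$, of how gcd's of simples transform under a single application of~$\f$ — is what would turn this program into a proof, and is the reason the statement is recorded here as a conjecture rather than a theorem.
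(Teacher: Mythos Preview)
The statement is a \emph{conjecture}: the paper does not prove it, and neither do you. You correctly recognise this and present a program rather than a proof, explicitly flagging the gap at the end. So there is no error to report; the question is how your program compares with the paper's.

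Both programs reduce regularity to a statement about left-gcd's of simple elements in~$\MLD$ via Proposition~\ref{P:CriterionTer}. The paper takes route~$(ii)$, formulating Proposition~\ref{P:PhiGcd}: it suffices that $\gcd(\f_\tt(\aa),\f_\tt(\bb)) = \f_\tt(\gcd(\aa,\bb))$ for all simple~$\aa,\bb$ and all~$\tt$. Its concrete suggestion is then to use the \emph{coordinates} of simple elements coming from the unique normal expression $\aa = \prod^{>}_{\a\in\Add}\DD\a^{(\ee_\a)}$ of~\cite[VIII.5.15]{Dgd}, and to express both~$\f_\tt$ and~$\gcd$ in those coordinates. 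You instead take route~$(i)$ (coprimeness preservation, which is weaker but still sufficient), and propose an induction on the size of~$\tt$ driven by the recursive definition of~$\D_\tt$ and the shift-invariance of the LD-relations, with the braid projection used to isolate the genuinely self-distributive content in the kernel of~$\pi$. This is a reasonable alternative line of attack; the paper does not pursue it, and conversely you do not mention the coordinate approach. Neither program has been carried through, and the obstacle you name --- that duplication under~$\f$ may create spurious common left-divisors, and that no finite local check analogous to the cube condition has yet been found for gcd's --- is exactly the obstruction the paper leaves open.
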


The regularity criteria of Section~\ref{S:Criteria} lead to a natural program for possibly proving
Conjecture~\ref{C:Main} and, therefore, the Embedding Conjecture. 

We begin with a preliminary observation.

\begin{lemm}
\label{L:Coherent}
The left-Garside sequence $(\D_\tt)_{\tt \in \TT}$ on~$\MLD$ is coherent (in the sense of
Definition~\ref{D:Coherent}).
\end{lemm}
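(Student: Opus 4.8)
The plan is to recast coherence as the statement that \emph{simplicity is an intrinsic property of an element of~$\MLD$}, not of the pair (element, term). First observe that $\aa\dive\D_\tt$ forces $\tt\act\aa$ to be defined: since $\tt\act\D_\tt=\f(\tt)$ is defined (Lemma~\ref{L:Delta}) and $\aa$ left-divides~$\D_\tt$, axiom~$(ii)$ of Definition~\ref{D:Action} gives that $\tt\act\aa$ is defined. Hence the content of coherence is exactly: for an element~$\aa$ dividing at least one~$\D_{\tt'}$, the relation $\aa\dive\D_\tt$ holds for \emph{every} term~$\tt$ with $\tt\act\aa$ defined. To compare different terms I would use the \emph{refinement order}~$\sqsubseteq$ on~$\TT$, where $\ss\sqsubseteq\tt$ means that $\tt$ is obtained from~$\ss$ by grafting subtrees onto some leaves (equivalently, the node set of~$\ss$ is contained in that of~$\tt$). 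Two facts, both immediate from the construction of the partial action, are needed: definedness is monotone, \ie\ $\ss\act\aa$ defined and $\ss\sqsubseteq\tt$ imply $\tt\act\aa$ defined; and $(\TT,\sqsubseteq)$ is a meet-semilattice, the meet $\ss\wedge\tt$ being the term whose nodes are the common nodes of~$\ss$ and~$\tt$.

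The heart of the argument is then the local equivalence
\begin{equation}
\label{E:LocalSimple}
\ss\sqsubseteq\tt \text{\ and\ } \ss\act\aa \text{\ defined}
\quad\Longrightarrow\quad
\bigl(\aa\dive\D_\ss \Leftrightarrow \aa\dive\D_\tt\bigr),
\end{equation}
which says that whether $\aa$ is simple depends only on the part of the tree that~$\aa$ actually touches, not on what has been grafted below. Granting~\eqref{E:LocalSimple}, coherence follows: given $\aa,\tt,\tt'$ with $\tt\act\aa$ defined and $\aa\dive\D_{\tt'}$, set $\ss=\tt\wedge\tt'$. Every node needed for the action of~$\aa$ is present in both~$\tt$ and~$\tt'$, hence in~$\ss$, so $\ss\act\aa$ is defined. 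Then \eqref{E:LocalSimple} applied along $\ss\sqsubseteq\tt'$ turns $\aa\dive\D_{\tt'}$ into $\aa\dive\D_\ss$, and applied along $\ss\sqsubseteq\tt$ turns $\aa\dive\D_\ss$ into $\aa\dive\D_\tt$, as wanted.

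I would prove~\eqref{E:LocalSimple} by induction on the size of~$\tt$, following the recursion $\D_{\tt_\0\op\tt_\1}=\sh\0(\D_{\tt_\0})\cdot\sh\1(\D_{\tt_\1})\cdot\d_{\f(\tt_\1)}$ of Definition~\ref{D:Delta}. Analyzing~$\aa$ through the addresses it uses, one separates the factors carried by the $\0$-subtree, those carried by the $\1$-subtree, and the root letter~$\DD\ea$; the shift endomorphisms~$\sh\0,\sh\1$ are injective and compatible with left-divisibility, so divisibility of the shifted parts of~$\aa$ by $\sh\0(\D_{\tt_\0})$ and $\sh\1(\D_{\tt_\1})$ reduces, by the induction hypothesis, to the corresponding statements for the smaller terms $\tt_\0,\tt_\1$ and their refinements. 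Since grafting below only enlarges $\tt_\0$, $\tt_\1$, or a deeper subterm, the induction hypothesis applies verbatim to the subtrees.

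The delicate point---and the main obstacle---is the interaction of~$\aa$ with the root factor~$\d_{\f(\tt_\1)}$ through the \emph{critical case} of Lemma~\ref{L:RLD}$(v)$: when the LD-law is applied at the root, the $\0$-subterm is duplicated and the addresses of the nested expansions are relabelled, so a single letter of~$\aa$ may correspond to different addresses before and after the root expansion. This is precisely the phenomenon that makes ``$\aa\dive\D_\tt$'' \emph{a priori} term-dependent, and controlling it is where the structural results of~\cite{Dgd} enter: the analogues of Lemmas~VII.3.16 and~VII.3.17 already invoked for Theorem~\ref{T:Main} (that every~$\DD\a$ defined on~$\tt$ divides~$\D_\tt$, and that $\D_\tt\dive\DD\a\,\D_{\tt\act\DD\a}$), together with the resulting description of~$\Div(\D_\tt)$. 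By contrast, the parallel and nested cases of Lemma~\ref{L:RLD} are plain commutations that do not disturb the address bookkeeping and cause no difficulty. In fact the required intrinsic characterization of simple LD-expansions is exactly the kind of statement established in~\cite{Dgd}, from which~\eqref{E:LocalSimple}, and hence coherence, can be read off directly.
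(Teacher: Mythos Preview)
Your final sentence is essentially the paper's entire proof: the paper simply invokes Proposition~VIII.5.1 of~\cite{Dgd}, which states that $\aa\dive\D_\tt$ holds (for some, equivalently any, term~$\tt$ on which $\aa$ acts) if and only if $\aa$ can be represented by a word of a certain special syntactic form. That characterization makes no reference to~$\tt$, so coherence is immediate. Everything you build before that last sentence---the refinement order~$\sqsubseteq$, the meet~$\tt\wedge\tt'$, the local equivalence~\eqref{E:LocalSimple}, the proposed induction on the size of~$\tt$---is scaffolding that the paper does not use and that you do not actually complete.

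Two of the scaffolding steps are genuine gaps as written. First, the claim that $\tt\act\aa$ and $\tt'\act\aa$ defined implies $(\tt\wedge\tt')\act\aa$ defined is not immediate: the ``nodes needed for the action of~$\aa$'' are not a static set, since each $\DD\a$ in a decomposition of~$\aa$ modifies the tree before the next one is applied, and the critical relation $\DD\1\DD\ea=\DD\ea\DD\1\DD\ea\DD\0$ shows that the set of addresses touched depends on the chosen word for~$\aa$. One needs something like the existence of a $\sqsubseteq$-minimal term on which $\aa$ acts, which is itself a nontrivial fact from~\cite{Dgd}. Second, your induction on~\eqref{E:LocalSimple} is only a plan: you correctly isolate the root factor $\d_{\f(\tt_\1)}$ and the critical case as the obstacle, but you do not explain how Lemmas~VII.3.16--17 (which concern a single generator~$\DD\a$) resolve it for an arbitrary simple~$\aa$; the paper's cited Proposition~VIII.5.1 is precisely the structural result that handles this, and once you have it the induction is unnecessary.
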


\begin{proof}
The question is to prove that, if $\tt$ is a term and $\tt \act \aa$ is defined and $\aa \dive \D_{\tt'}$ holds
for some~$\tt'$, then we necessarily have $\aa \dive \D_\tt$. This is a direct consequence of
Proposition~VIII.5.1 of~\cite{Dgd}. Indeed, the latter states that an element~$\aa$ is a left-divisor of some
element~$\D_\tt$ if and only if $\aa$ can be represented by a word in the letters~$\DD\a$
that has a certain special form. This property does not involve the term~$\tt$, and it implies that, if $\aa$
left-divides~$\D_\tt$, then it automatically left-divides every element~$\D_{\tt'}$ such that $\tt' \act \aa$ is
defined.
\end{proof}

So, according to Proposition~\ref{P:Coherent}, we obtain a well defined notion of a simple element
in~$\MLD$: an element~$\aa$ of~$\MLD$ is called \emph{simple} if it left-divides at least one element of the
form~$\D_\tt$. Then simple elements form a seed in~$\MLD$, and are eligible for a normal form satisfying
the general properties described in Section~\ref{S:Normal}.
In this context, applying Proposition~\ref{P:CriterionTer}$(ii)$ leads to the following criterion.

\begin{prop}
\label{P:PhiGcd}
Assume that, for each term~$\tt$ and all simple elements~$\aa, \bb$ of~$\MLD$ such that $\tt
\act \aa$ and $\tt \act \bb$ are defined, we have
\begin{equation}
\label{E:Gcd}
\gcd(\f_\tt(\aa), \f_\tt(\bb)) = \f_\tt(\gcd(\aa, \bb)).
\end{equation}
Then Conjecture~\ref{C:Main} is true.
\end{prop}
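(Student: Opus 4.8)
The plan is to derive the regularity of $\CLD$ (Conjecture~\ref{C:Main}) from the gcd-preservation criterion of Proposition~\ref{P:CriterionTer}$(ii)$, applied to the left-Garside category $\CLD$ furnished by Theorem~\ref{T:Main}. That criterion asks for two things: that $\f$ commute with the left-gcd of any two simple morphisms sharing a source, and that $\f$ carry nontrivial simple morphisms to nontrivial morphisms. The essential observation is that, because the objects of $\CLD$ are terms and its morphisms are triples $(\tt, \aa, \tt\act\aa)$ arising from the partial action of $\MLD$, both requirements translate into statements purely about $\MLD$, and the first of them is precisely the assumed identity~\eqref{E:Gcd}. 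First I would set up the dictionary between the two levels: a simple morphism of $\CLD$ with source $\tt$ is exactly a triple $(\tt, \aa, \tt\act\aa)$ with $\aa \dive \D_\tt$, and by Lemma~\ref{L:Coherent} and the ensuing notion of a simple element these are exactly the triples $(\tt, \aa, \cdot)$ with $\aa$ a simple element of $\MLD$ for which $\tt\act\aa$ is defined.

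Next I would transport the gcd operation across $\f$. For two simple morphisms $\ff = (\tt, \aa, \cdot)$ and $\gg = (\tt, \bb, \cdot)$ sharing the source $\tt$, I would check that the categorical left-gcd of Proposition~\ref{P:Gcd} is $(\tt, \gcd(\aa,\bb), \cdot)$, where $\gcd(\aa,\bb)$ is computed in $\MLD$ via Corollary~\ref{C:Lcm}$(i)$: indeed $(\tt, \cc, \cdot) \dive (\tt, \cc', \cdot)$ in $\CLD$ is equivalent to $\cc \dive \cc'$ in $\MLD$, and $\tt\act\gcd(\aa,\bb)$ is automatically defined since $\gcd(\aa,\bb) \dive \aa$ while $\tt\act\aa$ is defined. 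Invoking Lemma~\ref{L:Phi}, which gives $\f((\tt, \aa, \cdot)) = (\f(\tt), \f_\tt(\aa), \cdot)$, I then obtain $\f(\gcd(\ff,\gg)) = (\f(\tt), \f_\tt(\gcd(\aa,\bb)), \cdot)$, whereas $\gcd(\f(\ff), \f(\gg))$, a gcd of two morphisms with common source $\f(\tt)$, equals $(\f(\tt), \gcd(\f_\tt(\aa), \f_\tt(\bb)), \cdot)$. Hence the gcd-preservation clause $\f(\gcd(\ff,\gg)) = \gcd(\f(\ff),\f(\gg))$ holds if and only if $\f_\tt(\gcd(\aa,\bb)) = \gcd(\f_\tt(\aa), \f_\tt(\bb))$, which is exactly the hypothesis~\eqref{E:Gcd}.

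It then remains to verify the nontriviality clause: $\f(\ff)$ is nontrivial whenever the simple morphism $\ff = (\tt, \aa, \cdot)$ is. Here $\ff$ nontrivial means $\aa \not= 1$, so $\tt\act\aa$ is a proper LD-expansion of $\tt$ and therefore has strictly larger size; in particular $\tt\act\aa \not= \tt$. By Lemma~\ref{L:PartialInj}, $\f$ is injective on terms, so $\f(\tt\act\aa) \not= \f(\tt)$. Since the target of $\f(\ff)$ is at once $\f(\tt\act\aa)$ and $\f(\tt)\act\f_\tt(\aa)$, the equality $\f_\tt(\aa) = 1$ would force $\f(\tt) = \f(\tt\act\aa)$, a contradiction; thus $\f_\tt(\aa) \not= 1$ and $\f(\ff)$ is nontrivial. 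With both clauses of Proposition~\ref{P:CriterionTer}$(ii)$ in hand, $\CLD$ is regular.

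Since~\eqref{E:Gcd} is assumed outright, the argument is essentially the transport of the gcd operation across the functor $(\tt, \aa, \cdot) \mapsto (\f(\tt), \f_\tt(\aa), \cdot)$. The one point I would treat as the main obstacle is confirming that the categorical left-gcd in $\CLD$ genuinely restricts, on the middle coordinate, to the left-gcd in $\MLD$ — including the bookkeeping that all the relevant instances of the partial action are defined — together with the identification of $\f$ with $\f_\tt$ supplied by Lemma~\ref{L:Phi}. All the substantive difficulty has been relegated to the hypothesis~\eqref{E:Gcd} itself, whose verification is the subject of the accompanying program.
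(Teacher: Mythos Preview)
Your proof is correct and follows essentially the same route as the paper: translate the categorical left-gcd in~$\CLD$ to the monoid left-gcd in~$\MLD$ via the triple description of morphisms and Lemma~\ref{L:Phi}, then invoke Proposition~\ref{P:CriterionTer}$(ii)$. You are in fact more thorough than the paper's own proof, which applies Proposition~\ref{P:CriterionTer}$(ii)$ without explicitly verifying the nontriviality clause; your argument for that clause via Lemma~\ref{L:PartialInj} (injectivity of~$\f$ on terms, forcing $\f(\tt\act\aa)\not=\f(\tt)$ when $\aa\not=1$) is correct and neatly fills this small gap.
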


\begin{proof}
Let $\ff, \gg$ be two simple morphisms in~$\CLD$ that satisfy $\source\ff = \source\gg = \tt$. By definition,
$\ff$ has the form $(\tt, \aa, \tt\act\aa)$ for some~$\aa$ satisfying $\aa \dive \D_{\tt}$, hence simple
in~$\MLD$. Similarly, $\ff$ has the form $(\tt, \bb, \tt\act\bb)$ for some simple element~$\bb$, and
we have $\gcd(\ff, \gg) = (\tt, \gcd(\aa, \bb), \tt\act \gcd(\aa, \bb))$. On the other hand, 
Lemma~\ref{L:Phi} gives $\f(\ff) = (\f(\tt), \f_\tt(\aa), \f(\tt \act \aa))$ and. 
$\f(\gg) = (\f(\tt), \f_\tt(\bb), \f(\tt \act \bb))$, whence
$$\gcd(\f(\ff), \f(\gg)) = (\f(\tt), \gcd(\f_\tt(\aa),\f_\tt(\bb)), \f(\tt) \act \gcd(\f_\tt(\aa),\f_\tt(\bb))).$$
If \eqref{E:Gcd} holds, we deduce 
$$\gcd(\f(\ff), \f(\gg)) = \f(\gcd(\ff, \gg)).$$
By Proposition~\ref{P:CriterionTer}$(ii)$, this implies that $\CLD$ is regular.
\end{proof}

\begin{exam}
Assume $\aa = \DD\ea$, $\bb = \DD1$, and $\tt =\xx\op(\xx\op(\xx\op\xx))$. Then $\tt \act \aa$ and $\tt
\act \bb$ are defined. On the other hand, we have $\f(\tt) =
((\xx\op\xx)\op(\xx\op\xx))\op((\xx\op\xx)\op(\xx\op\xx))$. An easy computation gives $\f_\tt(\DD\ea) =
\DD0\DD1$ and $\f_\tt(\DD1) = \DD\ea$, see Figure~\ref{F:Gcd}. We find $\gcd(\f_\tt(\aa), \f_\tt(\bb)) = 1
= \gcd(\aa, \bb)$, and
\eqref{E:Gcd} is true in this case.

Note that the couterpart of~\eqref{E:Gcd} involving right-lcm's fails. In the current case, we have
$$\lcm(\f_\tt(\aa), \f_\tt(\bb)) = \f_\tt(\lcm(\aa, \bb)) \cdot \DD0\DD1:$$
the terms $\f(\tt \act \DD\ea)$ and $\f(\tt \act \DD1)$ admit a common LD-expansion that is smaller than
$\f_\tt(\tt \act \lcm(\DD\ea, \DD1))$, which turns out to be~$\f^2(\tt)$, see
Figure~\ref{F:Gcd} again. 

The reader may similarly check that \eqref{E:Gcd} holds for $\tt =
(\xx \op (\xx \op \xx)) \op (\xx \op (\xx \op \xx))$ with $\aa = \DD0$ and $\bb = \DD1$; the values are
$\f_\tt(\DD0) =
\DD{000}\DD{010}\DD{100}\DD{110}$ and $\f_\tt(\DD1) = \DD\ea$.
\end{exam}

\begin{figure}[tb]
\begin{picture}(120,63)(0,0)
\put(0,0){\includegraphics{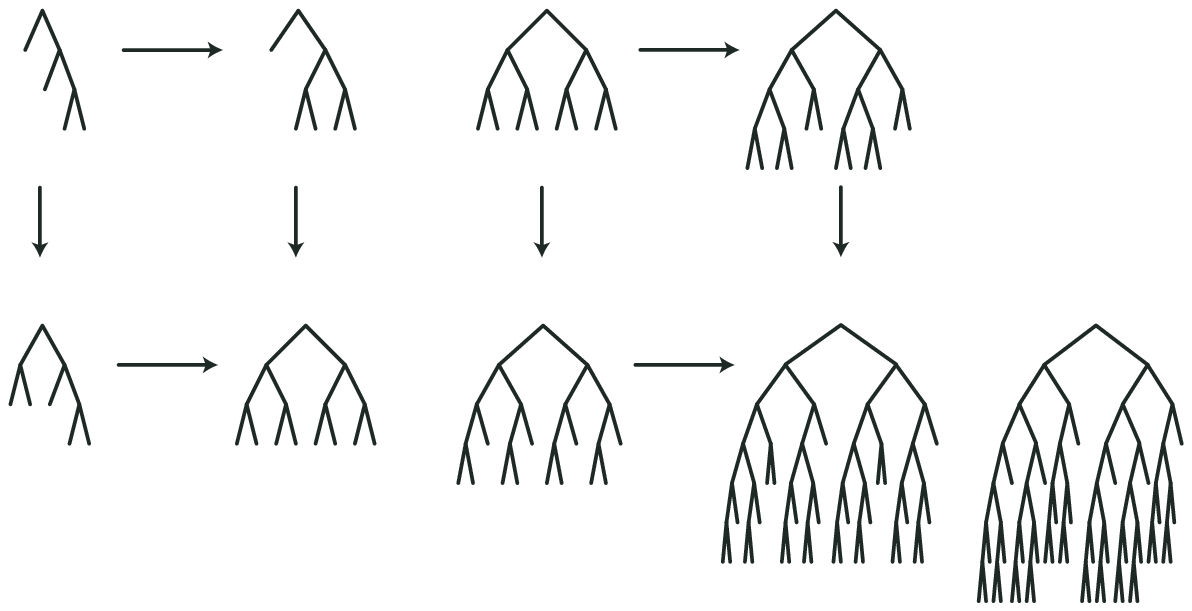}}
\put(3,62){$\tt$}
\put(26,62){$\tt\act\DD1$}
\put(52,62){$\f(\tt)$}
\put(78,62){$\f(\tt\act\DD1)$}
\put(14,58){$\DD1$}
\put(67,58){$\DD\ea$}
\put(5,38){$\DD\ea$}
\put(30,38){$\DD\ea\DD0\DD1$}
\put(55,38){$\DD0\DD1$}
\put(86,38){$\DD{00}\DD{10}\DD1\DD\ea$}
\put(0,30){$\tt\act\DD\ea$}
\put(20,30){$\tt\!\act\!\lcm(\!\DD\ea,\!\DD1\!)$}
\put(48,30){$\f(\tt\!\act\!\DD\ea)$}
\put(70,30){$\f(\tt)\!\act\!\lcm(\!\DD0\!\DD1,\!\DD\ea\!)$}
\put(100,30){$\f(\tt\!\act\!\lcm(\!\DD\ea,\!\DD1\!))$}
\put(11,26){$\DD1\DD\ea$}
\put(62,26){$\DD\ea\DD1\DD\ea$}
\put(97,24){$\not=$}
\end{picture}
\caption{\sf\smaller  The left diagram shows an instance of Relation~\eqref{E:Gcd}: for the considered choice
of~$\tt$, we find $\D_\tt = \DD\ea\DD1\DD\ea$, 
$\D_{\tt \act \DD1} = \DD1\DD\ea\DD0\DD{00}\DD1\DD{10}$, leading to  $\f_\tt(\DD1) = \DD\ea$
and $\f_\tt(\DD\ea) = \DD0\DD1$. Here $\f_\tt(\DD\ea)$ and
$\f_\tt(\DD1)$ are left-coprime, so~\eqref{E:Gcd} is true. The right diagram shows that the counterpart
involving lcm's fails.}
\label{F:Gcd}
\end{figure}

Proposition~\ref{P:PhiGcd} leads to a realistic program that would reduce the proof of the
Embedding Conjecture to a (long) sequence of verifications. Indeed, it is shown
in Proposition~VIII.5.15 of~\cite{Dgd} that every simple element~$\aa$ of~$\MLD$ admits a
unique expression of the form
$$\aa = \prod_{\a \in \Add}^{>} \DD\a^{(\ee_\a)},$$
where $\DD\a^{(\ee)}$ denotes $\DD\a \DD{\a\1} ... \DD{\a\1^{\ee-1}}$ and $>$ refers
to the unique linear ordering of~$\Add$ satisfying $\a > \a\0\b > \a\1\g$ for all~$\a,
\b, \g$. In this way, we associate with every simple element~$\aa$ of~$\MLD$ a sequence
of nonnegative integers $(\ee_\a)_{\a \in \Add}$ that plays the role of a sequence of coordinates
for~$\aa$. Then it \emph{should} be possible to 

- express the coordinates of~$\f_\tt(\aa)$ in terms of those of~$\aa$,

- express the coordinates of~$\gcd(\aa, \bb)$ in terms of those of~$\aa$ and~$\bb$.\\
If this were done, proving (or disproving) the equalities~\eqref{E:Gcd} should be easy.

\begin{rema*}
Contrary to the braid relations, the LD-relations of Lemma~\ref{L:RLD} are not symmetric. However, it turns
out that the presentation of~$\MLD$ is also associated with what can naturally be called a left-complement,
namely a counterpart of a (right)-complement involving left-multiples. But the latter fails to satisfy the
counterpart of~\eqref{E:Cube}, and it is extremely unlikely that one can prove that the monoid~$\MLD$ is
possibly right-cancellative (which would imply the Embedding Conjecture) using some version of
Proposition~\ref{P:Cube}.
\end{rema*}

\section{Reproving  braids properties}
\label{S:Reproving}

Proposition~\ref{P:Proj} and Theorem~\ref{T:Main} connect the Garside structures associated with
self-distributi\-vity and with braids, both being previously known to exist. In this
section, we show how the existence of the Garside structure of braids can be (re)-proved to exist assuming the
existencce of the Garside structure of~$\CLD$ only. So, for a while, we pretend that we do not know that the
braid monoid~$\BP\nn$ has a Garside structure, and we only know about the Garside structure
of~$\CLD$.

\subsection{Projections}
\label{S:Projection}

We begin with a general criterion guaranteeing that the projection of a locally left-Garside monoid is again a
locally left-Garside monoid. 

If $\pi$ is a map of~$\SS$ to~$\SSb^*$, we still denote by~$\pi$ the alphabetical homomorphism of~$\SS^*$
to~$\SSb^*$ that extends~$\pi$, defined by~$\pi(\ss_1 ... \ss_\ell) = \pi(\ss_1)...\pi(\ss_\ell)$.

\begin{lemm}
\label{L:Homo}
Assume that 

$\bullet$ $\MM$ is a locally left-Garside monoid associated with a complement~$\CC$ on~$\SS$;

\smallskip
$\bullet$ $\MMb$ is a monoid associated with a complement~$\CCb$ on~$\SSb$ and
satisfying~$\LG$;

\smallskip
$\bullet$ $\pi: \SS \to \SSb \cup \{\ew\}$ satisfies $\pi(\SS) \supseteq \SSb$  and
\begin{equation}
\label{E:ProjDef}
\mbox{For all~$\aa, \bb$ in~$\SS$, we have $\CCb(\pi(\aa), \pi(\bb)) = \pi(\CC(\aa,\bb))$.}
\end{equation}
\vskip-1mm
Then $\MMb$ is left-preGarside, and $\pi$ induces a surjective right-lcm preserving
homomorphism of~$\MM$ onto~$\MMb$.
\end{lemm}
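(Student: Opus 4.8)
The plan is to route every part of the statement through the single compatibility identity
\begin{equation*}
\pi(\CCh(\uu,\vv)) = \CCbh(\pi(\uu),\pi(\vv))\qquad\text{for all }\uu,\vv\in\SS^*,
\end{equation*}
which upgrades the letterwise hypothesis~\eqref{E:ProjDef} to arbitrary words. I would first prove this identity by induction following the recursive rules defining $\CCh$ and $\CCbh$. The base cases $\uu=\ew$, $\vv=\ew$, and $\uu,\vv$ single letters are settled by $\CCh(\ew,\vv)=\vv$, $\CCh(\uu,\ew)=\ew$, and~\eqref{E:ProjDef} respectively, where the empty-word conventions $\CCbh(\ew,\ww)=\ww$ and $\CCbh(\ww,\ew)=\ew$ are exactly what make the possibility $\pi(\SS)\ni\ew$ harmless; the inductive step applies $\pi$ to each recursive rule and pushes $\pi$ inside by the induction hypothesis. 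The delicate point is that the recursion for $\CCh(\uu,\vv)$ feeds an inner value $\CCh(\vv_1,\uu)$ back as a \emph{first} argument, whose length is not controlled by $|\uu|+|\vv|$; I would organize the induction the way $\CCh$ is actually computed (reduce the second argument to a single letter via the first rule, then the first argument via the second rule), so that the recursion in $\MM$, which terminates since $\MM$ is left-preGarside, drives a parallel terminating recursion for $\CCbh$ on the images. This identity, together with $\pi(\ew)=\ew$, is the engine for all four conclusions.

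With the identity in hand, the homomorphism claim is immediate: applying $\pi$ to a defining relation $\aa\,\CC(\aa,\bb)=\bb\,\CC(\bb,\aa)$ of $\Mon{\SS,\RR_\CC}$ gives $\pi(\aa)\,\CCbh(\pi(\aa),\pi(\bb))=\pi(\bb)\,\CCbh(\pi(\bb),\pi(\aa))$. When $\pi(\aa)$ and $\pi(\bb)$ are distinct letters this is a defining relation of $\MMb$; when they are equal it is a literal identity; and when one of them is $\ew$ it collapses, via the empty-word conventions, to a trivial identity. Hence the letterwise $\pi$ respects all relations of $\MM$ and descends to a monoid homomorphism $\MM\to\MMb$, and surjectivity is free since $\pi(\SS)\supseteq\SSb$ already exhausts the generators of $\MMb$.

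For the left-preGarside assertion I would invoke Proposition~\ref{P:Cube}. As $\MMb$ satisfies~$\LG$ and is presented by the complement $\CCb$, it suffices to verify the cube condition~\eqref{E:Cube} for every triple of letters of $\SSb$. Given $\aab,\bbb,\ccb\in\SSb$, I pick preimages $\aa,\bb,\cc\in\SS$ with $\pi(\aa)=\aab$, $\pi(\bb)=\bbb$, $\pi(\cc)=\ccb$ (possible since $\pi(\SS)\supseteq\SSb$, and I may take genuine letters rather than $\ew$). The cube expression is a nested composition of $\CCh$'s, so repeated use of the compatibility identity shows that the $\CCb$-cube on $(\aab,\bbb,\ccb)$ equals the $\pi$-image of the $\CC$-cube on $(\aa,\bb,\cc)$. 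Since $\MM$ is locally left-Garside, hence left-preGarside, the forward direction of Proposition~\ref{P:Cube} makes its $\CC$-cube equal to~$\ew$; applying $\pi$ and using $\pi(\ew)=\ew$ forces the $\CCb$-cube to equal~$\ew$ as well. Proposition~\ref{P:Cube} then delivers that $\MMb$ is left-preGarside.

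Finally, for right-lcm preservation I would replay the reversing computation from the proof of Theorem~\ref{T:Main}. Any two elements $\aa,\bb$ of $\MM$ admit a right-lcm by Corollary~\ref{C:Lcm}, represented by $\uu\,\CCh(\uu,\vv)$ whenever $\uu,\vv$ represent $\aa,\bb$. Its image is represented by $\pi(\uu\,\CCh(\uu,\vv))=\pi(\uu)\,\CCbh(\pi(\uu),\pi(\vv))$, which is precisely the reversing word computing the right-lcm of the images $\pi(\aa),\pi(\bb)$ in the now left-preGarside monoid~$\MMb$. As subword reversing computes right-lcm's uniquely once a common right-multiple is present, this yields $\pi(\lcm(\aa,\bb))=\lcm(\pi(\aa),\pi(\bb))$, so $\pi$ preserves right-lcm's. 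The main obstacle throughout is establishing the compatibility identity of the first paragraph, and in particular closing its induction despite the uncontrolled length growth of the inner first argument produced by the reversing recursion; once that identity is secured, the four conclusions follow formally.
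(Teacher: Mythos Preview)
Your proposal is correct and follows essentially the same route as the paper: establish the compatibility identity $\CCbh(\pi(\uu),\pi(\vv)) = \pi(\CCh(\uu,\vv))$ by induction, use it to transport the cube condition from~$\MM$ to~$\MMb$ via Proposition~\ref{P:Cube}, check that $\pi$ respects the defining relations, and read off right-lcm preservation from the reversing formula $\uu\,\CCh(\uu,\vv)$ for the lcm. You are in fact more explicit than the paper about the termination of the induction and about the degenerate cases $\pi(\aa)=\ew$ or $\pi(\aa)=\pi(\bb)$, which the paper dismisses as an ``easy induction''.
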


\begin{proof}
An easy induction shows that, if $\uu, \vv$ are words on~$\SS$ and $\CCh(\uu,\vv)$
exists, then $\CCbh(\pi(\uu), \pi(\vv))$ exists as well and we have
\begin{equation}
\label{E:ProjComp}
\CCbh(\pi(\uu), \pi(\vv)) = \pi(\CCh(\uu,\vv)).
\end{equation}

Let $\aab, \bbb, \ccb$ be elements of~$\SSb$. By hypothesis, there exist $\aa, \bb, \cc$
in~$\SS$ satisfying $\pi(\aa) = \aab$, $\pi(\bb) = \bbb$, $\pi(\cc) = \ccb$. As $\MM$ is left-preGarside, by
the direct implication of Proposition~\ref{P:Cube}, the relation~\eqref{E:Cube} involving~$\aa, \bb, \cc$ is true
in~$\SS^*$. Applying~$\pi$ and using~\eqref{E:ProjComp}, we deduce that the
relation~\eqref{E:Cube} involving~$\aab, \bbb, \ccb$ is true in~$\SSb^*$. Then, as $\MMb$
satisfies~$\LG$ by hypothesis, the converse implication of Proposition~\ref{P:Cube} implies that $\MMb$ is
left-preGarside.

Then, by definition, the relations $\aa \CC(\aa, \bb) = \bb \CC(\bb, \aa)$ with $\aa, \bb \in \SS$ make a
presentation of~$\MM$. Now, for~$\aa, \bb$ in~$\SS$, we find
$$\pi(\aa) \CCb(\pi(\aa), \pi(\bb)) 
= \pi(\aa \CC(\aa, \bb)) 
= \pi(\bb \CC(\bb, \aa)) 
= \pi(\bb) \CC(\pi(\bb), \pi(\aa))$$ in~$\MMb$, which shows that the
homomorphism of~$\SS^*$ to~$\MMb$ that extends~$\pi$ induces a well defined homomorphism of~$\MM$
to~$\MMb$. This homomorphism, still denoted~$\pi$, is surjective since, by hypothesis, its image
includes~$\SSb$.

Finally, we claim that $\pi$ preserves right-lcm's. The argument is almost the same as in the proof of
Theorem~\ref{T:Main}, with the difference that, here, we do not assume that common multiples necessarily
exist. Let $\aa, \bb$ be two elements of~$\MM$ that admit a common right-multiple. Let
$\uu, \vv$ be words on~$\SS^*$ that represent~$\aa$ and~$\bb$, respectively. By Proposition~II.2.16
of~\cite{Dgd}, the word~$\CCh(\uu,
\vv)$ exists, and $\uu
\CCh(\uu,\vv)$ represents~$\lcm(\aa, \bb)$. Then the word $\pi(\uu \CCh(\uu, \vv))$ represents a
common right-multiple of~$\pi(\aa)$ and~$\pi(\bb)$ in~$\MMb$, and, by~\eqref{E:ProjComp}, we have
$$\pi(\uu \CCh(\uu, \vv)) = \pi(\uu) \; \CCbh(\pi(\uu),\pi(\vv)),$$
which shows that the element represented by~$\pi(\uu \CCh(\uu, \vv))$, which is $\pi(\lcm(\aa,\bb))$ by
definition, is the right-lcm of~$\pi(\aa)$ and~$\pi(\bb)$ in~$\MMb$.
\end{proof}

We turn to locally left-Garside monoids, \ie, we add partial actions in the picture. Although
lengthy, the following result is easy. It just says that, if $\MM$ is a locally left-Garside
monoid, then its image under a projection that is compatible with the various ingredients of the
Garside structure is again locally left-Garside.

\begin{prop}
\label{P:Projection}
Assume that 

%\noindent 
$\bullet$ $\MM$ is a locally left-Garside monoid associated with a complement~$\CC$ on~$\SS$
and $(\D_\xx)_{\xx\in\XX}$ is a left-Garside sequence for the involved action of~$\MM$ on~$\XX$;

\smallskip%\noindent 
$\bullet$ $\MMb$ is a monoid associated with a complement~$\CCb$ on~$\SSb$ that has a partial
action on~$\XXb$ and satisfies~$\LG$;

\smallskip%\noindent 
$\bullet$ $\pi: \SS \to \SSb \cup \{\ew\}$ satisfies \eqref{E:ProjComp}, $\theta: \SSb \to \SS$ is a
section for~$\pi$, $\pio: \XX \to \XXb$ is a surjection, and\\
\parbox{\hsize}{
\begin{equation}
\label{E:Proj4}
\parbox{11cm}{%
\noindent For~$\xx$ in~$\XX$ and~$\aa$ in~$\MM$, if $\xx\act\aa$ is defined, then so is $\pio(\xx) \act
\pi(\aa)$\\
\null\hfill\strut and we have $\pio(\xx) \act \pi(\aa) = \pio(\xx\act\aa)$;}
\end{equation}
\vskip-4mm
\begin{equation}
\label{E:Proj5}
\parbox{11cm}{%
\noindent For~$\xxb$ in~$\XXb$ and~$\aab$ in~$\SSb$, if $\xxb\act\aab$ is defined, \\
\null\hfill\strut then so is $\xx \act \theta(\aab)$ for each~$\xx$ satisfying  $\pio(\xx)  = \xx$;}
\end{equation}
\vskip-4mm
\begin{equation}
\label{E:Proj6}
\parbox{11cm}{%
\noindent For~$\xx$ in~$\XX$, the value of~$\pi(\D_\xx)$ depends on~$\pio(\xx)$ only.}
\end{equation}}
\vskip-1mm
For~$\xxb$ in~$\XXb$, let $\D_\xxb$ be the common value of~$\pi(\D_\xx)$ for $\pio(\xx) = \xxb$. Then
$\MMb$ is locally left-Garside, with associated left-Garside sequence
$(\D_\xxb)_{\xxb\in\XXb}$, and $\pi$
induces a surjective right-lcm preserving homomorphism of~$\MM$ onto~$\MMb$.
\end{prop}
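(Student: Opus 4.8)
The plan is to assemble the conclusion from two essentially independent pieces: the purely monoid-theoretic part, which is already packaged in Lemma~\ref{L:Homo}, and the action-theoretic part, which concerns the newly introduced sets~$\XXb$ and the partial action of~$\MMb$. First I would invoke Lemma~\ref{L:Homo} directly. Its three hypotheses are subsumed by ours: $\MM$ is locally left-Garside (hence in particular left-preGarside), $\MMb$ is a monoid with complement~$\CCb$ satisfying~$\LG$, and $\pi$ satisfies~\eqref{E:ProjComp}; surjectivity onto~$\SSb$ follows from the existence of the section~$\theta$. Thus Lemma~\ref{L:Homo} already gives that $\MMb$ is left-preGarside and that $\pi$ induces a surjective right-lcm preserving homomorphism of~$\MM$ onto~$\MMb$. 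This disposes of the left-preGarside claim and of the right-lcm preservation statement with no further work, so the entire remaining burden is to prove that the sequence $(\D_\xxb)_{\xxb \in \XXb}$ is a \emph{left-Garside sequence} for the action of~$\MMb$ on~$\XXb$, i.e.\ that it satisfies Condition~$\LGloc$ of Definition~\ref{D:LocGarside}.

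Before checking~$\LGloc$, I must verify that the elements~$\D_\xxb$ are even well defined, which is exactly what hypothesis~\eqref{E:Proj6} provides: $\pi(\D_\xx)$ depends only on~$\pio(\xx)$, so setting $\D_\xxb$ to be this common value over the fibre $\pio^{-1}(\xxb)$ is unambiguous. I also need $\xxb \act \D_\xxb$ to be defined: picking any $\xx$ with $\pio(\xx) = \xxb$, we know $\xx \act \D_\xx$ is defined (since $(\D_\xx)$ is a left-Garside sequence for~$\MM$), and then~\eqref{E:Proj4} yields that $\pio(\xx) \act \pi(\D_\xx) = \xxb \act \D_\xxb$ is defined as well. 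The two halves of~$\LGloc$ then run as follows. For the generation half, let $\aab$ be a nontrivial element of~$\MMb_\xxb$; using the section~$\theta$ and~\eqref{E:Proj5} I can lift a generator, and the fact that $\Div(\D_\xx)$ left-generates~$\MM_\xx$ together with compatibility of~$\pi$ with the complement transports a nontrivial simple left-divisor down to~$\MMb_\xxb$. For the second half, suppose $\aab \dive \D_\xxb$; I would lift~$\aab$ to a simple~$\aa \dive \D_\xx$ in~$\MM$, apply the corresponding relation $\D_\xx \dive \aa\,\D_{\xx\act\aa}$ valid in~$\MM$ by~$\LGloc$ there, and project via~$\pi$, using~\eqref{E:Proj4} to identify $\pi(\D_{\xx\act\aa})$ with $\D_{\xxb\act\aab}$ and the homomorphism property to get $\D_\xxb \dive \aab\,\D_{\xxb\act\aab}$.

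The step I expect to be the main obstacle is the generation half of~$\LGloc$, specifically producing from an arbitrary nontrivial $\aab \in \MMb_\xxb$ a \emph{nontrivial} simple left-divisor in~$\MMb$ while staying inside~$\MMb_\xxb$. The difficulty is that $\pi$ can collapse letters to~$\ew$, so lifting~$\aab$ through~$\theta$ and invoking the generation property upstairs might return a left-divisor of~$\D_\xx$ whose image under~$\pi$ is trivial; I will need to argue that a nontrivial~$\aab$ forces some genuinely surviving simple factor, most cleanly by using the interplay of~$\theta$ being a section with~\eqref{E:Proj5} to guarantee that the lifted action is defined and that at least one simple piece projects nontrivially. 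A secondary delicate point is checking that the lift of a \emph{simple} element of~$\MMb$ can be chosen simple in~$\MM$, so that $\D_\xx \dive \aa\,\D_{\xx\act\aa}$ is applicable; this should follow from coherence of the Garside sequence and the description of simple elements via the complement, but it is where the argument needs care rather than a routine citation.
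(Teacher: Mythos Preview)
Your outline matches the paper's proof closely: invoke Lemma~\ref{L:Homo} for the monoid-theoretic part, then verify~$\LGloc$ for $(\D_\xxb)$ by lifting through~$\theta$ and projecting back via~$\pi$. Both of your anticipated difficulties, however, dissolve under a single simplification that the paper uses and that you do not quite reach: reduce at the outset to the case $\aab \in \SSb$. For the generation half this is legitimate because any nontrivial $\aab \in \MMb_\xxb$ has a leftmost letter in~$\SSb$ that still acts on~$\xxb$; for the second half one proves the relation $\D_\xxb \dive \aab\,\D_{\xxb\act\aab}$ for generators first and then extends to arbitrary~$\aab$ by a straightforward induction on word length.

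Once $\aab \in \SSb$, the lift $\theta(\aab)$ lies in~$\SS$ and is therefore an \emph{atom} in~$\MM$. The left-generation property of~$\Div(\D_\xx)$ applied to~$\theta(\aab) \in \MM_\xx$ (definedness coming from~\eqref{E:Proj5}) then forces $\theta(\aab)$ \emph{itself} to divide~$\D_\xx$, since its only nontrivial left-divisor is itself. This simultaneously answers your secondary concern (the lift is automatically simple, no need to argue about lifting arbitrary simples) and your main concern (since $\pi(\theta(\aab)) = \aab$ by the section property, the projection is nontrivial and nothing collapses). There is no genuine gap in your plan, only this unexploited shortcut.
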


\begin{proof}
First, the hypotheses of Lemma~\ref{L:Homo} are satisfied, hence $\MMb$ is left-preGarside and $\pi$
induces a surjective lcm-preserving homomorphism of~$\MM$ onto~$\MMb$.

Next, by~\eqref{E:Proj6}, the definition of the elements~$\D_\xxb$ for $\xxb$ in~$\XXb$ is unambiguous. It
remains to check that the sequence $(\D_\xxb)_{\xxb\in\XXb}$ is a left-Garside sequence with respect to the
action of~$\MMb$ on~$\XXb$. So, assume $\xxb \in \MMb$, and let $\xx$ be any element of~$\MM$
satisfying $\pio(\xx) = \xxb$. 

First,  $\xx \act \D_\xx$ is defined, hence, by~\eqref{E:Proj4}, so is $\pio(\xx) \act \pi(\D_\xx)$, which is
$\xxb \act \D_\xxb$.

Assume now $\aab \not= 1$ and $\xxb \act \aab$ is defined. As $\SSb$ generates~$\MMb$, we
can assume $\aa \in \SSb$ without loss of generality. By~\eqref{E:Proj5}, the existence of~$\xxb
\act \aab$ implies that of $\xx \act \theta(\aab)$. As $(\D_\xx)_{\xx \in \XX}$ is a left-Garside sequence for
the action of~$\MM$ on~$\XX$, we have $\aa' \dive \D_\xx$ for some $\aa' \not= 1$
left-dividing~$\theta(\aab)$. By construction, $\theta(\aab)$ lies in~$\SSb$, and it is an
atom in~$\MM$. So the only possiblity is $\aa' = \theta(\aab)$, \ie, we have
$\theta(\aab) \dive
\D_\xx$. Applying~$\pi$, we deduce $\aab \dive \D_\xxb$ in~$\MMb$.

Finally, under the same hypotheses, we have $\D_\xx \dive \theta(\aab) \, \D_{\xx \act \theta(\aab)}$
in~$\MM$. Using
$$\pi(\D_{\xx \act \theta(\aab)}) = \D_{\pio(\xx \act \theta(\aab))}
= \D_{\pio(\xx) \act \aab} = \D_{\xxb \act \aab},$$
we deduce $\D_\xxb \dive \aab \, \D_{\xxb \act \aab}$ in~$\MMb$, always under the hypothesis $\aab \in
\SSb$. The case of an arbitrary element~$\aab$ for which $\xxb \act \aab$ exists then follows from an easy
induction on the length of an expression of~$\aab$ as a product of elements of~$\SSb$.
\end{proof}

It should then be clear that, under the hypotheses of Proposition~\ref{P:Projection}, $\MAct\pi\pio$ is
a surjective, right-lcm preserving functor of~$\CAct\MM\XX$ to~$\CAct\MMb\XXb$.

\subsection{The case of~$\CLD$ and~$\BBB$}

Applying the criterion of Section~\ref{S:Projection} to the categories~$\CLD$ and~$\BBB$ is easy. 

\begin{prop}
The monoid~$\BP\infty$ is a locally left-Garside monoid with respect to its action on~$\Nat$, and
$(\D_\nn)_{\nn\in\Nat}$ is a left-Garside sequence in~$\BP\infty$. 
\end{prop}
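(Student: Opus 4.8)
The plan is to derive this result as a direct application of Proposition~\ref{P:Projection} to the projection $\pi : \MLD \to \BP\infty$ already constructed in Lemma~\ref{L:Proj}, together with the compatible map $\RH : \TT \to \Nat$ from Proposition~\ref{P:Proj}. Since Theorem~\ref{T:Main} and Proposition~\ref{P:Main} establish that $\MLD$ is a locally left-Garside monoid with left-Garside sequence $(\D_\tt)_{\tt\in\TT}$ for its action on terms, all that remains is to verify that the hypotheses of Proposition~\ref{P:Projection} hold with $\MM = \MLD$, $\MMb = \BP\infty$, $\XX = \TT$, $\XXb = \Nat$, $\pio = \RH$, and $\pi$ the projection of Lemma~\ref{L:Proj}.

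First I would check the requirement on complements. The monoid $\MLD$ is associated with the complement $\CC$ arising from the relations $\RLD$ of Lemma~\ref{L:RLD} (as used in the proof of Theorem~\ref{T:Main}), and $\BP\infty$ is associated with the standard braid complement $\CCb$. Condition~\eqref{E:ProjComp} (in the form~\eqref{E:ProjDef}/\eqref{E:ProjComp1}) is exactly the compatibility $\pi(\CC(\DD\a,\DD\b)) = \CCb(\pi(\DD\a),\pi(\DD\b))$, which was verified inside the proof of Theorem~\ref{T:Main} when showing that $\pi$ preserves right-lcm's. I would also note that $\BP\infty$ satisfies~$\LG$, since each positive braid has only finitely many left-divisors (it is homogeneous, so the word length bounds any $\div$-increasing chain). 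A section $\theta : \{\sig\ii\} \to \{\DD\a\}$ is given by $\theta(\sig{\ii+1}) = \DD{\1^\ii}$, which is a genuine section of $\pi$ on the generators.

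Next I would verify the three action-compatibility conditions. Condition~\eqref{E:Proj4} is precisely the compatibility~\eqref{E:Compat} established in Proposition~\ref{P:Proj}: if $\tt\act\aa$ is defined then $\RH(\tt)\act\pi(\aa)$ is defined and equals $\RH(\tt\act\aa)$, because the LD-law preserves right-height and the factors $\DD{\1^\ii}$ occurring in $\aa$ satisfy $\ii < \RH(\tt)-1$. Condition~\eqref{E:Proj5} asks that if $\nn\act\sig\jj$ is defined then $\tt\act\DD{\1^{\jj-1}}$ is defined for every $\tt$ with $\RH(\tt)=\nn$; this holds because $\nn\act\sig\jj$ defined means $\sig\jj\in\BP\nn$, i.e.\ $\jj \le \nn-1$, and a term of right-height $\nn$ always admits the expansion at address $\1^{\jj-1}$ along its rightmost branch. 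Condition~\eqref{E:Proj6} asks that $\pi(\D_\tt)$ depend only on $\RH(\tt)$; this is exactly Proposition~\ref{P:ProjDelta}, which gives $\pi(\D_\tt) = \D_\nn$ whenever $\RH(\tt)=\nn$. Thus $\D_\nn$ is the common value defining the projected left-Garside sequence, matching the conclusion that $(\D_\nn)_{\nn\in\Nat}$ is a left-Garside sequence in~$\BP\infty$.

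With all hypotheses in place, Proposition~\ref{P:Projection} delivers immediately that $\BP\infty$ is locally left-Garside with associated sequence $(\D_\nn)_{\nn\in\Nat}$ (and that $\pi$ is a surjective right-lcm preserving homomorphism, which we already knew). I do not expect a serious obstacle here: the statement is essentially a bookkeeping corollary whose entire content was prepared by Lemma~\ref{L:Proj}, Proposition~\ref{P:Proj}, and Proposition~\ref{P:ProjDelta}. The only point requiring a little care is confirming condition~\eqref{E:Proj5} in full generality rather than just for generators, but since $\SSb = \{\sig\ii\}$ generates and the proposition's own induction handles composite elements, checking it on the atoms $\sig\ii$ suffices. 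The mildest subtlety is making sure the indexing convention $\pi(\DD{\1^\ii}) = \sig{\ii+1}$ lines up with the right-height bound, so that $\sig\jj\in\BP\nn$ corresponds correctly to an admissible address along the rightmost branch; this is the one spot where an off-by-one slip could occur, so I would state the correspondence explicitly.
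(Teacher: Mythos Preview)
Your proposal is correct and follows essentially the same approach as the paper: both verify the hypotheses of Proposition~\ref{P:Projection} for the data $(\MLD, \TT, \CC, (\D_\tt))$ projecting via $(\pi, \RH)$ onto $(\BP\infty, \Nat, \CCb)$, with the section $\theta(\sig\ii) = \DD{\1^{\ii-1}}$, checking \eqref{E:ProjDef}, \eqref{E:Proj4}, \eqref{E:Proj5}, and \eqref{E:Proj6} exactly as you outline. You are in fact slightly more explicit than the paper in noting why $\BP\infty$ satisfies $\LG$ (homogeneity bounds chain length), which the paper leaves implicit.
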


\begin{proof}
Hereafter, we denote by~$\CC$ the complement on $\{\DD\a\mid\a \in \Add\}$ associated with the
LD-relations of Lemma~\ref{L:RLD}, and by~$\CCb$ the complement on $\{\sig\ii \mid \ii \ge 1\}$ associated
with the braid relations of~\eqref{E:Braid}. We consider the maps~$\pi$ of Lemma~\ref{L:Proj}, 
and~$\RH$ from terms to nonnegative integers. Finally, we define $\theta$ by $\theta(\sig\ii) =
\DD{\1^{\ii-i}}$. We claim that these data satisfy all hypotheses of
Proposition~\ref{P:Projection}. The verifications are easy. That the complements~$\CC$ and~$\CCb$
satisfy~\eqref{E:ProjDef} follows from a direct inspection. For instance, we find
$$\pi(\CC(\DD1, \DD\ea)) = \pi(\DD\ea \DD1 \DD0) = \sig1 \sig2 = \CCb(\sig2, \sig1) = \CCb(\pi(\DD1),
\pi(\DD\ea)),$$
and similar relations hold for all pairs of generators~$\DD\a, \DD\b$. 

Then, the action of~$\MLD$ on terms preserve the right-height, whereas the action of braids on~$\Nat$ is
trivial, so \eqref{E:Proj4} is clear. Next, define $\theta$ by $\theta(\sig\ii) = \DD{\1^{\ii-1}}$. Then
$\theta$ is a section for~$\pi$, and we observe that $\tt \act \theta(\sig\ii)$ is defined if and only if the
right-height of~$\tt$ is at least~$\ii+1$, hence if and only if $\RH(\tt) \act \sig\ii$ is defined, so
\eqref{E:Proj5} is satisfied. Finally, we observed in Proposition~\ref{P:Proj} that $\pi(\D_\tt)$ is equal
to~$\D_{\RH(\tt)}$, hence it depends on~$\RH(\tt)$ only. So \eqref{E:Proj6} is satisfied.

Therefore, Proposition~\ref{P:Projection} applies, and it gives the expected result.
\end{proof}

\begin{coro}
$(i)$ The braid category~$\BBB$ is a left-Garside category.

$(ii)$ For each~$\nn$, the braid monoid~$\BP\nn$ is a Garside monoid.
\end{coro}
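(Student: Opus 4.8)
The plan is to read off both assertions directly from the general machinery already assembled, so that almost no new work is required; the preceding Proposition supplies all the input. For $(i)$, I recall from Example~\ref{X:Braid} that $\BBB$ is by definition $\CAct{\BP\infty}{\Nat}$, the category attached to the partial action~\eqref{E:BraidAction}. Since the preceding Proposition asserts that $\BP\infty$ is locally left-Garside for this action with left-Garside sequence $(\D_\nn)_{\nn\in\Nat}$, I would simply invoke Proposition~\ref{P:Criterion}, which promotes any locally left-Garside monoid to a left-Garside category. This yields at once that $\BBB$ is left-Garside, with left-Garside map $\D(\nn) = (\nn, \D_\nn, \nn\act\D_\nn)$; and $\nn\act\D_\nn = \nn$ because the action~\eqref{E:BraidAction} fixes every integer. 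Part~$(i)$ is then complete.

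For $(ii)$, I fix $\nn$ and note that, in the notation of~\eqref{E:Mx}, the submonoid $\BP\nn$ is exactly $(\BP\infty)_\nn$, the set of braids~$\aa$ for which $\nn\act\aa$ is defined. The crucial feature of~\eqref{E:BraidAction} is that the action is trivial on values, $\nn\act\aa = \nn$ whenever defined; consequently $(\BP\infty)_\nn = \BP\nn$ is closed under product, and the left-Garside sequence is constant along it, $\D_{\nn\act\aa} = \D_\nn$ for every $\aa\in\BP\nn$. These are precisely the hypotheses of Proposition~\ref{P:Submonoid}, which therefore gives that $\BP\nn$ is a left-Garside submonoid of~$\BP\infty$ with left-Garside element~$\D_\nn$.

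It remains to upgrade left-Garside to Garside in the sense of Definition~\ref{D:GarsideM}, and this is where the single braid-specific ingredient enters. The two relation families in~\eqref{E:Braid} are each invariant under reversal of words, so the anti-automorphism~$\rho$ of the free monoid on the~$\sig\ii$ that reverses a word descends to an anti-automorphism of~$\BP\nn$. Applying~$\rho$ interchanges $\dive$ with~$\multe$, left-cancellation with right-cancellation, and local right-lcm's with local left-lcm's, while preserving finiteness of $\div$-increasing chains; hence $\BP\nn$ is right-preGarside, and the left-Garside axiom~$\LGiii$ for an element~$\D$ transports into the right-Garside axiom for~$\rho(\D)$. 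Taking $\D = \D_\nn$ and using that the half-twist $\D_\nn$ is a palindrome, \ie\ $\rho(\D_\nn) = \D_\nn$, I conclude that $\D_\nn$ is simultaneously a left- and a right-Garside element, so by Definition~\ref{D:GarsideM} the monoid $\BP\nn$ is Garside. This finishes~$(ii)$.

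The one step deserving care is the palindrome identity $\rho(\D_\nn) = \D_\nn$, which is exactly what forces the left- and right-Garside elements to coincide as Definition~\ref{D:GarsideM} demands; the rest of the last paragraph is a purely formal dualization of the four left-handed axioms through~$\rho$. I expect this symmetry of~$\D_\nn$ to be the main point: it can be checked by a short induction on~$\nn$ from the recursion $\D_\nn = \sh{}(\D_{\nn-1})\,\sig1\sig2\cdots\sig{\nn-1}$ read off in the proof of Proposition~\ref{P:ProjDelta}, or simply taken as the well-known symmetry of Garside's fundamental braid. Note that the same conclusion could be reached through the criterion recalled after Definition~\ref{D:GarsideM}, for which one needs only right-cancellativity of~$\BP\nn$ (again from~$\rho$) and the coincidence of the left- and right-divisors of~$\D_\nn$ (again from the palindrome property).
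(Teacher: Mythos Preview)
Your argument is correct, and for part~$(i)$ it coincides with the paper's proof: invoke Proposition~\ref{P:Criterion} once the preceding Proposition has established that~$\BP\infty$ is locally left-Garside.

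For part~$(ii)$, your first step also matches the paper's: identify~$\BP\nn$ with~$(\BP\infty)_\nn$ and apply Proposition~\ref{P:Submonoid}. The paper's proof stops there, whereas Proposition~\ref{P:Submonoid} only delivers a \emph{left}-Garside structure, so the paper leaves the upgrade to a two-sided Garside monoid implicit. You go further and supply this upgrade explicitly via the word-reversal anti-automorphism~$\rho$ of~$\BP\nn$ and the palindrome identity~$\rho(\D_\nn) = \D_\nn$. That extra paragraph is correct and is genuinely needed to match the statement as written; the alternative you mention at the end---right-cancellativity plus coincidence of left- and right-divisors of~$\D_\nn$, invoking the criterion recalled after Definition~\ref{D:GarsideM}---is exactly the route the paper would presumably have in mind, and your anti-automorphism argument is a clean way to obtain both ingredients at once.
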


\begin{proof}
Point~$(i)$ follows from Proposition~\ref{P:Criterion} once we know that $\BP\infty$ is locally
left-Garside. Point~$(ii)$ follows from Proposition~\ref{P:Submonoid} since, for each~$\nn$, the
submonoid~$\BP\nn$ is the monoid~$(\BP\infty)_\nn$ in the sense of Definition~\ref{D:Action}.
\end{proof}

Thus, as announced, the Garside structure of braids can be recovered from the left-Garside structure associated
with the LD-law. 

\section{Intermediate categories}
\label{S:Intermediate}

We conclude with a different topic. The projection of the self-distributivity category $\CLD$ to the braid
category~$\BBB$ described above is partly trivial in that terms are involved through their right-height only
and the corresponding action of braids on integers is just constant. Actually,
one can consider alternative projections corresponding to less trivial braid actions and leading to two-step
projections
$$\CLD \ \longrightarrow\  \CAct{\BP\infty}\XX \ \longrightarrow\  \BBB.$$
We shall describe two such examples. 

\subsection{Action of braids on sequences of integers}

Braids act on sequences of integers via their permutations. Indeed, the rule
\begin{equation}
\label{E:PermAction}
(\xx_1, ..., \xx_\nn) \act \sig\ii = (\xx_1, ..., \xx_{\ii-1}, \xx_{\ii+1}, \xx_\ii, \xx_{\ii+2} , ...,
\xx_\nn).
\end{equation}
defines an action of~$\BP\nn$ on~$\Nat^\nn$, whence a partial action of~$\BP\infty$ on~$\Nat^*$, where
$\Nat^*$ denotes the set of all finite sequences in~$\Nat$. In this way, we obtain
a new category~$\CAct{\BP\infty}{\Nat^*}$, which clearly projects to~$\BBB$. 

We shall now describe an explicit projection of~$\CLD$ onto this category. We recall that terms have been
defined to be bracketed expressions constructed from a fixed sequence of variables~$\xx_1, \xx_2, ...$ (or as
binary trees with leaves labelled with variables~$\xx_\pp$), and that, for~$\tt$ a term and $\a$ a binary
address, $\sub\tt\a$ denotes the subterm of~$\tt$ whose root, when $\tt$ is viewed as a binary tree, has
address~$\a$. 

\begin{prop}
\label{P:BraidHat}
Let $\BBBB$ be the category associated with the partial action~\eqref{E:PermAction} of~$\BP\infty$
on~$\Nat^*$. Then $\BBBB$ is a Garside category, and the projection~$[\pi, \RH]$ of~$\CLD$ onto~$\BBB$
factors through~$\BBBB$ into
$$ 
\begin{CD}
\CLD
@>{\textstyle[\pi, \PI]}>>
\BBBB
@>{\textstyle[\ID, \lg]}>>
\BBB,
\end{CD}
$$
where $\PI$ is defined for $\RH(\tt) = \nn$ by
$$
\PI(\tt) = (\var(\sub{\tt}{\0}),\var(\sub{\tt}{\1\0}), ..., \var(\sub{\tt}{\1^\nno\0})),
$$
$\var(\tt)$ denoting the index of the righmost variable occurring in~$\tt$. 
\end{prop}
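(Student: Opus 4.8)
The plan is to mirror the argument of the previous subsection, applying Proposition~\ref{P:Projection} to the braid projection $\pi$ of Lemma~\ref{L:Proj}, but now with the richer target space $\Nat^*$ and the surjection $\pio = \PI$ in place of $\RH$. So first I would record that $\BP\infty$ satisfies~$\LG$ (as already used for the projection onto $\BBB$: the number of letters is an additive invariant, the defining relations being length-preserving), and fix the section $\theta(\sig\ii) = \DD{\1^{\ii-1}}$ of $\pi$. The hypotheses \eqref{E:Proj5} and \eqref{E:Proj6} are then immediate: $\vec\xx \act \sig\ii$ is defined exactly when $\lg(\vec\xx) \ge \ii+1$, which is precisely the condition $\RH(\tt) \ge \ii+1$ needed for $\tt \act \theta(\sig\ii) = \tt \act \DD{\1^{\ii-1}}$ to exist whenever $\PI(\tt) = \vec\xx$; and $\pi(\D_\tt) = \D_{\RH(\tt)} = \D_{\lg \PI(\tt)}$ by Proposition~\ref{P:ProjDelta}, so $\pi(\D_\tt)$ depends on $\PI(\tt)$ through its length only.

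The crux is the compatibility condition \eqref{E:Proj4}, namely that $\PI(\tt \act \aa) = \PI(\tt) \act \pi(\aa)$ whenever $\tt \act \aa$ is defined; by the usual induction on the length of a word representing $\aa$ it suffices to treat a single generator $\aa = \DD\a$. Here the decisive computational lemma is that an LD-expansion never changes the index of the rightmost variable of the term it acts on: for the atomic step $\uu \op (\vv \op \ww) \mapsto (\uu\op\vv)\op(\uu\op\ww)$ both sides have rightmost variable $\var(\ww)$, and propagation through right-multiplication (an easy induction on the rewrite address) extends this to any nested rewrite. If $\a$ contains at least one $\0$, the rewrite takes place strictly inside a single left-child $L_k = \sub\tt{\1^k\0}$ of the rightmost branch, so every coordinate $\var(L_k)$ is preserved and $\PI(\tt)$ is unchanged, in agreement with $\pi(\DD\a)=1$. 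If $\a = \1^\ii$, then writing $\sub\tt{\1^\ii} = L_\ii \op (L_{\ii+1} \op R)$ the rewrite produces $(L_\ii \op L_{\ii+1}) \op (L_\ii \op R)$, whose successive left-children along the new branch are $L_\ii \op L_{\ii+1}$ and then $L_\ii$; since $\var(L_\ii \op L_{\ii+1}) = \var(L_{\ii+1})$, the net effect on $\PI(\tt)$ is exactly the exchange of the two consecutive entries $\var(L_\ii)$ and $\var(L_{\ii+1})$, which is precisely the transposition realized by $\sig{\ii+1} = \pi(\DD{\1^\ii})$ through \eqref{E:PermAction}. This gives \eqref{E:Proj4}, so Proposition~\ref{P:Projection} applies: $\BP\infty$ is locally left-Garside for \eqref{E:PermAction} with left-Garside sequence $\D_{\vec\xx} = \D_{\lg \vec\xx}$, hence $\BBBB$ is a left-Garside category by Proposition~\ref{P:Criterion}, and $\MAct\pi\PI$ is a surjective, right-lcm preserving functor of $\CLD$ onto $\BBBB$.

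Two points then remain. To upgrade $\BBBB$ from left-Garside to a two-sided Garside category, I would use that on each length stratum the local monoid $(\BP\infty)_{\vec\xx}$ equals $\BP\nn$ with $\nn = \lg(\vec\xx)$, a genuine Garside monoid whose single element $\D_\nn$ is simultaneously a left- and a right-Garside element; running the right-handed counterpart of the construction with the same constant sequence yields a right-Garside map $\nabla$, and the length-preserving character of \eqref{E:PermAction} makes $\target{\D(\vec\xx)}$ and $\source{\nabla(\vec\xx)}$ coincide, which is exactly the compatibility demanded in Definition~\ref{D:Garside}. Finally, the factorization is a bookkeeping verification: $\lg$ is compatible with $\ID\colon \BP\infty \to \BP\infty$ and the trivial action \eqref{E:BraidAction} on $\Nat$ (length is preserved, and $\vec\xx \act \aa$ being defined forces $\aa \in \BP_{\lg\vec\xx}$), so $\MAct\ID\lg$ is a well-defined functor $\BBBB \to \BBB$; and since $\lg \circ \PI = \RH$ by the very definition of $\PI$ together with $\RH(\tt_\0 \op \tt_\1) = \RH(\tt_\1)+1$, composing the two functors gives $\MAct\ID\lg \circ \MAct\pi\PI = \MAct\pi\RH$, the projection of Proposition~\ref{P:Proj}. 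I expect the main obstacle to be the compatibility computation of the middle paragraph — recognizing that $\PI$ is exactly the invariant on which the permutation action of the braid image is realized — whereas the two-sided upgrade, although it momentarily steps outside the strictly left-sided machinery, is routine once one observes that every stratum is the classical two-sided Garside monoid $\BP\nn$.
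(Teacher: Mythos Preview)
Your proposal is correct and follows essentially the same route as the paper, which only gives a sketch: the paper points to the single key computation---that applying $\DD{\1^{\ii}}$ to~$\tt$ swaps the entries $\var(L_\ii)$ and $\var(L_{\ii+1})$ in $\PI(\tt)$, matching the action of $\sig{\ii+1}=\pi(\DD{\1^{\ii}})$---and then invokes ``symmetry'' for the two-sided Garside upgrade, whereas you flesh this out by running the machinery of Proposition~\ref{P:Projection} with $\pio=\PI$ and then appealing to the classical Garside structure of each stratum $\BP\nn$. Your observation that $\var$ is invariant under LD-expansion (since $\var((\uu\op\vv)\op(\uu\op\ww))=\var(\ww)=\var(\uu\op(\vv\op\ww))$) is exactly the ``computational lemma'' underlying the case $\a=\1^k\0\b$, and your factorization check via $\lg\circ\PI=\RH$ is the routine closing step the paper leaves implicit.
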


So, a typical morphism of~$\BBBB$ is $((1,2,2), \sig1, (2,1,2))$, and the projection of terms to sequences of
integers is given by 
\vrule width0pt height14mm depth15mm
\begin{picture}(72,10)(4,12)
\put(6,0.5){\includegraphics{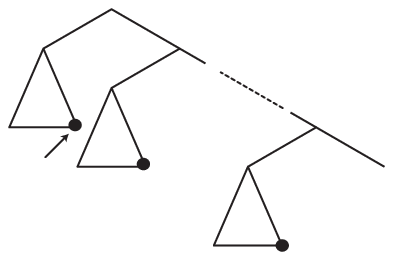}}
\put(7,8){$\xx_{\pp_1}$}
\put(18,6.5){$\xx_{\pp_2}$}
\put(32,-1.5){$\xx_{\pp_\nn}$}
\put(45,12){$\mapsto (\pp_1, \pp_2, ..., \pp_\nn)$.}
\put(46,14){$\PI$}
\end{picture}

\begin{proof}[Proof (Sketch)]
The point is to check that the action of the LD-law on the indices of the right variables of the
subterms with addresses~$\1^{\ii}\0$ is compatible with the action of braids on sequences of
integers. It suffices to consider the basic case of~$\DD{1^{\ii-1}}$, and
the expected relation is shown in Figure~\ref{F:ProjHatProof}. Details are easy. Note that, for symmetry reasons,
the category~$\BBBB$ is not only left-Garside, but even Garside.
\end{proof}

\begin{figure}[tb]
\begin{picture}(48,31)(0,2)
\put(0,0.5){\includegraphics{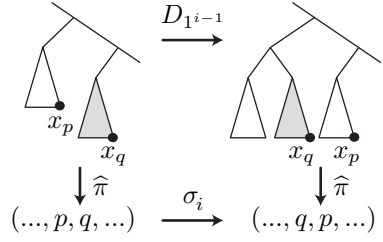}}
\put(10,5){$\PI$}
\put(42,5){$\PI$}
\put(4,14){$\xx_\pp$}
\put(11,10){$\xx_\qq$}
\put(42,10){$\xx_\pp$}
\put(36,10){$\xx_\qq$}
\put(-1,1){$(..., \pp, \qq, ...)$}
\put(31,1){$(..., \qq, \pp, ...)$}
\put(22,4){$\sig\ii$}
\put(19,28){$\DD{\1^{\ii-1}}$}
\end{picture}
\caption{\sf\smaller  Compatibility of the action of~$\DD{\1^{\ii-1}}$ on sequences
of ``subright'' variables and of the action of~$\sig\ii$ on sequences of integers.}
\label{F:ProjHatProof}
\end{figure}

\subsection{Action of braids on LD-systems}

The action of positive braids on sequences of integers defined in~\eqref{E:PermAction} is just one example of a
much more general situation, namely the action of positive braids on sequences of elements of any
LD-system. It is well known---see, for instance,~\cite[Chapter~I]{Dgd}---that, if $(\SS, \op)$ is an LD-system,
\ie,
$\op$ is a binary operation on~$\SS$ that obeys the LD-law, then
\begin{equation}
\label{E:LDAction}
(\xx_1, ..., \xx_\nn) \act \sig\ii = (\xx_1, ..., \xx_{\ii-1}, \xx_\ii \op \xx_{\ii+1}, \xx_\ii, \xx_{\ii+2} , ...,
\xx_\nn)
\end{equation}
induces a well defined action of the monoid~$\BP\nn$ on the set~$\SS^\nn$, and, from there, a
partial action of~$\BP\infty$ on the set~$\SS^*$ of all finite sequences of elements of~$\SS$. 

\begin{prop}
Assume that $(\SS, \op)$ is an LD-system, and let~$\BBB_\SS$ be the category associated with the
partial action~\eqref{E:LDAction} of~$\BP\infty$ on~$\SS^*$. Then $\BBB_\SS$ is a left-Garside category, and,
for all~$\ss_1, \ss_2$, ... in~$\SS$, the projection~$[\pi, \RH]$ of~$\CLD$ onto~$\BBB$ factors
through~$\BBB_\SS$ into
$$ 
\begin{CD}
\CLD
@>{\textstyle\ [\pi, \pi_\SS]\ }>>
\BBB_\SS
@>{\textstyle\ [\ID, \lg]\ }>>
\BBB,
\end{CD}
$$
where $\pi_\SS$ is defined for $\RH(\tt) = \nn$ by
$$\pi_\SS(\tt) = (\eval_\SS(\sub\tt\0), ..., \eval_\SS(\sub\tt{\1^{\nn-1}\0})),$$
$\eval_\SS(\tt)$ being the evaluation of~$\tt$ in~$(\SS, \op)$ when $\xx_\pp$ is given the
value~$\ss_\pp$ for each~$\pp$.
\end{prop}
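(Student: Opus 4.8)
The plan is to follow the same two-step pattern used for Proposition~\ref{P:BraidHat}, of which the present statement is exactly the generalization obtained by replacing the right-projection LD-system $(\Nat, \op)$ with $\ss\op\ss' = \ss'$ by an arbitrary LD-system $(\SS, \op)$; the well-definedness of the action~\eqref{E:LDAction} is the classical fact about LD-systems cited just before the statement. First I would establish that $\BBB_\SS$ is a left-Garside category by appealing to Proposition~\ref{P:Criterion}. The action~\eqref{E:LDAction} preserves the length of sequences, and a braid~$\aa$ satisfies ``$(\ss_1, \Ldots \ss_\nn)\act\aa$ defined'' if and only if $\aa\in\BP\nn$; hence for $\vec\ss$ of length~$\nn$ the set $(\BP\infty)_{\vec\ss}$ is exactly $\BP\nn$, precisely as for the action on~$\Nat$. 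Since $\BP\infty$ is already known to be left-preGarside, it suffices to exhibit a left-Garside sequence for~\eqref{E:LDAction}, and I would take $\D_{\vec\ss} = \D_\nn$ for $\vec\ss$ of length~$\nn$. Because~\eqref{E:LDAction} preserves length, we have $\D_{\vec\ss\act\aa} = \D_{\vec\ss}$, so Condition~$\LGloc$ reduces to the statement that $\D_\nn$ is a left-Garside element of~$\BP\nn$, which holds. Thus $\BP\infty$ is locally left-Garside with respect to~\eqref{E:LDAction}, and Proposition~\ref{P:Criterion} yields that $\BBB_\SS = \CAct{\BP\infty}{\SS^*}$ is left-Garside.

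Next I would show that $[\pi, \pi_\SS]$ is a well-defined functor $\CLD \to \BBB_\SS$, which amounts to verifying the compatibility identity~\eqref{E:Compat}, namely $\pi_\SS(\tt\act\aa) = \pi_\SS(\tt)\act\pi(\aa)$ whenever $\tt\act\aa$ is defined. As $\pi$ is a monoid homomorphism and the action is functorial, it is enough to check this on the generators $\aa = \DD\a$, and there are two cases. If the address~$\a$ contains at least one~$\0$, write $\a = \1^\jj\0\b$; the atomic expansion then replaces the subterm $\sub\tt{\1^\jj\0}$ by an LD-expansion of itself, leaving the right branch and all other left subterms $\sub\tt{\1^\kk\0}$ with $\kk\ne\jj$ untouched. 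Since $\eval_\SS$ is a homomorphism onto the LD-system $(\SS, \op)$, it takes equal values on LD-equivalent terms, so the $\jj$th coordinate of $\pi_\SS$ is unchanged and $\pi_\SS(\tt\act\DD\a) = \pi_\SS(\tt)$, matching $\pi(\DD\a) = 1$. If $\a = \1^\ii$, then $\pi(\DD\a) = \sig{\ii+1}$, and a direct computation using $\eval_\SS(\ss \op \ss') = \eval_\SS(\ss) \op \eval_\SS(\ss')$ shows that the $\SS$-sequence transforms exactly by the rule~\eqref{E:LDAction} for $\sig{\ii+1}$; this is the picture of Figure~\ref{F:ProjHatProof} with $\var$ replaced throughout by $\eval_\SS$. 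This settles compatibility for all generators, hence for all of~$\MLD$.

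Finally, $[\ID, \lg]$ is a functor $\BBB_\SS \to \BBB$ because the braid action on~$\Nat$ is constant and~\eqref{E:LDAction} preserves length, so $\lg(\vec\ss\act\aa) = \lg(\vec\ss) = \lg(\vec\ss)\act\aa$ holds trivially. Composing the two functors, on objects we have $\lg(\pi_\SS(\tt)) = \RH(\tt)$ (the sequence $\pi_\SS(\tt)$ has length $\RH(\tt)$ by construction), while on morphisms both steps carry~$\aa$ to~$\pi(\aa)$; hence the composite is $[\pi, \RH]$, which is the asserted factorization. I expect the only nonroutine point to be the generator-level compatibility of~$\pi_\SS$ in the second paragraph: one must organize the case analysis on the address~$\a$ so that every off-branch expansion is absorbed by the LD-invariance of~$\eval_\SS$, and the single right-branch expansion at~$\1^\ii$ is matched coordinate-by-coordinate with the braid transposition~$\sig{\ii+1}$. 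Everything else is bookkeeping entirely parallel to Proposition~\ref{P:BraidHat}.
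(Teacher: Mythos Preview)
Your proposal is correct and follows exactly the approach the paper intends: the paper itself omits the proof entirely, saying only that it is ``an easy verification similar to that of Proposition~\ref{P:BraidHat},'' and your argument is precisely that verification carried out in full --- reducing the left-Garside claim to Proposition~\ref{P:Criterion} via $\D_{\vec\ss}=\D_{\lg(\vec\ss)}$, and checking the compatibility~\eqref{E:Compat} generator by generator with the same two-case split (off-branch addresses absorbed by LD-invariance of~$\eval_\SS$, right-branch addresses~$\1^\ii$ matched to~$\sig{\ii+1}$ via~\eqref{E:LDAction}). Nothing is missing.
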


We skip the proof, which is an easy verification similar to that of Proposition~\ref{P:BraidHat}.

When $(\SS, \op)$ is $\Nat$ equipped with $\xx \op \yy = \yy$ and we map $\xx_\pp$ to~$\pp$, we
obtain the category~$\BBBB$ of Proposition~\ref{P:BraidHat}. In this case, the (partial) action of braids is
not constant as in the case of~$\BBB$, but it factors through an action of  the associated
permutations, and it is therefore far from being free. By contrast, if we
take for~$\SS$ the braid group~$\BB_\infty$ with $\op$ defined by $\xx \op \yy = \xx \,
\sh{}(\yy) \, \sig1 \, \sh{}(\xx)\inv$, where we recall $\sh{}$ is the shift endomorphism of~$\BB_\infty$ that
maps~$\sig\ii$ to~$\sig{\ii+1}$ for each~$\ii$, and if we send $\xx_\pp$ to~$1$ (or to any other fixed
braid) for each~$\pp$, then the corresponding action~\eqref{E:LDAction} of~$\BP\infty$ on~$(\BB_\infty)^*$
is \emph{free}, in the sense that $\aa = \aa'$ holds whenever $\ss \act \aa = \ss \act \aa'$ holds for at least one
sequence~$\ss$ in~$(\BB_\infty)^*$: this follows from Lemma~III.1.10 of~\cite{Dhr}. This suggests that the
associated category~$\CAct{\BP\infty}{(\BB_\infty)^*)}$ has a very rich structure.

\section*{Appendix: Other algebraic laws}

The above approach of self-distributivity can be developed for other algebraic laws as well. However, at least
from the viewpoint of Garside structures, the case of self-distributivity seems quite particular.

\subsection*{The case of associativity}

Associativity is the law $\xx(\yy\zz) = (\xx\yy)\zz$. It is syntactically close to self-distributivity, the only
difference being that the variable~$\xx$ is not duplicated in the right hand side. Let us say that a
term~$\tt'$ is an $A$-expansion of another term~$\tt$ if $\tt'$ can be obtained from~$\tt$ by applying the
associativity law in the left-to-right direction only, \ie, by iteratively replacing subterms of the form $\tt_1 \op
(\tt_2 \op \tt_3)$ by the corresponding term $(\tt_1 \op \tt_2) \op \tt_3$. Then the counterpart of
Proposition~\ref{P:Confluence} is true, \ie, two terms~$\tt, \tt'$ are equivalent up to
associativity if and only if they admit a common $A$-expansio, a trivial result since every
size~$\nn$ term~$\tt$ admits as an $A$-expansion the term~$\f(\tt)$ obtained
from~$\tt$ by pushing all brackets to the left.

As in Sections~\ref{S:LCDm} and~\ref{S:MLD}, we can introduce the category~$\CAm$ whose oblects are terms,
and whose morphisms are pairs~$(\tt, \tt')$ with $\tt'$ an $A$-expansion of~$\tt$. As in
Section~\ref{S:Labelling}, we can take positions into account, using $\AA_\a$ when associativity is applied at
address~$\a$, and introduce a monoid~$\MA$ that describes the connections between the
generators~$\AA_\a$ \cite{Dhb}. Here the relations of Lemma~\ref{L:RLD} are to be replaced by
analogous new relations, among which the MacLane--Stasheff Pentagon relations
$\AA_\a^2 = \AA_{\a1} \AA_\a \AA_{\a0}$. The monoid~$\MA$ turns out to be a well known object: indeed,
it is (isomorphic to) the submonoid~$\Fp$ of R.~Thompson's group~$F$ generated by the standard
generators~$\xx_1, \xx_2, ...$ \cite{CFP}.

Finally, as in Section~\ref{S:CLD}, we can introduce the category~$\CA$, whose objects are terms, and whose
morphisms are triples $(\tt, \aa, \tt')$ with~$\aa$ in~$\MA$ and $\tt \act \aa = \tt'$. Using $\psi(\tt)$ for
the term obtained from~$\tt$ by pushing all brackets to the right, we have

\begin{propS}
\label{P:Assoc}
The categories~$\CAm$ and~$\CA$ are isomorphic; $\CAm$ is left-Garside with Garside map $\tt
\mapsto (\tt, \f(\tt))$, and right-Garside with Garside map~$\tt \mapsto (\psi(\tt), \tt)$.
\end{propS}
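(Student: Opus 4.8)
The plan is to establish the Proposition in three parts, following the structure already visible in the statement. First I would prove the isomorphism $\CAm \cong \CA$, then verify the left-Garside structure via the map $\tt \mapsto (\tt, \f(\tt))$, and finally the right-Garside structure via $\tt \mapsto (\psi(\tt), \tt)$; the two-sided coherence between $\D$ and $\nabla$ (Definition~\ref{D:Garside}) must then be checked to conclude that $\CAm$ is in fact Garside.

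For the isomorphism $\CAm \cong \CA$, recall that $\CA = \CAct{\MA}{\TT}$ is built from the partial action of $\MA$ on terms, whereas $\CAm$ has morphisms that are mere pairs $(\tt, \tt')$ with $\tt'$ an $A$-expansion of~$\tt$. There is an obvious functor $\CA \to \CAm$ forgetting the label~$\aa$. The content is its injectivity, \ie, that the $A$-labelling is unique: distinct elements of~$\MA$ never induce the same $A$-expansion on a term. Since $\MA \cong \Fp$ is Thompson's monoid, I would appeal to the known right-cancellativity of~$\Fp$ (equivalently, that $F$ embeds in a group), which is exactly the associativity counterpart of the Embedding Conjecture---but here it is a \emph{theorem}, not a conjecture, precisely because associativity does not duplicate~$\xx$. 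This is the crucial structural difference flagged in the Appendix's opening sentence. Concretely, the relations replacing those of Lemma~\ref{L:RLD} (including the Pentagon relation $\AA_\a^2 = \AA_{\a1}\AA_\a\AA_{\a0}$) generate \emph{all} coincidences of the action, so the functor is an isomorphism.

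For the left-Garside structure of~$\CAm$, I would run exactly the program of Section~\ref{S:Recognizing} applied to~$\MA$: condition~$\LG$ follows from Proposition~\ref{P:Mass} using term size as the mass function~$\mu$, since a nontrivial $A$-expansion strictly increases the number of left-leaning internal nodes (or more simply since the unique left-combed normal form is an attractor); conditions~$\LGi$ and $\LGii$ follow from Proposition~\ref{P:Cube} once the cube condition~\eqref{E:Cube} is verified on triples of generators~$\AA_\a$, which for the Thompson relations is a finite, routine check. The map $\D(\tt) = (\tt, \f(\tt))$, with $\f(\tt)$ the fully left-bracketed expansion, satisfies~$\LGloc$: every atomic $A$-expansion $\AA_\a$ left-divides~$\D_\tt$ and $\D_\tt \dive \AA_\a \D_{\tt \act \AA_\a}$, so $(\D_\tt)_\tt$ is a left-Garside sequence and Proposition~\ref{P:Criterion} applies. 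The right-Garside structure is the mirror image, using $\psi(\tt)$, the fully right-bracketed form, as the target; by the associativity symmetry between left- and right-combing, the argument is strictly dual.

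The main obstacle is \emph{not} the Garside verifications, which are routine given the machinery of Sections~\ref{S:Cat}--\ref{S:Normal}; it is pinning down the isomorphism $\CAm \cong \CA$, \ie, the uniqueness of labels. I expect the honest work to be identifying~$\MA$ with~$\Fp$ and then invoking the right-cancellativity of Thompson's monoid. The subtle point worth emphasizing is that the associativity law, unlike~LD, collapses the ``emerged part of the iceberg'' to something trivial: the map $\f$ is injective on~$\Obj(\CAm)$ for a trivial reason (the combed form determines nothing beyond the leaf sequence and arity), and in fact $\CAm$ is Garside with a \emph{bijective} structural functor, so by the Remark following the proof that every Garside category is regular, $\CAm$ is automatically regular---the associativity analogue of the Embedding Conjecture holds unconditionally, which is precisely the contrast the Appendix is designed to highlight.
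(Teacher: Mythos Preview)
Your overall scaffolding---verifying the preGarside axioms via Propositions~\ref{P:Mass} and~\ref{P:Cube}, and obtaining the isomorphism $\CAm\cong\CA$ from the faithfulness of the Thompson action---is sound and matches the spirit of the paper (which in fact states the Proposition without proof in the Appendix). However, there are two genuine errors.

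First, you misread the statement. The Proposition asserts that $\CAm$ is left-Garside \emph{and} right-Garside, not that it is Garside in the sense of Definition~\ref{D:Garside}. Your plan to ``check the two-sided coherence between $\D$ and $\nabla$\ldots\ to conclude that $\CAm$ is in fact Garside'' would fail: the compatibility condition $\D(\tt) = \nabla(\target{\D(\tt)})$ reads $(\tt,\f(\tt)) = (\psi(\f(\tt)),\f(\tt))$, which forces $\tt = \psi(\f(\tt))$, \ie, that $\tt$ is already fully right-combed. This is false for generic~$\tt$. The paper explicitly flags this: ``$\CA$ provides an example where the left- and the right-Garside structures are not compatible, and, therefore, we have no Garside structure in the sense of Definition~\ref{D:Garside}.'' Consequently your final paragraph---``$\CAm$ is Garside with a bijective structural functor''---is wrong on both counts. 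The functor~$\f$ is not injective on objects: $\f(\tt)$ depends only on the leaf sequence of~$\tt$, so $\f$ is \emph{constant} on each associativity class. (Your parenthetical ``the combed form determines nothing beyond the leaf sequence and arity'' is exactly the reason $\f$ is \emph{not} injective, contradicting the sentence it is meant to justify.)

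Second, a smaller slip: term size is preserved by associativity, so it cannot serve as the mass function~$\mu$ in Proposition~\ref{P:Mass}. Your alternative---the number of left-leaning internal nodes, or any statistic that strictly increases toward the left-combed form---is correct; just drop the reference to ``term size''.
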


This result might appear promising. It is not! Indeed, the involved Garside
structure(s) is trivial: the maps~$\f$ and~$\psi$ are constant on each orbit of
the action of~$\MA$ on terms, and it easily follows that every morphism in~$\CAm$ and~$\CA$ is left-simple
and right-simple so that, for instance, the greedy normal form of any morphism always has
length one\footnote{We do not claim that the monoid~$\MA$ is
not an interesting object in itself: actually it is, with rich nontrivial algebraic and geometric
properties, see~\cite{Dhb}; we only say that the current Garside category approach is not relevant here.}. The
only observation worth noting is that
$\CA$ provides an example where the left- and the right-Garside structures are not compatible, and, therefore,
we have no Garside structure in the sense of Definition~\ref{D:Garside}.

\subsection*{Central duplication}

We conclude with still another example, namely the exotic \emph{central duplication} law
$\xx(\yy\zz) = (\xx\yy)(\yy\zz)$ of~\cite{Dgj}. The situation there turns out to be very similar to that of
self-distributivity, and a nontrivial left-Garside structure appears. As there is no known
connection between this law and other widely investigated objects like braids, it is probably not necessary to
go into details.

\end{document}